\newtheorem{thm}{Theorem}[section]
\newtheorem{prop}[thm]{Proposition}
\newtheorem{lemma}[thm]{Lemma}
\newtheorem{cor}[thm]{Corollary}
\newtheorem{defn}[thm]{Definition}
\newtheorem{example}[thm]{Example}
\newtheorem{remark}[thm]{Remark}
 \numberwithin{equation}{section}
\begin{document}
\title[On Seidel representation in $QK(Gr(k,n))$]{ On  Seidel representation in quantum $K$-theory of   Grassmannians}


\author{Changzheng Li}
 \address{School of Mathematics, Sun Yat-sen University, Guangzhou 510275, P.R. China}
\email{lichangzh@mail.sysu.edu.cn}

\author{Zhaoyang Liu}
\address{School of Mathematics, Sun Yat-sen University, Guangzhou 510275, P.R. China}
\email{liuchy73@mail2.sysu.edu.cn}

\author{Jiayu Song}
 \address{School of Mathematics, Sun Yat-sen University, Guangzhou 510275, P.R. China}
\email{songjy29@mail2.sysu.edu.cn}

\author{Mingzhi Yang}
 \address{School of Mathematics, Sun Yat-sen University, Guangzhou 510275, P.R. China}
\email{yangmzh8@mail2.sysu.edu.cn}


\thanks{}
\date{
      }

 \dedicatory{Dedicated to the memory of Bumsig Kim}



\begin{abstract}
 We provide a direct proof of Seidel representation  in the quantum $K$-theory  $QK(Gr(k, n))$ by studying projected Gromov-Witten varieties concretely. As applications, we give an alternative proof of
  the $K$-theoretic quantum Pieri rule by Buch and Mihalcea,
     reduce  certain quantum Schubert structure constants of higher degree to  classical Littlewood-Richardson coefficients for $K(Gr(k, n))$,
    and
    provide a quantum Littlewood-Richardson rule for $QK(Gr(3, n))$.
   \end{abstract}

\maketitle

 \tableofcontents

\section{Introduction}

 Let $\mbox{Ham}(M, \omega)$ denote the group of Hamiltonian  symplectomorphisms on a  symplectic manifold  $(M, \omega)$.
 Seidel  representation is an action of the fundamental group $\pi_1(\mbox{Ham}(M, \omega))$ on the invertible quantum cohomology  $QH^*(M)^{\times}$. It was first constructed by Seidel for monotone symplectic manifolds  \cite{Seid},
and was later extended to all symplectic manifolds \cite{McDu, McTo}.  While Seidel  representation    has very nice applications for toric manifolds (see  \cite{GoIr, CLLT} and references therein),   one of its first two applications 
was   for 
complex Grassmannians $X=Gr(k, n)$ \cite{Seid}. In this case, the $U(n)$-action on $\mathbb{C}^n$ induces a Hamiltonian action of projective unitary group $PU(n)$ on $X$. In return, Seidel constructed an action of the cyclic group  $\pi_1(PU(n))=\mathbb{Z}/n\mathbb{Z}$ on the quantum cohomology $QH^*(X)$ (with specialization  at $q=1$). There was an explanation of this cyclic symmetry from the viewpoint of Verlinde algebra  by Agnihotri and   Woodward \cite[Proposition 7.2]{AgWo}. The Grassmannian $X$ is a special case of a flag variety $G/P$, where $G$ is a simple simply connected complex Lie group and $P$ is a parabolic subgroup of $G$. In \cite{Belk}, Belkale constructed an injective group homomorphism from the center    of $G$ to the Weyl group of $G$, and 
 obtained a transformation formula for genus zero, three-pointed Gromov-Witten invariants for $G/P$ in a geometric way. This   recovers the $(\mathbb{Z}/n\mathbb{Z})$-symmetry for $QH^*(X)$ again. There was also   a combinatorial aspect of this cyclic symmetry by Postnikov \cite[Proposition 6.1]{Post}. The Seidel representation in   the quantum cohomology of  complete flag  manifolds in Lie type $A$ was constructed by Postnikov in \cite{Post2001} and for cominuscule Grassmannians by Chaput, Manivel and Perrin in  \cite{CMP}.

  The small quantum cohomology $QH^*(X)=(H^*(X,\mathbb{Z})\otimes \mathbb{Z}[q], \star)$ is a deformation of the classical cohomology $H^*(X,\mathbb{Z})$, and has a  $\mathbb{Z}[q]$-additive basis of Schubert classes $[X^\lambda]$ indexed by partitions $\lambda$ in $k\times (n-k)$ rectangle. Here $X^\lambda$ denotes a Schubert variety of codimension $|\lambda|$. The aforementioned cyclic symmetry is actually realized by an operator $T=[X^{(1,\cdots, 1)}]\star$ on $QH^*(X)$, where $X^{(1,\cdots, 1)}$ is a smooth Schubert  variety isomorphic to $Gr(k, n-1)$.
   The fact 
   $T^n=q^k\mbox{Id}$, together with the fact that $T^j$ is not a scalar map for $1\leq j<n$, shows that $T|_{q=1}$ generates a $(\mathbb{Z}/n\mathbb{Z})$-action on $QH^*(X)|_{q=1}$.

  Compared with  the extensive studies of $QH^*(X)$, much less is known about the quantum $K$-theory $QK(X)=(K(X)\otimes \mathbb{Z}[[q]], *)$. The classes $\mathcal{O}^\lambda=[\mathcal{O}_{X^\lambda}]$ of the structure sheaves over Schubert varieties $X^\lambda$ form an additive basis of $QK(X)$. The quantum $K$-product
     $$\mathcal{O}^\lambda*\mathcal{O}^\mu=\sum_{d\in\mathbb{Z}_{\geq 0}}\sum_\nu N_{\lambda, \mu}^{\nu, d}\mathcal{O}^\nu q^d$$
  is a priori a former power series in $q$ by definition, and turns out to be a polynomial in $q$ \cite{BuMi,BCMPfinite00,BCMPfinite, ACT}.
  The first detailed study of $QK(X)$ was due to Buch and Mihalcea in \cite{BuMi}, including  a Pieri rule calculating
     quantum $K$ products of the form  $\mathcal{O}^\lambda*\mathcal{O}^{(r, 0, \cdots, 0)}$.
The operator   $\mathcal{T}=\mathcal{O}^{(1, \cdots, 1)}*: QK(X)\to QK(X)$
could be viewed as a special case of their Pieri rule, thanks to the isomorphism between $X$ and its dual  Grassmannian $Gr(n-k, n)$. As one main result, we show the behavior of $\mathcal{T}$ directly, by studying relevant projected Gromov-Witten varieties carefully. It is a highlight of our alternative proof that we provide very concrete descriptions of the relevant curve neighborhoods in section 4.2 (especially \textbf{Proposition 4.5}).
This inspired a recent work \cite{Tari} of  Tarigradschi, which gives a  proof of the  conjecture on curve neighborhoods of Seidel products proposed in \cite{BCP23}.

\begin{thm}\label{SeidelQKT}  Let $\lambda\in \mathcal{P}_{k, n}=\{(\lambda_1, \cdots, \lambda_k)\in \mathbb{Z}^k\mid n-k\geq \lambda_1\geq \cdots \geq \lambda_k\geq 0\} $. In $QK(X)$, we have
  \begin{equation}\label{eqnTT}
     \mathcal{T}(\mathcal{O}^\lambda)= \mathcal{O}^{(1,\cdots, 1)}*\mathcal{O}^\lambda=\begin{cases}
     q \mathcal{O}^{(\lambda_2, \cdots, \lambda_{k},0)},&\mbox{if } \lambda_1=n-k,\\
      \mathcal{O}^{(\lambda_1+1,\cdots, \lambda_k+1)},&\mbox{if }\lambda_1<n-k.
  \end{cases}
  \end{equation}
 \end{thm}
 \noindent It follows that  $\mathcal{T}^n=q^{k}\mbox{Id}$, and  $\mathcal{T}|_{q=1}$ generates a $(\mathbb{Z}/n\mathbb{Z})$-action on $QK(X)|_{q=1}$, which we called Seidel representation.
     {As the first application, we use  Seidel representation   to reprove Buch and Mihalcea's  quantum Pieri rule in  \textbf{Proposition \ref{propQuanPieri22}}.} We remark that
        intersections of two Schubert varieties of $X$ associated to non-transverse complete flags arise in our alternative proof. To obtain the rational connectedness of such intersections, we make use of the refined double decomposition of intersections of Schubert cells of the complete flag variety by B. Shapiro, M. Shapiro and  Vainshtein \cite{SSV} (see also \cite{Curt}).
\begin{remark}
  It is worth to understand the quantum Pieri rule from the viewpoint of Seidel representation, which roughly tells us that
     {\upshape $$\mbox{quantum Pieri rule } = \mbox{classcial Pieri rule } + \mbox{Seidel action}.$$}

   {For quantum} cohomology of $Gr(k, n)$, this was shown by Belkale \cite{Belk}. Along this line,  the first and third named authors \cite{ LiSo} studied the quantum Pieri rule in the  quantum cohomology of complete flag variety $F\ell_n$, where
the special Schubert classes are those  pullback from   $Gr(1, n)$.
In   \cite{BCP23},  Buch, Chaput and  Perrin   studied quantum Pieri rule  for quantum $K$-theory of  cominuscule Grassmannians along this line as well.
 \end{remark}

   Let $\lambda\uparrow i$ be the unique partition satisfying  $\mathcal{T}^i(\mathcal{O}^\lambda)|_{q=1}=\mathcal{O}^{\lambda\uparrow i}$,
   called the $i$-th Seidel shift  of $\lambda$ \cite[Definition 15]{BuWa} (see Definition \ref{Seidelshifts} for precise descriptions).
   As the second  application of Seidel representation,
   we obtain the following.
  \begin{thm}\label{mainthm1}
     Let $\lambda, \mu \in \mathcal{P}_{k, n}$.
 The smallest power $d_{\rm min}$ of $q$ appearing in  $\mathcal{O}^\lambda*\mathcal{O}^\mu$ in $QK(X)$ equals that appearing in  $[X^\lambda]\star [X^\mu]$ in $QH^*(X)$, and   is given by 
$$d_{\rm min} = \max\{{1\over n}\big({|\lambda|-|\lambda\uparrow i|+|\mu|-|\mu\uparrow (n-i)|} \big)| 0\leq i\leq  n\}.$$ Moreover, if the max
is achieved for $r$, then
   \begin{equation}\label{Belprod}
      \mathcal{O}^\lambda* \mathcal{O}^\mu=q^{d_{\rm min}} \mathcal{O}^{\lambda\uparrow r}*\mathcal{O}^{\mu\uparrow (n-r)}.
   \end{equation}
  \end{thm}
\noindent In particular, the structure constants $N_{\lambda, \mu}^{\nu, d_{\min}}$ are all Littlewood-Richardson coefficients
   in $\mathcal{O}^{\lambda\uparrow r}\cdot\mathcal{O}^{\mu\uparrow (n-r)}$ in $K(X)$.
 \begin{remark}
       Belkale obtained the same formula as \eqref{Belprod}  in Theorem 10 of \cite{Belk} for the quantum cohomology $QH^*(Gr(k,n))$.   Postnikov also obtained  an equivalent
        formula by combining    Corollary 6.2 and the $D_{\rm min}$-part of Theorem 7.1 in \cite{Post}. The above theorem generalizes their results to the  quantum $K$-theory $QK(Gr(k, n))$.

        The first part on $d_{\rm min}$ has  also been proved independently in  \cite{BCMP22}. Therein  Buch, Chaput, Mihalcea and Perrin showed that the powers $q^d$ that occur in $\mathcal{O}^\lambda* \mathcal{O}^\mu$
           form an interval which is the same as that for the quantum product $[X^\lambda]\star[X^\mu]$ in the quantum cohomology of a (minuscule) Grassmannian. A formula for $d_{\rm min}$ in the quantum cohomology of a homogeneous variety $G/P$ was proved in \cite{FuWo}.

           Our theorem has a different emphasis in relating two quantum products, so that the coefficient of the smallest power $q^{d_{\rm min}}$ in a quantum product coincides  with a precisely described Littlewood-Richardson coefficient of another product in the $K$-theory
            of $Gr(k, n)$.
 \end{remark}

 Using the operators $\mathcal{T}, \mathcal{O}^{(n-k, 0,\cdots, 0)}$ and the duality in \cite[Theorem 5.13]{BuMi}, we also show  that certain structure constants $N_{\lambda, \mu}^{\nu, d}$ with $d\geq 1$ are equal to classical Littlewood-Richardson coefficients $N_{\tilde\lambda, \tilde \mu}^{\tilde \nu, 0}$. For instance, we provide an accessible   sufficient condition for  reduction for structure constants of degree one as follows.
\begin{thm}\label{thmdegreeone}
  Let $\lambda, \mu, \nu\in \mathcal{P}_{k, n}$ and $d\geq 1$. If   $\nu_i<\max\{\lambda_i, \mu_i\}$ for some $i$, then there exist $\tilde \lambda, \tilde \mu, \tilde \nu\in \mathcal{P}_{k, n}$ with explicit descriptions, such that  $N_{\lambda, \mu}^{\nu, d}=N_{\tilde \lambda, \tilde \mu}^{\tilde \nu, d-1}$.
\end{thm}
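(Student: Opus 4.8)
The plan is to deduce the identity entirely from the Seidel operator $\mathcal{T}=\mathcal{O}^{(1,\cdots,1)}*$ of Theorem~\ref{SeidelQKT} (equivalently from $\mathcal{O}^{(n-k,0,\cdots,0)}*=q\,\mathcal{T}^{-1}$), by cyclically shifting all three partitions by a well-chosen amount and peeling off one power of $q$. I would first encode each $\lambda\in\mathcal{P}_{k,n}$ by the $k$-element subset $S_\lambda=\{\lambda_i+k-i+1:1\le i\le k\}\subseteq\{1,\dots,n\}$; iterating Theorem~\ref{SeidelQKT} then gives $\mathcal{T}^a(\mathcal{O}^\lambda)=q^{e_a(\lambda)}\mathcal{O}^{\lambda\uparrow a}$, where $e_a(\lambda)=\#\{s\in S_\lambda:s>n-a\}$ and $\lambda\uparrow 1$ is the partition with $S_{\lambda\uparrow 1}=S_\lambda+1\pmod n$. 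Because $\mathcal{T}$ is left $*$-multiplication by $\mathcal{O}^{(1,\cdots,1)}$ and $*$ is associative, comparing coefficients of the basis element $\mathcal{O}^{\nu\uparrow a}q^{\bullet}$ on the two sides of $\mathcal{T}^a(\mathcal{O}^\lambda*\mathcal{O}^\mu)=\mathcal{T}^a(\mathcal{O}^\lambda)*\mathcal{O}^\mu$ yields the shift identity $N_{\lambda,\mu}^{\nu,d}=N_{\lambda\uparrow a,\mu}^{\nu\uparrow a,\,d+e_a(\nu)-e_a(\lambda)}$, valid for every $a\ge 0$. Beyond this the only non-formal ingredient I need is a one-step drop, which I will call $(\star)$: if $\lambda_1=n-k$, so $\mathcal{T}(\mathcal{O}^\lambda)=q\,\mathcal{O}^{\lambda\uparrow 1}$, and $\nu_1<n-k$, so $\mathcal{T}(\mathcal{O}^\nu)=\mathcal{O}^{\nu\uparrow 1}$ has all parts $\ge 1$, then expanding $q\,\mathcal{O}^{\lambda\uparrow 1}*\mathcal{O}^\mu=\mathcal{T}(\mathcal{O}^\lambda*\mathcal{O}^\mu)=\sum N_{\lambda,\mu}^{\nu',d}\,\mathcal{T}(\mathcal{O}^{\nu'})q^d$ and reading off the coefficient of $\mathcal{O}^{\nu\uparrow 1}q^d$ gives $N_{\lambda,\mu}^{\nu,d}=N_{\lambda\uparrow 1,\mu}^{\nu\uparrow 1,\,d-1}$; here only the summand $\nu'=\nu$ can contribute on the right, since every $\mathcal{T}(\mathcal{O}^{\nu'})$ with $\nu'_1=n-k$ carries an extra $q$ and a vanishing last part while $\mathcal{O}^{\nu\uparrow 1}$ has none.

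With $(\star)$ available, the remaining task is to find the shift that places $\lambda$ into that shape without changing the degree. By commutativity of $*$ I may assume the hypothesis is witnessed by $\nu\not\supseteq\lambda$, that is, $\nu_i<\lambda_i$ for some $i$. I would then examine the lattice path $b_j:=\#\{s\in S_\nu:s\le j\}-\#\{s\in S_\lambda:s\le j\}$ for $0\le j\le n$: here $b_0=b_n=0$, each step $b_j-b_{j-1}$ lies in $\{-1,0,1\}$ and equals $-1$ exactly when $j\in S_\lambda\setminus S_\nu$, and $\nu\not\supseteq\lambda$ forces $b_j>0$ for some $j$. Taking $t:=1+\max\{j:b_j>0\}$, a short check shows $b_t=0$ and $t\in S_\lambda\setminus S_\nu$. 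I set $a:=n-t\in\{0,\dots,n-1\}$; then $t\in S_\lambda$ makes $(\lambda\uparrow a)_1=n-k$, $t\notin S_\nu$ makes $(\nu\uparrow a)_1<n-k$, and $b_t=0$ is precisely the equality $e_a(\lambda)=e_a(\nu)$. Chaining the two identities now gives $N_{\lambda,\mu}^{\nu,d}=N_{\lambda\uparrow a,\mu}^{\nu\uparrow a,\,d}$ (the degree is unchanged exactly because $e_a(\lambda)=e_a(\nu)$), followed by $N_{\lambda\uparrow a,\mu}^{\nu\uparrow a,\,d}=N_{\lambda\uparrow(a+1),\mu}^{\nu\uparrow(a+1),\,d-1}$ from $(\star)$ applied to $(\lambda\uparrow a,\mu,\nu\uparrow a)$. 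Hence $\tilde\lambda:=\lambda\uparrow(a+1)$, $\tilde\mu:=\mu$, $\tilde\nu:=\nu\uparrow(a+1)$ lie in $\mathcal{P}_{k,n}$, are given explicitly through the Seidel shift of Definition~\ref{Seidelshifts}, and satisfy $N_{\lambda,\mu}^{\nu,d}=N_{\tilde\lambda,\tilde\mu}^{\tilde\nu,d-1}$; as $d\ge 1$ the right-hand side is a genuine structure constant, and for $d=1$ it is the classical Littlewood--Richardson coefficient of $\mathcal{O}^{\tilde\nu}$ in $\mathcal{O}^{\tilde\lambda}\cdot\mathcal{O}^{\tilde\mu}$ in $K(X)$. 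If the hypothesis is witnessed by $\nu\not\supseteq\mu$ instead, I would run the same argument with $\lambda$ and $\mu$ interchanged.

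The step I expect to be the main obstacle is precisely this degree bookkeeping: an arbitrary cyclic shift moves the exponent of $q$ by $e_a(\nu)-e_a(\lambda)$, so one has to locate a single shift that simultaneously pushes $\lambda$ onto the right edge of the $k\times(n-k)$ rectangle, keeps $\nu$ off that edge, and leaves the $q$-degree unchanged --- and it is exactly there that the hypothesis $\nu\not\supseteq\lambda$ enters, through the ``last return to $0$'' of the path $b_j$. Everything else, the shift identity and $(\star)$, is formal once Theorem~\ref{SeidelQKT} is in hand. The same scheme runs with $\mathcal{O}^{(n-k,0,\cdots,0)}*=q\,\mathcal{T}^{-1}$ in place of $\mathcal{T}$, which produces the mirror reduction, and the duality of \cite[Theorem~5.13]{BuMi} can then be applied to trade $\tilde\mu$ for a complementary partition, which is convenient for making $\tilde\lambda,\tilde\mu,\tilde\nu$ as small as possible.
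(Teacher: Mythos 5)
Your proposal is correct and follows essentially the same route as the paper: the identity $N_{\lambda,\mu}^{\nu,d}=N_{\lambda\uparrow a,\mu}^{\nu\uparrow a,\,d+e_a(\nu)-e_a(\lambda)}$ obtained from $\mathcal{T}^a(\mathcal{O}^\lambda*\mathcal{O}^\mu)=\mathcal{T}^a(\mathcal{O}^\lambda)*\mathcal{O}^\mu$ is exactly the mechanism of Lemma \ref{lemred}, and your one-step drop $(\star)$ is Lemma \ref{lemred}(1). The only difference is how the shift is located and the degree drop of exactly one is certified: the paper fixes $r=n-k-\lambda_m+m$ for $m=\min\{i\mid\nu_i<\lambda_i\}$ and verifies $t=1$ by the explicit computation of the shifted partitions in Lemma \ref{lemcom}, whereas you find the shift via the ``last return to zero'' of the lattice path $b_j$ on jump sequences; in fact your $t$ always equals $\lambda_m+k-m+1$, so your $a+1$ coincides with the paper's $r$ and your $\tilde\lambda,\tilde\mu,\tilde\nu$ agree with those of Theorem \ref{thmdegonerestated} — your combinatorial bookkeeping is just a cleaner packaging of the same reduction.
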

\noindent More precise  statement  of the above theorem will be provided  in \textbf{Theorem \ref{thmdegonerestated}}.
  We remark that  many reductions of quantum-to-classical type have been given by Postnikov \cite{Post} for quantum cohomology $QH^*(X)$, while the reduction for the largest power in the quantum product of two Schubert classes cannot be generalized to quantum $K$-theory due to the lack of strange duality.
  The structure constants $N_{\lambda, \mu}^{\nu, d}$ are not single  but combinations of $K$-theoretic Gromov-Witten invariants. This is completely different from that for $QH^*(X)$, making the study of  $N_{\lambda, \mu}^{\nu, d}$ much more complicated. The quantum-to-classical principle derived in \cite{BuMi} can only reduce  $K$-theoretic Gromov-Witten invariants of $X$ to classical $K$-intersection numbers of two-step flag varieties, which leads to complicated combinatorics for calculating general structure constants.  There is an expression of the structure constants of degree one in terms of combination of structure constants of degree zero for most of flag varieties of general Lie type in \cite[Theorem 6.8]{LiMi}, which unfortunately  {involves sign cancelations}. As from the computational examples, it seems that the sufficient condition we provided   covers most of the non-vanishing terms $N_{\lambda, \mu}^{\nu, 1}$ already.

  By more careful but a bit tedious analysis for $Gr(3, n)$, we obtain a further application as follows, and refer to \textbf{Theorem \ref{thmQLRforGr3n}} for  detailed descriptions.
\begin{thm} In $QK(Gr(3, n))$, for any partitions $\lambda, \mu, \nu$ and  degree $d$,
we have     $$N_{\lambda, \mu}^{\nu, d}=N_{\tilde\lambda, \tilde\mu}^{\tilde \nu, 0}$$
 with precise descriptions of    partitions  $\tilde\lambda, \tilde\mu, \tilde \nu$,   except for a few cases for which a
   formula of $N_{\lambda, \mu}^{\nu, d}$ with manifestly alternating positivity can be provided directly.
\end{thm}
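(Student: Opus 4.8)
The plan is to prove the rule by induction on the quantum degree $d$, which for $Gr(3,n)$ is bounded (indeed $d\le\min(3,n-3)$, and the cases $n\le 5$ reduce to smaller Grassmannians), so the induction has only finitely many steps. The base case $d=0$ is the classical $K$-theoretic Littlewood--Richardson rule for $K(Gr(3,n))$, which is known. For $d\ge 1$ the goal is to reduce each structure constant to degree $d-1$ by two mechanisms. The first is \emph{Seidel repositioning}: since $\mathcal{T}(a*b)=(\mathcal{T}a)*b=a*(\mathcal{T}b)$, applying $\mathcal{T}^{i}$ to the first factor and $\mathcal{T}^{\,n-i}$ to the second alters $\mathcal{O}^\lambda*\mathcal{O}^\mu$ only by a fixed power of $q$; by Theorem \ref{SeidelQKT} this identifies $N_{\lambda,\mu}^{\nu,d}$ with $N_{\lambda\uparrow i,\ \mu\uparrow(n-i)}^{\nu\uparrow i,\ d'}$ for a suitable $d'$ and any $i$, giving freedom to move a triple around its $\mathbb{Z}/n\mathbb{Z}$-orbit (as in the proof of Theorem \ref{mainthm1}). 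The second is \emph{degree lowering}, Theorem \ref{thmdegonerestated} (the sharp form of Theorem \ref{thmdegreeone}): whenever $\nu_i<\max\{\lambda_i,\mu_i\}$ for some $i$ we have $N_{\lambda,\mu}^{\nu,d}=N_{\tilde\lambda,\tilde\mu}^{\tilde\nu,d-1}$ with $\tilde\lambda,\tilde\mu,\tilde\nu\in\mathcal{P}_{3,n}$ described explicitly.

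Given a triple $(\lambda,\mu,\nu)$ with $d\ge1$, the recursion is: first use Seidel repositioning to pass to a representative satisfying the hypothesis of Theorem \ref{thmdegonerestated}, then apply that theorem to drop to degree $d-1$, and repeat. Because $\mathcal{P}_{3,n}$ consists of partitions with at most three parts, its $\mathbb{Z}/n\mathbb{Z}$-orbits are easy to describe (a partition in $\mathcal{P}_{3,n}$ is a triple of cyclic jump-positions, $\mathcal{T}$ simply rotates them, and the orbit is recorded by the multiset of cyclic gaps), and one checks that for all but a short list of triples some Seidel shift lands in the region where $\nu_i<\max\{\lambda_i,\mu_i\}$ for some $i$; for those triples the recursion reaches $d=0$, yielding $N_{\lambda,\mu}^{\nu,d}=N_{\tilde\lambda,\tilde\mu}^{\tilde\nu,0}$ with $\tilde\lambda,\tilde\mu,\tilde\nu$ obtained by composing the recorded Seidel shifts with the maps of Theorem \ref{thmdegonerestated}. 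Where Theorem \ref{thmdegonerestated} alone is insufficient one also brings in the operator $\mathcal{O}^{(n-3,0,0)}*(-)$ and the Buch--Mihalcea duality pairing of \cite[Theorem 5.13]{BuMi}, exactly as in the proof of Theorem \ref{thmdegreeone}, to enlarge the collection of triples reachable by reductions.

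The remaining work is the analysis of the exceptional triples --- those with $\lambda\subseteq\nu$ and $\mu\subseteq\nu$ whose entire Seidel/duality orbit stays in that region --- which I expect to be a handful of explicitly listed shapes (typically built from the generator $\mathcal{O}^{(1,1,1)}$, the Pieri classes $\mathcal{O}^{(r,0,0)}$, and the point class). For each of these I would compute $N_{\lambda,\mu}^{\nu,d}$ directly, either by using associativity of $*$ together with the quantum Pieri rule of Proposition \ref{propQuanPieri22} to rewrite the product in terms of ones already determined, or by invoking the Buch--Mihalcea quantum-to-classical principle to express the relevant $K$-theoretic Gromov--Witten invariants as $K$-intersection numbers on the two-step flag variety $Fl(3-d,3+d;n)$ and evaluating them by hand. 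The main obstacle I anticipate is precisely this last step: $N_{\lambda,\mu}^{\nu,d}$ is not a single $K$-theoretic Gromov--Witten invariant but an alternating combination of them (coming from writing the quantum product through the pairing), so the quantum-to-classical substitution must be performed on the combination and the signs reassembled to exhibit the claimed manifestly alternating positivity. A secondary, more bookkeeping obstacle is to verify that the recursion of the second paragraph always terminates and that the exceptional list is genuinely complete, which requires tracking carefully how the explicit partitions $\tilde\lambda,\tilde\mu,\tilde\nu$ of Theorem \ref{thmdegonerestated} interact with the three-row geometry and with the Seidel shifts.
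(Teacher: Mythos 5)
There are two genuine gaps. First, your induction on $d$ never establishes the key structural fact that makes the three-row case tractable: after normalizing to $\lambda_3=\mu_3=0$ (Proposition \ref{propred2zero}), \emph{all} structure constants with $d\geq 2$ vanish. The paper gets this from a Giambelli-type identity (Lemma \ref{lemGiamb}), writing $\mathcal{O}^{(\mu_1,\mu_2,0)}=\mathcal{O}^{\mu_1}*\mathcal{O}^{\mu_2-1}+\sum_{j\geq\mu_1}\mathcal{O}^{j}*(\mathcal{O}^{\mu_2}-\mathcal{O}^{\mu_2-1})$, combined with the fact that $\mathcal{O}^{(\lambda_1,\lambda_2,0)}*\mathcal{O}^{j}$ has no quantum correction (Corollary \ref{Pierideg0}); one further Pieri multiplication then produces at most $q^1$. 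Without this, your scheme would have to handle degree~$2$ and~$3$ constants for triples that escape Theorem \ref{thmdegonerestated}, and you offer no mechanism for those.

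Second, you badly underestimate the exceptional locus. After normalization, the triples not reachable by Theorem \ref{thmdegonerestated} are exactly those with $\nu_i\geq\max\{\lambda_i,\mu_i\}$ for $i=1,2$; this is an infinite family growing with $n$ (e.g.\ $\lambda=\mu=\nu=(2c,c,0)$ for all admissible $c$), not ``a handful of shapes built from $\mathcal{O}^{(1,1,1)}$, Pieri classes and the point class,'' and it is not always removable by Seidel repositioning or the duality of \cite[Theorem 5.13]{BuMi} (the paper's final proposition shows some of these, such as $\lambda=\mu=\nu$ with $\lambda_1=2\lambda_2$, genuinely resist reduction and have value $-\lambda_2$). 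Consequently ``computing the remaining cases by hand'' via quantum-to-classical on two-step flags is not a finite procedure; what is needed, and what the paper supplies in case (3) of Theorem \ref{thmQLRforGr3n}, is a \emph{uniform closed formula} on this whole region, obtained by expanding the Giambelli expression with the quantum Pieri rule (Proposition \ref{propQuanPieri22}) and counting the admissible intermediate partitions $\eta$ (the sets $\Gamma_i^m$ with the injections $\eta\mapsto(\eta_1,\eta_2,\eta_3-1)$), which is where the $\min\{A-1,\,n-3-\nu_1\}$-type answers and the manifest sign pattern come from. Your use of Theorem \ref{thmdegonerestated} for the cases $\nu_1<\max\{\lambda_1,\mu_1\}$ or $\nu_2<\max\{\lambda_2,\mu_2\}$ does match the paper, but the two ingredients above are the heart of the proof and are missing from your plan.
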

\noindent We therefore obtain a quantum Littlewood-Richardson rule for $QK(Gr(3, n))$, thanks to Buch's classical Littlewood-Richardson rule for $K(Gr(k, n))$ \cite{Buch}. As a direct consequence of our reduction and the well-known alternating positivity \cite{Buch,Brio} for
the Littlewood-Richardson coefficients for $K(Gr(k, n))$, we obtain the following.
\begin{cor}
    In $QK(Gr(3, n))$, for any partitions $\lambda, \mu, \nu$ and  degree $d$, we have
    $(-1)^{|\lambda|+|\mu|+|\nu|+nd}N_{\lambda, \mu}^{\nu, d}\geq 0$.
\end{cor}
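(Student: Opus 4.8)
The plan is to deduce this formally from two inputs already at our disposal: the quantum Littlewood--Richardson rule for $QK(Gr(3,n))$ (Theorem \ref{thmQLRforGr3n}), and the alternating positivity of the $K$-theoretic Littlewood--Richardson coefficients of a Grassmannian due to Buch and Brion \cite{Buch,Brio}, which states that $(-1)^{|\alpha|+|\beta|+|\gamma|}N_{\alpha,\beta}^{\gamma,0}\ge 0$ for all $\alpha,\beta,\gamma\in\mathcal{P}_{k,n}$. Given $\lambda,\mu,\nu$ and $d$, Theorem \ref{thmQLRforGr3n} provides explicitly described partitions $\tilde\lambda,\tilde\mu,\tilde\nu\in\mathcal{P}_{3,n}$ with $N_{\lambda,\mu}^{\nu,d}=N_{\tilde\lambda,\tilde\mu}^{\tilde\nu,0}$, unless $(\lambda,\mu,\nu,d)$ lies in the finite exceptional list, in which case an explicit closed formula for $N_{\lambda,\mu}^{\nu,d}$ with manifestly alternating signs is recorded there; for those cases the asserted inequality is read off directly, so we may assume we are in the generic case and must combine $N_{\lambda,\mu}^{\nu,d}=N_{\tilde\lambda,\tilde\mu}^{\tilde\nu,0}$ with Buch--Brion.

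The only thing that then requires verification is the parity identity $|\lambda|+|\mu|+|\nu|+nd\equiv|\tilde\lambda|+|\tilde\mu|+|\tilde\nu|\pmod 2$; granting it, $(-1)^{|\lambda|+|\mu|+|\nu|+nd}N_{\lambda,\mu}^{\nu,d}=(-1)^{|\tilde\lambda|+|\tilde\mu|+|\tilde\nu|}N_{\tilde\lambda,\tilde\mu}^{\tilde\nu,0}\ge 0$. I would check this congruence stepwise along the chain of elementary reductions from which $\tilde\lambda,\tilde\mu,\tilde\nu$ are built, namely the passage to the minimal degree (Theorem \ref{mainthm1}) followed by the degree-lowering reductions of the type in Theorem \ref{thmdegonerestated}. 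For the first step, Theorem \ref{mainthm1} gives $n\,d_{\rm min}=|\lambda|-|\lambda\uparrow r|+|\mu|-|\mu\uparrow(n-r)|$ together with $N_{\lambda,\mu}^{\nu,d_{\rm min}}=N_{\lambda\uparrow r,\,\mu\uparrow(n-r)}^{\nu,0}$, and hence $|\lambda|+|\mu|+|\nu|+n\,d_{\rm min}=2|\lambda|+2|\mu|-|\lambda\uparrow r|-|\mu\uparrow(n-r)|+|\nu|\equiv|\lambda\uparrow r|+|\mu\uparrow(n-r)|+|\nu|\pmod 2$, so the corollary holds at $d=d_{\rm min}$ as an immediate instance of Buch--Brion. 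Each further reduction $N_{\lambda,\mu}^{\nu,d}=N_{\tilde\lambda,\tilde\mu}^{\tilde\nu,d-1}$ consumes exactly one power of $q$ while modifying the three partitions by Seidel shifts and by the operators $\mathcal{T}$ and $\mathcal{O}^{(n-k,0,\cdots,0)}$ of Theorem \ref{SeidelQKT}; a direct computation from the explicit formulas there shows that the total size of the partitions involved changes by $n$ modulo $2$ under this replacement, which exactly compensates the change of $n$ in the term $nd$, so that $|\lambda|+|\mu|+|\nu|+nd$ is left invariant modulo $2$.

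The main (and only non-formal) point is thus the parity bookkeeping of the previous paragraph: one must confirm, case by case, that every elementary reduction entering Theorem \ref{thmQLRforGr3n} preserves $|\lambda|+|\mu|+|\nu|+nd$ modulo $2$, and, for the handful of exceptional $(\lambda,\mu,\nu,d)$, that the sign in the explicit formula recorded there equals $(-1)^{|\lambda|+|\mu|+|\nu|+nd}$. Both checks are routine once the explicit recipes of Theorems \ref{SeidelQKT}, \ref{mainthm1}, \ref{thmdegonerestated} and \ref{thmQLRforGr3n} are in hand, but they must be carried out over the full classification; no geometric input beyond those results and the duality of \cite[Theorem 5.13]{BuMi} already used there is needed. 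I expect the bulk of the effort to lie simply in tracking the partition sizes through the nested Seidel shifts, not in any genuinely new argument.
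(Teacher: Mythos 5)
Your proposal is correct and follows essentially the same route as the paper: reduce to the degree-zero Littlewood--Richardson coefficients via the explicit reductions behind Theorem \ref{thmQLRforGr3n} (checking that $|\lambda|+|\mu|+|\nu|+nd$ is preserved mod $2$), invoke the Buch--Brion alternating positivity there, and read the sign off the explicit formula in the remaining case (3), where $m=|\nu|+n-|\lambda|-|\mu|$ has the right parity. The only cosmetic differences are that the paper's reductions go through Proposition \ref{propred2zero} and Theorem \ref{thmdegonerestated} rather than Theorem \ref{mainthm1}, and that case (3) is an infinite family rather than a finite exceptional list, neither of which affects your argument.
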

\noindent  {The conjecture on the positivity of the structure constants \cite[Conjecture 5.10]{BuMi} was recently proved for minuscule Grassmannians and quadric hypersurfaces by Buch, Chaput, Mihalcea and Perrin with  a geometric method   \cite{BCMP22}.}
The above corollary  provides  an alternative proof of  the conjecture  in the special case $Gr(3, n)$.
We remark that the case $Gr(2, n)$ is trivial in the sense that the quantum product of general Schubert classes are directly reduced to the classical product of two special Schubert classes by Seidel operators.
For $Gr(k, n)$ with $k>3$, quite a few quantum Littlewood-Richardson coefficients cannot be reduced to classical ones by using all the known symmetries, even for the quantum cohomology.

The present paper is organized as follows. In section 2, we review Seidel representation for $QH^*(X)$. In section 3, we review basic facts on quantum $K$-theory of $X$. In section 4, we study projected Gromov-Witten varieties and provide a direct proof of  Seidel representation on $QK(X)$. We give applications of Seidel representations in the rest.
In section 5, we reprove the $K$-theoretic quantum Pieri rule by Buch and Mihalcea.
In section 6, we prove Theorem \ref{mainthm1} and show quantum-to-classical for certain structure constants of higher degree.
 Finally in section 7,  we provide a quantum Littlewood-Richardson rule for $QK(Gr(3, n))$.  We remark that  sections 6 and 7 are purely of combinatorial nature, which are self-contained after assuming   little background on $QK(X)$ together with the quantum Pieri rule.

\subsection*{Acknowledgements}


The authors would like to thank    Prakash Belkale, Boris Shapiro, Michael Shapiro and Lei Song for helpful discussions.  The authors are extremely grateful to    Leonardo C. Mihalcea
 for his valuable comments. The authors would also like to thank the anonymous reviewers for the careful reading of our manuscript and  their valuable  comments.

The first author would like to express the deepest gratitude to Prof. Bumsig Kim (1968-2021) for his mentorship. As suggested by Prof. Bumsig Kim, the first author learned  both quantum K-theory and Seidel elements for the first time, while he was a postdoc at KIAS.
The first author  was  supported by   the National Key Research and Development Program of China No.~2023YFA100980001 and NSFC Grant 12271529.
  \section{Seidel action on $QH^*(Gr(k, n))$}
 In this section, we review Seidel representation on the quantum cohomology $QH^*(X)$ of Grassmannians $X=Gr(k, n)$, mainly following \cite{Belk}. We refer to \cite{BuchGr, FuWo} for the well-known facts for $QH^*(X)$.

\subsection*{Quantum cohomology} Let $k, n$ be integers with $1\leq k<n$.
The complex Grassmannian   $X=\{V\leqslant \mathbb{C}^n\mid \dim V=k\}$ is a smooth projective variety.
Let $\mathcal{P}_{k, n}=\{\lambda=(\lambda_1, \cdots, \lambda_k)\in \mathbb{Z}^k\mid n-k\geq \lambda_1\geq \cdots \geq \lambda_k\geq 0\}$.
 Fix an arbitrary complete flag $F_\bullet=(F_1\leqslant F_2\leqslant\cdots\leqslant F_{n-1}\leqslant F_n=\mathbb{C}^n)$.
For each partition $\lambda\in \mathcal{P}_{k, n}$,  the  Schubert subvariety  (associated to the complete flag $F_\bullet$)
     $$X^\lambda=X^\lambda(F_\bullet):=\{V\in X\mid \dim V\cap F_{n-k+i-\lambda_i}\geq i,\,\, 1\leq i\leq k\}.$$
 is of   codimension $|\lambda|=\sum_{i=1}^k\lambda_i$. In particular, $X^{(1, \cdots, 1)}$ is   smooth, isomorphic to $Gr(k, n-1)$, and $X^{(n-k, 0, \cdots,0)}$ is also smooth, isomorphic to $Gr(k-1, n-1)$.

 By abuse of notation, we denote both  the homology class of $X^\lambda$ and its Poincar$\acute{e}$ dual cohomology class as $[X^\lambda]$.   The Schubert (cohomology) classes $[X^\lambda]$ are independent of choices of   $F_\bullet$, and  form a basis of  the cohomology of $X$:
     $H^*(X, \mathbb{Z})=\bigoplus_{\lambda\in \mathcal{P}_{k, n}}\mathbb{Z}[X^\lambda]$.
     The special  Schubert classes  $[X^j]:=[X^{(j, 0,\cdots,0)}]$
   (resp. $[X^{1^i}]: =[X^{(1,\cdots,1,0,\cdots, 0)}]$) generate the cohomology ring $(H^*(X, \mathbb{Z}), \cup)$. Moveover,
   $[X^j]=c_j(\mathcal{Q})$  and  $[X^{1^i}]=(-1)^ic_i(\mathcal{S})$  are the Chern classes of   tautological vector bundles. Here
    $0\to \mathcal{S}\to \mathbb{C}^n\to \mathcal{Q}\to 0$ is an exact sequence of vector bundles over $X$, in which   the fiber $\mathcal{S}|_{V\in X}$ is   the vector space $V$.
  We notice that $\int_X[X^\lambda]\cup[X^{\mu^\vee}]=\delta_{\lambda, \mu}$ for any    $\lambda, \mu\in \mathcal{P}_{k, n}$; here
  $\mu^\vee$ is the dual partition of $\mu=(\mu_1, \cdots, \mu_k)$, defined by $\mu^\vee:=(n-k-\mu_k, \cdots, n-k-\mu_1)$.

 Denoting by $\square$ the (unique) homology class of Schubert variety of complex dimension one, we have $H_2(X, \mathbb{Z})=\mathbb{Z}\square$. Let  $\overline{\mathcal{M}}_{0, 3}(X, d)$ be the moduli space of  three-pointed stable maps to $X$ of genus zero and degree $d\square$, which is of dimension $(\dim X+ nd)$. Let  $\mbox{ev}_i: \overline{\mathcal{M}}_{0, 3}(X, d)\to X$ denote the $i$-th evaluation map.
  The (small) quantum cohomology $QH^*(X)=(H^*(X, \mathbb{Z})\otimes\mathbb{Z}[q], \star)$ is deformation of $H^*(X, \mathbb{Z})$. The structure constants in the quantum product
     $$[X^\lambda]\star[X^\mu]=\sum_{d\in \mathbb{Z}_{\geq 0}}\sum_{\nu\in \mathcal{P}_{k, n}}c_{\lambda, \mu}^{\nu, d}[X^\nu]q^d$$
    are given by genus zero, three-pointed Gromov-Witten invariants:
    {\upshape $$ c_{\lambda, \mu}^{\nu, d}=\langle [X^\lambda], [X^\mu], [X^{\nu^\vee}]\rangle_d=\int_{\overline{\mathcal{M}}_{0, 3}(X, d)}\mbox{ev}_1^*[X^\lambda]\cup\mbox{ev}_2^*[X^\mu]\cup\mbox{ev}_3^*[X^{\nu^\vee}].$$
    }
   In particular, the above sum is finite, since
 $$
     c_{\lambda, \mu}^{\nu, d}\neq 0\quad\mbox{only if}\quad |\lambda|+|\mu|=|\nu|+nd.$$
  Moreover, $ c_{\lambda, \mu}^{\nu, d}\in \mathbb{Z}_{\geq 0}$, since geometrically it counts the number of morphisms
$f:\mathbb{P}^1\to X$ of degree $f_*([\mathbb{P}^1])=d\square$ with the properties $f(0)\in g_1X^\lambda, f(1)\in g_2X^\mu$ and
     $f(\infty)\in X^{\nu^\vee}$ for (fixed) generic $g_1, g_2\in GL(n, \mathbb{C})$.
\subsection*{Seidel representation on $QH^*(X)$} In addition to the 
set of partitions, Schubert classes in $H^*(X, \mathbb{Z})$ can  be indexed by other combinatorial sets, for instance by   the set  ${[n]\choose k}$ of jump sequences $1\leq a_1<a_2<\cdots<a_k\leq n$. There is a   bijection  $\varphi: \mathcal{P}_{k,n}\overset{\cong}{\longrightarrow} {[n]\choose k}$ of sets, given by $\lambda\mapsto I_\lambda=\{n-k+j-\lambda_j\mid 1\leq j\leq k\}$.

For $I=\{a_j\}_j\in {[n]\choose k}$ and $p\in \mathbb{Z}$, by
 $I+p$ we mean the unique element $\{b_j\}_j$ in ${[n]\choose k}$ such that for some permutation $\phi\in S_k$, $b_{\phi(j)}\equiv a_j+p \mod n$ for all $1\leq j\leq k$; for $0\leq i\leq n$, we denote by  $d_i(I)$ the cardinality of the set $\{j\mid a_j\leq i, 1\leq j\leq k\}$.
Following \cite[section 3.5]{BuWa}, we define the notion of\textit{ Seidel shift }as below.
\begin{defn}\label{Seidelshifts}
  {\upshape  We define the first  \textit{Seidel shift} of a partition $\lambda\in \mathcal{P}_{k, n}$  by
$$\lambda\uparrow1=\begin{cases} (\lambda_1+1, \lambda_2+1, \cdots, \lambda_k+1),&\mbox{if }\lambda_1<n-k,\\
   (\lambda_2,\lambda_3, \cdots, \lambda_m, 0),&\mbox{if }\lambda_1=n-k.
\end{cases}$$
The $p$-th Seidel shift $\lambda\uparrow p$ is defined inductively by $\lambda\uparrow 0=\lambda$ and  $\lambda\uparrow(p + 1) =(\lambda\uparrow p)\uparrow 1$ for $p\geq 0$. For $0\leq p\leq n$, we write $\lambda\downarrow p:=\lambda\uparrow (n-p)$, and notice
 \begin{equation}\label{shiftdual}
    (\lambda\uparrow p)^\vee=\lambda^\vee\downarrow p.
 \end{equation}
}
\end{defn}
\noindent We notice
$I_\lambda-1=I_{\lambda \uparrow 1}$ and $d_1(I_\lambda)={k+|\lambda|-|\lambda\uparrow 1|\over n}$,    following immediately from   
$\varphi$.

Seidel representation on $QH^*(X)$ is generated by the operator $T=[X^{1^k}]\star$ on $QH^*(X)$ 
in \cite{Belk},  which  maps a Schubert class $[X^\lambda]=[X^{I_\lambda}]$ to
 \begin{eqnarray}\label{SeidelQH}
     T([X^{I_\lambda}])=q^{d_1(I_\lambda)}[X^{I_\lambda-1}]=q^{k+|\lambda|-|\lambda\uparrow 1|\over n} [X^{\lambda\uparrow 1}].
 \end{eqnarray}
 Together with the fact $T^r([X^{I_\lambda}])=q^{d_r(I_\lambda)}[X^{I_\lambda-r}]$, it follows that
  \begin{eqnarray}\label{SeidelQHdlambdak}T^r([X^{\lambda}])=q^{d_r(I_\lambda)}[X^{\lambda\uparrow r}] \quad\mbox{with}\quad d_r(I_\lambda)={1\over n}\big(rk+|\lambda|-|\lambda\uparrow r|\big). \end{eqnarray}
As an application of the Seidel representation, Belkale obtained the following   in  the proof of  \cite[Theorem 10]{Belk}, which is stronger than the statement of his theorem.
\begin{prop}\label{Belformula}Let $\lambda, \mu \in \mathcal{P}_{k, n}$.
 The smallest power $d_{\rm min}$ of $q$ appearing in  $[X^\lambda]\star [X^\mu]$ in $QH^*(X)$ is the number
$$d_{\rm min} = \max\{{1\over n}\big({|\lambda|-|\lambda\uparrow i|+|\mu|-|\mu\uparrow (n-i)|} \big)| 0\leq i\leq  n\}.$$ Moreover, if the max
is achieved for $r$, then
   $$[X^\lambda]\star [X^\mu]=q^{d_{\rm min}} [X^{\lambda\uparrow r}]\star [X^{\mu\uparrow (n-r)}]. $$
\end{prop}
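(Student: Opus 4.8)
The plan is to reduce everything to a single \emph{exchange relation}: for each $0\le i\le n$,
\begin{equation*}
[X^\lambda]\star[X^\mu]=q^{e_i}\,[X^{\lambda\uparrow i}]\star[X^{\mu\uparrow(n-i)}],\qquad e_i:=\tfrac1n\bigl(|\lambda|-|\lambda\uparrow i|+|\mu|-|\mu\uparrow(n-i)|\bigr),
\end{equation*}
an identity in $QH^*(X)\otimes\mathbb{Z}[q,q^{-1}]$. First I would prove it. Applying the formula $T^r([X^\lambda])=q^{d_r(I_\lambda)}[X^{\lambda\uparrow r}]$ with $r=n$ and noting $\lambda\uparrow n=\lambda$, $d_n(I_\lambda)=k$ gives $T^n=q^k\,\mathrm{Id}$, where $T=[X^{1^k}]\star$. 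Since $\star$ is commutative and associative, $T^i([X^\lambda])\star T^{n-i}([X^\mu])=[X^{1^k}]^{\star n}\star[X^\lambda]\star[X^\mu]=q^k\,[X^\lambda]\star[X^\mu]$. Substituting $T^i([X^\lambda])=q^{d_i(I_\lambda)}[X^{\lambda\uparrow i}]$ and $T^{n-i}([X^\mu])=q^{d_{n-i}(I_\mu)}[X^{\mu\uparrow(n-i)}]$ and using the arithmetic identity $d_i(I_\lambda)+d_{n-i}(I_\mu)=k+e_i$, the relation follows after cancelling $q^k$. Reading off $q$-exponents, the right-hand factor lies in $QH^*(X)$ and so contributes only non-negative powers of $q$; hence $d_{\rm min}\ge e_i$ for every $i$, and therefore $d_{\rm min}\ge\max_i e_i$.

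For the reverse inequality it suffices to exhibit one index $r$ for which the classical cup product $[X^{\lambda\uparrow r}]\cup[X^{\mu\uparrow(n-r)}]$ is nonzero in $H^*(X)$. Granting this, the exchange relation at $i=r$ has $q$-valuation exactly $e_r$, so $d_{\rm min}=e_r\le\max_i e_i\le d_{\rm min}$ and all inequalities collapse: $r$ achieves the maximum and $d_{\rm min}=\max_i e_i$. Moreover, for any maximizer $r'$, dividing the exchange relation by $q^{d_{\rm min}}$ shows that $[X^{\lambda\uparrow r'}]\star[X^{\mu\uparrow(n-r')}]$ has nonzero constant term, which gives the ``moreover'' assertion (for every maximizer, not just $r$).

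It remains to produce such an $r$. Here I would use the classical nonvanishing criterion $[X^\alpha]\cup[X^\beta]\neq 0\iff\beta\subseteq\alpha^\vee$ (equivalently $\alpha_i+\beta_{k+1-i}\le n-k$ for all $i$), together with \eqref{shiftdual}, which gives $(\lambda\uparrow r)^\vee=\lambda^\vee\uparrow(n-r)$. Writing $s=n-r$, the task becomes the combinatorial lemma: for any $\rho,\sigma\in\mathcal{P}_{k,n}$ there is $s\in\{0,\dots,n\}$ with $\rho\uparrow s\subseteq\sigma\uparrow s$ (apply it with $\rho=\mu$, $\sigma=\lambda^\vee$). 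Passing to jump sequences, $I_{\rho\uparrow s}=I_\rho-s$ and $I_{\sigma\uparrow s}=I_\sigma-s$ cyclically, and $\rho\uparrow s\subseteq\sigma\uparrow s$ is equivalent to $|I_\rho\cap J|\le|I_\sigma\cap J|$ for every arc $J=\{s+1,\dots,s+t\}\pmod n$ with $0\le t\le n$. Setting $\epsilon(p)=\mathbf{1}_{I_\sigma}(p)-\mathbf{1}_{I_\rho}(p)$, so that $\sum_{p=1}^n\epsilon(p)=0$, and $S_t=\sum_{p=1}^t\epsilon(p)$, this says $S_{s+t}\ge S_s$ for all $t$ (reading indices mod $n$), i.e. $S_s=\min_m S_m$; choosing $s$ to be such a minimizer proves the lemma by the usual cycle-lemma argument.

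The hard part is this final combinatorial lemma: getting the dictionary between partition containment, arc-counts of jump sequences, and partial sums exactly right (in particular pinning down the conventions relating $\uparrow$, $\vee$, and the cyclic shift $I\mapsto I-1$), and then applying the minimal-partial-sum trick. By contrast the algebraic core is routine once $T^n=q^k\,\mathrm{Id}$ is available; one only needs to remember that the exchange relation a priori lives in the localization $\mathbb{Z}[q,q^{-1}]$, and that the passage back to genuine (non-negative) $q$-powers at the maximizer $r$ is exactly what the classical nonvanishing supplies.
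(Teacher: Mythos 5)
Your argument is correct, but it cannot be compared line-by-line with an in-paper proof for the simple reason that the paper does not prove this proposition: it is quoted from the proof of Theorem 10 in Belkale's paper (and noted to be equivalent to Postnikov's Corollary 6.2 and the $D_{\rm min}$-part of his Theorem 7.1). What you do splits into two halves. The exchange relation $[X^\lambda]\star[X^\mu]=q^{e_i}[X^{\lambda\uparrow i}]\star[X^{\mu\uparrow(n-i)}]$, obtained from $T^n=q^k\,\mathrm{Id}$, commutativity/associativity, and $d_i(I_\lambda)+d_{n-i}(I_\mu)=k+e_i$, is exactly the argument the paper itself runs later for the quantum $K$-analogue (proof of Theorem \ref{mainthm2}); there the paper closes the loop by invoking Proposition \ref{Belformula}, i.e.\ precisely the nonvanishing statement you prove directly. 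Your second half — the claim that for some $r$ the classical product $[X^{\lambda\uparrow r}]\cup[X^{\mu\uparrow(n-r)}]$ is nonzero, reduced via $[X^\alpha]\cup[X^\beta]\neq 0\iff\beta\subseteq\alpha^\vee$ and \eqref{shiftdual} to the lemma that any two partitions become comparable after a common cyclic shift, proved by the minimal-partial-sum (cycle lemma) trick on jump sequences — is sound: the dictionary $\rho\uparrow s\subseteq\sigma\uparrow s\iff|I_\rho\cap J|\le|I_\sigma\cap J|$ for all arcs $J$ starting at $s+1$ is the standard translation, $\sum_p\epsilon(p)=0$, and choosing $s$ at a minimum of the partial sums does the job. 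Since the structure constants $c_{\lambda,\mu}^{\nu,d}$ are nonnegative and the exchange relation is an exact identity, there is no cancellation issue, and the ``moreover'' part for any maximizer follows immediately from the relation at that index. So your write-up is in effect a self-contained reconstruction of the cited Belkale/Postnikov cyclic-shift argument; what it buys over the paper's treatment is independence from the citation, and it also makes transparent that the same nonvanishing input is the only genuinely geometric/combinatorial ingredient beyond the Seidel operator identity.
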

\begin{remark}
  The above proposition was originally given in terms of jump sequences, and is equivalent to \cite[Corollary 6.2 and the $D_{\rm min}$-part of Theorem 7.1]{Post}.
\end{remark}
\begin{remark}
   The formula $T([X^\lambda])$ is a special case of Bertram's quantum Pieri rule \cite{Bert}, but does not rely on it.
  The quantum Pieri rule was reproved by using Seidel representation and  the dimension constraint for  $c_{\lambda, \mu}^{\nu, d}$ \cite{Belk}.
\end{remark}
\section{Quantum $K$-theory of $Gr(k, n)$}

\subsection*{$K$-theory} The $K$-theory $K(X)=K^0(X)$, as a  free abelian group, is the Grothendieck group  of isomorphism classes $[E]$ of algebraic vector bundles over $X$ of finite rank, subject to the relations $[E]+[F]=[H]$ whenever there is a short exact sequence of algebraic vector bundles
 $0\to E\to H\to F\to 0$. The two ring operations on $K(X)$ are defined by
$[E]+ [F]:= [E\oplus F]$ and $[E]\cdot[F]:= [E \otimes F]$. Each structure sheaf $\mathcal{O}_{X^\lambda}$ has a resolution
 $0\to E_N\to E_{N-1}\to\cdots\to E_0\to \mathcal{O}_{X^\lambda}\to 0$ by locally free sheaves, making it meaningful to define the class $\mathcal{O}^\lambda=[\mathcal{O}_{X^\lambda}]:=\sum_{j=0}^N(-1)^j[E_j]\in K(X)$. Moreover, we have $K(X)=\bigoplus_{\lambda\in \mathcal{P}_{k, n}}\mathbb{Z}\mathcal{O}^\lambda$. The Euler characteristic of $[E]\in K(X)$ is given by
 $\chi_X([E]):=\sum_{j=0}^{\dim X}(-1)^j\dim H^i(X, E)$.
  $K(X)$ has another basis of ideal sheaves $\xi_\lambda$, satisfying $\chi_X(\mathcal{O}^\lambda\cdot \xi_\mu)=\delta_{\lambda, \mu}$ for all $\lambda, \mu\in \mathcal{P}_{k, n}$. Precisely,
      $\xi_\mu=[\mathcal{O}_{X^{\mu^\vee}}(-\partial X^{\mu^\vee})] =\sum_{\eta\supset \mu^\vee; \eta/\mu^\vee \rm is\,\, a\,\, rook\,\, strip}(-1)^{|\eta/\mu^\vee|}\mathcal{O}^\eta$.
The structure constants $N_{\lambda, \mu}^\nu$ in the product   satisfy the alternating positivity \cite{Buch,Brio}:
   $$\mathcal{O}^\lambda\cdot \mathcal{O}^\mu=\sum_{\nu\in \mathcal{P}_{k, n}}N_{\lambda, \mu}^{\nu} \mathcal{O}^\nu\quad\mbox{with}\quad (-1)^{|\lambda|+|\mu|+|\nu|}N_{\lambda, \mu}^{\nu}\in \mathbb{Z}_{\geq 0}.$$
    A Littlewood-Richardson rule for them was first given by Buch \cite{Buch}.

\subsection*{Quantum $K$-theory}
 Similar to the cohomological Gromov-Witten invariants, the genus zero $K$-theoretic Gromov-Witten invariants of degree $d\square$ for $\gamma_1, \cdots, \gamma_m\in K(X)$ are defined by\footnote{For Grassmannian $X$, the moduli space $\overline{\mathcal{M}}_{0, m}(X, d)$ of stable maps is an orbifold, so that the issue of virtual structure sheaf will not be involved.}
   $$I_d(\gamma_1, \cdots, \gamma_m):=\chi_{\overline{\mathcal{M}}_{0, m}(X, d)}(\mbox{ev}_1^*(\gamma_1)\cdot \cdots\cdot \mbox{ev}_m^*(\gamma_m))\in \mathbb{Z}.$$
  The (small) quantum $K$-theory $QK(X)=(K(X)\otimes \mathbb{Z}[[q]], *)$ is a deformation of the classical $K$-theory $K(X)$. The  structure constants in the  quantum
   $K$-product,
    $$\mathcal{O}^\lambda* \mathcal{O}^\mu=\sum_{d\in \mathbb{Z}_{\geq 0}}\sum_{\nu\in \mathcal{P}_{k, n}}N_{\lambda, \mu}^{\nu, d} \mathcal{O}^\nu q^d,$$
   are defined by combination of $K$-theoretic Gromov-Witten invariants, or equivalently defined recursively by structure constants of smaller degree as follows \cite{Give, BuMi}.
   \begin{eqnarray}
    \label{sc1}{}\hspace{1cm}  N_{\lambda, \mu}^{\nu, d}
      &=&\sum_{r\in \mathbb{Z}_{\geq 0}}\sum\limits_{(d_0, \cdots,d_r)}\hspace{0cm}\sum\limits_{\kappa_1, \cdots, \kappa_r\in \mathcal{P}_{k, n}}\hspace{-0.3cm}(-1)^rI_{d_0}(\mathcal{O}^\lambda, \mathcal{O}^\mu, \xi_{\kappa_1})\prod_{i=1}^{r}I_{d_i}(\mathcal{O}^{\kappa_i},\xi_{\kappa_{i+1}})\\
     \label{sc2} &=&I_d(\mathcal{O}^\lambda, \mathcal{O}^\mu, \xi_{\nu})-\sum_{\kappa\in \mathcal{P}_{k, n};0<e\leq d}N_{\lambda, \mu}^{\kappa, d-e}I_e(\mathcal{O}^\kappa,\xi_\nu).
   \end{eqnarray}
  The sum in \eqref{sc1}  is over all   $(d_0,\cdots , d_r )\in \mathbb{Z}_{\geq 0}\times \mathbb{Z}^r_{>0}$ with $d=\sum_{i=0}^rd_i$, and   we set $\kappa_{r+1} = \nu$.
In contrast to the cohomological  invariants, the $K$-theoretic Gromov-Witten invariants  could be nonzero even for large $d$. Nevertheless, the structure constants $N_{\lambda, \mu}^{\nu, d}$ do  vanish whenever $d$ is large enough \cite{BuMi,BCMPfinite00,BCMPfinite, ACT}. Moreover, $N_{\lambda, \mu}^{\nu, 0}=N_{\lambda, \mu}^{\nu}$, namely $\mathcal{O}^\lambda*\mathcal{O}^\mu|_{q=0}=\mathcal{O}^\lambda\cdot\mathcal{O}^\mu \in K(X)$.

\section{Seidel action on $QK(Gr(k, n))$ 
 }
 In this section, we provide a direct proof of   Seidel representation on $QK(X)$, by describing projected Gromov-Witten varieties precisely.

 \subsection{Projected Gromov-Witten varieties}
Recall $X^\lambda=X^\lambda(F_\bullet)$ is of codimension $|\lambda|$. There are also Schubert varieties $X_\lambda$ of dimension $|\lambda|$, opposite to $X^\lambda$. Precisely, we consider the complete flag $F^{\rm opp}_\bullet :=C_{w_0} F_\bullet$,
where $C_{w_0}$ denotes the permutation matrix associated to the longest permutation $w_0\in S_n$. That is,
   $F_i^{\rm opp}$ is spanned by $\{e_n, e_{n-1}, \cdots, e_{n-i+1}\}$ for all $i$.
 Then $X_\lambda:=C_{w_0}\cdot X^{\lambda^\vee}$ is given by
 $$X_\lambda=X_\lambda(F^{\rm opp}_\bullet):=\{\Lambda\in X\mid \dim \Lambda\cap F^{\rm opp}_{i+\lambda_{k+1-i}}\geq i, 1\leq i\leq k\} =C_{w_0}\cdot X^{\lambda^\vee}.$$
 Recall that $\mbox{ev}_i: \overline{\mathcal{M}}_{0,3}(X, d)\to X$ is the $i$-th evaluation map.
\begin{defn}{\upshape
 Let $Y, Z\subset X$ be closed subvarieties.  The Gromov-Witten variety of $Y$ and $Z$ 
 of degree $d$ is defined by $M_d(Y, Z):= {\rm ev}_1^{-1}(Y)\cap {\rm ev}_2^{-1}(Z)\subset \overline{\mathcal{M}}_{0,3}(X, d)$.
   The image
        $\Gamma_d(Y, Z):={\rm ev}_3(M_d(Y, Z))$ is called a  \textit{projected Gromov-Witten variety}.
   }
\end{defn}

Let $F\ell_{n_1,\cdots, n_r; n}:=\{V_{n_1}\leqslant V_{n_2}\leqslant \cdots\leqslant V_{n_r}\leqslant \mathbb{C}^n\mid \dim V_{n_i}=n_i, 1\leq i\leq r\}$.
For $1\leq d< \min\{k+1, n-k\}$, we consider the natural projections  $\pi_{\rm G}$, $\pi_{\rm F}$, $pr_1$ and $pr_2$   among flag varieties in the following commutative diagram.
\begin{equation}\label{diag}
   \xymatrix{
    & Z_d:=F\ell_{k-d, k, k+d; n}   \ar[0,0];[1,1]_{\pi_G } \ar[0,0];[1,-1]^{\pi_F }\ar[r]^{{}\qquad pr_1} &  F\ell_{k, k+d; n}  \ar[d]^{pr_2}\\
  F\ell_{k-d, k+d; n}   &    & X=Gr(k, n)    }
\end{equation}
Geometrically, $\Gamma_d(Y, Z)$ consists of points $p$ in $X$ such that there exists a rational curve $C_d$ of degree $d$ in $X$ that passes $p$, $Y$ and $Z$.
For $d=1$, the composition $\pi_{\rm G}\circ \pi_{\rm F}^{-1}$ gives an isomorphism
    between $F\ell_{k-1, k+1; n}$ and the space of lines in $Gr(k, n)$ \cite{Stri}. For general $d$,  the
kernel $V_{k-d}$ and span $V_{k+d}$ \cite{Buch} of a general rational curve $C_d\subset X$ form a point  $V_{k-d}\leqslant V_{k+d}$
 in $F\ell_{k-d, k+d; n}$.
 By chasing the  commutative diagram (8) in \cite{BuMi} (see for instance \cite[formula (4)]{BCMP11} for the special case  $\Gamma_d(X^{\lambda}, X_{\eta})$), for any subvarieties $Y_1, Y_2$ of $X$, we have the following formula
 \begin{eqnarray}
 \label{GammadY1Y2}   \Gamma_d(Y_1, Y_2)&=& \pi_{\rm G}(Z_d(Y_1, Y_2)),\\
 \label{ZdY1Y2} \mbox{ where } \quad  Z_d(Y_1, Y_2)&=& \pi^{-1}_{\rm F}\big(
   \pi_{\rm F}\pi_{\rm G}^{-1}(Y_1)\bigcap \pi_{\rm F}\pi_{\rm G}^{-1}(Y_2)\big).
 \end{eqnarray}
In particular when $Y_2=X$, we simply denote
 \begin{eqnarray}
 \label{GammadZdY1}   \Gamma_d(Y_1):=\Gamma_d(Y_1, X)\quad\mbox{and}\quad Z_d(Y_1):=Z_d(Y_1, X).
 \end{eqnarray}

The restriction $\mbox{ev}_3: M_d(X^\lambda, X_\eta)\to \Gamma_d(X^\lambda, X_\eta)$ is   cohomological trivial by \cite[Theorem 4.1 (b)]{BCMP11}. The following is a direct consequence, where $\mathcal{O}_\eta:=[\mathcal{O}_{X_\eta}]$.
\begin{prop}[Corollary 4.2 of \cite{BCMP11}] For any $\lambda, \eta\in \mathcal{P}_{k, n}$, $\gamma\in K(X)$ and $d\geq 1$, 
     \begin{eqnarray}
  \label{KGW3} I_d(\mathcal{O}^{\lambda},\mathcal{O}_\eta, \gamma)&=&\chi_X([\mathcal{O}_{\Gamma_d(X^\lambda, X_\eta)}]\cdot \gamma).
 \end{eqnarray}
\end{prop}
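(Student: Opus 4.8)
The statement to prove is \eqref{KGW3}: for $\lambda,\eta\in\mathcal P_{k,n}$, $\gamma\in K(X)$ and $d\geq 1$,
\[
  I_d(\mathcal O^{\lambda},\mathcal O_\eta,\gamma)=\chi_X\bigl([\mathcal O_{\Gamma_d(X^\lambda,X_\eta)}]\cdot\gamma\bigr).
\]
The plan is to push forward through the evaluation map $\mathrm{ev}_3$, using the projection formula together with the cohomological triviality of $\mathrm{ev}_3$ restricted to the Gromov--Witten variety $M_d(X^\lambda,X_\eta)$, which is exactly the content of \cite[Theorem 4.1(b)]{BCMP11} that we are allowed to invoke.

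First I would unwind the definition of the $K$-theoretic Gromov--Witten invariant. By definition,
\[
  I_d(\mathcal O^{\lambda},\mathcal O_\eta,\gamma)=\chi_{\overline{\mathcal M}_{0,3}(X,d)}\bigl(\mathrm{ev}_1^*\mathcal O^{\lambda}\cdot\mathrm{ev}_2^*\mathcal O_\eta\cdot\mathrm{ev}_3^*\gamma\bigr).
\]
The first step is to replace the product $\mathrm{ev}_1^*\mathcal O^{\lambda}\cdot\mathrm{ev}_2^*\mathcal O_\eta$ by the structure sheaf $[\mathcal O_{M_d(X^\lambda,X_\eta)}]$ of the Gromov--Witten variety. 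This requires a Tor-independence / proper intersection statement: $M_d(X^\lambda,X_\eta)=\mathrm{ev}_1^{-1}(X^\lambda)\cap\mathrm{ev}_2^{-1}(X_\eta)$ should have the expected dimension with the higher Tor sheaves vanishing, so that $\mathrm{ev}_1^*[\mathcal O_{X^\lambda}]\cdot\mathrm{ev}_2^*[\mathcal O_{X_\eta}]=[\mathcal O_{M_d(X^\lambda,X_\eta)}]$ in $K$-theory. For general $\lambda,\eta$ one wants $X^\lambda$ and $X_\eta$ to be taken with respect to flags in general position; since the class $\mathcal O^\lambda$ (resp.\ $\mathcal O_\eta$) does not depend on the flag, one may choose the flags transverse, and then Kleiman--Bertini-type arguments (as in \cite{BuMi,BCMP11}) give the Cohen--Macaulay and proper-intersection properties of the evaluation maps that make this identity hold. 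Thus
\[
  I_d(\mathcal O^{\lambda},\mathcal O_\eta,\gamma)=\chi_{\overline{\mathcal M}_{0,3}(X,d)}\bigl([\mathcal O_{M_d(X^\lambda,X_\eta)}]\cdot\mathrm{ev}_3^*\gamma\bigr).
\]

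Next I would apply the projection formula along $\mathrm{ev}_3\colon\overline{\mathcal M}_{0,3}(X,d)\to X$. Writing $f=\mathrm{ev}_3$ and using $\chi_{\overline{\mathcal M}_{0,3}(X,d)}=\chi_X\circ f_*$ together with $f_*(\alpha\cdot f^*\gamma)=f_*(\alpha)\cdot\gamma$, we get
\[
  I_d(\mathcal O^{\lambda},\mathcal O_\eta,\gamma)=\chi_X\bigl(f_*[\mathcal O_{M_d(X^\lambda,X_\eta)}]\cdot\gamma\bigr).
\]
Now the key input: $f$ restricts to a map $M_d(X^\lambda,X_\eta)\to\Gamma_d(X^\lambda,X_\eta)$ which is cohomologically trivial by \cite[Theorem 4.1(b)]{BCMP11}, meaning $R^0f_*\mathcal O_{M_d(X^\lambda,X_\eta)}=\mathcal O_{\Gamma_d(X^\lambda,X_\eta)}$ and $R^{>0}f_*\mathcal O_{M_d(X^\lambda,X_\eta)}=0$. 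Hence $f_*[\mathcal O_{M_d(X^\lambda,X_\eta)}]=[\mathcal O_{\Gamma_d(X^\lambda,X_\eta)}]$ in $K(X)$, and substituting gives precisely \eqref{KGW3}.

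The main obstacle is the first step: justifying that $\mathrm{ev}_1^*\mathcal O^\lambda\cdot\mathrm{ev}_2^*\mathcal O_\eta=[\mathcal O_{M_d(X^\lambda,X_\eta)}]$, i.e.\ that the two pullback classes intersect properly with no excess Tor, so that no correction terms from higher Tor sheaves appear. This is where one must quote the geometric regularity properties of the evaluation maps on $\overline{\mathcal M}_{0,3}(X,d)$ for a (flag-)homogeneous target — the rational-connectedness and Cohen--Macaulayness results behind \cite[Theorem 4.1]{BCMP11} and the analogous statements in \cite{BuMi}. Everything after that (the projection formula and the cohomological-triviality substitution) is formal. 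Since the proposition is literally cited as ``Corollary 4.2 of \cite{BCMP11}'', in the paper itself one would simply record this three-line derivation from \cite[Theorem 4.1(b)]{BCMP11} rather than reprove the underlying geometry.
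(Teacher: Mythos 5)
Your proposal is correct and follows essentially the same route the paper relies on: the paper quotes this statement as Corollary 4.2 of \cite{BCMP11}, obtained as a direct consequence of the cohomological triviality of $\mathrm{ev}_3\colon M_d(X^\lambda,X_\eta)\to\Gamma_d(X^\lambda,X_\eta)$ from \cite[Theorem 4.1(b)]{BCMP11}, which is exactly your argument (sheaf identity for the Gromov--Witten variety, projection formula, then $\mathrm{ev}_3{}_*[\mathcal{O}_{M_d}]=[\mathcal{O}_{\Gamma_d}]$). The Tor-independence step you flag is indeed the nontrivial geometric input, and it is handled in \cite{BuMi,BCMP11} by the transversality of the opposite flags defining $X^\lambda$ and $X_\eta$, so nothing essential is missing.
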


 \subsection{Main propositions} In this subsection, we provide concrete descriptions of relevant projected Gromov-Witten varieties, which play key roles in our direct proof of Seidel representation as well as  in the alternative proof of quantum Pieri rule.


\begin{lemma}\label{mainlemmapieri}
  Let $\eta\in \mathcal{P}_{k,n}$, $1\leq i\leq n-k$ and  $1\leq d<n-k$. Let $V_k\in Gr(k, n)$ satisfy
   $\dim V_k \cap (F^{\rm opp}_{d+\eta_{k-d+1}}+F_{n-k-i+1})\geq 1$ and   for all  $d< j\leq k$,
  \begin{align*}
   \dim V_k\cap F^{\rm opp}_{j+\eta_{k-j+1}}\geq j-d, \quad
  \dim V_k \cap (F^{\rm opp}_{j+\eta_{k-j+1}}+F_{n-k-i+1})\geq j-d+1.
\end{align*}
Then there exists   $V_{k+d}\in Gr(k+d, n)$ with $V_k\leq V_{k+d}$ that satisfies
   \begin{align*}
  \dim V_{k+d}\cap F_{n-k-i+1}\geq 1,\quad \dim V_{k+d}\cap F^{\rm opp}_{s+\eta_{k-s+1}}\geq s, \,\, \forall\,\,1\leq s\leq k.
   \end{align*}\end{lemma}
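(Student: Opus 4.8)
The plan is to construct the desired $(k+d)$-dimensional subspace $V_{k+d}$ explicitly by adjoining to $V_k$ a single well-chosen vector together with a nested family of subspaces coming from the opposite flag, and then to verify the stated dimension inequalities one at a time. First I would use the hypothesis $\dim V_k\cap(F^{\rm opp}_{d+\eta_{k-d+1}}+F_{n-k-i+1})\geq 1$ to produce a vector $v\in V_k$ of the form $v = v' + w$ with $v'\in F^{\rm opp}_{d+\eta_{k-d+1}}$ and $w\in F_{n-k-i+1}$. The idea is that $w = v - v'$ is the vector we want to ``push in'': set $V_{k+d} := V_k + \langle v'\rangle + (\text{a }(d-1)\text{-dimensional piece of }F^{\rm opp}_{\bullet})$, arranged so that $\dim V_{k+d} = k+d$ and $w\in V_{k+d}$, which immediately gives $\dim V_{k+d}\cap F_{n-k-i+1}\geq 1$ (unless $w=0$, a degenerate case to be handled separately by noting $v$ itself then lies in $F^{\rm opp}$).

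The key mechanism is the following: the inequalities $\dim V_k\cap F^{\rm opp}_{j+\eta_{k-j+1}}\geq j-d$ for $d<j\leq k$ say that $V_k$ already meets the opposite flag ``deficiently by $d$''; the stronger inequalities $\dim V_k\cap(F^{\rm opp}_{j+\eta_{k-j+1}}+F_{n-k-i+1})\geq j-d+1$ say that after adding $F_{n-k-i+1}$ the deficiency drops to $d-1$. So the second step is to choose inductively, for $j = d+1,\dots,k$, vectors $u_j$ witnessing these intersections and to show that $V_{k+d} := V_k + \langle v', u_{?}, \dots\rangle$ can be taken inside $F^{\rm opp}_{k+\eta_1}$-type spaces in a way compatible with the whole chain. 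Concretely I expect to build a strictly increasing sequence of indices and to add exactly $d$ new vectors to $V_k$, each lying in the appropriate $F^{\rm opp}$-term, so that for each $s$ with $1\le s\le k$ the count of basis vectors of $V_{k+d}$ lying in $F^{\rm opp}_{s+\eta_{k-s+1}}$ is at least $s$. The bookkeeping here is essentially a linear-algebra induction: at stage $j$ one has a $(k-j+\text{something})$-dimensional controlled intersection and one shows adding the next vector raises it correctly, using that the flag is a filtration so intersections with larger flag terms only grow.

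The third step is to assemble the pieces: verify $\dim V_{k+d} = k+d$ exactly (the $d$ adjoined vectors are independent modulo $V_k$ — this uses genericity of the filtration indices, or rather that $v'$ and the $u_j$'s can be chosen in an increasing flag so independence is automatic), verify $V_k\le V_{k+d}$ (immediate by construction), verify $w\in V_{k+d}$ so that the $F_{n-k-i+1}$-condition holds, and finally verify $\dim V_{k+d}\cap F^{\rm opp}_{s+\eta_{k-s+1}}\geq s$ for all $1\le s\le k$ by tallying how many of the adjoined vectors and how much of $V_k\cap F^{\rm opp}_\bullet$ sit inside each flag term. For small $s\leq d$ this comes purely from the $d$ new vectors (chosen in $F^{\rm opp}_{d+\eta_{k-d+1}}\subseteq F^{\rm opp}_{s+\eta_{k-s+1}}$ once the indices are arranged), and for $s>d$ it combines the $\geq s-d$ from $V_k$ with the $d$ new ones.

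The main obstacle I anticipate is the inductive construction in the second step: one must choose the $d$ new vectors simultaneously compatibly with all $k-d$ of the strengthened inequalities and with the exactness of $\dim V_{k+d}=k+d$, and the naive greedy choice may overshoot the dimension or fail to lie in the smallest required flag term. I expect the right way to control this is to pass to a basis of $V_k$ adapted to the opposite flag (a ``staircase'' basis), read off from the hypotheses exactly which flag terms acquire an extra dimension when $F_{n-k-i+1}$ is adjoined, and then define $V_{k+d}$ by specifying which staircase positions to fill in — reducing the whole lemma to a finite combinatorial check on jump indices analogous to the one relating $I_\lambda$ and $I_{\lambda\uparrow 1}$ used elsewhere in the paper.
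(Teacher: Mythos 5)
Your opening move coincides with the paper's main case: extract $v=v'+w\in V_k$ with $v'\in F^{\rm opp}_{d+\eta_{k-d+1}}$ and $w\in F_{n-k-i+1}$, adjoin $v'$ so that $w=v-v'$ lands in $V_{k+d}$ for free, and fill up with a nested family of vectors from the opposite flag. But there are two genuine gaps. First, your ``degenerate case'' $w=0$ is not handled by ``noting $v$ itself then lies in $F^{\rm opp}$'': if every nonzero vector of $V_k\cap(F^{\rm opp}_{j+\eta_{k-j+1}}+F_{n-k-i+1})$ has zero $F$-component (this can happen for all $j$, or for all $j$ below some threshold $m$), then nothing in your construction produces a nonzero vector of $V_{k+d}\cap F_{n-k-i+1}$; you are forced to spend one of the $d$ new dimensions on some $v_0\in F_{n-k-i+1}$, and then your count $(s-d)+d\geq s$ for $s>d$ breaks, because only $d-1$ new vectors lie in the opposite flag. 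The repair is precisely the dichotomy the paper organizes its proof around: when no ``mixed'' vector exists at level $s$ (the paper's $S_s=\emptyset$), one has $V_k\cap(F^{\rm opp}_{s+\eta_{k-s+1}}+F_{n-k-i+1})=V_k\cap F^{\rm opp}_{s+\eta_{k-s+1}}$, so the second hypothesis upgrades to the pure bound $\dim V_k\cap F^{\rm opp}_{s+\eta_{k-s+1}}\geq s-d+1$ and supplies the missing $+1$; and when mixed vectors do exist, one must take one at the \emph{minimal} level $m$ (not merely level $d$) so that its $F^{\rm opp}$-part helps at all intermediate levels. Your sketch never states this upgrade, yet it is the actual content of the two families of hypotheses.

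Second, even in the non-degenerate case, the assertion that the $d$ adjoined vectors are ``independent modulo $V_k$ \dots automatic'' is false: $v'$ or one of the $u_j$ may already lie in $V_k$ plus the span of the previously chosen vectors (e.g.\ when $F^{\rm opp}_{j+\eta_{k-j+1}}\subseteq V_k+\langle u_1,\dots,u_{j-1}\rangle$), in which case $\dim(V_k+W)<k+d$ and the count for $\dim V_{k+d}\cap F^{\rm opp}_{s+\eta_{k-s+1}}$ loses a dimension that the bound $\geq s-d$ coming from $V_k$ does not recover, since the absorbed vector may be one of those already counted there. The paper handles this by adjoining flag vectors $v_1,\dots,v_r$ up to an index $r$ that may be as large as $k$, chosen so that $\dim(V_k+H_r)=k+d$ exactly, and then re-proving the inequality for $s>r$ from the upgraded pure-intersection bounds; the resulting bookkeeping, with subcases governed by the relative position of $r$ and the minimal mixed level $m$, is the bulk of the argument and is exactly what your sketch defers (the proposed ``staircase basis'' reduction is plausible but not carried out). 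A small further slip: for $s<d$ one has $F^{\rm opp}_{s+\eta_{k-s+1}}\subseteq F^{\rm opp}_{d+\eta_{k-d+1}}$, the reverse of the inclusion you wrote, so vectors chosen merely in $F^{\rm opp}_{d+\eta_{k-d+1}}$ do not automatically contribute at the levels $s<d$; they must be taken inside the nested smaller terms.
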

\begin{proof}
We consider the set  $R:=\bigcup_{d\leq j\leq k}S_j$ with each $S_j$ being defined by
 \begin{align*} S_j:=\{v\mid v=w+v_0 \mbox{ for some } w\in F^{\rm opp}_{j+\eta_{k-j+1}} \mbox{ and } v_0\in F_{n-k-i+1}\setminus\{0\};
    v\in V_k\setminus\{0\}
   \}.
   \end{align*}

Assume $R=\emptyset$ first.  Let $\{v_1,\cdots,v_k\}$ be a set of linearly independent vectors satisfying $v_s\in F^{\rm opp}_{s+\eta_{k-s+1}}$ for all $1\leq s\leq k$.
 Take   $v_0\in F_{n-k-i+1}\setminus\{0\}$. Clearly, there is $d-1\leq r\leq k$ such that
  $V_{k+d}:=V_k+H_r$ is of dimension $k+d$, where $H_r:=\mathbb{C}v_0+\mathbb{C}v_1+\cdots +\mathbb{C}v_r$.
  By definition, we have $v_0\in V_{k+d}\cap   F_{n-k+1-i}\setminus\{0\}$ and
  $\dim V_{k+d}\cap F^{\rm opp}_{s+\eta_{k-s+1}} \geq \dim H_{r}\cap F^{\rm opp}_{s+\eta_{k-s+1}}\geq  s$   for  $ 1\leq s\leq r$.
Note $V_{k+d}=V_k+\tilde H_d$ for some $\tilde H_d\subset H_r$ with $V_k\cap \tilde H_d=\emptyset.$
For $k\geq s>r\geq d-1$, it follows from $S_s=\emptyset$ that
$\dim V_{k}\cap F^{\rm opp}_{s+\eta_{k-s+1}}=\dim V_{k}\cap (F^{\rm opp}_{s+\eta_{k-s+1}}+F_{n-k+1-i})\geq s-d+1$;
hence $\dim V_{k+d}\cap  F^{\rm opp}_{s+\eta_{k-s+1}} \!\!\geq \dim V_{k}\cap  F^{\rm opp}_{s+\eta_{k-s+1}} +
 \dim \tilde H_{d}\cap  F^{\rm opp}_{s+\eta_{k-s+1}}\!\! \geq s-d+1 + (d-1)\!=s.$

 Now we assume   $R\neq\emptyset$, and set $m:=\min\{j\mid S_j\neq \emptyset; d\leq j\leq k\}$. Then
 there exists $v=w+v_0$ with $w\in F^{\rm opp}_{m+\eta_{k-m+1}}$, $v_0\in F_{n-k+1-i}\setminus\{0\}$ and $v\in V_k\setminus\{0\}$.
   Let $\{v_1,\cdots,v_k\}$ be a set of linearly independent vectors satisfying $v_s\in F^{\rm opp}_{s+\eta_{k-s+1}}$ for all $1\leq s\leq k$ and
   $w\in   \mathbb{C}v_1+\cdots +\mathbb{C}v_m$. Set $\hat H_0:=\mathbb{C}w$ and $\hat H_j:=\mathbb{C}w+\mathbb{C}v_1+\cdots +\mathbb{C}v_j$ for $j>0$.
 Once again we have  $V_{k+d}:=V_k+\hat H_r\in Gr(k+d, n)$
for some $d-1\leq r\leq k$, and  $\dim V_{k+d}\cap F^{\rm opp}_{s+\eta_{k-s+1}} \geq \dim \hat H_{r}\cap F^{\rm opp}_{s+\eta_{k-s+1}}\geq  s$
for  $ 1\leq s\leq r$.
Moveover, $0\neq v_0=v-w\in V_{k}+\hat H_{r}=V_{k+d}$, implying $\dim V_{k+d}\cap F_{n-k+1-i}\geq 1$.

It remains to discuss   $r<s\leq k$.
Let $t:=\dim \hat H_r$. Clearly,  $r\leq t\leq r+1$.
  If $m\leq r$,   by the same arguments for $R=\emptyset$, we conclude $\dim V_{k+d}\cap  F^{\rm opp}_{s+\eta_{k-s+1}}\geq s$ for $r<s\leq k$.
  If  $r<m$, then we still have
    \begin{align*}
    &\dim V_{k+d}\cap F^{\rm opp}_{s+\eta_{k-s+1}}\\
     \geq& \dim V_k\cap F^{\rm opp}_{s+\eta_{k-s+1}} + \dim \hat H_r\cap F^{\rm opp}_{s+\eta_{k-s+1}}-\dim V_{k}\cap \hat H_r\\
   \geq & \begin{cases}
      (s-d+1)+r-(k+t-(k+d))=s+1+r-t\geq s, &\mbox{for } r<s<m,\\
      (s-d)+t-(k+t-(k+d))=s, &\mbox{for }r<m\leq s\leq k.
  \end{cases}
\end{align*}
Here $\dim V_k\cap F^{\rm opp}_{s+\eta_{k-s+1}}\geq s-d+1$  follows from $S_s=\emptyset$ for all $r\leq s<m$.
\end{proof}

\begin{lemma}\label{genZd}
Let  $\eta\in \mathcal{P}_{k, n}$ and $1\leq i\leq n-k$. For $1\leq d<\min\{k+1, n-k\}$, 
  \begin{align*}
      Z_d(X^i, X_\eta)&= \{V_{k-d}\leq V_k \leq V_{k+d}\mid  \dim V_{k+d}\cap  F^{\rm opp}_{s+\eta_{k-s+1}} \geq s, \mbox{ for   } 1\leq s\leq k; \\
                    \dim & V_{k+d}\cap   F_{n-k+1-i} \geq 1;\,\, \dim V_{k-d}\cap F^{\rm opp}_{j+\eta_{k-j+1}} \geq j-d, \mbox{ for   } d+1\leq j\leq k  \}.
                         \end{align*}
\end{lemma}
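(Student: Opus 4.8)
The plan is to unwind formula \eqref{ZdY1Y2}, which presents $Z_d(X^i,X_\eta)$ as $\pi_{\rm F}^{-1}\big(\pi_{\rm F}\pi_{\rm G}^{-1}(X^i)\cap\pi_{\rm F}\pi_{\rm G}^{-1}(X_\eta)\big)$, so that it suffices to compute the two images $\pi_{\rm F}\pi_{\rm G}^{-1}(X^i)$ and $\pi_{\rm F}\pi_{\rm G}^{-1}(X_\eta)$ inside $F\ell_{k-d,k+d;n}$, intersect them, and then adjoin an arbitrary middle plane. First I would record the defining conditions: $X^i=\{V_k\mid\dim V_k\cap F_{n-k+1-i}\ge1\}$ (the remaining inequalities defining $(i,0,\dots,0)$ being automatic by dimension count), and $X_\eta=\{V_k\mid\dim V_k\cap F^{\rm opp}_{s+\eta_{k-s+1}}\ge s,\ 1\le s\le k\}$. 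Since $\pi_{\rm G}$ forgets $V_{k-d}$ and $V_{k+d}$ while $\pi_{\rm F}$ forgets $V_k$, a pair $V_{k-d}\le V_{k+d}$ lies in $\pi_{\rm F}\pi_{\rm G}^{-1}(Y)$ exactly when there is a $k$-plane $V_k$ with $V_{k-d}\le V_k\le V_{k+d}$ and $V_k\in Y$.

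For $Y=X^i$ this is straightforward: one inclusion is monotonicity of $V\mapsto\dim V\cap F_{n-k+1-i}$, and for the other, given $0\ne v\in V_{k+d}\cap F_{n-k+1-i}$ one has $\dim(V_{k-d}+\mathbb{C}v)\le k-d+1\le k<k+d$ (using $d\ge1$), so $V_{k-d}+\mathbb{C}v$ extends to a $k$-plane between $V_{k-d}$ and $V_{k+d}$ lying in $X^i$. Hence $\pi_{\rm F}\pi_{\rm G}^{-1}(X^i)=\{V_{k-d}\le V_{k+d}\mid\dim V_{k+d}\cap F_{n-k+1-i}\ge1\}$.

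For $Y=X_\eta$, put $G_s:=F^{\rm opp}_{s+\eta_{k-s+1}}$. If $V_k\in X_\eta$ sits between $V_{k-d}$ and $V_{k+d}$, then $\dim V_{k+d}\cap G_s\ge\dim V_k\cap G_s\ge s$, and since $\dim(V_k/V_{k-d})=d$ also $\dim V_{k-d}\cap G_j\ge\dim V_k\cap G_j-d\ge j-d$ for $j>d$. The converse is the step I expect to be the main obstacle. Given $V_{k-d}\le V_{k+d}$ with $\dim V_{k+d}\cap G_s\ge s$ for all $s$ and $\dim V_{k-d}\cap G_j\ge j-d$ for $d<j\le k$, I would pass to the $2d$-dimensional quotient $\bar V:=V_{k+d}/V_{k-d}$ with projection $p$: writing $a_s:=\dim V_{k-d}\cap G_s$ and $\bar G_s:=p(G_s\cap V_{k+d})$, a short computation gives $\dim\bar G_s=\dim(G_s\cap V_{k+d})-a_s\ge s-a_s$, and for any $H\le V_{k+d}$ complementary to $V_{k-d}$ one has $\dim\big((V_{k-d}\oplus H)\cap G_s\big)=a_s+\dim(p(H)\cap\bar G_s)$. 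So it suffices to find a $d$-dimensional $\bar H\le\bar V$ with $\dim(\bar H\cap\bar G_s)\ge s-a_s$ for all $s$; note $s-a_s\le d$ (from $a_s\ge s-d$ when $s>d$, and $a_s\ge0$ when $s\le d$). Such an $\bar H$ is built greedily: pick linearly independent $h_1,\dots,h_d$ with $h_m\in\bar G_{\sigma(m)}$, where $\sigma(m):=\min\{t:t-a_t\ge m\}$ (finite whenever $m\le\max_t(t-a_t)\le d$, and $h_m$ taken arbitrary for larger $m$); the choice at step $m$ is possible because $\dim\bar G_{\sigma(m)}\ge\sigma(m)-a_{\sigma(m)}\ge m$, and since $\#\{m:\sigma(m)\le s\}=\max_{t\le s}(t-a_t)\ge s-a_s$ we get $\dim(\bar H\cap\bar G_s)\ge s-a_s$. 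Lifting $\bar H$ to $H\le V_{k+d}$ and setting $V_k:=V_{k-d}\oplus H$ gives $\dim V_k\cap G_s=a_s+\dim(\bar H\cap\bar G_s)\ge s$, i.e. $V_k\in X_\eta$. This yields $\pi_{\rm F}\pi_{\rm G}^{-1}(X_\eta)=\{V_{k-d}\le V_{k+d}\mid\dim V_{k+d}\cap G_s\ge s\ (1\le s\le k),\ \dim V_{k-d}\cap G_j\ge j-d\ (d<j\le k)\}$.

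Intersecting the two descriptions and applying $\pi_{\rm F}^{-1}$ — which merely inserts an arbitrary $V_k$ with $V_{k-d}\le V_k\le V_{k+d}$ — produces exactly the set in the statement. The only genuine work is the greedy construction of $\bar H$ in the quotient; the bounds $1\le d<\min\{k+1,n-k\}$ enter only to guarantee $k-d\ge0$, $k+d\le n$, and the padding room $k-d+1\le k<k+d$ used in the $X^i$ step.
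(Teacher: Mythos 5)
Your proposal is correct and follows essentially the same route as the paper: both reduce the lemma to computing $\pi_{\rm F}\pi_{\rm G}^{-1}(X^i)$ and $\pi_{\rm F}\pi_{\rm G}^{-1}(X_\eta)$ and then intersecting, with the only real work being the construction of an intermediate $k$-plane $\bar V_k\in X_\eta$ between a given pair $V_{k-d}\leq V_{k+d}$ satisfying the stated dimension conditions. The paper carries out this step by inductively building a flag $W_1\leq\cdots\leq W_k$ inside $V_{k+d}$ (preferring vectors of $V_{k-d}\cap G_{r+1}$ at each stage so that $W_k\supseteq V_{k-d}$), while you perform the equivalent greedy construction in the $2d$-dimensional quotient $V_{k+d}/V_{k-d}$; this is only a difference in bookkeeping, and your dimension identity and counting argument there are sound.
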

\begin{proof}
By definition, we have  $\pi^{-1}_{\rm F}\pi_{\rm F}\pi_{\rm G}^{-1}(X_\eta)=\mbox{RHS}1$ with
       \begin{align*}
       \mbox{RHS}1
       & := \{V_{k-d}\leq V_k \leq V_{k+d}\mid   \exists \bar{V_k} \mbox{ such that }  V_{k-d}\leq \bar{V_k}\leq V_{k+d}, \\
                  &{}\hspace{4cm}  \dim \bar{V_k}\cap F^{\rm opp}_{s+\eta_{k-s+1}} \geq s, \mbox{ for  } 1\leq s\leq k \}.
        \end{align*}
Clearly, we have $\mbox{RHS}1\subseteq \mbox{RHS}2$, where
   \begin{align*}
      \mbox{RHS}2
       &  : {=} \{V_{k-d}\leq V_k \leq V_{k+d}\mid  \dim V_{k+d}\cap  F^{\rm opp}_{s+\eta_{k-s+1}} \geq s, \mbox{ for   } 1\leq s\leq k, \\
                     &{}\hspace{3.6cm}  \dim V_{k-d}\cap F^{\rm opp}_{j+\eta_{k-j+1}} \geq j-d, \mbox{ for   } d+1\leq j\leq k  \}.
    \end{align*}
     To show $\mbox{RHS}1\supseteq \mbox{RHS}2$, we denote $G_s:=F^{\rm opp}_{s+\eta_{k-s+1}}$ for all $1\leq s\leq k$. Since $\dim V_{k+d}\cap G_s\geq s$ for $1\leq s\leq d$, we can take a partial flag $W_1\leqslant W_2\leqslant \cdots \leqslant W_d$ with $W_s\subset V_{k+d}\cap G_s$ for all $1\leq s\leq d$. Clearly $\dim W_d\cap V_{k-d}\geq 0=d-d$. Let $d\leq r<k$. Assume that we have obtained partial flag $W_d\leqslant W_{d+1}\leqslant \cdots \leqslant W_r$ that satisfies
   $W_j\subset V_{k+d}\cap G_j$ and $\dim W_j\cap V_{k-d}\geq j-d$ for all $d\leq j\leq r$. Notice  $\dim V_{k-d}\cap G_{r+1}\geq r+1-d$ and $\dim V_{k+d}\cap G_{r+1}\geq r+1$. If there exists $v\in \big(V_{k-d}\cap G_{r+1}\big)\setminus W_r$, then we take $W_{r+1}=\mathbb{C}v+W_r$; otherwise, we take any $w\in \big(V_{k+d}\cap G_{r+1}\big)\setminus W_r$ and set $W_{r+1}=\mathbb{C}w+W_r$. In either cases, we have
  $W_{r+1}\subset V_{k+d}\cap G_{r+1}$ and  $\dim W_{r+1}\cap V_{k-d}\geq r+1-d$. Thus by induction we obtain a partial flag $W_1\leqslant W_2\leqslant \cdots \leqslant W_k$ with the properties $W_k\subset V_{k+d}\cap G_k\subset V_{k+d}$,
      $\dim W_k \cap G_s\geq \dim W_s\cap G_s=\dim W_s=s$ for all $1\leq s\leq k$, and $\dim W_k\cap V_{k-d}\geq k-d$. The last inequality implies $V_{k-d}\subset W_k$.  Therefore, for any element $V_{k-d}\leqslant V_k\leqslant V_{k+d}$ satisfying the constraints in RHS$2$, we can find $W_k$ that satisfies the constraint in RHS$1$. Hence,  $\mbox{RHS}2\subseteq \mbox{RHS}1$.
     \begin{align*}
       \pi^{-1}_{\rm F}\pi_{\rm F}\pi_{\rm G}^{-1}(X^{i})&= \{V_{k-d}\leq V_k \leq V_{k+d}\mid \exists \bar{V_k} \mbox{ such that } V_{k-1}\leq \bar{V_k}\leq V_{k+1}, \bar{V_k}\in X^{i}\}\\
              &= \{V_{k-d}\leq V_k \leq V_{k+d}\mid \dim V_{k+d}\cap F_{n-k+1-i} \geq 1\}.
    \end{align*}
   Since $Z_d(X^i, X_\eta)= \pi^{-1}_{\rm F}\pi_{\rm F}\pi_{\rm G}^{-1}(X^{i})\cap \pi^{-1}_{\rm F}\pi_{\rm F}\pi_{\rm G}^{-1}(X_\eta)$, the statement follows.
   \end{proof}

 Set $\tilde F_\bullet:=g\cdot F^{\rm opp}_\bullet$, where    the permutation matrix $g$ is defined by
  \begin{equation}  \label{matrix g}
     g:=\left(
         \begin{array}{cccccc}
           0 & 1 & 0 & \cdots & \cdots & 0 \\
           0 & 0 & 1 & \ddots &  & \vdots \\
           \vdots &  & \ddots & \ddots & \ddots & \vdots \\
           \vdots &  & & \ddots & \ddots & 0 \\
           0& 0 & \cdots & \cdots  & 0 & 1 \\
           1 & 0 & \cdots & \cdots & 0 & 0 \\
         \end{array}
       \right).
  \end{equation}
Namely
 $\tilde F_1=\mathbb{C}e_1$, and $\tilde F_j$ is spanned by $\{e_1, e_n, \cdots, e_{n-j+2}\}$ for $2\leq j\leq n$.

\begin{prop}\label{deggeneralGamma}
Let   $\eta\in \mathcal{P}_{k, n}$ and $1\leq i\leq n-k$. For $1\leq d<\min\{k+1, n-k\}$, 
   \begin{align*}
      \Gamma_d(X^i, X_\eta)
       &  {=} \{V_k \mid  \dim V_{k}\cap  F^{\rm opp}_{j+\eta_{k-j+1}} \geq j-d, \mbox{ for   } d+1\leq j\leq k, \\
                     &{}\qquad \quad  \dim V_{k}\cap (F^{\rm opp}_{s+\eta_{k-s+1}}+F_{n-k+1-i}) \geq s-d+1, \mbox{ for   } d\leq s\leq k  \}.
    \end{align*}
    In particular, $\Gamma_d(X^{n-k}, X_{\eta})$ is a Schubert variety given by
     $$\Gamma_d(X^{n-k}, X_{\eta})=X_{\tilde \eta}(\tilde F_\bullet)= g\cdot X_{\tilde \eta},
   $$
   {where }$ \tilde \eta_j=
      \min\{\eta_{j-d+1}+d, n-k\}$ for $1\leq j\leq k$, with $\eta_i:=n$ for $i\leq 0$.

\end{prop}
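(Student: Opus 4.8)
The plan is to derive the description of $\Gamma_d(X^i,X_\eta)$ by pushing forward the description of $Z_d(X^i,X_\eta)$ from Lemma \ref{genZd} along $\pi_{\rm G}$, and then to specialize to $i=n-k$. Recall from \eqref{GammadY1Y2} that $\Gamma_d(X^i,X_\eta)=\pi_{\rm G}(Z_d(X^i,X_\eta))$, and $\pi_{\rm G}$ forgets the middle term $V_k$ of the triple $V_{k-d}\leq V_k\leq V_{k+d}$ and records $V_k$ as the image — wait, more precisely $\pi_{\rm G}\colon Z_d=F\ell_{k-d,k,k+d;n}\to X=Gr(k,n)$ is the projection onto the $V_k$-factor. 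So a point $V_k$ lies in $\Gamma_d(X^i,X_\eta)$ if and only if there exist $V_{k-d}\leq V_k\leq V_{k+d}$ satisfying the constraints of Lemma \ref{genZd}: namely $\dim V_{k+d}\cap F^{\rm opp}_{s+\eta_{k-s+1}}\geq s$ for $1\leq s\leq k$, $\dim V_{k+d}\cap F_{n-k+1-i}\geq 1$, and $\dim V_{k-d}\cap F^{\rm opp}_{j+\eta_{k-j+1}}\geq j-d$ for $d+1\leq j\leq k$. The ``only if'' direction (that any such $V_k$ satisfies the stated inequalities) is the easy direction: given $V_{k-d}$ and $V_{k+d}$, the inequality $\dim V_k\cap F^{\rm opp}_{j+\eta_{k-j+1}}\geq \dim V_{k-d}\cap F^{\rm opp}_{j+\eta_{k-j+1}}\geq j-d$ is immediate, and the inequality involving $F^{\rm opp}_{s+\eta_{k-s+1}}+F_{n-k+1-i}$ follows by combining $\dim V_{k+d}\cap F^{\rm opp}_{s+\eta_{k-s+1}}\geq s$ with $\dim V_{k+d}\cap F_{n-k+1-i}\geq 1$ and $\dim V_{k+d}\leq k+d$: inside $V_{k+d}$ an $s$-dimensional and a $1$-dimensional subspace of spaces whose sum has dimension at most $k+d$ must overlap in dimension $\geq s+1-(k+d-k)=s+1-d$, so already $\dim V_{k+d}\cap(F^{\rm opp}_{s+\eta_{k-s+1}}+F_{n-k+1-i})\geq s-d+1$, and intersecting with $V_k$ and the ambient relations trims this to the claimed bound for $V_k$ — this needs the usual dimension-counting bookkeeping but no new idea.

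The substantive direction is the converse: given $V_k$ satisfying the two displayed inequalities, one must produce $V_{k-d}\leq V_k\leq V_{k+d}$ realizing the Lemma \ref{genZd} constraints. This is exactly what Lemma \ref{mainlemmapieri} supplies on the $V_{k+d}$ side — it takes a $V_k$ with $\dim V_k\cap(F^{\rm opp}_{d+\eta_{k-d+1}}+F_{n-k-i+1})\geq 1$ and the family of inequalities $\dim V_k\cap F^{\rm opp}_{j+\eta_{k-j+1}}\geq j-d$ and $\dim V_k\cap(F^{\rm opp}_{j+\eta_{k-j+1}}+F_{n-k-i+1})\geq j-d+1$ for $d<j\leq k$, and outputs a $V_{k+d}\supseteq V_k$ with $\dim V_{k+d}\cap F_{n-k-i+1}\geq 1$ and $\dim V_{k+d}\cap F^{\rm opp}_{s+\eta_{k-s+1}}\geq s$ for all $1\leq s\leq k$. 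For the $V_{k-d}$ side one argues dually/by hand: from $\dim V_k\cap F^{\rm opp}_{j+\eta_{k-j+1}}\geq j-d$ for $d+1\leq j\leq k$ one builds a partial flag of subspaces of $V_k$ and takes $V_{k-d}$ to be a generic $(k-d)$-dimensional subspace of $V_k$ containing the top of that flag, using that the conditions only ask for $\dim V_{k-d}\cap F^{\rm opp}_{j+\eta_{k-j+1}}\geq j-d$, which for $j=k$ forces containment of that intersection and propagates downward — a routine inductive construction of the same flavour as the RHS$2\subseteq$RHS$1$ argument inside the proof of Lemma \ref{genZd}. I expect the matching of hypotheses between ``$V_k$ satisfies the displayed $\Gamma_d$ inequalities'' and ``$V_k$ satisfies the hypotheses of Lemma \ref{mainlemmapieri}'' to be the main obstacle, because the index ranges ($s\geq d$ versus $j>d$, and the separate role of the $s=d$ term giving the rank-one condition) must be reconciled carefully; once that bookkeeping is done the set-theoretic equality drops out.

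Finally, specialize to $i=n-k$, so $F_{n-k+1-i}=F_1=\mathbb{C}e_1$. Then the condition $\dim V_k\cap(F^{\rm opp}_{s+\eta_{k-s+1}}+\mathbb{C}e_1)\geq s-d+1$ together with $\dim V_k\cap F^{\rm opp}_{j+\eta_{k-j+1}}\geq j-d$ should be repackaged as a single chain of Schubert-type incidence conditions with respect to the flag $\tilde F_\bullet=g\cdot F^{\rm opp}_\bullet$ whose terms are $\tilde F_1=\mathbb{C}e_1$ and $\tilde F_j=\mathrm{span}(e_1,e_n,\dots,e_{n-j+2})=\mathbb{C}e_1+F^{\rm opp}_{j-1}$ for $j\geq 2$. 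Concretely, $F^{\rm opp}_{s+\eta_{k-s+1}}+\mathbb{C}e_1=\tilde F_{s+\eta_{k-s+1}+1}$ when $s+\eta_{k-s+1}+1\leq n$ and equals $\mathbb{C}^n$ otherwise; re-indexing $s\mapsto j=s+1$ and reading off the jump sequence, the conditions become $\dim V_k\cap \tilde F_{(j)+\tilde\eta_{k-j+1}}\geq j$ for $1\leq j\leq k$ with $\tilde\eta_j=\min\{\eta_{j-d+1}+d,\,n-k\}$ (the truncation by $n-k$ reflecting that an incidence condition at a subspace containing the whole of $F^{\rm opp}$ of the relevant size is vacuous, i.e.\ the partition entry caps at $n-k$), and the convention $\eta_i=n$ for $i\leq0$ handles the cases $j\leq d$. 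This identifies $\Gamma_d(X^{n-k},X_\eta)$ with the opposite Schubert variety $X_{\tilde\eta}(\tilde F_\bullet)=g\cdot X_{\tilde\eta}$. The only care needed here is to confirm that the family of inequalities indexed by $d\leq s\leq k$ (with the $F^{\rm opp}$-only conditions for the smaller indices) assembles into precisely the $k$ incidence conditions defining a single Schubert variety, with no redundancy and no missing condition — a finite check on the jump sequence.
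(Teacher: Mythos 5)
Your overall route is the paper's: describe $Z_d(X^i,X_\eta)$ via Lemma \ref{genZd}, push forward along $\pi_{\rm G}$, use Lemma \ref{mainlemmapieri} to produce $V_{k+d}$ for the converse inclusion, build $V_{k-d}$ from a nested flag inside $V_k$, and at $i=n-k$ rewrite $F^{\rm opp}_{s+\eta_{k-s+1}}+F_1=\tilde F_{s+\eta_{k-s+1}+1}$ to identify the result with $X_{\tilde\eta}(\tilde F_\bullet)=g\cdot X_{\tilde\eta}$. The hypothesis-matching you worry about is in fact exact (the $s=d$ member of the second family is precisely the rank-one hypothesis of Lemma \ref{mainlemmapieri}), your $V_{k-d}$ construction is the paper's, and the final jump-sequence check is the same finite verification the paper leaves to the definitions (the paper also observes that at $i=n-k$ the $F^{\rm opp}$-only conditions are redundant, being implied by the $+F_1$ conditions since $F_1$ is a line).

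The one step that does not work as written is the ``easy'' inclusion $\Gamma_d(X^i,X_\eta)\subseteq\mathrm{RHS}$. You assert $\dim V_{k+d}\cap\bigl(F^{\rm opp}_{s+\eta_{k-s+1}}+F_{n-k+1-i}\bigr)\geq s-d+1$ and then ``intersect with $V_k$''; but $V_k$ has codimension $d$ in $V_{k+d}$, so that bound would only give $s-2d+1$ for $V_k$, which is $d$ short of the claimed inequality. What has to be proved at the $V_{k+d}$ level is the bound $s+1$, and this cannot come from a general-position dimension count: from $\dim V\cap A\geq s$ and $\dim V\cap B\geq 1$ one cannot conclude $\dim V\cap(A+B)\geq s+1$ when $A\cap B\neq 0$, since $V\cap B\subseteq V\cap A$ is then possible. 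The paper obtains $s+1$ from the specific structure of the two opposite coordinate flags: either $F^{\rm opp}_{s+\eta_{k-s+1}}\cap F_{n-k+1-i}=0$, in which case the two intersections inside $V_{k+d}$ are in direct sum and their dimensions add, or $F^{\rm opp}_{s+\eta_{k-s+1}}+F_{n-k+1-i}=\mathbb{C}^n$, in which case $\dim V_{k+d}\cap(\cdot)=k+d\geq s+1$; only then does the codimension-$d$ passage to $V_k$ yield $s-d+1$. This dichotomy is the small but essential idea missing from your sketch; with it inserted, your proposal coincides with the paper's proof.
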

\begin{proof}
       Denote by $\mbox{RHS}$  the right hand side of the equality in the statement.

Let $V_k\in \Gamma_d(X^i, X_\eta)=\pi_G(Z_d(X^i, X_\eta))$.
 Then there exists $V_{k-d}\leqslant V_k\leqslant V_{k+d}$ in $Z_d(X^i, X_\eta)$. By Lemma \ref{genZd}, we have
   have
 $$ \dim V_k\cap F^{\rm opp}_{j+\eta_{k+1-j}}\geq \dim V_{k-d}\cap F^{\rm opp}_{j+\eta_{k+1-j}}\geq j-d, \quad d+1\leq j\leq k $$
By the definition of $F_\bullet$ and $F_\bullet^{\rm opp}$,  either $F^{\rm opp}_{j+\eta_{k-j+1}}\cap F_{n-k-i+1}=0$
  or  $F^{\rm opp}_{j+\eta_{k-j+1}}+F_{n-k-i+1}=\mathbb{C}^n$ must hold. For any $1\leq s\leq k$, in the former case, we have
  $$\dim V_{k+d}\cap (F^{\rm opp}_{s+\eta_{k-j+1}}+F_{n-k-i+1})\geq V_{k+d}\cap F^{\rm opp}_{s+\eta_{k-j+1}}+\dim V_{k+d}\cap F_{n-k-i+1}\geq s+1;$$
in the latter case, $\dim V_{k+d} \cap (F^{\rm opp}_{s+\eta_{k-s+1}}+F_{n-k-i+1})=\dim V_{k+d}\geq s+1.$
 Therefore   $\dim V_{k} \cap (F^{\rm opp}_{s+\eta_{k-s+1}}+F_{n-k-i+1}) \geq s+1-d$ in either cases. Thus $\Gamma_d(X^i, X_\eta)\subseteq\mbox{RHS}$.

   Now we take any $V_k$ in RHS.
    By Lemma \ref{mainlemmapieri}, there exists $V_{k+d}\in Gr(k+d, n)$ with $V_k\leqslant V_{k+d}$ that satisfies
     $$\dim V_{k+d}\cap F_{n-k-i+1}\geq 1,\quad \dim V_{k+d}\cap F^{\rm opp}_{s+\eta_{k-j+1}}\geq s, \,\, \forall\,\,1\leq s\leq k.$$
Since  $\dim V_{k}\cap  F^{\rm opp}_{j+\eta_{k-j+1}} \geq j-d$ for $d<j\leq k$,
there exists  a partial flag $V_1\leq \cdots \leq V_{k-d}\leq V_k$ with
   $V_{j-d}\subset V_k\cap F^{\rm opp}_{j+\eta_{k-j+1}}$ for $d<j\leq k$. In particular,
   $\dim V_{k-d}\cap F^{\rm opp}_{j+\eta_{k-j+1}}\geq j-d$ for $d<j\leq k$.
Hence,  the element $V_{k-d}\leqslant V_k\leqslant V_{k+d}$ is in $Z_d(X^i, X_\eta)$. Thus $\Gamma_d(X^i, X_\eta)\supseteq\mbox{RHS}$.

So far we have shown $\Gamma_d(X^i, X_\eta)=\mbox{RHS}$. In particular for $i=n-k$, we have
   \begin{align*}
      \Gamma_d(X^{n-k}, X_\eta)
       &  {=} \{V_k \mid  \dim V_{k}\cap  F^{\rm opp}_{j+\eta_{k-j+1}} \geq j-d, \mbox{ for   } d+1\leq j\leq k, \\
                     &{} \dim V_{k}\cap (F^{\rm opp}_{s+\eta_{k-s+1}}+F_{n-k+1-(n-k)}) \geq s-d+1, \mbox{ for   } d\leq s\leq k  \}\\
        &  {=} \{V_k \mid  \dim V_{k}\cap (F^{\rm opp}_{s+\eta_{k-s+1}}+F_1) \geq s-d+1, \mbox{ for   } d\leq s\leq k  \}\\
        &  {=} \{V_k \mid  \dim V_{k}\cap \tilde F_{s-d+1+(d+\eta_{k-s+1})} \geq s-d+1, \mbox{ for   } d\leq s\leq k  \}.
    \end{align*}
 Hence, the last part follows directly from the definitions of $\tilde F_\bullet$ and $X_{\tilde \eta}(\tilde F_\bullet)$.
\end{proof}

  The curve neighborhood $\Gamma_d(\Omega)$ of a Schubert variety $\Omega$ in a general flag variety
  was studied in  \cite{BuMi00}. Therein it was shown  that
   $\Gamma_d(\Omega)$ is a Schubert variety labeled by the Hecke product of Weyl group elements. In the special case of (co)minuscule Grassmannians,
     $\Gamma_d(\Omega)$ was studied earlier in \cite{BCMPfinite00}.
  For $X=Gr(k, n)$, we have
  \begin{align}\label{GammaSch}
     \Gamma_d(X^\lambda)&=X^{\lambda^d}\textcolor{red}{,}
  \end{align}
   where the partition   $\lambda^d$
   is  obtained by removing the first $d$ rows and
columns from $\lambda$  \cite{BuMi00}. Equivalently, for all $j$,  $\lambda^{j+1}$  is obtained by removing the outer rim of the partition $\lambda^j$; namely for $\lambda^j=(\lambda_1^j,\cdots, \lambda_k^j)$,   $\lambda^{j+1}=(\lambda_2^j-1, \cdots, \lambda^j_{k}-1, 0)$ where $\lambda^j_i-1$ is replaced  by $0$ whenever $\lambda^j_i=0$  \cite[Theorem 2.5]{ShWi}. Here $\lambda^0:=\lambda$.

\begin{cor}\label{compareGamma}
  Let $\mu\in \mathcal{P}_{k, n}$. If $\mu_k>0$, then for any        $d\geq 1$, we have
   $$      \Gamma_d(X^{n-k}, X_{\mu^\vee})=
          \Gamma_{d-1}(g\cdot X_{\mu^\vee\uparrow 1}).
   $$
\end{cor}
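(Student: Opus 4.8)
The plan is to identify each side with a translate by $g$ of a Schubert variety and then match the indexing partitions; the hypothesis $\mu_k>0$ will be precisely what makes the relevant Seidel shift a non-wrap-around one, so that all the combinatorics stays linear. For the left-hand side I would apply Proposition \ref{deggeneralGamma} with $\eta=\mu^\vee$, which gives $\Gamma_d(X^{n-k},X_{\mu^\vee})=g\cdot X_{\widetilde{\mu^\vee}}$. Unwinding $\widetilde{\mu^\vee}_j=\min\{(\mu^\vee)_{j-d+1}+d,\,n-k\}$ with $(\mu^\vee)_l=n-k-\mu_{k+1-l}$ (and $(\mu^\vee)_i=n$ for $i\le0$) yields $\widetilde{\mu^\vee}_j=n-k$ for $1\le j<d$ and $\widetilde{\mu^\vee}_j=\min\{\,n-k-\mu_{k-j+d}+d,\ n-k\,\}$ for $d\le j\le k$.

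For the right-hand side I would first observe that $\Gamma_{d-1}$ commutes with the $GL_n$-action: since the evaluation maps on $\overline{\mathcal M}_{0,3}(X,d-1)$ are $GL_n$-equivariant, $M_{d-1}(g\Omega,X)=g\cdot M_{d-1}(\Omega,X)$ and hence $\Gamma_{d-1}(g\cdot\Omega)=g\cdot\Gamma_{d-1}(\Omega)$ for every subvariety $\Omega\subset X$. Applying this with $C_{w_0}$ in place of $g$ to an opposite Schubert variety $X_\nu=C_{w_0}\cdot X^{\nu^\vee}$, and invoking \eqref{GammaSch}, gives $\Gamma_{d-1}(X_\nu)=C_{w_0}\cdot X^{(\nu^\vee)^{d-1}}=X_{((\nu^\vee)^{d-1})^\vee}$; thus the right-hand side equals $g\cdot X_{(\sigma^{d-1})^\vee}$, where $\sigma:=(\mu^\vee\uparrow1)^\vee$. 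Here $\mu_k>0$ enters: it is equivalent to $(\mu^\vee)_1=n-k-\mu_k<n-k$, so $\mu^\vee\uparrow1$ is given by the first case of Definition \ref{Seidelshifts}, namely $\mu^\vee\uparrow1=(\mu^\vee)+(1,\dots,1)$, and dualizing gives $\sigma=(\mu_1-1,\dots,\mu_k-1)$, which lies in $\mathcal P_{k,n}$ exactly because $\mu_k\ge1$ (equivalently $\sigma=\mu\uparrow(n-1)$ by \eqref{shiftdual}).

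It then remains to prove the partition identity $(\widetilde{\mu^\vee})^\vee=\sigma^{d-1}$, which I would check entrywise. Since $\sigma^{d-1}$ is obtained by removing the first $d-1$ rows and columns of $\sigma$, we have $(\sigma^{d-1})_j=\max\{\mu_{j+d-1}-d,0\}$ for $1\le j\le k-d+1$ and $(\sigma^{d-1})_j=0$ otherwise, while from the formula above $(\widetilde{\mu^\vee})^\vee_j=(n-k)-\widetilde{\mu^\vee}_{k+1-j}$ equals $\max\{\mu_{j+d-1}-d,0\}$ for $1\le j\le k-d+1$ (using $k-(k+1-j)+d=j+d-1$) and equals $0$ for $k-d+2\le j\le k$ (where $\widetilde{\mu^\vee}_{k+1-j}=n-k$); these coincide. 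This proves the corollary in the range $1\le d<\min\{k+1,n-k\}$ where Proposition \ref{deggeneralGamma} applies; for larger $d$ both sides are all of $X$ — then $\sigma^{d-1}$ is the empty partition, so the right-hand side is $X$, and curve neighborhoods of nonempty subvarieties stabilize to $X$ — so the identity holds there too.

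The only genuine difficulty I anticipate is bookkeeping: keeping the chain of dualities and Seidel shifts straight, and making the ``$\eta_i=n$ for $i\le0$'' convention of Proposition \ref{deggeneralGamma} line up with the row/column-deletion description of $\sigma\mapsto\sigma^{d-1}$. One should also record explicitly that \eqref{GammaSch}, and hence the Schubert description of $\Gamma_{d-1}$ used above, holds with no degree restriction, in contrast to Proposition \ref{deggeneralGamma}.
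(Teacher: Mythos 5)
Your computation in the range $1\le d<\min\{k+1,n-k\}$ is correct and follows essentially the same route as the paper: both arguments identify the left-hand side via Proposition \ref{deggeneralGamma} as a $g$-translate of an (opposite) Schubert variety and the right-hand side via \eqref{GammaSch}; the only difference is that you verify the resulting partition identity entrywise in closed form, whereas the paper runs an induction on $d$ through the outer-rim-removal recursion. The $GL_n$-equivariance of $\Gamma_{d-1}$ that you invoke is harmless and is implicitly used by the paper as well, and your bookkeeping ($\sigma=(\mu_1-1,\dots,\mu_k-1)$, $(\widetilde{\mu^\vee})^\vee_j=\max\{\mu_{j+d-1}-d,0\}=(\sigma^{d-1})_j$) checks out.

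The gap is in the range $d\ge\min\{k+1,n-k\}$, on the left-hand side. The phrase ``curve neighborhoods of nonempty subvarieties stabilize to $X$'' does not apply to $\Gamma_d(X^{n-k},X_{\mu^\vee})$: this is a two-pointed projected Gromov--Witten variety, not a curve neighborhood $\Gamma_d(\Omega)$ of a single subvariety, and the only subvariety one could naturally substitute, namely $X^{n-k}\cap X_{\mu^\vee}$, is empty precisely because $\mu_k>0$ (this emptiness is the whole point of the corollary, since the classical product $\mathcal{O}^{n-k}\cdot\mathcal{O}^\mu$ vanishes). One also cannot pass to $\Gamma_d(X^{n-k})$ or $\Gamma_d(X_{\mu^\vee})$, because $\Gamma_d(Y,Z)\subseteq\Gamma_d(Y)$, so the inclusion goes the wrong way. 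What is needed is a concatenation inclusion such as $\Gamma_{d-1}\bigl(\Gamma_1(X^{n-k},X_{\mu^\vee})\bigr)\subseteq\Gamma_d(X^{n-k},X_{\mu^\vee})$, obtained by gluing a degree-$(d-1)$ stable map onto a degree-one one through $X^{n-k}$ and $X_{\mu^\vee}$. Combining this with the already-established case $d=1$ (which gives $\Gamma_1(X^{n-k},X_{\mu^\vee})=g\cdot X_{\mu^\vee\uparrow1}$) and with your correct observation that $\Gamma_{d-1}(g\cdot X_{\mu^\vee\uparrow1})=X$ once $d\ge\min\{k+1,n-k\}$, one sandwiches $\Gamma_d(X^{n-k},X_{\mu^\vee})$ between $X$ and $X$; this is exactly how the paper closes the large-degree case. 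Without some such step, your assertion that the left-hand side equals $X$ in that range is unsupported, although the repair is short.
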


\begin{proof}
Let $m=\min\{k+1, n-k\}$. For $1\leq d<m$, we notice $\mu^\vee_1=n-k-\mu_k<n-k$. By Proposition \ref{deggeneralGamma}, we have
   $\Gamma_1(X^{n-k}, X_{\mu^\vee})=X_{\tau^{(1)}}(\tilde F_\bullet)=g\cdot X_{\tau^{(1)}}$ with
  \begin{align*}
    \tau^{(1)}&=(\mu^\vee_1+1, \cdots, \mu^\vee_{k-d+1}+1)=\mu^\vee\uparrow 1.
 \end{align*}
  It follows directly from the description of $\Gamma_d(X^{n-k}, X_{\mu^\vee})=X_{\tau^{(d)}}(\tilde F_\bullet)$ in Proposition \ref{deggeneralGamma}
  that  $\big(\tau^{(d+1)}\big)^\vee$ is
  obtained  by removing the outer rim from $\big(\tau^{(d)}\big)^\vee$.
  Therefore the statement holds for $1\leq d<m$
  by using \eqref{GammaSch} and   induction on $d$.

   The partition  $(\mu^\vee\uparrow 1)$ has at most  $k$ rows and at most  $n-k-1$ columns.  Thus for any $d\geq m$,
   we have $\Gamma_{d-1}(g\cdot X_{\mu^\vee\uparrow 1})=X$ by \eqref{GammaSch}, and consequently
      $$X= \Gamma_{d-1}(\Gamma_1(X^{n-k},  X_{\mu^\vee}))\subseteq  \Gamma_{d}(X^{n-k},  X_{\mu^\vee})\subseteq X.$$
    \end{proof}

 Although $Z_d(X^i \cap X_\eta)$ is a proper subvariety of  $Z_d(X^i, X_\eta)$, they could have the same image under the projection $pr_1$, as shown in the following   key lemma.
\begin{lemma}\label{lemmiddle}
    Let $\eta\in \mathcal{P}_{k, n}$ and $1\leq i\leq n-k$. If $\eta_1=n-k$, then  we have
  \begin{align*}
 pr_1(Z_d(X^i, X_\eta))=pr_1(Z_d(X^i \cap X_\eta))\quad \mbox{ for any } 1\leq d<\min\{k, n-k\}.
 \end{align*}
\end{lemma}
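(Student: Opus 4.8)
The plan is to prove the inclusion $pr_1(Z_d(X^i,X_\eta)) \subseteq pr_1(Z_d(X^i\cap X_\eta))$ directly (the reverse inclusion is automatic, since $Z_d(X^i\cap X_\eta)\subseteq Z_d(X^i,X_\eta)$ implies the reverse containment of images). Fix a point $(V_k\leq V_{k+d})$ in $pr_1(Z_d(X^i,X_\eta))$; unwinding the definitions and using Lemma \ref{genZd}, this means there is a flag $V_{k-d}\leq V_k\leq V_{k+d}$ with $\dim V_{k+d}\cap F^{\rm opp}_{s+\eta_{k-s+1}}\geq s$ for $1\leq s\leq k$, $\dim V_{k+d}\cap F_{n-k+1-i}\geq 1$, and $\dim V_{k-d}\cap F^{\rm opp}_{j+\eta_{k-j+1}}\geq j-d$ for $d+1\leq j\leq k$. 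My goal is to produce a possibly different flag $V'_{k-d}\leq V'_k\leq V_{k+d}$ lying in $Z_d(X^i\cap X_\eta)$ with the same image $(V'_k\leq V_{k+d})=(V_k\leq V_{k+d})$ under $pr_1$ — so I must keep both $V_k$ and $V_{k+d}$ fixed and only adjust $V_{k-d}$, while arranging that $V_k$ itself lies in $X^i\cap X_\eta$. Equivalently, I need to show that under the hypothesis $\eta_1=n-k$, the middle subspace $V_k$ coming from $Z_d(X^i,X_\eta)$ already lies (or can be taken, without changing $V_k$ or $V_{k+d}$, to lie) in $X^i\cap X_\eta(F^{\rm opp}_\bullet)$.

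First I would observe that $V_k\in X^i$ is automatic: $\dim V_{k+d}\cap F_{n-k+1-i}\geq 1$ does not immediately give $\dim V_k\cap F_{n-k+1-i}\geq 1$, so here is where the condition $\eta_1=n-k$ and the Pieri-type flag $F_\bullet$ must be used — I would argue that the one-dimensional subspace of $V_{k+d}\cap F_{n-k+1-i}$ can be pushed into $V_k$ after possibly changing the complementary $d$-dimensional piece $\tilde H_d$ (with $V_{k+d}=V_k\oplus \tilde H_d$), exactly in the spirit of the "$R=\emptyset$ versus $R\neq\emptyset$" dichotomy in the proof of Lemma \ref{mainlemmapieri}. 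The condition $\eta_1=n-k$ makes $F^{\rm opp}_{1+\eta_k}=F^{\rm opp}_{1+\eta_k}$ degenerate with respect to $F_{n-k+1-i}$ in the right way, so that the span/kernel pair has enough room. Second, for the Schubert conditions with respect to $F^{\rm opp}_\bullet$, I would run the same incremental flag-building argument as in the second half of the proof of Lemma \ref{genZd}: starting from the chain $\dim V_{k+d}\cap G_s\geq s$ (with $G_s:=F^{\rm opp}_{s+\eta_{k-s+1}}$), build a partial flag $W_1\leq\cdots\leq W_k$ inside $V_{k+d}$ with $W_s\subset G_s$, and additionally force $W_k$ to contain (or to equal) a suitable modification so that the resulting $k$-dimensional subspace is precisely $V_k$. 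The point is that the hypothesis $\eta_1=n-k$ guarantees that the very first Schubert condition $G_1=F^{\rm opp}_{1+\eta_k}$ already holds on all of $V_{k+d}$ (since $1+\eta_k = 1 + (n-k)$ or the analogous largest index forces $G_1$ to be large), removing the one obstruction that would otherwise prevent $V_k$ itself — rather than some other $\bar V_k$ — from satisfying all the Schubert conditions.

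The main obstacle I anticipate is the bookkeeping that reconciles the two adjustments simultaneously: the modification needed to place the line of $V_{k+d}\cap F_{n-k+1-i}$ into $V_k$ (controlled by $F_\bullet$) and the modification needed to place $V_k$ into $X_\eta(F^{\rm opp}_\bullet)$ (controlled by $F^{\rm opp}_\bullet$) must be carried out without spoiling the already-fixed pair $(V_k\leq V_{k+d})$, and without requiring $V_{k+d}\cap F_{n-k+1-i}$ and the flag-building for $F^{\rm opp}_\bullet$ to compete for the same dimensions. I expect that $\eta_1=n-k$ is exactly the numerical hypothesis that makes these two constraints compatible — concretely, it shifts one of the Schubert indices to its maximal value so that the corresponding condition is vacuous on $V_{k+d}$, leaving a free direction to absorb the $F_{n-k+1-i}$-line. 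I would make this precise by a careful choice of basis adapted to both flags, analogous to but more delicate than the basis choices in Lemmas \ref{mainlemmapieri} and \ref{genZd}; once the compatible basis is set up, the verification of all the dimension inequalities should be routine, and taking $V'_{k-d}$ to be the span of the first $k-d$ basis vectors (or an appropriate subspace of $V_k$ meeting the $G_j$'s in the required dimensions) completes the construction.
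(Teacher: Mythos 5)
Your reduction to the single inclusion $pr_1(Z_d(X^i,X_\eta))\subseteq pr_1(Z_d(X^i\cap X_\eta))$ and your observation that only $V_{k-d}$ may be adjusted are both correct, as is your instinct that $\eta_1=n-k$ makes the top Schubert condition $F^{\rm opp}_{k+\eta_1}=\mathbb{C}^n$ vacuous. But the core of your plan rests on a misreading of what membership in $Z_d(X^i\cap X_\eta)$ requires. By definition $Z_d(Y)=\pi_F^{-1}\pi_F\pi_G^{-1}(Y)$, so a flag $V'_{k-d}\leq V_k\leq V_{k+d}$ lies in $Z_d(X^i\cap X_\eta)$ as soon as there exists \emph{some} auxiliary subspace $\bar V_k$ with $V'_{k-d}\leq \bar V_k\leq V_{k+d}$ and $\bar V_k\in X^i\cap X_\eta$; the middle member $V_k$ of the flag itself need not lie in $X^i\cap X_\eta$. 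Your stated goal of ``arranging that $V_k$ itself lies in $X^i\cap X_\eta$'' is therefore both unnecessary and unachievable: $V_k$ is fixed, and in general it simply is not in $X^i\cap X_\eta$. For instance with $k=2$, $n=5$, $d=1$, a generic $V_2$ sandwiched between an admissible $V_1$ and $V_3$ does not contain the line $V_3\cap F_{n-k+1-i}$, so $V_2\notin X^i$, yet the lemma asserts $(V_2\leq V_3)\in pr_1(Z_1(X^i\cap X_\eta))$. Relatedly, your suggestion to ``push'' the line of $V_{k+d}\cap F_{n-k+1-i}$ into $V_k$ by modifying a complement $\tilde H_d$ cannot work: changing the complement leaves $V_k$ unchanged, and whether the line lies in $V_k$ is already decided.

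The missing idea, which is how the paper proceeds, is a two-step construction exploiting exactly this freedom in $\bar V_k$. First, using only the conditions on $V_{k+d}$ from Lemma \ref{genZd}, build inside $V_{k+d}$ a partial flag $\bar V_1\leq\cdots\leq\bar V_{k-1}$ with $\bar V_s\subset V_{k+d}\cap F^{\rm opp}_{s+\eta_{k-s+1}}$, and extend it to $\bar V_k\subset V_{k+d}$ by adjoining either a nonzero $v_0\in V_{k+d}\cap F_{n-k-i+1}$ or, if $v_0\in\bar V_{k-1}$ already, any vector of $V_{k+d}\setminus\bar V_{k-1}$; the hypothesis $\eta_1=n-k$ guarantees the $s=k$ condition is automatic, so $\bar V_k\in X^i\cap X_\eta$. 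Second, since $V_k$ and $\bar V_k$ are both $k$-dimensional subspaces of the $(k+d)$-dimensional $V_{k+d}$, one has $\dim V_k\cap\bar V_k\geq k-d$, so one may choose $\hat V_{k-d}\subset V_k\cap\bar V_k$; then $\hat V_{k-d}\leq V_k\leq V_{k+d}$ lies in $Z_d(X^i\cap X_\eta)$ because $\hat V_{k-d}\leq\bar V_k\leq V_{k+d}$ with $\bar V_k\in X^i\cap X_\eta$, and it has the required image under $pr_1$. This dimension-count replacing $V_{k-d}$ by a subspace of $V_k\cap\bar V_k$ is the crux of the argument and is absent from your proposal; without it, the ``careful choice of basis'' you anticipate has no correct statement to verify.
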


\begin{proof}
    The direction $pr_1(Z_d(X^i, X_\eta))\supseteq pr_1(Z_d(X^i \cap X_\eta))$  follows immediately from the definitions in \eqref{ZdY1Y2} and \eqref{GammadZdY1}.
 It remains to show    that any two-step flag    $V_k\leqslant V_{k+d}$ in $pr_1(Z_d(X^i, X_\eta))$ must also belong to  $pr_1(Z_d(X^i\cap X_\eta))$.

 Since $V_{k-d}\leqslant V_k\leqslant V_{k+d}$ belongs to  $Z_d(X^i, X_\eta)$ for some $V_{k-d}$, by Lemma \ref{genZd} we have  
$\dim V_{k-d}\cap F^{\rm opp}_{j+\eta_{k-j+1}} \geq j-d \mbox{ for   } d+1\leq j\leq k$, and
  $$ \dim   V_{k+d}\cap   F_{n-k+1-i} \geq 1,\quad \dim V_{k+d}\cap  F^{\rm opp}_{s+\eta_{k-s+1}} \geq s, \mbox{ for   } 1\leq s\leq k.
                    $$
Hence, there exist $v_0\in V_{k+d}\cap F_{n-k-i+1}\setminus\{0\}$  and  a partial flag $\bar V_1\leq \cdots \leq \bar V_{k-1}\leq V_{k+d}$ with
   $\bar V_{s}\subset V_{k+d}\cap F^{\rm opp}_{s+\eta_{k-s+1}}$ for $1\leq s\leq k-1$. In particular,  we have
   $$\dim \bar V_{k-1}\cap F^{\rm opp}_{s+\eta_{k-s+1}}\geq s,\quad  1\leq s\leq k-1.$$
 If $v_0\notin \bar {V}_{k-1}$, then we define $\bar{V}_k:=\bar{V}_{k-1}+\mathbb{C}v_0$. Otherwise, we take $w\in V_{k+d}\setminus \bar{V}_{k-1}$ and then define $\bar{V}_k:=\bar V_{k-1}+\mathbb{C}w$.
 In either cases, we  have
  $$\dim \bar{V}_k\cap F_{n-k-i+1}\geq 1,\quad \dim \bar{V}_k\cap F^{\rm opp}_{s+\eta_{k-s+1}}\geq s,\,\,\, 1\leq s\leq k-1. $$
    Since $\eta_1=n-k$, $F^{\rm opp}_{k+\eta_{1}}=\mathbb{C}^n$. Thus $\dim \bar{V}_k\cap F^{\rm opp}_{k+\eta_{1}}=k$ and
      $\bar{V}_k\in X^i\cap X_\eta$.
      Since $V_k$ and $\bar{V_k}$ are both $k$-dimensional vector subspaces in $V_{k+d}$, we have $\dim V_k\cap \bar{V}_k \geq k-d$.
      Thus there exists  $(k-d)$-dimensional vector subspace  $\hat V_{k-d}\subset V_k\cap \bar{V_k}$, implying
      $$\dim \hat V_{k-d}\cap F^{\rm opp}_{j+\eta_{k-s+1}}\geq j-d,\quad d+1\leq j\leq k.$$
     Then
        $\hat V_{k-d}\leqslant  V_k \leqslant V_{k+d}$ belongs to $\pi_F^{-1}\pi_F\pi_G^{-1}(X^i\cap X_\eta)=Z_d(X^i\cap X_\eta)$, since the partial flag
      $\hat V_{k-d}\leqslant \bar V_k \leqslant V_{k+d}$ satisfies  $\bar V_k\in X^i\cap X_\eta$. Thus
        $V_k\leqslant V_{k+d}$ belongs to $pr_1(Z_d(X^i \cap X_\eta))$. Hence,
 $pr_1(Z_d(X^i, X_\eta))\subseteq pr_1(Z_d(X^i \cap X_\eta))$.
\end{proof}
\begin{prop}\label{Gammacomparemukzero}
  Let $\eta\in \mathcal{P}_{k, n}$ and $1\leq i\leq n-k$. If $\eta_1=n-k$, then  we have
  \begin{align*}
 \Gamma_d(X^i, X_\eta)=\Gamma_d(X^i \cap X_\eta)\quad \mbox{ for any } d\geq 1.
 \end{align*}
\end{prop}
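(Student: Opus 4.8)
The plan is to split on the size of $d$: when $d$ is below the covering degree $\min\{k,n-k\}$ the equality will follow from Lemma \ref{lemmiddle} by composing with $pr_2$, and when $d\geq\min\{k,n-k\}$ both projected Gromov--Witten varieties will be all of $X$. Since $\min\{k,n-k\}\geq1$ (as $1\leq k\leq n-1$), the two ranges $1\leq d<\min\{k,n-k\}$ and $d\geq\min\{k,n-k\}$ exhaust $d\geq1$, so this suffices.

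For $1\leq d<\min\{k,n-k\}$ the diagram \eqref{diag} is available, and its commutativity gives $\pi_{\rm G}=pr_2\circ pr_1$ on $Z_d$. Lemma \ref{lemmiddle} provides the \emph{equality} $pr_1(Z_d(X^i,X_\eta))=pr_1(Z_d(X^i\cap X_\eta))$ --- this is where the hypothesis $\eta_1=n-k$ is used. Applying $pr_2$ to this identity and using $\Gamma_d(\cdot,\cdot)=\pi_{\rm G}(Z_d(\cdot,\cdot))$ from \eqref{GammadY1Y2}--\eqref{GammadZdY1}, I get
\begin{align*}
\Gamma_d(X^i,X_\eta)=\pi_{\rm G}\big(Z_d(X^i,X_\eta)\big)
&=pr_2\Big(pr_1\big(Z_d(X^i,X_\eta)\big)\Big)\\
&=pr_2\Big(pr_1\big(Z_d(X^i\cap X_\eta)\big)\Big)=\Gamma_d(X^i\cap X_\eta).
\end{align*}
Because Lemma \ref{lemmiddle} is an equality, both inclusions are obtained at once in this range.

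For $d\geq\min\{k,n-k\}$ I claim both sides equal $X$. Since $\eta_1=n-k\geq i$, the Richardson variety $X^i\cap X_\eta=X^{(i,0,\cdots,0)}(F_\bullet)\cap X_\eta(F^{\rm opp}_\bullet)$ is nonempty; fix a point $p$ in it. From $\{p\}\subseteq X^i\cap X_\eta$ we get ${\rm ev}_1^{-1}(\{p\})\subseteq{\rm ev}_1^{-1}(X^i\cap X_\eta)$, hence $\Gamma_d(\{p\})\subseteq\Gamma_d(X^i\cap X_\eta)$; and $\Gamma_d(\{p\})=X$, because $\{p\}$ is the Schubert point $X^{(n-k,\cdots,n-k)}(gF_\bullet)$ for a suitable $g\in GL(n,\mathbb{C})$, so by $GL(n,\mathbb{C})$-equivariance of $\Gamma_d$ and \eqref{GammaSch} one has $\Gamma_d(\{p\})=g\cdot X^{\lambda^d}$ with $\lambda=(n-k,\cdots,n-k)$, and $\lambda^d$ --- the partition obtained from the $k\times(n-k)$ rectangle by deleting its first $d$ rows and columns --- is empty once $d\geq\min\{k,n-k\}$, so $\Gamma_d(\{p\})=g\cdot X=X$. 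Thus $\Gamma_d(X^i\cap X_\eta)=X$. Conversely, $\Gamma_d(X^i\cap X_\eta)\subseteq\Gamma_d(X^i,X_\eta)$, since any degree $d$ rational curve through a point $q$ and through $X^i\cap X_\eta$ meets both $X^i$ and $X_\eta$ (concretely, bubble off a genus-zero component carrying the first two marked points over a point of $X^i\cap X_\eta$ on the curve). Hence $X=\Gamma_d(X^i\cap X_\eta)\subseteq\Gamma_d(X^i,X_\eta)\subseteq X$, so both equal $X$.

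The substantive content sits entirely in Lemma \ref{lemmiddle}; the proposition merely combines it with the curve-neighborhood formula \eqref{GammaSch} and the elementary fact that $Gr(k,n)$ is covered by degree-$\min\{k,n-k\}$ rational curves. The only point needing attention is that Lemma \ref{lemmiddle} is restricted to $d<\min\{k,n-k\}$, so the large-degree case genuinely requires the separate argument; I expect no real obstacle beyond checking, as above, that the two ranges exhaust $d\geq1$.
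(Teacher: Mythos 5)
Your proposal is correct and follows essentially the same route as the paper: the range $1\leq d<\min\{k,n-k\}$ is handled by applying $pr_2$ to the equality of Lemma \ref{lemmiddle} via $\pi_{\rm G}=pr_2\circ pr_1$, and for $d\geq\min\{k,n-k\}$ one uses $X^i\cap X_\eta\neq\emptyset$ (from $\eta_1=n-k$) together with $\Gamma_d(\mbox{point})=X$ from \eqref{GammaSch} to sandwich both sides between $X$ and $X$. Your extra verifications (the exhaustion of the two ranges and the inclusion $\Gamma_d(X^i\cap X_\eta)\subseteq\Gamma_d(X^i,X_\eta)$, which also follows directly from \eqref{GammadY1Y2}--\eqref{GammadZdY1}) are details the paper leaves implicit, not a different argument.
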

\begin{proof}
For $1\leq d<\min\{k, n-k\}$, by   Lemma \ref{lemmiddle} and  $\pi_G=pr_2\circ pr_1$, we have
$$ \Gamma_d(X^i, X_\eta)= pr_2\circ pr_1(Z_d(X^i, X_\eta))=pr_2\circ pr_1(Z_d(X^i\cap X_\eta)) =\Gamma_d(X^i \cap X_\eta).$$
     Since $\eta_1=n-k$, $X^i\cap X_\eta\neq \emptyset$. Consequently for  $d\geq \min\{k, n-k\}$, we have
     $$X=\Gamma_d(\mbox{point})\subseteq \Gamma_d(X^i\cap X_\eta)\subseteq \Gamma_d(X^i, X_\eta)\subseteq X.$$
Here the first equality  follows directly from \eqref{GammaSch}. Hence, we are done.
\end{proof}

\subsection{Proof of Seidel representation}
 We consider the operators $\mathcal{H}$, $\mathcal{T}$ on $QK(X)$, defined respectively by
   $$\mathcal{H}(\mathcal{O}^\lambda)=\mathcal{O}^{n-k}*\mathcal{O}^\lambda,\quad\mbox{and}\quad \mathcal{T}(\mathcal{O}^\lambda)=\mathcal{O}^{1^k}*\mathcal{O}^\lambda.$$
Under the isomorphism $Gr(k, n)\cong Gr(n-k, n)$, Theorem \ref{SeidelQKT} is equivalent to Theorem \ref{SeidelQK} as follows.  As a consequence, we have $\mathcal{H} \mathcal{T}=q\mbox{Id},\, \mathcal{H}^n=q^{n-k}\mbox{Id}$ and $\mathcal{T}^n=q^{k}\mbox{Id}$. Moreover, $\mathcal{T}|_{q=1}$  generates an action of the cyclic group $\mathbb{Z}/n\mathbb{Z}$ on  $QK(X)|_{q=1}$, called the \textit{Seidel representation} on $QK(X)$.

\begin{thm}\label{SeidelQK} Let $\mu\in \mathcal{P}_{k, n}$. In $QK(X)$, we have
  \begin{equation}\label{eqnHH}
     \mathcal{H}(\mathcal{O}^\mu)= \mathcal{O}^{n-k}*\mathcal{O}^\mu=q^{n-k+|\mu|-|\mu\downarrow1|\over n}\mathcal{O}^{\mu\downarrow 1}=\begin{cases}
         q\mathcal{O}^{(\mu_1-1,\cdots, \mu_k-1)},&\mbox{if }\mu_k>0,\\
      \mathcal{O}^{(n-k, \mu_1, \cdots, \mu_{k-1})},&\mbox{if } \mu_k=0.
  \end{cases}
  \end{equation}
 \end{thm}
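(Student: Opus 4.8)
The plan is to read off every structure constant $N_{n-k,\mu}^{\nu,d}$ of $\mathcal{O}^{n-k}*\mathcal{O}^\mu$ from the recursion \eqref{sc2}, after evaluating all the $K$-theoretic Gromov--Witten invariants occurring in it as Euler characteristics against the dual basis via $\chi_X(\mathcal{O}^\beta\cdot\xi_\nu)=\delta_{\beta,\nu}$. First I would note $\mathcal{O}^\mu=\mathcal{O}_{\mu^\vee}$ in $K(X)$, since $X_{\mu^\vee}=C_{w_0}\cdot X^\mu$ and translating by a connected group leaves a $K$-class unchanged; hence $I_d(\mathcal{O}^{n-k},\mathcal{O}^\mu,\gamma)=I_d(\mathcal{O}^{n-k},\mathcal{O}_{\mu^\vee},\gamma)$, which equals $\chi_X([\mathcal{O}_{\Gamma_d(X^{n-k},X_{\mu^\vee})}]\cdot\gamma)$ for $d\geq 1$ by \eqref{KGW3}. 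Likewise, applying \eqref{KGW3} with the second subvariety taken to be $X$ itself (noting $X=X_{(n-k,\dots,n-k)}$, so $\mathcal{O}_\eta=[\mathcal{O}_X]=1$), using the $K$-theoretic fundamental class axiom $I_e(\mathcal{O}^\kappa,\xi_\nu)=I_e(\mathcal{O}^\kappa,1,\xi_\nu)$, and invoking \eqref{GammaSch}, the two-pointed invariants in \eqref{sc2} become $I_e(\mathcal{O}^\kappa,\xi_\nu)=\chi_X([\mathcal{O}_{\Gamma_e(X^\kappa)}]\cdot\xi_\nu)=\chi_X(\mathcal{O}^{\kappa^e}\cdot\xi_\nu)=\delta_{\kappa^e,\nu}$ for every $e\geq 1$, where $\kappa^e$ denotes the partition obtained from $\kappa$ by removing $e$ outer rim layers as in \eqref{GammaSch}.

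The geometric core is to show that $\Gamma_d(X^{n-k},X_{\mu^\vee})$ is, for every $d\geq 0$, a $GL_n$-translate of a Schubert variety, and to identify its label. Set $c={1\over n}\big(n-k+|\mu|-|\mu\downarrow 1|\big)$; a jump-sequence computation gives $c=1$ and $\mu\downarrow 1=(\mu_1-1,\dots,\mu_k-1)$ when $\mu_k>0$, and $c=0$ and $\mu\downarrow 1=(n-k,\mu_1,\dots,\mu_{k-1})$ when $\mu_k=0$. If $\mu_k>0$, then for $d\geq 1$ Corollary~\ref{compareGamma} gives $\Gamma_d(X^{n-k},X_{\mu^\vee})=\Gamma_{d-1}(g\cdot X_{\mu^\vee\uparrow 1})$; since $X_{\mu^\vee\uparrow 1}=C_{w_0}\cdot X^{\mu\downarrow 1}$ by \eqref{shiftdual}, $GL_n$-equivariance of curve neighborhoods and \eqref{GammaSch} identify this with $gC_{w_0}\cdot X^{(\mu\downarrow 1)^{d-1}}$; and for $d=0$ the classical product $\mathcal{O}^{n-k}\cdot\mathcal{O}^\mu$ vanishes, because $X^{n-k}(F_\bullet)$ and $X_{\mu^\vee}(F^{\rm opp}_\bullet)$ are disjoint --- every $\Lambda\in X_{\mu^\vee}$ lies in $F^{\rm opp}_{n-\mu_k}$, which does not contain the line $F_1$. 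If $\mu_k=0$, then $(\mu^\vee)_1=n-k$, so Proposition~\ref{Gammacomparemukzero} gives $\Gamma_d(X^{n-k},X_{\mu^\vee})=\Gamma_d(X^{n-k}\cap X_{\mu^\vee})$ for $d\geq 1$; here I would check directly --- using $\tilde F_1=F_1$ and $\tilde F_j\supseteq F^{\rm opp}_{j-1}$ --- that the Richardson variety $X^{n-k}\cap X_{\mu^\vee}$ is the Schubert variety $X_{\mu\downarrow 1}(\tilde F_\bullet)$, a translate of $X^{\mu\downarrow 1}$, and then take its curve neighborhood via \eqref{GammaSch}; for $d=0$ this same Richardson variety represents $\mathcal{O}^{n-k}\cdot\mathcal{O}^\mu$. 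Combining all cases with the $GL_n$-invariance of $\xi_\nu$ and the duality pairing gives, uniformly for $d\geq 0$,
\[
  I_d(\mathcal{O}^{n-k},\mathcal{O}^\mu,\xi_\nu)=\begin{cases}\delta_{(\mu\downarrow 1)^{d-c},\,\nu},&d\geq c,\\ 0,&d<c,\end{cases}
\]
where for $d=0$ the left side is understood as the classical number $\chi_X(\mathcal{O}^{n-k}\cdot\mathcal{O}^\mu\cdot\xi_\nu)$.

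Substituting these evaluations into \eqref{sc2} reduces it to the combinatorial recursion $N_{n-k,\mu}^{\nu,d}=\varepsilon_d\,\delta_{(\mu\downarrow 1)^{d-c},\nu}-\sum_{0<e\leq d}\sum_{\kappa:\,\kappa^e=\nu}N_{n-k,\mu}^{\kappa,d-e}$, where $\varepsilon_d=1$ for $d\geq c$ and $\varepsilon_d=0$ otherwise. An induction on $d$, with base case $N_{n-k,\mu}^{\nu,0}=\delta_{c,0}\,\delta_{\nu,\mu\downarrow 1}$ supplied by the $d=0$ analysis above, then yields $N_{n-k,\mu}^{\nu,d}=\delta_{d,c}\,\delta_{\nu,\mu\downarrow 1}$: for $d>c$ the inductive hypothesis forces $N_{n-k,\mu}^{\kappa,d-e}\neq 0$ only when $e=d-c$ and $\kappa=\mu\downarrow 1$, so the double sum collapses to $\delta_{(\mu\downarrow 1)^{d-c},\nu}$ and exactly cancels the leading term, while for $d=c$ the double sum is empty. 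Hence $\mathcal{O}^{n-k}*\mathcal{O}^\mu=q^{c}\,\mathcal{O}^{\mu\downarrow 1}$, which is \eqref{eqnHH} once $c$ and $\mu\downarrow 1$ are written out in the two cases.

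The step I expect to be the main obstacle is the degree-$0$ bookkeeping when $\mu_k=0$: one must pin down the Richardson variety $X^{n-k}\cap X_{\mu^\vee}$ exactly as the translated Schubert variety labelled by $(n-k,\mu_1,\dots,\mu_{k-1})=\mu\downarrow 1$, so that the $d=0$ and $d\geq 1$ data assemble into the single formula above; one must also make sure the cohomological-triviality input of \cite{BCMP11} underlying \eqref{KGW3} is applied to genuinely transverse pairs --- $(X^{n-k}(F_\bullet),X_{\mu^\vee}(F^{\rm opp}_\bullet))$ for the three-pointed invariants and $(X^\kappa,X)$ for the two-pointed ones --- and that inserting $1=[\mathcal{O}_X]$ is legitimate in $K$-theory, where, unlike in cohomology, it does not annihilate the invariant.
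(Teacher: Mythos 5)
Your proposal is correct and follows essentially the same route as the paper: the same geometric inputs (\eqref{KGW3}, \eqref{GammaSch}, Corollary \ref{compareGamma}, Proposition \ref{Gammacomparemukzero}, the explicit identification of $X^{n-k}\cap X_{\mu^\vee}$ for $\mu_k=0$ and its emptiness for $\mu_k>0$) are fed into the recursion \eqref{sc2} with induction on $d$; you merely make the paper's cancellation of equal Euler characteristics explicit by evaluating every invariant as a Kronecker delta. The only blemish is notational: with the paper's conventions the Richardson variety in the $\mu_k=0$ case is $X^{\mu\downarrow 1}(\tilde F_\bullet)=X_{(\mu\downarrow 1)^\vee}(\tilde F_\bullet)$ (of codimension $|\mu\downarrow 1|$), not $X_{\mu\downarrow 1}(\tilde F_\bullet)$, though your gloss ``a translate of $X^{\mu\downarrow 1}$'' is exactly what the argument uses, so nothing breaks.
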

\begin{proof}
Assume $\mu_k>0$ first. Then $(n-k, 0,\cdots, 0)\not\leq \mu^\vee$ in the Bruhat order, so that $X^{n-k}\cap X_{\mu^\vee}=\emptyset$.
 Thus we have  $\mathcal{O}^{n-k}\cdot \mathcal{O}^\mu=0$ in $K(X)$.
For any $\nu\in \mathcal{P}_{k, n}$, 
   \begin{align*}
   N_{n-k, \mu}^{\nu, 1}
             =I_1(\mathcal{O}^{n-k}, \mathcal{O}^\mu, \xi_{\nu})&=\chi_X([\mathcal{O}_{ \Gamma_1(X^{n-k},  X_{\mu^\vee})}]\cdot \xi_\nu)\\&=\chi_X([\mathcal{O}_{ g\cdot X_{\mu^\vee \uparrow 1}}]\cdot \xi_\nu)=\chi_X(\mathcal{O}^{\mu  \downarrow 1}\cdot \xi_\nu)=\delta_{\mu\downarrow 1, \nu} .
         \end{align*}
 Notice $\mu^\vee\uparrow 1=(\mu\downarrow 1)^\vee$. Using  \eqref{sc2}, \eqref{KGW3}  and induction on $d\geq 2$, we have
 \begin{align*}
   N_{n-k, \mu}^{\nu, d}&= I_d(\mathcal{O}^{n-k}, \mathcal{O}^\mu, \xi_{\nu})- I_{d-1}(\mathcal{O}^{\mu\downarrow 1},\xi_\nu)\\
   &= I_d(\mathcal{O}^{n-k}, \mathcal{O}^\mu, \xi_{\nu})- I_{d-1}(\mathcal{O}^{\rm id}, \mathcal{O}^{\mu\downarrow 1},\xi_\nu)\\
   &=   \chi_X([\mathcal{O}_{\Gamma_{d}(X^{n-k}, X_{\mu^\vee})}]\cdot \xi_\nu)-\chi_X([\mathcal{O}_{\Gamma_{d-1}(X^{\mu\downarrow 1})}]\cdot \xi_\nu)\\
   &=   \chi_X([\mathcal{O}_{\Gamma_{d-1}(g\cdot X_{\mu^\vee\uparrow 1})}]\cdot \xi_\nu)-\chi_X([\mathcal{O}_{\Gamma_{d-1}(X^{\mu\downarrow 1})}]\cdot \xi_\nu)\\
   &=0.
    \end{align*}
Here we denote $\mathcal{O}^{\rm id}:=\mathcal{O}^{(0, \ldots, 0)}$, which is the identity element.   The third equality follows from  Corollary \ref{compareGamma}.

Now we assume $\mu_k=0$, and  notice $\mu^\vee_1=n-k$. Hence,
  \begin{align*}
      X^{n-k}\cap X_{\mu^\vee}
      &=\{V_{k}\mid \dim V_k\cap F_1\geq 1, \dim V_{k}\cap F^{\rm opp}_{j+\mu^\vee_{k-j+1}}\geq j, 1\leq j\leq k\}\\
                       &=\{V_{k}\mid  \dim V_k\cap F_1\geq 1, \dim V_{k}\cap (F_1+F^{\rm opp}_{j+\mu^\vee_{k-j+1}})\geq j+1, \\
                       &{} \qquad 1\leq j\leq k-1\}\\
                        &= X^{\mu\downarrow 1}(g^{-1}\cdot F^{\rm opp}_\bullet)=g^{-1}\cdot X^{\mu\downarrow 1}(F^{\rm opp}_\bullet),
  \end{align*}  
  where $g$ was given in (\ref{matrix g}). Hence,  $\mathcal{O}^{n-k}\cdot \mathcal{O}^\mu=\mathcal{O}^{\mu\downarrow 1}$. Moreover,   for any $\nu\in \mathcal{P}_{k, n}$, using \eqref{KGW3} we have
\begin{align*}
   N_{n-k, \mu}^{\nu, 1}
            &= I_1(\mathcal{O}^{n-k}, \mathcal{O}^\mu, \xi_{\nu})-\sum_{\kappa\in \mathcal{P}_{k, n}}N_{n-k, \mu}^{\kappa, 0}I_1(\mathcal{O}^\kappa,\xi_\nu)\\
            &=I_1(\mathcal{O}^{n-k}, \mathcal{O}^\mu, \xi_{\nu})- I_1(\mathcal{O}^{\rm id}, \mathcal{O}^{\mu\downarrow 1},\xi_\nu)\\
            &=\chi_X([\mathcal{O}_{\Gamma_1( X^{n-k}, X_{\mu^\vee})}]\cdot \xi_\nu)-\chi_X([\mathcal{O}_{\Gamma_1( X^{\mu\downarrow 1})}]\cdot \xi_\nu)\\
              &=\chi_X([\mathcal{O}_{\Gamma_1( X^{n-k}\cap X_{\mu^\vee})}]\cdot \xi_\nu)-\chi_X([\mathcal{O}_{\Gamma_1( X^{\mu\downarrow 1})}]\cdot \xi_\nu)\\
        &=\chi_X([\mathcal{O}_{\Gamma_1( g^{-1}\cdot X^{\mu \downarrow 1}(F^{\rm opp}_\bullet))}]\cdot \xi_\nu)-\chi_X([\mathcal{O}_{\Gamma_1( X^{\mu\downarrow 1})}]\cdot \xi_\nu)\\
        &=0.
 \end{align*}
 Here the fourth equality follows from Proposition \ref{Gammacomparemukzero}.  By induction on $d$ and using   Proposition \ref{Gammacomparemukzero}, we conclude  $N_{n-k, \mu}^{\nu, d}=0$ for all $d>0$.
 \end{proof}

\section{An alternative proof of quantum Pieri rule}

 For $\lambda,\nu \in \mathcal{P}_{k, n}$, by $\lambda\leq \nu$ in the Bruhat order we mean $\lambda_i\leq \nu_i$ for all $i$.  In this paper, by  $\nu/\lambda$ we always request    $\lambda\leq \nu$ such that the complement $\nu/\lambda$ is a horizontal strip (i.e. $\nu_{i+1}\leq \lambda_i$ for all $1\leq i\leq k-1$).
Denote by $r(\nu/\lambda)$ the number of nonempty rows in the skew diagram $\nu/\lambda$. Let $\ell(\lambda)$ represent the number of index $i$ with $\lambda_i \neq 0$ for all $1\leq i \leq k$.   The following quantum Pieri rule was proved by Buch and Mihalcea \cite[Theorem 5.4]{BuMi}. Its   degree zero part  is the classical Pieri rule due to Lenart \cite[Theorem 3.2]{Lena}.

\begin{prop}[Quantum Pieri rule]\label{propQuanPieri} Let  $\lambda \in \mathcal{P}_{k, n}$ and $1\leq i\leq n-k$. 
We have
  $$\mathcal{O}^\lambda*\mathcal{O}^{i}=\sum_{i\leq |\nu/\lambda|\leq i+ r(\nu/\lambda)-1} (-1)^{|\nu/\lambda|-i}{r(\nu/\lambda)-1\choose |\nu/\lambda|-i}\mathcal{O}^{\nu}+q\sum (-1)^{e}{\varrho\choose e}\mathcal{O}^{\nu}.$$
Here the second sum occurs only if      $\ell(\lambda)=k$ and $\nu$ can be obtained from $\lambda$ by removing a subset of the boxes in the outer rim of $\lambda$ with at least one box removed from each row. When this holds, $e= |\nu|+n-|\lambda|-i$,  and  $\varrho$ counts the number of rows of $\nu$ that contain at least one box from the outer rim of $\lambda$, excluding the bottom row of this rim.
\end{prop}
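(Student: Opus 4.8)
The plan is to run everything through the Seidel operator $\mathcal{T}$, using the explicit descriptions of projected Gromov--Witten varieties from Propositions \ref{deggeneralGamma} and \ref{Gammacomparemukzero}. Since $\mathcal{O}^\lambda*\mathcal{O}^i|_{q=0}=\mathcal{O}^\lambda\cdot\mathcal{O}^i$ is Lenart's classical Pieri rule \cite{Lena}, only the quantum part has to be produced. Put $\bar\lambda:=(\lambda_1-\lambda_k,\dots,\lambda_{k-1}-\lambda_k,0)$, which has fewer than $k$ rows. Because the first part stays $\le\lambda_1\le n-k$ at every stage, Theorem \ref{SeidelQKT} shows that applying $\mathcal{T}$ to $\mathcal{O}^{\bar\lambda}$ exactly $\lambda_k$ times merely adds $\lambda_k$ to every part, with no ``wrap-around''; hence $\mathcal{T}^{\lambda_k}(\mathcal{O}^{\bar\lambda})=\mathcal{O}^\lambda$, and by commutativity and associativity of $*$,
\[
  \mathcal{O}^\lambda*\mathcal{O}^i=\mathcal{T}^{\lambda_k}\bigl(\mathcal{O}^{\bar\lambda}*\mathcal{O}^i\bigr).
\]
It therefore suffices to treat the base case $\ell(\lambda)<k$ (where the quantum term is absent) and then analyse how $\mathcal{T}^{\lambda_k}$ transports Lenart's expansion.

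For the base case I would prove $\mathcal{O}^{\bar\lambda}*\mathcal{O}^i=\mathcal{O}^{\bar\lambda}\cdot\mathcal{O}^i$, i.e. $N_{\bar\lambda,i}^{\nu,d}=0$ for all $d\ge1$, by induction on $d$ through the recursion \eqref{sc2}. Using commutativity of $*$ and $\mathcal{O}^{\bar\lambda}=\mathcal{O}_{\bar\lambda^\vee}$ in $K(X)$, the relevant three-point invariant equals $\chi_X([\mathcal{O}_{\Gamma_d(X^i,X_{\bar\lambda^\vee})}]\cdot\xi_\nu)$ by \eqref{KGW3}. Since $(\bar\lambda^\vee)_1=n-k$, Proposition \ref{Gammacomparemukzero} identifies $\Gamma_d(X^i,X_{\bar\lambda^\vee})$ with the curve neighborhood $\Gamma_d(X^i\cap X_{\bar\lambda^\vee})$ of the Richardson variety $X^i\cap X_{\bar\lambda^\vee}$, whose $K$-class is $\sum_\kappa N_{\bar\lambda,i}^\kappa\,\mathcal{O}^\kappa$ (transverse intersection of Schubert varieties with respect to opposite flags). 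Feeding the inductive hypothesis into \eqref{sc2} reduces the vanishing to the identity $[\mathcal{O}_{\Gamma_d(X^i\cap X_{\bar\lambda^\vee})}]=\sum_\kappa N_{\bar\lambda,i}^\kappa[\mathcal{O}_{\Gamma_d(X^\kappa)}]$, which expresses that the push--pull curve-neighborhood operator along \eqref{diag} is linear on $K(X)$ and sends $[\mathcal{O}_Y]$ to $[\mathcal{O}_{\Gamma_d(Y)}]$; the recursion then telescopes exactly as in the proof of Theorem \ref{SeidelQK}.

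The last step is combinatorial bookkeeping. Apply the $\mathbb{Z}[[q]]$-linear operator $\mathcal{T}^{\lambda_k}$ termwise to Lenart's expansion $\mathcal{O}^{\bar\lambda}\cdot\mathcal{O}^i=\sum_\nu(-1)^{|\nu/\bar\lambda|-i}\binom{r(\nu/\bar\lambda)-1}{|\nu/\bar\lambda|-i}\mathcal{O}^\nu$. Every $\nu$ occurring has $\nu_2\le\bar\lambda_1=\lambda_1-\lambda_k$ by the horizontal-strip condition, so by Theorem \ref{SeidelQKT} one gets $\mathcal{T}^{\lambda_k}(\mathcal{O}^\nu)=q^{\epsilon(\nu)}\mathcal{O}^{\nu\uparrow\lambda_k}$ with $\epsilon(\nu)\in\{0,1\}$; this already pins the $q$-degree of $\mathcal{O}^\lambda*\mathcal{O}^i$ at $\le1$. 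The terms with $\epsilon(\nu)=0$ are precisely those with $\nu_1\le n-k-\lambda_k$, and $\nu\mapsto\nu\uparrow\lambda_k$ (which on these just adds $\lambda_k$ to each part) matches them bijectively, with the same binomial coefficient, to Lenart's expansion of $\mathcal{O}^\lambda\cdot\mathcal{O}^i$, recovering the $q^0$-part. The terms with $\epsilon(\nu)=1$ are those with $\nu_1\ge n-k-\lambda_k+1$; here $\nu\uparrow\lambda_k$ is obtained by first deleting the top row of $\nu$, and a direct check (using $|\nu/\bar\lambda|-i=|\nu\uparrow\lambda_k|+n-|\lambda|-i$ and $r(\nu/\bar\lambda)-1=\varrho$) identifies the resulting partitions with those obtained from $\lambda$ by removing at least one box from every row of the outer rim of $\lambda$, and turns the coefficient into $(-1)^e\binom{\varrho}{e}$ with $e=|\nu\uparrow\lambda_k|+n-|\lambda|-i$. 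This yields the claimed quantum sum.

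The main obstacle is the base case: showing that the curve-neighborhood push--pull acts linearly on $K(X)$ when applied to the Richardson variety $X^i\cap X_{\bar\lambda^\vee}$, which amounts to cohomological triviality of the relevant evaluation maps and hence to rational connectedness of the projected Gromov--Witten varieties entering $\Gamma_d(X^i,X_{\bar\lambda^\vee})$ and its iterates. These involve intersections of Schubert varieties with respect to non-transverse complete flags (produced by the shifted flag $\tilde F_\bullet=g\cdot F^{\rm opp}_\bullet$), whose rational connectedness is supplied by the refined double Bruhat decomposition of \cite{SSV}. By contrast, the combinatorial matching in the third step, though it involves several cases and off-by-one bookkeeping with outer rims and Seidel shifts, is routine.
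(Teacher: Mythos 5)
Your proposal follows essentially the same route as the paper: reduce to the case $\ell(\lambda)<k$ via $\mathcal{O}^\lambda=\mathcal{T}^{\lambda_k}(\mathcal{O}^{\tilde\lambda})$ and associativity, prove the absence of quantum corrections there by the recursion \eqref{sc2} together with \eqref{KGW3}, Proposition \ref{Gammacomparemukzero} and the cohomological-triviality statement for the curve neighborhood of the Richardson variety (the paper's Lemma \ref{lemTwopointRicardson}, established exactly as you indicate via rational connectedness supplied by \cite{SSV}), and then transport Lenart's rule by $\mathcal{T}^{\lambda_k}$ with the same Seidel-shift bookkeeping as in Proposition \ref{propQuanPieri22} and the remark following it. The plan and its identified key obstacle match the paper's proof, so no further comparison is needed.
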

\begin{example}\label{examGr49}{
 Consider $N_{i, \lambda}^{\nu, 1}$ for $QK(Gr(4, 9))$, where $i=4$, $\lambda=(4,3,2,1)$ and $\nu=(3,2,1,0)$.
   The boxes inside the  outer rim of $\lambda$ are shaded below.


  We have   $e=|\nu|+9-|\lambda|-4=1$ and $\varrho=3$.  Thus  $N_{i, \lambda}^{\nu, 1}=-3$, giving the negative coefficient of the following product in $QK(Gr(4,9))$:
$$\mathcal{O}^4*\mathcal{O}^{(4,3,2,1)}= \mathcal{O}^{(5,4,3,2)}+q\mathcal{O}^{(2,2,1,0)}+q\mathcal{O}^{(3,1,1)}+q\mathcal{O}^{(3,2,0,0)}-3q\mathcal{O}^{(3,2,1,0)}.$$
Moreover, we let $\tilde \nu:=\nu\downarrow \lambda_4=(5,3,2,1)$ and $\tilde \lambda:=\lambda\downarrow \lambda_4=(3,2,1,0)$. Then
we have $r(\tilde \nu/\tilde \lambda)-1=3$ and $|\tilde \nu/\tilde \lambda|-i=1$, and hence $N_{i, \tilde \lambda}^{\tilde \nu, 0}=(-1)^1{3\choose 1}=-3=N_{i, \lambda}^{\nu, 1}$. }
\end{example}

\subsection{An alternative proof of Proposition \ref{propQuanPieri}}
We assume the following lemma first, and leave the proof in the next subsection.
\begin{lemma}\label{lemTwopointRicardson}
 Let $\eta\in \mathcal{P}_{k, n}$ and $1\leq i\leq n-k$. If $\eta_1=n-k$,  then we have 
     \begin{eqnarray}
    \label{KGW4}   I_d([\mathcal{O}_{X^i\cap X_\eta}], \gamma)&=&  \chi_X( [\mathcal{O}_{\Gamma_d(X^i\cap X_\eta)}]\cdot \gamma)\quad\mbox{for } 1\leq d<\min\{k, n-k\}.
\end{eqnarray}
\end{lemma}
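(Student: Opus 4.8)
The plan is to mimic the proof of Proposition \ref{KGW3} (Corollary 4.2 of \cite{BCMP11}), but with the subvariety $X^i \cap X_\eta$ in place of a single Schubert variety $X^\lambda$ (or $X_\eta$). Recall that the $K$-theoretic Gromov--Witten invariant $I_d([\mathcal{O}_{X^i\cap X_\eta}],\gamma)$ can be written as $\chi_{\overline{\mathcal{M}}_{0,2}(X,d)}(\mathrm{ev}_1^*[\mathcal{O}_{X^i\cap X_\eta}]\cdot \mathrm{ev}_2^*\gamma)$, and that this equals $\chi_X\big((\mathrm{ev}_2)_*\mathrm{ev}_1^*[\mathcal{O}_{X^i\cap X_\eta}]\cdot\gamma\big)$ by the projection formula. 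So the content of the lemma is the identity
\begin{equation*}
  (\mathrm{ev}_2)_*\,\mathrm{ev}_1^*[\mathcal{O}_{X^i\cap X_\eta}] \;=\; [\mathcal{O}_{\Gamma_d(X^i\cap X_\eta)}]\in K(X).
\end{equation*}
First I would set $M:=\mathrm{ev}_1^{-1}(X^i\cap X_\eta)\subset\overline{\mathcal{M}}_{0,2}(X,d)$, which is $M_d(X^i\cap X_\eta)$ in the notation of the paper, with image $\Gamma:=\Gamma_d(X^i\cap X_\eta)=\mathrm{ev}_2(M)$. The identity above will follow once we establish two facts: (i) $\mathrm{ev}_1^*[\mathcal{O}_{X^i\cap X_\eta}]=[\mathcal{O}_M]$, i.e.\ the fiber product defining $M$ has no higher Tor (rational/rationally connected fibers of $\mathrm{ev}_1$ over a Cohen--Macaulay or even just a reduced base suffices, together with flatness of $\mathrm{ev}_1$ up to the usual cohomological-triviality arguments); and (ii) the restricted map $\mathrm{ev}_2\colon M\to\Gamma$ is \emph{cohomologically trivial}, meaning $R\mathrm{ev}_{2*}\mathcal{O}_M=\mathcal{O}_\Gamma$. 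Then $(\mathrm{ev}_2)_*[\mathcal{O}_M]=[R\mathrm{ev}_{2*}\mathcal{O}_M]=[\mathcal{O}_\Gamma]$ and we are done by the projection formula.

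For (ii) I would follow the strategy of \cite[Theorem 4.1(b)]{BCMP11}, which handled the case $\mathrm{ev}_3\colon M_d(X^\lambda,X_\eta)\to\Gamma_d(X^\lambda,X_\eta)$. The key structural input is the diagram \eqref{diag} and the fact, used repeatedly in this section, that the evaluation maps factor through the kernel-span flag variety $Z_d$; the fibers of $\mathrm{ev}_2$ over a point of $\Gamma$ are then (birationally modeled by) fibers of the projections $\pi_{\mathrm G},\pi_{\mathrm F}$, which are rational homogeneous and in particular rationally connected with rational singularities. Here the hypothesis $\eta_1=n-k$ is essential: it is exactly what makes $X^i\cap X_\eta$ nonempty (as already noted in the proof of Theorem \ref{SeidelQK}) and, more to the point, it is the hypothesis of Lemma \ref{lemmiddle} and Proposition \ref{Gammacomparemukzero}, so that $\Gamma_d(X^i\cap X_\eta)=\Gamma_d(X^i,X_\eta)$ and, at the level of $Z_d$, $pr_1(Z_d(X^i\cap X_\eta))=pr_1(Z_d(X^i,X_\eta))$. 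This means the image in $Z_d$ (and hence the whole curve-neighborhood geometry) is unchanged when we pass from the pair $(X^i,X_\eta)$ to the single subvariety $X^i\cap X_\eta$, so the cohomological triviality of $\mathrm{ev}_2$ can be transported from the known two-pointed case. The range $1\le d<\min\{k,n-k\}$ is precisely the range in which diagram \eqref{diag} is set up and in which Lemma \ref{lemmiddle} applies.

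For (i), the point is that $X^i\cap X_\eta$ — an intersection of two Schubert varieties with respect to two \emph{fixed} (non-transverse) flags — is still reduced and Cohen--Macaulay (indeed it is a Richardson-type variety for the flags $F_\bullet$ and $F^{\rm opp}_\bullet$, or after applying $g$ it becomes a genuine Richardson variety, hence has rational singularities by Brion's results), and the map $\mathrm{ev}_1$ is flat with rationally connected fibers over it; so $\mathrm{ev}_1^*\mathcal{O}_{X^i\cap X_\eta}$ has no higher Tor and $[\mathcal{O}_M]=\mathrm{ev}_1^*[\mathcal{O}_{X^i\cap X_\eta}]$ in $K(\overline{\mathcal{M}}_{0,2}(X,d))$. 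I expect the main obstacle to be verifying (ii) cleanly — namely checking that the cohomological triviality statement of \cite[Theorem 4.1(b)]{BCMP11}, proved there for the third evaluation map restricted to $M_d(X^\lambda,X_\eta)$, genuinely descends to $\mathrm{ev}_2$ restricted to $M_d(X^i\cap X_\eta)$ using only the equality of $pr_1$-images from Lemma \ref{lemmiddle}; this requires a small Stein-factorization / connectedness-of-fibers argument rather than a formal citation, and is where the hypothesis $\eta_1=n-k$ does the real work.
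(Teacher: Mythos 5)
Your reduction of the lemma to the identity $(\mathrm{ev}_2)_*\,\mathrm{ev}_1^*[\mathcal{O}_{X^i\cap X_\eta}]=[\mathcal{O}_{\Gamma_d(X^i\cap X_\eta)}]$ is reasonable, and step (i) is unproblematic ($X^i\cap X_\eta$ is a genuine Richardson variety and $\mathrm{ev}_1$ is a locally trivial fibration, so flat pullback gives $[\mathcal{O}_M]$). The paper, for what it is worth, does not work on the moduli space at all: it first converts $I_d([\mathcal{O}_{X^i\cap X_\eta}],\gamma)$ into $\chi_X(\pi_{G*}[\mathcal{O}_{Z_d(X^i\cap X_\eta)}]\cdot\gamma)$ via the Buch--Mihalcea quantum-to-classical theorem (Proposition \ref{propQtoC}) together with cohomological triviality of $\pi_F$ on the Richardson variety $\pi_G^{-1}(X^i\cap X_\eta)$ (Corollary \ref{cortwoptRichardson}), and then proves that $\pi_G\colon Z_d(X^i\cap X_\eta)\to\Gamma_d(X^i\cap X_\eta)$ is cohomologically trivial (Proposition \ref{propcohtrivZd}). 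That second step is the real content, and it is exactly your step (ii).

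Here is the genuine gap: your sketch of (ii) would not go through as written. The fibers of $\mathrm{ev}_2$ on $M$ (equivalently, of $pr_1$ on $Z_d(X^i\cap X_\eta)$) are \emph{not} fibers of $\pi_G,\pi_F$ and are not rational homogeneous; they are cut out by the condition that the kernel-span flag meets $X^i\cap X_\eta$, i.e.\ they are the varieties $F_x$ of Proposition \ref{propcohtrivZd}, whose images $\pi_4(F_x)$ are intersections $X^{\hat m}(F_\bullet\cap V_{k+d})\cap X_{\hat\eta}(F^{\rm opp}_\bullet\cap V_{k+d})$ of two Schubert varieties in $Gr(k,V_{k+d})$ with respect to two flags that are in general \emph{not} opposite, hence not Richardson varieties and with no off-the-shelf rationality properties. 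Moreover, equality of $pr_1$-images (Lemma \ref{lemmiddle}) does not let you ``transport'' cohomological triviality from the two-pointed variety $Z_d(X^i,X_\eta)$ to its proper subvariety $Z_d(X^i\cap X_\eta)$: cohomological triviality of a surjection from a bigger source says nothing about the restriction to a smaller source with the same image, and connectedness of fibers (your proposed Stein-factorization fix) is far weaker than what is needed. To apply Proposition \ref{propcohtriv} (Koll\'ar/Buch--Mihalcea) one must prove that the \emph{general} fibers $F_x$ are rationally connected with the source having rational singularities; the paper does this by showing $\pi_4(F_x)$ is reduced, irreducible (Lemma \ref{lemirreducible}, using Lemma \ref{lemmiddle} and Brion's generic-fiber lemma), connected (Lemma \ref{lemmconnected}), and rationally connected via the Shapiro--Shapiro--Vainshtein refined double decomposition of intersections of Schubert cells for arbitrary pairs of flags (Proposition \ref{refineddecomp}, Corollary \ref{corratconn}), then concludes with Graber--Harris--Starr (Proposition \ref{propratconn}) and finally Koll\'ar. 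This non-transverse-flag rational connectedness argument is the key new ingredient of the proof (it is advertised as such in the introduction), and it is absent from your proposal; without it, step (ii) remains unestablished.
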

\begin{lemma}\label{lemTwopointRicardsonlarged}
 Let $\lambda,\eta\in \mathcal{P}_{k, n}$. If $X^\lambda\cap X_\eta\neq \emptyset$,  then we have 
     \begin{eqnarray}
    \label{KGW4}   I_d([\mathcal{O}_{X^\lambda\cap X_\eta}], \gamma)&=&  \chi_X( [\mathcal{O}_{\Gamma_d(X^\lambda\cap X_\eta)}]\cdot \gamma)\quad\mbox{for } d\geq \min\{k, n-k\}.
\end{eqnarray}
\end{lemma}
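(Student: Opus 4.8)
The plan is to reduce the stated identity to a cohomological triviality of an evaluation map, much as \eqref{KGW3} was deduced from \cite[Theorem~4.1]{BCMP11}, the simplification in the present range of $d$ being that the relevant projected Gromov--Witten variety is already the whole Grassmannian. Indeed, I would first check that $\Gamma_d(X^\lambda\cap X_\eta)=X$. Since $X^\lambda\cap X_\eta\neq\emptyset$, fix a point $p$ in it; by monotonicity of $\Gamma_d$ it suffices to show $\Gamma_d(\{p\})=X$. The point Schubert variety $X^{(n-k,\dots,n-k)}(F_\bullet)$ is a single point, and every point of $X$ is a $GL_n$-translate of it, so by the $GL_n$-equivariance of the construction and \eqref{GammaSch} one gets $\Gamma_d(\{p\})=g\cdot X^{\mu}$ for some $g\in GL_n$, where $\mu$ is obtained from the full $k\times(n-k)$ rectangle by deleting its first $d$ rows and columns; this $\mu$ is empty once $d\geq k$ or $d\geq n-k$, so $\Gamma_d(\{p\})=g\cdot X=X$. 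Hence the right-hand side of \eqref{KGW4} is $\chi_X([\mathcal{O}_X]\cdot\gamma)=\chi_X(\gamma)$, and the lemma reduces to the assertion $I_d([\mathcal{O}_{X^\lambda\cap X_\eta}],\gamma)=\chi_X(\gamma)$.

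For this, set $M:=\mathrm{ev}_1^{-1}(X^\lambda\cap X_\eta)\subseteq\overline{\mathcal{M}}_{0,2}(X,d)$, so that $\mathrm{ev}_2(M)=\Gamma_d(X^\lambda\cap X_\eta)=X$. The plan is to show that $\mathrm{ev}_2|_M\colon M\to X$ is cohomologically trivial, i.e.\ that $(\mathrm{ev}_2|_M)_*\mathcal{O}_M=\mathcal{O}_X$ and $R^i(\mathrm{ev}_2|_M)_*\mathcal{O}_M=0$ for $i>0$. Granting this, the flatness of $\mathrm{ev}_1$ over the smooth variety $X$ gives $\mathrm{ev}_1^*[\mathcal{O}_{X^\lambda\cap X_\eta}]=[\mathcal{O}_M]$, and then the $K$-theoretic projection formula and the Leray spectral sequence for $\mathrm{ev}_2|_M$ yield
\[
I_d([\mathcal{O}_{X^\lambda\cap X_\eta}],\gamma)=\chi_X\bigl([R(\mathrm{ev}_2|_M)_*\mathcal{O}_M]\cdot\gamma\bigr)=\chi_X([\mathcal{O}_X]\cdot\gamma)=\chi_X(\gamma),
\]
which is exactly the algebraic manipulation carrying \cite[Theorem~4.1]{BCMP11} to \cite[Corollary~4.2]{BCMP11}.

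The real work, and the step I expect to be the main obstacle, is establishing this cohomological triviality. I would use that $X^\lambda\cap X_\eta$ is an irreducible Richardson variety with rational singularities and that $\mathrm{ev}_1$ is flat with irreducible fibres (the Gromov--Witten varieties of a point in $Gr(k,n)$) that themselves have rational singularities, so that $M$ is irreducible with rational singularities. The surjectivity of $\mathrm{ev}_2|_M$ onto $X$ has been noted; the crucial point is that its general fibre over a point $q\in X$ is rationally connected. That fibre maps, via $\mathrm{ev}_1$, onto the rationally connected variety $X^\lambda\cap X_\eta$, with general fibre the space of degree-$d$ rational curves through two points of $X$, one general and one general in $X^\lambda\cap X_\eta$; since $d\geq\min\{k,n-k\}$ is at least the distance between any two points of the Grassmannian --- which is exactly where the hypothesis on $d$ is used --- that space of curves is rationally connected by the rational connectedness results underlying the finiteness theorems \cite{BCMPfinite00,BCMPfinite}, and hence the general fibre of $\mathrm{ev}_2|_M$ is rationally connected by the theorem of Graber--Harris--Starr. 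A Koll\'ar-type vanishing for higher direct images along a morphism with rationally connected general fibre then gives $R^i(\mathrm{ev}_2|_M)_*\mathcal{O}_M=0$ for $i>0$, and connectedness of the general fibre gives $(\mathrm{ev}_2|_M)_*\mathcal{O}_M=\mathcal{O}_X$; carrying out carefully the rational-connectedness (and rational-singularity) analysis of these general fibres is the delicate part.
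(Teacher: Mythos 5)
Your opening step agrees with the paper: for $d\geq\min\{k,n-k\}$ one has $\Gamma_d(X^\lambda\cap X_\eta)=X$ (via \eqref{GammaSch}), so the right-hand side of \eqref{KGW4} is $\chi_X(\gamma)$ and the lemma amounts to $I_d([\mathcal{O}_{X^\lambda\cap X_\eta}],\gamma)=\chi_X(\gamma)$. But the route you propose for the left-hand side is incomplete precisely at its central step. Everything rests on the cohomological triviality of $\mathrm{ev}_2|_M$ for $M=\mathrm{ev}_1^{-1}(X^\lambda\cap X_\eta)$, and the inputs needed for it are exactly the statements you assert rather than prove: (i) that $M$ is irreducible with rational singularities requires knowing that the fibres $\mathrm{ev}_1^{-1}(p)$ are irreducible with rational singularities, which is a nontrivial theorem of BCMP type and not a formal consequence of flatness; (ii) your application of Proposition \ref{propratconn} (Graber--Harris--Starr) and of the Koll\'ar-type statement Proposition \ref{propcohtriv} presupposes that the general fibre $F_q$ of $\mathrm{ev}_2|_M$ is an irreducible (in particular connected) variety, and nothing in your sketch rules out several components; without connectedness you cannot even conclude $(\mathrm{ev}_2|_M)_*\mathcal{O}_M=\mathcal{O}_X$, and your appeal to GHS to prove rational connectedness of ``the fibre'' is circular on this point; (iii) the rational connectedness of $M_d(p,q)$ from the finiteness papers is stated for a general pair of points of $X$, so you must additionally argue that a general $q\in X$ together with a general $p$ in the Richardson variety lies in the open $GL_n$-orbit of $X\times X$. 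Item (ii) is exactly the kind of difficulty the paper spends Lemma \ref{lemirreducible}, Corollary \ref{corratconn} and Proposition \ref{propcohtrivZd} resolving in the harder range $d<\min\{k,n-k\}$; labelling it ``the delicate part'' does not discharge it, so as written the proof has a genuine gap.

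The paper's own proof of this lemma avoids all of this geometry. It expands $[\mathcal{O}_{X^\lambda\cap X_\eta}]=\mathcal{O}^\lambda\cdot\mathcal{O}^{\eta^\vee}=\sum_\kappa N_{\lambda,\eta^\vee}^{\kappa,0}\mathcal{O}^\kappa$, uses linearity of $I_d$ together with the already-available Schubert-class identity $I_d(\mathcal{O}^\kappa,\gamma)=\chi_X([\mathcal{O}_{\Gamma_d(X^\kappa)}]\cdot\gamma)$ and the fact $\Gamma_d(X^\kappa)=X$ in this range of $d$, and then concludes with $\sum_\kappa N_{\lambda,\eta^\vee}^{\kappa,0}=\chi_X([\mathcal{O}_{X^\lambda\cap X_\eta}])=1$, which holds because Richardson varieties are unirational with rational singularities. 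To repair your argument you would either have to supply the missing irreducibility and connectedness statements for $M$ and for the general fibres of $\mathrm{ev}_2|_M$ (in the spirit of Section 5.2), or simply switch to this linearity argument, which uses nothing beyond facts already established in the paper.
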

\begin{proof}
   Since $d\geq \min\{k, n-k\}$,  we have $\Gamma_d(\mbox{point})=X$ by \eqref{GammaSch}, implying that $\Gamma_d(X^\lambda, X_\eta)=\Gamma_d(X^\lambda\cap X_\eta)=X$.
   Notice  $\chi_X([\mathcal{O}_Z])=1$, whenever $Z\subset X$ is a closed torus-invariant subvariety that is unirational
and has rational singularities \cite[Corollary 4.18]{Deba}. In particular, we have
$$1=\chi_X([\mathcal{O}_{X^\lambda\cap X_{\eta}}])=\chi_X(\mathcal{O}^\lambda\cdot \mathcal{O}^{\eta^\vee})=\chi_X(\sum\nolimits_{\kappa}N_{\lambda, \eta^\vee}^{\kappa, 0}\mathcal{O}^\kappa)=\sum\nolimits_{\kappa}N_{\lambda, \eta^\vee}^{\kappa, 0}.$$
\begin{align*}
   I_d([\mathcal{O}_{X^\lambda\cap X_\eta}], \gamma) =I_d(\mathcal{O}^\lambda\cdot \mathcal{O}^{\eta^\vee}, \gamma)
                    &=\sum_\kappa N_{\lambda,\eta^\vee}^{\kappa, 0} I_d(\mathcal{O}^\kappa, \gamma)\\
                    &=\sum_\kappa N_{\lambda,\eta^\vee}^{\kappa, 0} \chi_X([\mathcal{O}_{\Gamma_d(X^\kappa)}], \gamma)\\
                   &=\sum_\kappa N_{\lambda,\eta^\vee}^{\kappa, 0} \chi_X([\mathcal{O}_{X}], \gamma)\\
                      &= \chi_X([\mathcal{O}_{\Gamma_d(X^\lambda\cap X_\eta)}], \gamma).
\end{align*}

\end{proof}
\begin{cor}\label{Pierideg0}
Let $\mu\in \mathcal{P}_{k, n}$ and $1\leq i\leq n-k$. If $\mu_k=0$,  then we have
$$\mathcal{O}^{i}*\mathcal{O}^\mu=\mathcal{O}^{i}\cdot \mathcal{O}^\mu.$$
\end{cor}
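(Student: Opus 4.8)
The plan is to prove that the quantum product $\mathcal{O}^i * \mathcal{O}^\mu$ has no $q$-terms when $\mu_k = 0$, i.e. $N_{i,\mu}^{\nu,d} = 0$ for all $d \geq 1$, so that it collapses to the classical product. I would proceed degree by degree using the recursion \eqref{sc2}, exactly as in the proof of Theorem \ref{SeidelQK}. The key geometric input is that when $\mu_k = 0$ we have $\mu^\vee_1 = n-k$, so the intersection $X^i \cap X_{\mu^\vee}$ is non-empty; moreover one should identify this intersection explicitly as a Schubert variety with respect to a suitably permuted flag, much as was done for $X^{n-k} \cap X_{\mu^\vee}$ in the proof of Theorem \ref{SeidelQK}. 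Write $X^i \cap X_{\mu^\vee}$ as a Schubert variety $X^{\tilde\mu}(h \cdot F_\bullet)$ for an appropriate partition $\tilde\mu$ and permutation matrix $h$, so that $[\mathcal{O}_{X^i \cap X_{\mu^\vee}}] = \mathcal{O}^{\tilde\mu}$ and hence $\mathcal{O}^i \cdot \mathcal{O}^\mu = \mathcal{O}^{\tilde\mu}$ in $K(X)$.

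Next I would set up the degree-one computation. Using \eqref{sc2} with $d=1$, $N_{i,\mu}^{\nu,1} = I_1(\mathcal{O}^i, \mathcal{O}^\mu, \xi_\nu) - \sum_\kappa N_{i,\mu}^{\kappa,0} I_1(\mathcal{O}^\kappa, \xi_\nu)$. Since $N_{i,\mu}^{\kappa,0}$ are the classical structure constants in $\mathcal{O}^i \cdot \mathcal{O}^\mu = \mathcal{O}^{\tilde\mu}$ (when interpreted through the classical $K$-product, but really we just have $\mathcal{O}^i \cdot \mathcal{O}^\mu = [\mathcal{O}_{X^i \cap X_{\mu^\vee}}]$ if the intersection is proper with rational singularities), the subtracted sum becomes $I_1([\mathcal{O}_{X^i \cap X_{\mu^\vee}}], \xi_\nu)$. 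Here is where Lemma \ref{lemTwopointRicardson} enters: for $1 \leq d < \min\{k, n-k\}$ it gives $I_d([\mathcal{O}_{X^i \cap X_\eta}], \gamma) = \chi_X([\mathcal{O}_{\Gamma_d(X^i \cap X_\eta)}] \cdot \gamma)$ (with $\eta = \mu^\vee$, noting $\eta_1 = n-k$), while Lemma \ref{lemTwopointRicardsonlarged} covers $d \geq \min\{k,n-k\}$. Combined with \eqref{KGW3} giving $I_1(\mathcal{O}^i, \mathcal{O}^\mu, \xi_\nu) = \chi_X([\mathcal{O}_{\Gamma_1(X^i, X_{\mu^\vee})}] \cdot \xi_\nu)$, the vanishing $N_{i,\mu}^{\nu,1} = 0$ follows at once from Proposition \ref{Gammacomparemukzero}, which asserts $\Gamma_1(X^i, X_{\mu^\vee}) = \Gamma_1(X^i \cap X_{\mu^\vee})$ precisely under the hypothesis $\eta_1 = n-k$.

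For the induction on $d$, suppose $N_{i,\mu}^{\kappa,e} = 0$ for all $\kappa$ and all $1 \leq e < d$. Then \eqref{sc2} reduces to $N_{i,\mu}^{\nu,d} = I_d(\mathcal{O}^i, \mathcal{O}^\mu, \xi_\nu) - N_{i,\mu}^{\nu,0}$-type terms coming only from $e = d$, i.e. $N_{i,\mu}^{\nu,d} = I_d(\mathcal{O}^i, \mathcal{O}^\mu, \xi_\nu) - \sum_\kappa N_{i,\mu}^{\kappa,0} I_d(\mathcal{O}^\kappa, \xi_\nu) = \chi_X([\mathcal{O}_{\Gamma_d(X^i,X_{\mu^\vee})}] \cdot \xi_\nu) - \chi_X([\mathcal{O}_{\Gamma_d(X^i \cap X_{\mu^\vee})}] \cdot \xi_\nu)$, where the second equality again uses \eqref{KGW3}, Lemmas \ref{lemTwopointRicardson} and \ref{lemTwopointRicardsonlarged}, and the decomposition of $\mathcal{O}^i \cdot \mathcal{O}^\mu$ as $[\mathcal{O}_{X^i \cap X_{\mu^\vee}}]$. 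Proposition \ref{Gammacomparemukzero} now gives $\Gamma_d(X^i, X_{\mu^\vee}) = \Gamma_d(X^i \cap X_{\mu^\vee})$ for all $d \geq 1$, so the difference vanishes and $N_{i,\mu}^{\nu,d} = 0$. Therefore $\mathcal{O}^i * \mathcal{O}^\mu = \mathcal{O}^i \cdot \mathcal{O}^\mu$.

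The main obstacle I anticipate is the bookkeeping in identifying $X^i \cap X_{\mu^\vee}$ as a genuine Schubert variety and checking that the intersection is proper with rational singularities so that $[\mathcal{O}_{X^i \cap X_{\mu^\vee}}] = \mathcal{O}^i \cdot \mathcal{O}^\mu$ holds on the nose in $K(X)$ — this is the analogue of the flag-permutation computation carried out for $X^{n-k} \cap X_{\mu^\vee}$ in the proof of Theorem \ref{SeidelQK}, and it requires correctly tracking the index shifts $F^{\rm opp}_{j + \mu^\vee_{k-j+1}}$ versus $F_{n-k+1-i}$. Everything else is a formal consequence of the machinery already assembled: \eqref{sc2}, \eqref{KGW3}, Proposition \ref{Gammacomparemukzero}, and Lemmas \ref{lemTwopointRicardson} and \ref{lemTwopointRicardsonlarged}.
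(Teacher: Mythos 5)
Your proposal is correct and follows essentially the same route as the paper's own proof: the recursion \eqref{sc2}, the identification of $\mathcal{O}^i\cdot\mathcal{O}^\mu$ with $[\mathcal{O}_{X^i\cap X_{\mu^\vee}}]$, formula \eqref{KGW3} for the three-point invariants together with Lemmas \ref{lemTwopointRicardson} and \ref{lemTwopointRicardsonlarged} for the two-point ones, Proposition \ref{Gammacomparemukzero} (valid since $\mu_k=0$ gives $\mu^\vee_1=n-k$) to cancel the two Euler characteristics, and induction on $d$. One caveat: the step you flag as the main obstacle, writing $X^i\cap X_{\mu^\vee}$ as a single Schubert variety $X^{\tilde\mu}(h\cdot F_\bullet)$ for a permuted flag, is in fact impossible for general $i<n-k$ — for instance in $Gr(2,4)$ with $i=1$ and $\mu=(1,0)$ the intersection is isomorphic to $\mathbb{P}^1\times\mathbb{P}^1$, which is not a Schubert variety; that identification is special to $i=n-k$, as exploited in the proof of Theorem \ref{SeidelQK}. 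This does no harm to your argument, because the fallback you state parenthetically — $\mathcal{O}^i\cdot\mathcal{O}^\mu=[\mathcal{O}_{X^i\cap X_{\mu^\vee}}]$ since the intersection of Schubert varieties with respect to opposite flags is a Richardson variety, proper with rational singularities — is exactly the fact the paper uses, and no bookkeeping with permuted flags is needed.
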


\begin{proof} Since $\mu_k=0$, $\mu^\vee_1=n-k$.
  For any $\nu\in \mathcal{P}_{k, n}$,
     by \eqref{sc2} we have
\begin{align*}
   N_{i, \mu}^{\nu, 1}
            &= I_1(\mathcal{O}^{i}, \mathcal{O}^\mu, \xi_{\nu})-\sum_{\kappa\in \mathcal{P}_{k, n}}N_{i, \mu}^{\kappa, 0}I_1(\mathcal{O}^\kappa,\xi_\nu)\\
            &=I_1(\mathcal{O}^{i}, \mathcal{O}^\mu, \xi_{\nu})- I_1(\mathcal{O}^{i}\cdot \mathcal{O}^\mu,\xi_\nu)\\
            &=I_1(\mathcal{O}^{i}, \mathcal{O}^\mu, \xi_{\nu})- I_1([\mathcal{O}_{X^{i}\cap X_{\mu^\vee}}],\xi_\nu)\\
            &=\chi_X([\mathcal{O}_{\Gamma_1( X^{i}, X_{\mu^\vee})}]\cdot \xi_\nu)-\chi_X([\mathcal{O}_{\Gamma_1( X^{i}\cap X_{\mu^\vee})}]\cdot \xi_\nu)\\
           &=0
 \end{align*}
Here the third equality follows from Lemma \ref{lemTwopointRicardson}, and the fourth equality follows from Proposition \ref{Gammacomparemukzero}.   Thus $N_{n-k, \mu}^{\nu, d}=0$ for all $1\leq d<\min\{k, n-k\}$, by using Lemma \ref{lemTwopointRicardson}, Proposition \ref{Gammacomparemukzero} and induction on $d$.

As a consequence, for $d=\min\{k, n-k\}$, by Lemma \ref{lemTwopointRicardsonlarged} we have
 \begin{align*}
 N_{i, \mu}^{\nu, d}
            &= I_d(\mathcal{O}^{i}, \mathcal{O}^\mu, \xi_{\nu})- I_d(\mathcal{O}^i\cdot \mathcal{O}^\mu,\xi_\nu) \\
            &=\chi_X([\mathcal{O}_{\Gamma_d( X^{i}, X_{\mu^\vee})}]\cdot \xi_\nu)-\chi_X([\mathcal{O}_{\Gamma_d( X^{i}\cap X_{\mu^\vee})}]\cdot \xi_\nu)\\
             &=0.
 \end{align*}
The last equality holds by noting   $\Gamma_d( X^{i}, X_{\mu^\vee})=\Gamma_d( X^{i}\cap X_{\mu^\vee})=X$. Hence,
  $N_{i, \mu}^{\nu, d}=0$ for all $d\geq \min\{k, n-k\}$, by using Lemma \ref{lemTwopointRicardsonlarged} and induction on $d$.
\end{proof}

\begin{remark}
  The above corollary is also a consequence of \cite[Proposition 7.1]{BCMP22}.
\end{remark}
 We remind of our assumption for    $\nu/\lambda$ that   $\lambda\leq \nu$ with the complement  of $\lambda$ in $\nu$ being a horizontal strip.
Now we can reprove 
Proposition \ref{propQuanPieri}  in the following form.
\begin{prop} \label{propQuanPieri22} For any  $\lambda \in \mathcal{P}_{k, n}$ and $1\leq i\leq n-k$, in $QK(Gr(k, n))$ we have
  $$\mathcal{O}^\lambda*\mathcal{O}^{i}=\sum (-1)^{|\nu/\lambda|-i}{r(\nu/\lambda)-1\choose |\nu/\lambda|-i}\mathcal{O}^{\nu}+q\sum (-1)^{|\nu|+n-i-|\lambda|}{r(\tilde \nu/\tilde \lambda)-1\choose |\tilde \nu/\tilde \lambda|-i}\mathcal{O}^{\nu}.$$
Here $i\leq |\nu/\lambda|\leq i+ r(\nu/\lambda)-1$ in the first sum.
Set
 $\tilde \lambda=\lambda\downarrow \lambda_k$. 
The second sum occurs only if $\lambda_k>0$, and when this holds it is over partitions $\nu$ such that the associated partition $\tilde \nu$ defined by  $$\tilde \nu_1=\nu_k-\lambda_k+n-k+1, \tilde \nu_i= \nu_{i-1}-\lambda_k+1, \, 2\leq i\leq k$$
 satisfies    with {\upshape a)} 
    $i\leq |\tilde \nu/\tilde \lambda|\leq i+r(\tilde \nu/\tilde \lambda)-1$ and {\upshape b)} 
 $\tilde{\nu}_1>n-k-\lambda_k$.
\end{prop}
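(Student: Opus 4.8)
The plan is to deduce Proposition \ref{propQuanPieri22} from the classical Pieri rule (the degree-zero part of Proposition \ref{propQuanPieri}, i.e.\ Lenart's theorem) by transporting the product through a power of the Seidel operator. If $\lambda_k=0$ there is nothing new: condition (b) fails, so the second sum is empty, while Corollary \ref{Pierideg0} gives $\mathcal{O}^\lambda*\mathcal{O}^i=\mathcal{O}^\lambda\cdot\mathcal{O}^i$, which is exactly the first sum. So assume $\lambda_k\geq1$ and set $\tilde\lambda:=\lambda\downarrow\lambda_k$. Iterating Theorem \ref{SeidelQK} exactly $\lambda_k$ times — at each stage the partition still has positive last entry, so the first branch applies — gives $\mathcal{H}^{\lambda_k}(\mathcal{O}^\lambda)=q^{\lambda_k}\mathcal{O}^{\tilde\lambda}$, where $\tilde\lambda=(\lambda_1-\lambda_k,\dots,\lambda_{k-1}-\lambda_k,0)$ has $\tilde\lambda_k=0$, and adding $\lambda_k$ to every part of $\tilde\lambda$ recovers $\lambda$.

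Next I would push the product through $\mathcal{H}^{\lambda_k}$ and back. By commutativity and associativity of $QK(X)$,
\[
\mathcal{H}^{\lambda_k}(\mathcal{O}^\lambda*\mathcal{O}^i)=\big(\mathcal{H}^{\lambda_k}\mathcal{O}^\lambda\big)*\mathcal{O}^i=q^{\lambda_k}\,\mathcal{O}^{\tilde\lambda}*\mathcal{O}^i=q^{\lambda_k}\,\mathcal{O}^{\tilde\lambda}\cdot\mathcal{O}^i ,
\]
the last equality being Corollary \ref{Pierideg0}, applicable since $\tilde\lambda_k=0$. Applying $\mathcal{T}^{\lambda_k}$ to both sides and using $\mathcal{T}\mathcal{H}=q\,\mathrm{Id}$ (hence $\mathcal{T}^{\lambda_k}\mathcal{H}^{\lambda_k}=q^{\lambda_k}\mathrm{Id}$, as $\mathcal{T}$ and $\mathcal{H}$ commute), the left side becomes $q^{\lambda_k}(\mathcal{O}^\lambda*\mathcal{O}^i)$; cancelling $q^{\lambda_k}$ — legitimate since $\mathbb{Z}[[q]]$ is a domain — yields the clean identity
\[
\mathcal{O}^\lambda*\mathcal{O}^i=\mathcal{T}^{\lambda_k}\big(\mathcal{O}^{\tilde\lambda}\cdot\mathcal{O}^i\big).
\]
Now I would expand $\mathcal{O}^{\tilde\lambda}\cdot\mathcal{O}^i=\sum_{\tilde\nu}(-1)^{|\tilde\nu/\tilde\lambda|-i}\binom{r(\tilde\nu/\tilde\lambda)-1}{|\tilde\nu/\tilde\lambda|-i}\mathcal{O}^{\tilde\nu}$ by the classical Pieri rule — the sum being over $\tilde\nu\supset\tilde\lambda$ with $\tilde\nu/\tilde\lambda$ a horizontal strip and $i\leq|\tilde\nu/\tilde\lambda|\leq i+r(\tilde\nu/\tilde\lambda)-1$ — and apply $\mathcal{T}^{\lambda_k}$ term by term. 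Iterating Theorem \ref{SeidelQKT} gives $\mathcal{T}^{\lambda_k}(\mathcal{O}^{\tilde\nu})=q^{w(\tilde\nu)}\mathcal{O}^{\tilde\nu\uparrow\lambda_k}$, where $w(\tilde\nu)$ is the number of the $\lambda_k$ successive first Seidel shifts of $\tilde\nu$ that are of the second (``wrap'') type.

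The remaining, and main, step is the bookkeeping for $w(\tilde\nu)$ and $\tilde\nu\uparrow\lambda_k$. Because $\tilde\nu/\tilde\lambda$ is a horizontal strip, $\tilde\nu_2\leq\tilde\lambda_1=\lambda_1-\lambda_k\leq n-k-\lambda_k$. If $\tilde\nu_1\leq n-k-\lambda_k$, then each of the $\lambda_k$ shifts merely adds $1$ to every part, so $w(\tilde\nu)=0$ and $\nu:=\tilde\nu\uparrow\lambda_k$ is $\tilde\nu$ with $\lambda_k$ added to every part; since the same operation sends $\tilde\lambda$ to $\lambda$, we get $\nu/\lambda=\tilde\nu/\tilde\lambda$ as skew shapes, and these terms reproduce exactly the first sum in the statement. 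If instead $\tilde\nu_1>n-k-\lambda_k$, set $t:=n-k-\tilde\nu_1$, so $0\leq t<\lambda_k$: the first $t$ shifts add $1$ to each part; the $(t{+}1)$-st is a wrap, the first part having just reached $n-k$; and since the new first part is then $\tilde\nu_2+t$ with $\tilde\nu_2+t+(\lambda_k-t-1)=\tilde\nu_2+\lambda_k-1\leq n-k-1$, the remaining $\lambda_k-t-1$ shifts add $1$ each with no further wrap. Hence $w(\tilde\nu)=1$ and $\tilde\nu\uparrow\lambda_k=(\tilde\nu_2+\lambda_k-1,\dots,\tilde\nu_k+\lambda_k-1,\,\lambda_k-n+k+\tilde\nu_1-1)=:\nu$, which is exactly the partition in the second sum, conditions (a), (b), (c) corresponding to $i\leq|\tilde\nu/\tilde\lambda|\leq i+r(\tilde\nu/\tilde\lambda)-1$, $\lambda_k>0$, and $\tilde\nu_1>n-k-\lambda_k$; one checks directly that $\tilde\nu\mapsto\nu$ is a bijection onto the stated indexing set and that $\nu\in\mathcal{P}_{k,n}$. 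Finally, the single wrap gives $|\tilde\nu|-|\nu|=n-k\lambda_k$, which together with $|\tilde\lambda|=|\lambda|-k\lambda_k$ yields $|\tilde\nu/\tilde\lambda|=|\nu|+n-|\lambda|$, so $(-1)^{|\tilde\nu/\tilde\lambda|-i}=(-1)^{|\nu|+n-i-|\lambda|}$ and the binomial is the one displayed. In particular $w(\tilde\nu)\in\{0,1\}$ always, so no power $q^{\geq2}$ occurs, recovering the polynomiality of the quantum Pieri product as a byproduct. The only genuinely delicate point is the case $\tilde\nu_1>n-k-\lambda_k$: that exactly one wrap occurs hinges on the horizontal-strip bound $\tilde\nu_2\leq\tilde\lambda_1$, and identifying $\tilde\nu\uparrow\lambda_k$ with the explicit partition above requires keeping careful track of the shifted complete flag.
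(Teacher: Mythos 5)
Your proposal is correct and takes essentially the same route as the paper: both rest on the identity $\mathcal{O}^\lambda*\mathcal{O}^{i}=\mathcal{T}^{\lambda_k}(\mathcal{O}^{\tilde\lambda}\cdot\mathcal{O}^{i})$ obtained from Corollary \ref{Pierideg0} and associativity, followed by Lenart's rule and a careful count of how many of the $\lambda_k$ Seidel shifts of $\tilde\nu$ are wraps (at most one, thanks to $\tilde\nu_2\le\tilde\lambda_1$), with the same identification of $\nu=\tilde\nu\uparrow\lambda_k$ and the same sign bookkeeping. The only (cosmetic) difference is that the paper writes $\mathcal{O}^\lambda=\mathcal{T}^{\lambda_k}(\mathcal{O}^{\tilde\lambda})$ directly, whereas you pass through $\mathcal{H}^{\lambda_k}$ and cancel $q^{\lambda_k}$.
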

\begin{proof} By Corollary \ref{Pierideg0}, $N_{\lambda, i}^{\nu, d}\neq 0$ for   $d>0$, only if $\lambda_k>0$.
Noting  $\tilde \lambda_k=0$, we have
    $\mathcal{O}^\lambda=\mathcal{T}^{\lambda_k}(\mathcal{O}^{\tilde \lambda})$. By Corollary \ref{Pierideg0} and the associativity in $QK(X)$, we have
    $$\mathcal{O}^\lambda*\mathcal{O}^{i}=\mathcal{T}^{\lambda_k}(\mathcal{O}^{\tilde \lambda})*\mathcal{O}^i=\mathcal{T}^{\lambda_k}(\mathcal{O}^{\tilde \lambda}*\mathcal{O}^i)=\mathcal{T}^{\lambda_k}(\mathcal{O}^{\tilde \lambda} \cdot \mathcal{O}^i)=\mathcal{T}^{\lambda_k}(\sum\nolimits_{\tilde \nu} N_{\tilde \lambda, i}^{\tilde \nu, 0}\mathcal{O}^{\tilde \nu}).$$
    Hence, $\mathcal{O}^\lambda*\mathcal{O}^{i}=\sum_{\tilde \nu}q^{d_{\lambda_k}}N_{\tilde \lambda, i}^{\tilde \nu, 0} \mathcal{O}^{\tilde \nu\uparrow \lambda_k}$, where $d_{\lambda_k}:=d_{\lambda_k}(I_{\tilde \nu})$ was defined in \eqref{SeidelQHdlambdak}.
    In other words, for $\nu=\tilde \nu\uparrow \lambda_k$, we have $\tilde \nu=\nu\downarrow \lambda_k$ and
     $N_{\lambda, i}^{\nu, d_{\lambda_k}}=N_{\tilde \lambda, i}^{\tilde \nu, 0}$.
   By Lenart's Pieri rule,  $N_{\tilde \lambda, i}^{\tilde \nu, 0}\neq 0$ only if $\tilde \nu/\tilde \lambda$ is a horizontal strip with
    $|\tilde \nu/\tilde \lambda|\geq i$, which implies $\tilde \nu_2\leq \tilde \lambda_1=\lambda_1-\lambda_k$.
   By Theorem \ref{SeidelQKT}, we can write $\mathcal{T}^j(\mathcal{O}^{\tilde \nu})=q^{d_j}\mathcal{O}^{\tilde \nu\uparrow j}$. The power $d_j$ is increasing in $j$; for $j=n-k-\tilde \nu_2+1$, we notice $d_j=2$ and $d_{j-1}=1$. Since
    $$n-k-\tilde \nu_2+1\geq n-k-(\lambda_1-\lambda_k)+1=(n-k-\lambda_1)+1+\lambda_k>\lambda_k,$$
    it follows    that $d_{\lambda_k}\leq 1$. Moreover,   $d_k\geq 1$ holds if and only if
       $\lambda_k>n-k-\tilde \nu_1=:r$, by noticing  $\mathcal{T}^{r}(\mathcal{O}^{\tilde \nu})=\mathcal{O}^{(n-k, \tilde \nu_2+r, \cdots, \tilde \nu_k+r)}$. (We refer to section 6 for further study of the reductions of quantum-to-classical types by using $\mathcal{T}$.)

       In a summary, we have  $N_{\lambda, i}^{\nu, d}=0$ for any $d>1$. Moreover, $N_{\lambda, i}^{\nu, 1}\neq 0$
        if and only if $N_{\tilde \lambda, i}^{\tilde \nu, 0}\neq 0$,  $\lambda_k>0$ and   $\lambda_k>n-k-\tilde \nu_1$ all hold.
      When all these hold, we have
       $\nu=\tilde \nu\uparrow \lambda_k=(\tilde \nu_2+\lambda_k-1, \cdots, \tilde \nu_k+\lambda_k-1, \lambda_k-n+k+\tilde \nu_1-1)$.
\end{proof}

\begin{remark}{\upshape It follows   from the expression of $\nu$ that  $|\tilde \nu/\tilde \lambda|=|\nu|+n-|\lambda|$.
Moreover, we have $\lambda_j>\nu_j\geq \lambda_{j+1}-1$ for all $j$, where $\lambda_{k+1}:=0$. This   shows that $\nu$ is obtained from $\lambda$ by removing a subset of the boxes in the outer rim of $\lambda$ with at least one box removed from each row.
For $2\leq j\leq k$, the $j$-th row of $\tilde \nu/\tilde \lambda$ makes contributions in the counting $r(\tilde \nu/\tilde \lambda)$
   if and only if $\tilde \nu_j>\tilde \lambda_j$, equivalently $\nu_{j-1}=\tilde \nu_j+\lambda_k-1>\lambda_j-1$;  namely the $(j-1)$-th row of $\nu$ contain at least one box from the outer rim of $\lambda$.
   Since $\tilde \nu_1>n-k-\lambda_k\geq \lambda_1-\lambda_k=\tilde \lambda_1$, the first row of $\tilde \nu/\tilde \lambda$ makes contribution in the counting.
   Hence $r(\tilde \nu/\tilde \lambda)-1$ coincides with  the number of rows of $\nu$ that contain at least one box from the outer rim of $\lambda$, excluding the bottom row of this rim. In a summary, the description of the second sum
    in the above proposition is indeed equivalent to that in Proposition \ref{propQuanPieri}.
    }
\end{remark}

\subsection{Proof of Lemma \ref{lemTwopointRicardson}}

\subsubsection{Basic facts on cohomologicality} We start with
  some facts about cohomologically trivial morphisms, as were collected in \cite[section 2]{BCMP11}.

\begin{defn}
   {\upshape A morphism $f : Y\to Z$  of schemes is called \textit{cohomologically trivial}, if
 $f_*\mathcal{O}_Y=\mathcal{O}_Z$ and $R^if_*\mathcal{O}_Y = 0$ for $i > 0$.
 }
\end{defn}
\begin{defn} {\upshape
   An irreducible complex variety $Y$ has \textit{rational singularities}, if there exists a
cohomologically trivial resolution $\varphi:\tilde Y\to Y$, namely $\tilde Y$ is a nonsingular
variety and $\varphi$ is a proper birational and cohomologically trivial morphism.
}
\end{defn}
The following is  \cite[Proposition 2.2]{BCMP11},  proved in \cite[Theorem 3.1]{BuMi} by Buch and Mihalcea  as an application of   \cite[Theorem 7.1]{Koll} by Koll\'ar.
\begin{prop}\label{propcohtriv}
  Let $f : Y\to Z$ be a surjective morphism between complex irreducible
projective varieties with rational singularities. Assume that the general fibers of $f$
are rationally connected. Then $f$ is cohomologically trivial.
\end{prop}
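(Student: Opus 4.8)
The plan is to verify the two conditions defining cohomological triviality — $f_{*}\mathcal{O}_{Y}=\mathcal{O}_{Z}$ and $R^{i}f_{*}\mathcal{O}_{Y}=0$ for $i>0$ — separately; only the second is non-formal, and it rests on a theorem of Koll\'ar that we may invoke. Note at the outset that $f$ is automatically proper, $Y$ and $Z$ being projective.

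For $f_{*}\mathcal{O}_{Y}=\mathcal{O}_{Z}$: take the Stein factorization $Y\xrightarrow{h}Z'\xrightarrow{g}Z$, where $Z'=\operatorname{Spec}_{Z}f_{*}\mathcal{O}_{Y}$, so $g$ is finite and $h$ has connected fibres. A variety with rational singularities is normal (take the cohomologically trivial resolution from the definition; the pushforward of its structure sheaf is integrally closed in the function field), so $Z$ is normal. Since a rationally connected variety is connected, the general fibre of $f$ is connected, which forces $g$ to be generically one-to-one, hence birational; and a finite birational morphism onto a normal variety is an isomorphism. So $g$ is an isomorphism and $f_{*}\mathcal{O}_{Y}=g_{*}\mathcal{O}_{Z'}=\mathcal{O}_{Z}$.

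For $R^{i}f_{*}\mathcal{O}_{Y}=0$ with $i>0$, I would first reduce to the case that $Y$ is smooth. Choose a cohomologically trivial resolution $\psi\colon\widetilde Y\to Y$, which exists since $Y$ has rational singularities. As $R^{j}\psi_{*}\mathcal{O}_{\widetilde Y}=0$ for $j>0$, the Leray spectral sequence for $f\circ\psi$ collapses and gives $R^{i}(f\circ\psi)_{*}\mathcal{O}_{\widetilde Y}\cong R^{i}f_{*}\mathcal{O}_{Y}$ for all $i$; one must verify that the general fibre of $f\circ\psi$ is rationally chain connected (it is connected, being $\psi^{-1}$ of the connected fibre $f^{-1}(z)$, and has a component birational to the rationally connected $f^{-1}(z)$) — this is the one place where the resolution should be chosen with a little care, but it causes no essential difficulty. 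Granting $Y$ smooth, cohomology and base change identifies the stalk of $R^{i}f_{*}\mathcal{O}_{Y}$ at a general $z\in Z$ with $H^{i}(Y_{z},\mathcal{O}_{Y_{z}})$, which vanishes for $i>0$ because a smooth projective rationally connected variety has $h^{i}(\mathcal{O})=0$; hence each $R^{i}f_{*}\mathcal{O}_{Y}$ is generically zero.

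The genuinely deep step is to upgrade this generic vanishing to vanishing on all of $Z$, and this is exactly \cite[Theorem 7.1]{Koll}, used in \cite[Theorem 3.1]{BuMi}; I would simply cite it. Its mechanism is Koll\'ar's theory of higher direct images of dualizing sheaves: via relative duality along $f$ — legitimate since $Z$, having rational singularities, is Cohen--Macaulay — the vanishing of $R^{i}f_{*}\mathcal{O}_{Y}$ is reduced to properties of the higher direct images $R^{j}f_{*}\omega_{Y}$, which Koll\'ar's torsion-freeness theorem shows to be torsion-free, and a torsion-free coherent sheaf vanishing on a dense open subset is zero. Combined with the first paragraph this shows $f$ is cohomologically trivial. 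The main obstacle is thus entirely packaged inside \cite{Koll}; everything we add around it is formal, the only real point of attention being the behaviour of fibres under the resolution.
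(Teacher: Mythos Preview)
The paper does not supply its own proof here: it simply quotes the proposition from \cite[Proposition~2.2]{BCMP11} and points to \cite[Theorem~3.1]{BuMi} (itself resting on Koll\'ar \cite[Theorem~7.1]{Koll}) for the argument. Your proposal is precisely a sketch of that cited proof --- Stein factorization plus normality of $Z$ for $f_{*}\mathcal{O}_{Y}=\mathcal{O}_{Z}$, then Koll\'ar's torsion-freeness to pass from generic to global vanishing of the higher direct images --- and you explicitly invoke the same references at the key step. So your approach coincides with the one the paper defers to; there is nothing further to compare.
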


\begin{lemma}[Lemma 2.4 of \cite{BCMP11}]\label{lemcompcohtri}
   Let $f_1 : Y_1 \to Y_2$ and $f_2: Y_2\to Y_3$ be morphisms of schemes. Assume $f_1$ to be  cohomologically trivial. Then $f_2$ is cohomologically trivial if and only
if $f_2\circ f_1$ is cohomologically trivial.
\end{lemma}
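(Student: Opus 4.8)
The plan is to reduce the whole statement to the Grothendieck (Leray) spectral sequence for the composition $f_2\circ f_1$, or equivalently to a one-line computation in the derived category, and then read off the two-out-of-three property cohomological degree by cohomological degree. Since $f_1$ is cohomologically trivial, the adjunction morphism $\mathcal{O}_{Y_2}\to Rf_{1*}\mathcal{O}_{Y_1}$ is an isomorphism in $D^{+}(Y_2)$ — this is precisely a restatement of $(f_1)_*\mathcal{O}_{Y_1}=\mathcal{O}_{Y_2}$ together with $R^i(f_1)_*\mathcal{O}_{Y_1}=0$ for $i>0$. Applying $Rf_{2*}$ and using $R(f_2\circ f_1)_*=Rf_{2*}\circ Rf_{1*}$, one obtains an isomorphism $Rf_{2*}\mathcal{O}_{Y_2}\xrightarrow{\ \sim\ }R(f_2\circ f_1)_*\mathcal{O}_{Y_1}$ in $D^{+}(Y_3)$, and a direct diagram chase shows this isomorphism is compatible with the canonical maps from $\mathcal{O}_{Y_3}$: the structure morphism $\mathcal{O}_{Y_3}\to R(f_2\circ f_1)_*\mathcal{O}_{Y_1}$ factors as $\mathcal{O}_{Y_3}\to Rf_{2*}\mathcal{O}_{Y_2}\xrightarrow{\ \sim\ }R(f_2\circ f_1)_*\mathcal{O}_{Y_1}$.

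From here the lemma is immediate. Taking $i$-th cohomology sheaves of the isomorphism $Rf_{2*}\mathcal{O}_{Y_2}\cong R(f_2\circ f_1)_*\mathcal{O}_{Y_1}$ gives $R^i(f_2)_*\mathcal{O}_{Y_2}\cong R^i(f_2\circ f_1)_*\mathcal{O}_{Y_1}$ for every $i\geq 0$, and for $i=0$ this identification intertwines the two adjunction maps out of $\mathcal{O}_{Y_3}$. Hence $R^i(f_2)_*\mathcal{O}_{Y_2}=0$ for all $i>0$ if and only if $R^i(f_2\circ f_1)_*\mathcal{O}_{Y_1}=0$ for all $i>0$; and the canonical morphism $\mathcal{O}_{Y_3}\to(f_2)_*\mathcal{O}_{Y_2}$ is an isomorphism if and only if $\mathcal{O}_{Y_3}\to(f_2\circ f_1)_*\mathcal{O}_{Y_1}$ is. Taken together, these two equivalences say exactly that $f_2$ is cohomologically trivial if and only if $f_2\circ f_1$ is.

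If one prefers to avoid the derived category, the same argument runs through the Leray spectral sequence $E_2^{p,q}=R^p(f_2)_*\big(R^q(f_1)_*\mathcal{O}_{Y_1}\big)\Rightarrow R^{p+q}(f_2\circ f_1)_*\mathcal{O}_{Y_1}$: cohomological triviality of $f_1$ annihilates every row $q>0$ and identifies the $q=0$ row with $R^p(f_2)_*\mathcal{O}_{Y_2}$, so the spectral sequence degenerates at $E_2$ and produces edge isomorphisms $R^p(f_2)_*\mathcal{O}_{Y_2}\cong R^p(f_2\circ f_1)_*\mathcal{O}_{Y_1}$, again compatibly with the structure maps in degree $0$.

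There is no serious obstacle here; the only points that require a little care are (i) working in a setting where these higher direct images and the spectral sequence are available — it suffices, as in \cite{BCMP11}, to take the schemes Noetherian, which covers the intended applications to projective varieties — and (ii) the bookkeeping that the degree-zero edge isomorphism really is the canonical morphism $\mathcal{O}_{Y_3}\to(f_2)_*\mathcal{O}_{Y_2}$, which is what upgrades an abstract isomorphism of sheaves to the genuine identity $(f_2)_*\mathcal{O}_{Y_2}=\mathcal{O}_{Y_3}$. With (ii) in hand, the "if and only if" is just the two-out-of-three pattern applied separately in each cohomological degree.
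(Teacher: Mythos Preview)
Your argument is correct and is the standard proof via the Grothendieck spectral sequence (or, equivalently, the derived-category identity $R(f_2\circ f_1)_*=Rf_{2*}\circ Rf_{1*}$). Note, however, that the paper under review does not prove this lemma at all: it is quoted verbatim as Lemma~2.4 of \cite{BCMP11} and used as a black box, so there is no ``paper's own proof'' to compare against here. Your write-up is essentially what one finds in the cited source, and your caveat~(ii) about the degree-zero edge map being the canonical adjunction is exactly the point that needs checking; everything else is routine.
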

The next property is a useful criterion for rational connectedness, which was  conjectured by Koll\'ar, Miyaoka and Mori, and was proved  by Graber, Harris and Starr.
\begin{prop}[Corollary 3 of \cite{GHS}]\label{propratconn} Let $f : Y \to Z$ be any dominant morphism
of complete irreducible complex varieties. If $Z$ and the general fibers of $f$ are
rationally connected, then $Y$ is rationally connected.
\end{prop}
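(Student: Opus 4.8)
The plan is to deduce this from the main theorem of \cite{GHS} — a proper family over a smooth curve with rationally connected general fibre admits a section — together with the standard refinement that such a section can be chosen to pass through prescribed general points of finitely many fibres. Throughout I use that over $\mathbb{C}$ a variety $V$ is rationally connected precisely when two general points of $V$ are joined by a rational curve (equivalently, by a connected chain of rational curves), and that, since $Y$ is complete, $f$ is proper and hence, being dominant, surjective. By generic smoothness (characteristic zero) there is a dense open $U\subseteq Z$ over which $f$ is smooth; shrinking $U$, I may also assume every fibre of $f$ over $U$ is rationally connected, hence smooth and geometrically irreducible.

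First I would pick two general points $y_1,y_2\in Y$, so that $z_i:=f(y_i)$ are general points of $Z$, in particular lying in $U$; thus $F_i:=f^{-1}(z_i)$ is a smooth projective rationally connected variety containing $y_i$, and $f$ is smooth (hence flat) in a neighbourhood of $F_i$. Since $Z$ is rationally connected I may join $z_1$ to $z_2$ by a morphism $g\colon\mathbb{P}^1\to Z$ with $g(0)=z_1$ and $g(\infty)=z_2$; then $g^{-1}(U)$ is a nonempty, hence dense, open subset of $\mathbb{P}^1$. Form $W:=Y\times_{Z,g}\mathbb{P}^1$ with its projection $\pi\colon W\to\mathbb{P}^1$: it is proper, and surjective because $f$ is; all fibres of $\pi$ over the dense open $g^{-1}(U)$ are rationally connected, hence geometrically irreducible, and since geometric irreducibility of the fibres is a constructible condition the generic fibre of $\pi$ is geometrically irreducible, so $W$ is irreducible. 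By construction $\pi^{-1}(0)=F_1$ and $\pi^{-1}(\infty)=F_2$, and $\pi$ is smooth along each of these fibres, so $y_1$ and $y_2$ are smooth points of $W$.

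Now I would invoke \cite{GHS}: the rationally connected fibration $\pi\colon W\to\mathbb{P}^1$ over the smooth curve $\mathbb{P}^1$ admits a section, and — since $F_1,F_2$ are smooth and rationally connected and $y_1,y_2$ are general in them — one can choose a section $\tau\colon\mathbb{P}^1\to W$ with $\tau(0)=y_1$ and $\tau(\infty)=y_2$. (If one prefers to cite only the bare existence of a section, take any section, note that it meets $F_1$ and $F_2$ in points that may be assumed general, and then join $y_i$ to those points inside the rationally connected $F_i$, producing instead a chain of rational curves.) Composing $\tau$ with the projection $W\to Y$ yields a morphism $\mathbb{P}^1\to Y$ through $y_1$ and $y_2$, so two general points of $Y$ lie on a rational curve and $Y$ is rationally connected. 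The only genuine obstacle is the theorem of \cite{GHS} itself, which we treat as a black box; granting it, the remaining work is organisational — arranging the base change so that $W$ is irreducible (genericity of $z_1,z_2$ and geometric irreducibility of rationally connected fibres) and smooth near $F_1,F_2$, and running the standard deformation argument that upgrades "a section exists" to "a section through prescribed general points of $F_1,F_2$ exists".
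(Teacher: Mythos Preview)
The paper does not give a proof of this proposition at all: it is simply quoted as Corollary~3 of \cite{GHS} and used as a black box. So there is no ``paper's own proof'' to compare against.

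Your argument is essentially the standard derivation of this corollary from the main theorem of \cite{GHS}, and is the route taken in \cite{GHS} itself. One technical caveat: you invoke generic smoothness to get an open $U\subseteq Z$ over which $f$ is smooth, but generic smoothness requires the source $Y$ to be smooth, which is not assumed here. The usual fix is to pass to a resolution $\tilde Y\to Y$ first (rational connectedness of the general fibre and of the total space are unaffected); alternatively, one works with rational chain connectedness and only needs the general fibre of $\pi\colon W\to\mathbb{P}^1$ to be rationally connected, not smoothness of $\pi$ near $0,\infty$. Similarly, rather than arguing that $W$ is irreducible, it is cleaner to replace $W$ by an irreducible component dominating $\mathbb{P}^1$. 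With these routine adjustments your proof goes through; your parenthetical alternative (take any section, then connect inside the rationally connected fibres $F_1,F_2$) is in fact the more robust version, since it avoids appealing to the refined ``section through prescribed points'' statement.
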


 We also need the following property about projected Richardson varieties proved in \cite{BiCo, KLS}. The projections $\pi_G, \pi_F, pr_1, pr_2$ in diagram \eqref{diag}
are the natural projections among flag varieties $G/P$ when $G=SL(n, \mathbb{C})$. Moveover, $F\ell_n:=F\ell_{1, 2, \cdots, n-1; n}$ is the special case of complete flag variety  $G/B$
when $G=SL(n, \mathbb{C})$. Let $\rho: G/B\to G/P$ denote the natural projection.
\begin{prop}\label{projRVcoh}
    Let $R \subset G/B$ be a Richardson variety.
\begin{enumerate}
  \item  The projected Richardson variety  $\rho(R)\subset G/P$ 
   has   rational singularities.
 \item  The restricted map $\rho: R \to  \rho(R)$ is cohomologically trivial.
\end{enumerate}
\end{prop}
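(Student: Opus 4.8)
Proposition~\ref{projRVcoh} is a known result (\cite{BiCo}, \cite{KLS}); here is the route I would follow. Write $R=X_w\cap X^v$ for the intersection in $G/B$ of a Schubert variety $X_w=\overline{BwB/B}$ and an opposite Schubert variety $X^v=\overline{B^-vB/B}$ with $v\le w$. Such a Richardson variety is irreducible, reduced, Cohen--Macaulay, rationally connected and has rational singularities, and it admits a resolution of singularities; these are standard facts (Frobenius splitting together with Bott--Samelson-type resolutions, plus reduction to characteristic zero). The geometric observation I would isolate first is that $\rho|_R\colon R\to\rho(R)$ has Richardson fibres inside the fibre $P/B$ of $\rho$: the fibre of $\rho$ over $eP$ is $P/B$, its intersection with $X_w$ is a Schubert variety in $P/B$ and its intersection with $X^v$ an opposite Schubert variety, so $R\cap P/B$ is a Richardson variety in $P/B$, and over a dense open subset of $\rho(R)$ the map $\rho|_R$ is a fibre bundle with such a fibre. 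Consequently $\rho|_R$ is a proper surjection whose general fibres are connected and rationally connected.

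The one genuinely non-formal ingredient is the normality of $\rho(R)$. This rests on the fact that $\rho$ is compatibly split: the $B$-canonical Frobenius splitting of $G/B$ descends to one of $G/P$, the image of a compatibly split subvariety under such a morphism is again compatibly split, so $\rho(R)$ is a compatibly split (in particular reduced) and, as shown in \cite{KLS}, normal and Cohen--Macaulay subvariety of $G/P$, the normality being proved via the splitting technology and then transported to characteristic zero. Granting this, I would pick a resolution $\mu\colon R'\to R$ and set $g:=\rho|_R\circ\mu\colon R'\to\rho(R)$. Since $R'$ is smooth and the general fibres of $g$ dominate the corresponding Richardson fibres of $\rho|_R$, they are rationally connected, so Koll\'ar's higher-direct-image vanishing theorem \cite[Theorem 7.1]{Koll} (the theorem underlying Proposition~\ref{propcohtriv}) gives $R^ig_*\mathcal O_{R'}=0$ for $i>0$; combined with $g_*\mathcal O_{R'}=\mathcal O_{\rho(R)}$, which holds because $\rho(R)$ is normal and $g$ has connected fibres, this yields $Rg_*\mathcal O_{R'}=\mathcal O_{\rho(R)}$ with $R'$ smooth. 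By the standard homological characterisation of rational singularities, $\rho(R)$ then has rational singularities, proving part (1). For part (2), both $R$ and $\rho(R)$ now have rational singularities and $\rho|_R$ is a surjection with rationally connected general fibres, so Proposition~\ref{propcohtriv} applies and $\rho|_R$ is cohomologically trivial.

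Alternatively, one can sidestep the general vanishing machinery and construct $\rho(R)$ directly together with a resolution: from a reduced subword expression adapted to the coset data $(v,w)$ modulo the parabolic one builds an iterated fibre bundle (with $\mathbb P^1$-fibres and, more generally, Schubert-in-$P/B$ fibres) mapping birationally onto $\rho(R)$, and one checks cohomological triviality of this map, and hence rational singularities of $\rho(R)$, by induction on the length of the subword, each step being cohomologically trivial and composing via Lemma~\ref{lemcompcohtri}; this inductive verification, which also delivers the normality, is the technical core of \cite{KLS} and \cite{BiCo}. Either way, the main obstacle is the same: the normality of the projected Richardson variety $\rho(R)$, which is not accessible from the formal cohomological-triviality lemmas alone and genuinely requires Frobenius-splitting input — compatible splittings descending along $\rho$, with semicontinuity used to lift to characteristic zero. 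Once normality is secured, both assertions follow as above.
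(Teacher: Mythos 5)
The paper does not prove Proposition \ref{projRVcoh} at all: it is imported as a known result with the citation to \cite{BiCo, KLS}, so there is no internal argument to compare yours against. Your sketch is consistent with how those references (and the related arguments of \cite{BuMi}) proceed, and you correctly isolate the genuinely non-formal ingredient — normality (indeed the compatibly-split nature) of the projected Richardson variety $\rho(R)$, which comes from Frobenius-splitting technology and cannot be extracted from Proposition \ref{propcohtriv} or Lemma \ref{lemcompcohtri} alone. The description of the general fibres of $\rho|_R$ as Richardson varieties in $P/B$, hence irreducible and rationally connected, is also the right input and is established in \cite{KLS}.

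One step of your part (1), as written, is logically backwards: you claim the general fibres of $g=\rho|_R\circ\mu$ are rationally connected ``because they dominate'' the Richardson fibres of $\rho|_R$. Domination transfers rational connectedness in the opposite direction (a variety dominating a rationally connected variety need not be rationally connected — think of $C\times\mathbb{P}^1\to\mathbb{P}^1$ with $C$ of high genus). To repair this you would need to argue that, for general $y$, the fibre $\mu^{-1}\bigl(\rho|_R^{-1}(y)\bigr)$ is irreducible and birational to $\rho|_R^{-1}(y)$ (ruling out components inside the exceptional locus), or else reorder the argument: take the cohomological triviality of $\rho|_R$ from \cite{KLS} as the cited input, compose with a resolution $\mu$ of $R$ (which has rational singularities), so that $R(\rho|_R\circ\mu)_*\mathcal{O}_{R'}=\mathcal{O}_{\rho(R)}$ with $R'$ smooth, and conclude rational singularities of $\rho(R)$ by the splitting criterion; part (2) then follows either directly from the citation or from Proposition \ref{propcohtriv} as you say. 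With that adjustment your outline is a faithful reconstruction of the literature's proof; as it stands it is an accurate account of what must be cited rather than a self-contained argument, which matches how the paper itself treats the statement.
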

\subsubsection{Cohomological triviality of $\pi_G|_{Z_d(X^i\cap X_\eta)}$}
Consider the fiber product $$Z_d\times_{F\ell_{k-d, k+d; n}} Z_d=\{(V_{k-d}, V_k, \bar V_k, V_{k+d})\mid V_{k-d}\leqslant V_k\leqslant V_{k+d}, V_{k-d}\leqslant \bar V_k\leqslant V_{k+d}\}.$$
 Let $\pi_3:  Z_d\times_{F\ell_{k-d, k+d; n}} Z_d \to
F\ell_{k-d, k, k+d; n}$ and $\pi_4:Z_d\times_{F\ell_{k-d, k+d; n}} Z_d \to Gr(k, n)$ be the natural projections defined by mapping
  $(V_{k-d}, V_k, \bar V_k, V_{k+d})$ to $V_{k-d}\leqslant V_k\leqslant V_{k+d}$ and $\bar V_k$ respectively.  Since $\pi_4$ is smooth and the fiber of $\pi_4$ is connected, it follows from the properties of the Richardson variety $X^i\cap X_\eta\subset Gr(k, n)$ that the incidence variety
    $$IV:=\pi_4^{-1}(X^i\cap X_\eta)$$ is irreducible and has rational singularities.
     Recall $$Z_d(X^i\cap X_\eta)=\{V_{k-d}\leqslant V_k\leqslant V_{k+d}\mid \exists \bar V_k\in X^i\cap X_\eta \mbox{ with } V_{k-d}\leqslant \bar V_k\leqslant V_{k+d}\}.$$ Therefore we have $\pi_3(IV)=Z_d(X^i\cap X_\eta)$.  For $x\in pr_1(Z_d(X^i\cap X_\eta))$,
  we set $F_x:= \pi_3|_{IV}^{-1}\big(pr_1|_{Z_d(X^i\cap X_\eta)}^{-1}(x)\big)\subset IV$, and   consider the following surjective morphisms, where we still denote by $\pi_i$ the restriction maps by abuse of notation.

    \begin{equation*}
    \scalebox{0.85}{
   \xymatrix{
  x\in pr_1(Z_d(X^i\cap X_\eta))    & IV   \ar[r]^{\pi_4 } \ar[l]_{{}\qquad\qquad pr_1\circ\pi_3} \ar[d]^{ \pi_3}  &  X^i\cap X_\eta    \\
  & Z_d(X^i\cap X_\eta)  \ar[0,0];[-1,-1]^{pr_1} &    }
 \xymatrix{
    & F_x   \ar[r]^{\pi_4  } \ar[d]^{ \pi_3  }  &   \pi_4(F_x)    \\
  &  pr_1|_{Z_d(X^i\cap X_\eta)}^{-1}(x)  &   }
  }
\end{equation*}
  Since $\eta_1=n-k$, $F^{\rm opp}_{k+\eta_{k-k+1}}=\mathbb{C}^n$. Thus   $\bar V_k\cap F^{\rm opp}_{k+\eta_{k-k+1}}\cap V_{k+d}\geq k$ holds if and only if  $\bar V_k \leqslant V_{k+d}$ holds. Thus for $x$ being  $V_k\leqslant V_{k+d}$,  we have
   \begin{align*}
      \pi_4(F_x)&=\{\bar V_k \leqslant \mathbb{C}^n \mid \dim \bar V_k \cap F_{n-k+1-i}\geq 1; \bar V_k\cap F^{\rm opp}_{s+\eta_{k-s+1}}\geq s, 1\leq s\leq k; \bar V_k\leqslant V_{k+d}\}\\
      &=\{\bar V_k \leqslant V_{k+d} \mid \dim \bar V_k \cap F_{n-k+1-i}\cap V_{k+d}\geq 1; \bar V_k\cap F^{\rm opp}_{s+\eta_{k-s+1}}\cap V_{k+d}\geq s, 1\leq s\leq k\}\\
      &=X^{\hat m}(F_\bullet \cap V_{k+d})\cap  X_{\hat \eta}(F^{\rm opp}_\bullet\cap V_{k+d}),
   \end{align*}
  for some partitions $(\hat m, 0,\cdots, 0)$ and $\hat \eta$ in $\mathcal{P}_{k, k+d}$.
The third equality holds, by noting that the intersections $F_\bullet\cap V_{k+d}$ and $F^{\rm opp}\cap V_{k+d}$ are induced complete flags in $V_{k+d}$. The complete flag $F_\bullet \cap V_{k+d}=h\cdot (F^{\rm opp}_\bullet \cap V_{k+d})$, where  $h\in SL(k, k+d)$, is no longer opposite
to  $F^{\rm opp}_\bullet \cap V_{k+d}$ in general.
  \begin{lemma}\label{lemmconnected}
     For any $x\in pr_1(Z_d(X^i\cap X_\eta))$, $\pi_4(F_x)$ is connected.
  \end{lemma}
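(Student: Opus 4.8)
The plan is to realize $\pi_4(F_x)$ as an explicit intersection of two Schubert varieties in the Grassmannian $Gr(k, V_{k+d})\cong Gr(k, k+d)$, cut out by the induced flags $F_\bullet \cap V_{k+d}$ and $F^{\rm opp}_\bullet \cap V_{k+d}$, and then prove connectedness of such an intersection. The identification itself is already carried out in the discussion preceding the statement: for $x$ corresponding to $V_k \leqslant V_{k+d}$, we have shown $\pi_4(F_x) = X^{\hat m}(F_\bullet \cap V_{k+d}) \cap X_{\hat \eta}(F^{\rm opp}_\bullet \cap V_{k+d})$ for suitable one-row partition $(\hat m, 0, \dots, 0)$ and partition $\hat\eta$ in $\mathcal{P}_{k, k+d}$. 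So the crux is: an intersection of two Schubert varieties associated to two \emph{possibly non-transverse} complete flags in a Grassmannian, one of which is a special (Pieri-type) Schubert variety, is connected.

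First I would reduce to non-emptiness plus irreducibility, or directly invoke a connectedness result. The cleanest route is to present $\pi_4(F_x)$ as a projected Richardson variety in a suitable setting so that Proposition \ref{projRVcoh} (part (1), giving rational singularities, hence in particular connectedness of the — necessarily irreducible — projected Richardson variety) applies. Concretely: the two induced flags $F_\bullet \cap V_{k+d} = h.(F^{\rm opp}_\bullet \cap V_{k+d})$ differ by some $h \in SL(k+d)$; choosing a Borel $B$ adapted to $F^{\rm opp}_\bullet \cap V_{k+d}$ and the opposite Borel $B^-$, the Schubert variety $X^{\hat m}$ with respect to $h.(F^{\rm opp}_\bullet\cap V_{k+d})$ is $hX^{\hat m}$, so $\pi_4(F_x) = hX^{\hat m} \cap X_{\hat\eta}$. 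To make this a Richardson (or projected Richardson) variety I would lift to the complete flag variety $F\ell_{k+d}$ and use the refined double decomposition of intersections of Schubert cells due to B.~Shapiro, M.~Shapiro and Vainshtein \cite{SSV} — exactly the tool the introduction advertises for handling non-transverse flags — to see that the relevant cell-level intersection is irreducible (in fact an affine space bundle), and that its closure projects onto $\pi_4(F_x)$ cohomologically trivially, hence with connected (indeed irreducible) image.

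An alternative, more hands-on argument I would keep in reserve: show $\pi_4(F_x)$ is \emph{irreducible} directly. The fiber $F_x$ sits over $pr_1|_{Z_d(X^i\cap X_\eta)}^{-1}(x)$, which is a Schubert-type subvariety of a Grassmannian of $(k-d)$-planes inside $V_k$ (those contained in enough of the $F^{\rm opp}_{j+\eta_{k-j+1}} \cap V_k$), hence irreducible; and $\pi_3: F_x \to pr_1|^{-1}(x)$ has fibers that are themselves Schubert-type intersections, with $\pi_4$ then collapsing the $V_{k-d}$-coordinate. Chaining irreducibility of base and fibers (all of which are Schubert or Richardson varieties in auxiliary Grassmannians, hence irreducible) gives $F_x$ irreducible, and the continuous image $\pi_4(F_x)$ of an irreducible variety is irreducible, thus connected. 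Irreducibility of the base $pr_1|^{-1}(x)$ follows from Lemma \ref{genZd}, which describes $Z_d(X^i, X_\eta)$ and hence (via Lemma \ref{lemmiddle}, using $\eta_1 = n-k$) the image $pr_1(Z_d(X^i\cap X_\eta))$ and its fibers by Schubert-type incidence conditions.

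The main obstacle is precisely the \emph{non-transversality} of the two induced flags $F_\bullet\cap V_{k+d}$ and $F^{\rm opp}_\bullet\cap V_{k+d}$: a generic intersection of two Schubert varieties in a Grassmannian is a Richardson variety (irreducible, rational singularities), but here the flags are special, so a priori the intersection could be reducible. Resolving this is exactly where the \cite{SSV} refined decomposition enters — one must check that, although the pair of flags is non-generic, the particular Schubert conditions in play ($X^{\hat m}$ being a Pieri-type variety cutting a single incidence condition, together with the $X_{\hat\eta}$-conditions) still force the cell-level intersection to be a single irreducible stratum. I would spend the bulk of the effort verifying this stratum count, isolating which relative position $w$ of the two flags actually occurs (it is determined by $V_k$, $V_{k+d}$ and the $\eta$-data) and confirming the corresponding double-coset intersection is nonempty and irreducible; once that is in hand, connectedness of $\pi_4(F_x)$ is immediate.
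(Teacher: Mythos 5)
Your identification of $\pi_4(F_x)$ with $X^{\hat m}(F_\bullet\cap V_{k+d})\cap X_{\hat \eta}(F^{\rm opp}_\bullet\cap V_{k+d})$ matches the setup (it is taken from the discussion preceding the lemma), but the proposal leaves the actual crux unresolved, and neither route you sketch would close it. The paper's proof is short and of a different nature: the hypothesis $\eta_1=n-k$ forces $\hat\eta_1=d\geq \hat m$, so $(\hat m,0,\cdots,0)\leq \hat\eta$ in the Bruhat order and the genuine Richardson variety $X^{\hat m}\big((F^{\rm opp}_\bullet\cap V_{k+d})^{\rm opp}\big)\cap X_{\hat\eta}(F^{\rm opp}_\bullet\cap V_{k+d})$ in $Gr(k,V_{k+d})$ is nonempty; connectedness of the intersection for the two given, possibly non-transverse, flags then follows from \cite[Proposition 3.2]{BCMP11}, which is exactly a connectedness statement for intersections of two Schubert varieties in arbitrary relative position under this nonemptiness condition. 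Your proposal never verifies this nonemptiness (the only place where $\eta_1=n-k$ enters), and never invokes or reproves any such arbitrary-position connectedness result; instead it aims at the stronger claim that $\pi_4(F_x)$ is irreducible, which the paper asserts only for generic $x$ (Lemma \ref{lemirreducible}), whereas the present lemma must hold for every $x\in pr_1(Z_d(X^i\cap X_\eta))$.

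Both of your routes have concrete gaps. The variety $hX^{\hat m}\cap X_{\hat\eta}$ is not a Richardson variety, nor in any evident way a projected Richardson variety, so Proposition \ref{projRVcoh} does not apply to it; and the pivotal step of your \cite{SSV} plan, that the relevant cell-level intersection is a single irreducible stratum, is precisely what would need proof and is not true for general pairs of flags. In the paper the logic runs the other way: irreducibility of $\pi_4(F_x)$ for generic $x$ is obtained from Brion's lemma applied to the generic fiber of $pr_1\circ\pi_3|_{IV}$, and only then is used in Corollary \ref{corratconn} to conclude that there is a unique top-dimensional stratum in the refined decomposition, not vice versa. Your hands-on alternative begs the question: the fibers of $\pi_3|_{F_x}$ are again intersections of Schubert varieties attached to non-transverse flags, so their irreducibility is the same unsettled issue, and irreducibility of a total space does not follow formally from irreducibility of base and fibers without further input (openness, flatness, or equidimensionality). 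Moreover, the fiber $pr_1|_{Z_d(X^i\cap X_\eta)}^{-1}(x)$ is not cut out by the Schubert-type conditions of Lemma \ref{genZd}: those describe $Z_d(X^i,X_\eta)$, and Lemma \ref{lemmiddle} only identifies the images under $pr_1$, not the fibers.
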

\begin{proof}
  It follows from  $\eta_1=n-k$   that $\hat \eta_1=d$. (Namely   the dimension condition for $s=k$ is always redundant.)
Thus $(\hat m, 0,\cdots, 0)\leq (\hat\eta_1, \cdots, \hat\eta_k)$ in the Bruhat order, and consequently
  the Richardson variety  $X_{\hat \eta}(F^{\rm opp}_\bullet\cap V_{k+d})\cap   X^{\hat m}((F^{\rm opp}_\bullet \cap V_{k+d})^{\rm opp})$ in $Gr(k, V_{k+d})$ is nonempty. Hence, the statement follows by \cite[Proposition 3.2]{BCMP11}.
\end{proof}
     \begin{lemma}\label{lemirreducible}
     For generic  $x\in pr_1(Z_d(X^i\cap X_\eta))$, both $\pi_4(F_x)$ and $F_x$ are reduced, irreducible projective varieties.
  \end{lemma}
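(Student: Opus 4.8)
The plan is to exhibit a nonempty open subset $U \subset pr_1(Z_d(X^i \cap X_\eta))$ over which both $\pi_4(F_x)$ and $F_x$ behave well, by combining generic smoothness with the structure results already in place. First I would note that, by Lemma \ref{lemmconnected}, $\pi_4(F_x) = X^{\hat m}(F_\bullet \cap V_{k+d}) \cap X_{\hat\eta}(F^{\rm opp}_\bullet \cap V_{k+d})$ is a nonempty connected intersection of two Schubert varieties in $Gr(k, V_{k+d})$. The two governing flags $F_\bullet \cap V_{k+d}$ and $F^{\rm opp}_\bullet \cap V_{k+d}$ are not opposite in general, but for generic $x$ they are in \emph{relative general position} compatible with a Richardson configuration — this is exactly the content of the refined double decomposition of \cite{SSV} (cited in the introduction), which guarantees that the intersection of two Schubert varieties with respect to two flags in general enough position is irreducible and has rational singularities. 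So for generic $x$, $\pi_4(F_x)$ is a nonempty irreducible projective variety, and being cut out scheme-theoretically by the (reduced) incidence conditions it is reduced.

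Next I would handle $F_x$ itself. Recall $F_x = \pi_3|_{IV}^{-1}\big(pr_1|^{-1}_{Z_d(X^i\cap X_\eta)}(x)\big)$, so the projection $\pi_4\colon F_x \to \pi_4(F_x)$ realizes $F_x$ as a fiber bundle: over a point $\bar V_k \in \pi_4(F_x)$ the fiber records the choices of $V_{k-d} \leqslant \bar V_k$ (together with the already-fixed $V_k \leqslant V_{k+d}$ determined by $x$), which is a Grassmannian $Gr(k-d, \bar V_k)$, hence smooth, irreducible and rationally connected. Applying Proposition \ref{propratconn} (Graber--Harris--Starr) to $\pi_4\colon F_x \to \pi_4(F_x)$, since the base is rationally connected (it is a projected Richardson variety, indeed rational) and the fibers are rationally connected, $F_x$ is rationally connected; in particular it is irreducible. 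Reducedness of $F_x$ for generic $x$ follows from generic smoothness (Sard, in characteristic zero) applied to the dominant morphism $pr_1\circ\pi_3\colon IV \to pr_1(Z_d(X^i\cap X_\eta))$: over a dense open $U$ the fibers $F_x$ are smooth, a fortiori reduced.

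The main obstacle I anticipate is controlling the \emph{relative position} of the two induced flags $F_\bullet \cap V_{k+d}$ and $F^{\rm opp}_\bullet \cap V_{k+d}$ as $x$ ranges over $pr_1(Z_d(X^i\cap X_\eta))$, and pinning down the partitions $\hat m$ and $\hat\eta$ precisely enough to invoke irreducibility of the resulting double-Schubert intersection. Because $pr_1(Z_d(X^i\cap X_\eta))$ is itself a projected Richardson variety (not all of $F\ell_{k,k+d;n}$), one cannot simply say "two general flags"; one has to track which Schubert cell of the double flag decomposition of \cite{SSV} the generic $x$ lands in and verify it is the open (dense) one, equivalently that the intersection $\pi_4(F_x)$ has the expected dimension. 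I would carry this out by a dimension count: compute $\dim Z_d(X^i\cap X_\eta)$ from Lemma \ref{genZd}, compute $\dim pr_1(Z_d(X^i\cap X_\eta))$ and the generic fiber dimension of $pr_1$, and match these against $\dim Gr(k-d,\bar V_k) + \dim \pi_4(F_x)$; equality forces the generic double-Schubert intersection to be of expected dimension, which together with the \cite{SSV} decomposition yields irreducibility. Once irreducibility and genericity of the configuration are secured, reducedness and the cohomological-triviality consequences (needed for Lemma \ref{lemTwopointRicardson}) follow formally from Proposition \ref{propcohtriv}, Proposition \ref{projRVcoh} and Lemma \ref{lemcompcohtri}.
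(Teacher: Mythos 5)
There is a genuine gap, and it sits exactly at the point you flag as "the main obstacle": irreducibility. Your route to irreducibility of $F_x$ is circular, because Proposition \ref{propratconn} (Graber--Harris--Starr) takes as a hypothesis that the total space is a complete \emph{irreducible} variety; it can conclude rational connectedness of $F_x$ only after irreducibility is known, so it cannot be used to prove it. Your route to irreducibility of $\pi_4(F_x)$ is also unsupported: Proposition \ref{refineddecomp} (Shapiro--Shapiro--Vainshtein) only decomposes intersections of Schubert \emph{cells} into strata of the form $(\mathbb{C}^*)^a\times\mathbb{C}^b$; it does not assert that an intersection of two Schubert varieties with respect to flags in "general enough position" is irreducible, let alone that it has rational singularities. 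Moreover the two induced flags $F_\bullet\cap V_{k+d}$ and $F^{\rm opp}_\bullet\cap V_{k+d}$ are genuinely not in general position as $x$ varies in the projected Richardson variety $pr_1(Z_d(X^i\cap X_\eta))$ — this is precisely why the paper cannot and does not argue via genericity of flags — and the dimension count you propose to rescue this is not carried out (nor is it clear it would close the gap). Two smaller defects: generic smoothness requires the source to be smooth, whereas $IV$ only has rational singularities, so "generic fibers are smooth, a fortiori reduced" does not follow as stated; and the fiber of $\pi_4|_{F_x}$ over $\bar V_k$ is $Gr(k-d, V_k\cap\bar V_k)$ (the space $V_{k-d}$ must lie in both $V_k$ and $\bar V_k$), not $Gr(k-d,\bar V_k)$.

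For comparison, the paper's argument runs in the opposite direction: it first shows $F_x$ is connected (the fibers of $\pi_4|_{F_x}$ are the Grassmannians $Gr(k-d,V_k\cap\bar V_k)$, and $\pi_4(F_x)$ is connected by Lemma \ref{lemmconnected}, which uses $\eta_1=n-k$ and \cite[Proposition 3.2]{BCMP11}); it then invokes \cite[Lemma 3]{Brio} for the map $pr_1\circ\pi_3|_{IV}$ from the irreducible variety $IV$ with rational singularities onto the irreducible base $pr_1(Z_d(X^i\cap X_\eta))$ (irreducible by Lemma \ref{lemmiddle}, being a projected Richardson variety) to conclude that the \emph{generic} fiber $F_x$ has rational singularities, hence is reduced and normal; connected plus normal gives irreducible. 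Only then are irreducibility and reducedness of $\pi_4(F_x)$ deduced, as the scheme-theoretic image of the reduced irreducible $F_x$. Rational connectedness of $\pi_4(F_x)$, which is where SSV actually enters, is established afterwards in Corollary \ref{corratconn}, using the irreducibility just proved to single out a unique open stratum. If you want to keep your outline, you would need to replace the GHS and SSV steps by an argument of this type (or some other independent proof of normality or irreducibility of the generic fiber); as written, the irreducibility claims are not established.
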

  \begin{proof}
   The fiber of $\pi_4|_{F_x}$ at  an arbitrary point $\bar V_k\in \pi_4(F_x)$ is given by
     $$\pi_4|_{F_x}^{-1}(\bar V_k)=\{V_{k-d}\mid V_{k-d}\leqslant \bar V_k, V_{k-d}\leqslant V_k\}=Gr(k-d, V_k\cap \bar V_k),$$
     and hence is connected. By Lemma \ref{lemmconnected}, $\pi_4(F_x)$ is connected, so is  $F_x$.

By Lemma \ref{lemmiddle}, $pr_1(Z_d(X^i\cap X_\eta))=pr_1(Z_d(X^i, X_\eta))$  is a projected Richardson variety in $F\ell_{k, k+d;n}$, since   $Z_d(X^i, X_\eta)$ is a Richardson variety in $F\ell_{k-d, k, k+d; n}$. In particular,  $pr_1(Z_d(X^i\cap X_\eta))$ is  irreducible. Since $IV$ is  irreducible and has rational singularities, the generic fiber $F_x$
 of $pr_1\circ \pi_3|_{IV}$ has rational singularities by \cite[Lemma 3]{Brio}. In particualr, $F_x$ is reduced and normal. Since $F_x$ is connected and normal,
 it follows that $F_x$ is irreducible.
 Hence, $\pi_4(F_x)$ is irreducible as well. Since $F_x$ is reduced, its scheme theoretic image $\pi_4(F_x)$ is reduced as well.
  \end{proof}

Every Schubert variety in a flag variety has a stratification by Schubert cells, with each Schubert cell isomorphic to an affine space and the largest Schubert cell being a Zariski open subset. In particular, Schubert varieties are rational. In \cite{SSV}, B. Shapiro, M. Shapiro and A. Vainshtein made
refined double decomposition for intersections of Schubert cells in $F\ell_n$, and studied topological properties of such intersections.
We will  just need  formal descriptions of   the stratums as follows,
and refer to \cite{SSV} for relevant notions and more precise descriptions. We remark that a refinement of Bruhat decomposition for   complete flag varieties of general Lie type was given by Curtis \cite{Curt}.

\begin{prop}[Theorem A. of \cite{SSV}]\label{refineddecomp} Let $F^{(1)}_\bullet$ and $F^{(2)}_\bullet$ be any two complete flags in $\mathbb{C}^N$.  Let $u, v$ be  permutations in $S_N$.
 The intersection of Schubert cells  $\overset{\circ}{X}_u(F^{(1)}_\bullet)$ and $\overset{\circ}{X}_v(F^{(2)}_\bullet)$ in $F\ell_n$ admits a refined double decomposition
     $$\overset{\circ}{X}_u(F^{(1)}_\bullet)\bigcap \overset{\circ}{X}_v(F^{(2)}_\bullet)=\bigsqcup_{U\in RD_{F^{(1)}_\bullet, F^{(2)}_\bullet}} U,  $$
with each stratum being biholomorphically equivalent to $(\mathbb{C}^*)^a\times \mathbb{C}^b$ for some $(a, b)$.
\end{prop}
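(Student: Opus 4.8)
The plan is to trivialize the first Schubert cell by explicit row-echelon coordinates, translate membership in the second Schubert cell into a system of rank conditions on those coordinates, and then build the refinement $RD_{F^{(1)}_\bullet,F^{(2)}_\bullet}$ by cutting the resulting locus along the vanishing loci of a carefully ordered finite family of minors; the $(\mathbb{C}^*)^a\times\mathbb{C}^b$ shape of each piece will fall out of a recursive normalization of the defining equations. As a preliminary reduction, since $GL_N$ acts transitively on pairs of complete flags of a fixed relative position, and since Schubert cells are equivariant in the reference flag, one may assume $F^{(1)}_\bullet$ is the coordinate flag and $F^{(2)}_\bullet=\dot w F^{(1)}_\bullet$ for a permutation $w$; thus only finitely many combinatorial types $(u,v,w)$ need be treated.

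First I would identify a point of $F\ell_N$ with the cumulative row span of an $N\times N$ matrix modulo left multiplication by lower-triangular invertible matrices, the basis chosen so that $F^{(1)}_\bullet$ is the coordinate flag. Gaussian elimination then puts every flag in $\overset{\circ}{X}_u(F^{(1)}_\bullet)$ into a unique reduced-echelon representative $g=g(u)(x)$ with pivots in the positions prescribed by $u$ and free entries $x=(x_e)_{e\in E}$ ranging over an affine space $\mathbb{A}^{E}$, which identifies $\overset{\circ}{X}_u(F^{(1)}_\bullet)$ with $\mathbb{A}^{E}$. Writing $F^{(2)}_\bullet$ as the cumulative row span of a fixed matrix $H$ (a permutation matrix in the reduced case), the condition $V_\bullet\in\overset{\circ}{X}_v(F^{(2)}_\bullet)$ becomes a list of rank equalities $\operatorname{rk}\bigl(B_{ij}(x)\bigr)=\#\{k\le i:\ v(k)\le j\}$ for the top-left blocks $B_{ij}(x)$ of $g(u)(x)H^{-1}$; each equality holds as an inequality ``$\le$'' on a dense open set, so its real content is a collection of vanishing and non-vanishing conditions on specified minors of $x$.

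The heart of the argument is the analysis of this locus. I would fix a total order on the relevant minors compatible with the staircase pattern of the echelon form, process them one at a time, and stratify further whenever necessary: a minor that vanishes identically on the current piece is discarded; one that is forced to be nonzero has, by the echelon triangularity, leading behaviour equal to a monomial in a single ``new'' coordinate times a product of units already shown invertible, which pins that coordinate to $\mathbb{C}^*$; one that is forced to vanish is affine in the newest coordinate with invertible leading coefficient, and is solved to eliminate it. Tracking which coordinates get eliminated, which get pinned to $\mathbb{C}^*$, and which remain free exhibits each stratum as $(\mathbb{C}^*)^a\times\mathbb{C}^b$, the eliminated coordinates being rational functions of the survivors with denominators monomial in the pinned ones. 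The finite set of vanishing/non-vanishing patterns that actually arise is precisely the index set $RD_{F^{(1)}_\bullet,F^{(2)}_\bullet}$; summing over it gives the decomposition, disjointness being immediate since the patterns are mutually exclusive.

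The main obstacle is establishing the triangularity that drives the recursion: one must order the minors and the free coordinates so that, on the stratum where all earlier minor conditions have already been imposed, each new minor condition is affine (ideally monomial) in the current coordinate with leading coefficient a unit expressible through already-processed non-vanishing minors, and otherwise involves only earlier coordinates. Making this precise---and in particular verifying that the substitutions coming from earlier eliminations never destroy this structure, so that the denominators remain monomial in the $\mathbb{C}^*$-coordinates---is the genuine combinatorial content of \cite{SSV} (with \cite{Curt} providing the analogous refinement of Bruhat cells in arbitrary Lie type). Once that is in place, the coordinate set-up and the cell-counting bookkeeping around it are routine.
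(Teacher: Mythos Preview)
The paper does not prove this statement at all: Proposition~\ref{refineddecomp} is simply quoted as ``Theorem~A of \cite{SSV}'', and the surrounding text explicitly says the authors only need the formal shape of the strata and refer to \cite{SSV} for the details. So there is no ``paper's own proof'' to compare against.

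Your outline is a reasonable sketch of the strategy actually used in \cite{SSV}: normalize one flag, put the other in relative position $w$, parametrize one Schubert cell by echelon coordinates, and peel off the rank conditions for the second cell one minor at a time, stratifying by the vanishing/non-vanishing pattern. You correctly identify the crux as the triangularity property ensuring each new minor is affine in a fresh coordinate with unit leading term on the current stratum; that is indeed the combinatorial core of \cite{SSV}, and your sketch does not supply it. As a self-contained proof your write-up is therefore incomplete in exactly the place you flag, but as a plan it matches the cited source, which is all the paper itself invokes.
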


\begin{cor}\label{corratconn}
    For generic  $x\in pr_1(Z_d(X^i\cap X_\eta))$,   $\pi_4(F_x)$ is rationally connected.
\end{cor}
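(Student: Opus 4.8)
Fix a generic point $x\in pr_1(Z_d(X^i\cap X_\eta))$, say $x$ is the two-step flag $V_k\leqslant V_{k+d}$. By Lemma~\ref{lemirreducible}, $\pi_4(F_x)$ is a reduced irreducible projective variety, and by the computation of $\pi_4(F_x)$ carried out above it equals
$$Y\ :=\ X^{\hat m}(F^{(1)}_\bullet)\cap X_{\hat\eta}(F^{(2)}_\bullet)\ \subset\ Gr(k,V_{k+d}),$$
the intersection of two Schubert varieties with respect to the two complete flags $F^{(1)}_\bullet:=F_\bullet\cap V_{k+d}$ and $F^{(2)}_\bullet:=F^{\rm opp}_\bullet\cap V_{k+d}$ of $V_{k+d}\cong\mathbb{C}^{k+d}$. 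The point is that these two flags are in general \emph{not} opposite, so $Y$ is not a Richardson variety and its rationality cannot be read off from classical Schubert calculus; the plan is to extract it from the refined double decomposition of Shapiro--Shapiro--Vainshtein (Proposition~\ref{refineddecomp}).

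First I would lift to the complete flag variety. Let $\rho\colon F\ell_{k+d}\to Gr(k,V_{k+d})$ be the natural projection; it is a Zariski-locally-trivial fibration with irreducible fibers, so $\rho^{-1}(Y)$ is irreducible. The subvarieties $\rho^{-1}\big(X^{\hat m}(F^{(1)}_\bullet)\big)$ and $\rho^{-1}\big(X_{\hat\eta}(F^{(2)}_\bullet)\big)$ of $F\ell_{k+d}$ are closed, irreducible, and invariant under the Borel subgroups stabilizing $F^{(1)}_\bullet$, resp.\ $F^{(2)}_\bullet$, hence are Schubert varieties of $F\ell_{k+d}$ with respect to $F^{(1)}_\bullet$, resp.\ $F^{(2)}_\bullet$. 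Stratifying each by its Schubert cells and intersecting, $\rho^{-1}(Y)$ becomes a finite disjoint union of sets of the form $\overset{\circ}{X}_u(F^{(1)}_\bullet)\cap\overset{\circ}{X}_v(F^{(2)}_\bullet)$, i.e.\ intersections of two Schubert cells of $F\ell_{k+d}$ taken with respect to the pair of complete flags $F^{(1)}_\bullet,F^{(2)}_\bullet$. Proposition~\ref{refineddecomp} applies to each of these, decomposing it into finitely many locally closed strata each biholomorphic to some $(\mathbb{C}^*)^a\times\mathbb{C}^b$; hence $\rho^{-1}(Y)$ is a finite disjoint union of such strata.

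To finish: since $\rho^{-1}(Y)$ is irreducible, exactly one of these finitely many strata is dense; being locally closed it is open in $\rho^{-1}(Y)$, and being isomorphic to $(\mathbb{C}^*)^a\times\mathbb{C}^b$ it is rational. Therefore $\rho^{-1}(Y)$ is a rational, in particular rationally connected, projective variety. Finally $\rho\colon\rho^{-1}(Y)\to Y$ is surjective, and the image of a rationally connected variety under a dominant morphism is again rationally connected (push forward a covering family of connecting rational curves), so $Y=\pi_4(F_x)$ is rationally connected. The one genuine obstacle, and the reason the statement is not immediate, is precisely that $F^{(1)}_\bullet$ and $F^{(2)}_\bullet$ need not be transverse, which is exactly the situation the Shapiro--Shapiro--Vainshtein decomposition is designed to handle; the only technical care needed beyond invoking it is the passage through $\rho$ (their theorem being stated for the complete flag variety) and the use of the irreducibility from Lemma~\ref{lemirreducible} — hence the word ``generic'' — to single out one dense stratum.
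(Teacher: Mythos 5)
Your argument is correct and follows essentially the same route as the paper: lift $\pi_4(F_x)$ via $\rho$ to an intersection of Schubert varieties for the two (generally non-opposite) induced flags in $F\ell_{k+d}$, apply the Shapiro--Shapiro--Vainshtein decomposition of Proposition \ref{refineddecomp} to get a stratification by pieces of the form $(\mathbb{C}^*)^a\times\mathbb{C}^b$, use the irreducibility from Lemma \ref{lemirreducible} to single out a dense rational stratum, and push rational connectedness down through $\rho$. The only difference is cosmetic (you justify openness of the dense stratum via local closedness rather than maximal dimension), so nothing further is needed.
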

\begin{proof}
  For the natural projection $\rho: F\ell_{k+d}\to Gr(k, V_{k+d})$, the preimage
 $\rho^{-1}(\pi_4(F_x))$ is an intersection of Schubert varieties $X^u(F_\bullet \cap V_{k+d})$ and $X_v(F^{\rm opp}_\bullet\cap V_{k+d})$ in $F\ell_{k+d}$ for some permutations $u, v$. Since $\pi_4(F_x)$ is reduced and irreducible by Lemma \ref{lemirreducible}, so is  $\rho^{-1}(\pi_4(F_x))$.
 Each Schubert variety has a stratification by Schubert cells. Thus
  $\rho^{-1}(\pi_4(F_x))$ is the disjoint union of intersection of Schubert cells in $F\ell_n$, and hence is the disjoint union of refined double stratums of the form  $(\mathbb{C}^*)^a\times \mathbb{C}^b$
   by Proposition \ref{refineddecomp}. Refined double stratums of the largest dimension are Zariski open subsets of  $\rho^{-1}(\pi_4(F_x))$. Since
    $\rho^{-1}(\pi_4(F_x))$ is reduced and irreducible, there exists a unique stratum of the largest dimension, say  $(\mathbb{C}^*)^a\times \mathbb{C}^b$, which is also reduced scheme-theoretically. Hence,   $\rho^{-1}(\pi_4(F_x))$ is birational to $\mathbb{P}^{a+b}$. Hence, $\rho^{-1}(\pi_4(F_x))$ is rationally connected, and consequently
    the statement follows.
\end{proof}
\begin{prop}\label{propcohtrivZd} Let $\eta\in \mathcal{P}_{k, n}$, $1\leq i\leq n-k$ and $1\leq d<\min\{k, n-k\}$. If $\eta_1=n-k$, then
   $\pi_G:  Z_d(X^i\cap X_\eta)\to \Gamma_d(X^i\cap X_\eta)$ is cohomologically trivial.
 \end{prop}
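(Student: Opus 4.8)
The plan is to deduce the statement from Koll\'ar's criterion (Proposition \ref{propcohtriv}), the Graber--Harris--Starr theorem (Proposition \ref{propratconn}) and the composition lemma (Lemma \ref{lemcompcohtri}), by factoring $\pi_G=pr_2\circ pr_1$ as in diagram \eqref{diag} and interposing the incidence variety $IV=\pi_4^{-1}(X^i\cap X_\eta)$, which is already known to be irreducible with rational singularities. Throughout, the hypotheses $\eta_1=n-k$ and $1\le d<\min\{k,n-k\}$ let us invoke Lemma \ref{lemmiddle}, Proposition \ref{Gammacomparemukzero}, equation \eqref{ZdY1Y2} and diagram \eqref{diag}. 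Set $W:=pr_1(Z_d(X^i\cap X_\eta))$. The chain we work with is $IV\xrightarrow{\pi_3|_{IV}}Z_d(X^i\cap X_\eta)\xrightarrow{\pi_G}\Gamma_d(X^i\cap X_\eta)$ together with its refinement $IV\xrightarrow{pr_1\circ\pi_3|_{IV}}W\xrightarrow{pr_2|_W}\Gamma_d(X^i\cap X_\eta)$, and the idea is to show that all maps except $\pi_G|_{Z_d(X^i\cap X_\eta)}$ are cohomologically trivial and then cancel.

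First I would record the rational-singularity statements by passing to the complete flag variety $F\ell_n$. Since $Z_d(X^i,X_\eta)$ is a Richardson variety in $F\ell_{k-d,k,k+d;n}$ (as used in the proof of Lemma \ref{lemirreducible}), its preimage $\widetilde R$ in $F\ell_n$ is a Richardson variety there, and by Lemma \ref{lemmiddle} and Proposition \ref{Gammacomparemukzero} the varieties $W=pr_1(Z_d(X^i,X_\eta))$ and $\Gamma_d(X^i\cap X_\eta)=\Gamma_d(X^i,X_\eta)$ are the images of $\widetilde R$ under the projections $F\ell_n\to F\ell_{k,k+d;n}$ and $F\ell_n\to Gr(k,n)$. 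Hence Proposition \ref{projRVcoh}(1) gives that $W$ and $\Gamma_d(X^i\cap X_\eta)$ have rational singularities, Proposition \ref{projRVcoh}(2) gives that the restrictions of these two projections to $\widetilde R$ are cohomologically trivial, and feeding the factorization $F\ell_n\to F\ell_{k,k+d;n}\to Gr(k,n)$ into Lemma \ref{lemcompcohtri} shows $pr_2|_W\colon W\to\Gamma_d(X^i\cap X_\eta)$ is cohomologically trivial. Similarly $\pi_G^{-1}(X^i\cap X_\eta)$ is a Richardson variety in $F\ell_{k-d,k,k+d;n}$, being the intersection of the $\pi_G$-preimages of a Schubert variety for $F_\bullet$ and of one for $F^{\rm opp}_\bullet$; so its image under the smooth morphism $\pi_F$ is a projected Richardson variety, hence has rational singularities, and therefore so does $Z_d(X^i\cap X_\eta)=\pi_F^{-1}\big(\pi_F\pi_G^{-1}(X^i\cap X_\eta)\big)$, which is moreover irreducible.

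It then remains to check that $pr_1\circ\pi_3|_{IV}\colon IV\to W$ and $\pi_3|_{IV}\colon IV\to Z_d(X^i\cap X_\eta)$ are cohomologically trivial; since source and targets have rational singularities, by Proposition \ref{propcohtriv} it suffices to show the general fibers are rationally connected. For the first map this is exactly the content of Lemmas \ref{lemmconnected}--\ref{corratconn}: the general fiber is $F_x$, Corollary \ref{corratconn} gives that $\pi_4(F_x)$ is rationally connected, the map $\pi_4|_{F_x}$ has fibers $Gr(k-d,V_k\cap\bar V_k)$ which are rationally connected, so $F_x$ is rationally connected by Proposition \ref{propratconn}. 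For the second map, the general fiber over $V_{k-d}\le V_k\le V_{k+d}$ is the intersection of the sub-Grassmannian $Gr(d,V_{k+d}/V_{k-d})\hookrightarrow Gr(k,n)$ with the Richardson variety $X^i\cap X_\eta$; since $\eta_1=n-k$ this is a nonempty intersection of a Schubert variety and an opposite Schubert variety inside the Grassmannian of the $2d$-dimensional space $V_{k+d}/V_{k-d}$, and repeating the arguments of Lemma \ref{lemirreducible} (generic reducedness and irreducibility) and of Corollary \ref{corratconn} (deleting the lower-dimensional strata of the Shapiro--Shapiro--Vainshtein refined double decomposition of Proposition \ref{refineddecomp} to exhibit the generic fiber as birational to a projective space) shows this fiber is rationally connected as well.

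Finally, $\pi_G|_{Z_d(X^i\cap X_\eta)}\circ\pi_3|_{IV}=pr_2|_W\circ\big(pr_1\circ\pi_3|_{IV}\big)$ is a composition of cohomologically trivial morphisms, hence cohomologically trivial by Lemma \ref{lemcompcohtri}; since $\pi_3|_{IV}$ is a cohomologically trivial surjection onto $Z_d(X^i\cap X_\eta)$, applying Lemma \ref{lemcompcohtri} once more yields that $\pi_G|_{Z_d(X^i\cap X_\eta)}$ is cohomologically trivial, as desired. The main obstacle is precisely the rational connectedness of those general fibers: they are intersections of sub-Grassmannians with Richardson varieties and need not be smooth, so one really must pass to the \cite{SSV} refined double decomposition to see the generic fiber as birational to a projective space, and through Lemma \ref{lemmconnected} this is exactly where $\eta_1=n-k$ is used, guaranteeing the relevant Schubert intersections are nonempty. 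The restriction $1\le d<\min\{k,n-k\}$ is needed for diagram \eqref{diag} and for Lemma \ref{lemmiddle}; the remaining large-$d$ case is treated separately, with all the projected varieties there degenerating to $X$.
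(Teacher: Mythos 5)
Most of your argument coincides with the paper's proof: the cohomological triviality of $pr_2|_W$ obtained from the Richardson variety $\hat\rho^{-1}(Z_d(X^i,X_\eta))\subset F\ell_n$ together with Proposition \ref{projRVcoh}(2) and Lemma \ref{lemcompcohtri}, the irreducibility and rational singularities of $Z_d(X^i\cap X_\eta)$ and of $W=pr_1(Z_d(X^i\cap X_\eta))$ (via Lemma \ref{lemmiddle} and Proposition \ref{Gammacomparemukzero}), and the rational connectedness of $F_x$ via Lemmas \ref{lemmconnected}, \ref{lemirreducible}, Corollary \ref{corratconn} and Proposition \ref{propratconn} are exactly the steps in the paper. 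The divergence is your final cancellation: you additionally require that $\pi_3|_{IV}\colon IV\to Z_d(X^i\cap X_\eta)$ be cohomologically trivial, and this is where there is a genuine gap. The new input needed is the rational connectedness of the general fiber of $\pi_3|_{IV}$, namely $\{\bar V_k\in X^i\cap X_\eta\mid V_{k-d}\leqslant \bar V_k\leqslant V_{k+d}\}$, which is an intersection of two Schubert varieties of $Gr(d,V_{k+d}/V_{k-d})$ with respect to the two flags induced by $F_\bullet$ and $F^{\rm opp}_\bullet$. These induced flags are in general \emph{not} opposite (this is precisely the difficulty the paper points out for $\pi_4(F_x)$), so your description of the fiber as ``a Schubert variety and an opposite Schubert variety'' is not justified, and before the Shapiro--Shapiro--Vainshtein argument of Corollary \ref{corratconn} can be repeated one needs connectedness and generic irreducibility of this fiber, i.e.\ an analogue of Lemma \ref{lemmconnected}: a verification of the nonemptiness/Bruhat-order hypothesis of \cite[Proposition 3.2]{BCMP11} for the induced Schubert data at a general point of $Z_d(X^i\cap X_\eta)$. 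Lemma \ref{lemmconnected} itself concerns $\pi_4(F_x)\subset Gr(k,V_{k+d})$ and does not cover this; moreover, since $Z_d(X^i\cap X_\eta)$ is a proper subvariety of the Richardson variety $Z_d(X^i,X_\eta)$, the generic values of $\dim V_{k\pm d}\cap F^{\rm opp}_{s+\eta_{k-s+1}}$ along it, hence the induced Schubert conditions on $Gr(d,V_{k+d}/V_{k-d})$, are not controlled by any lemma in the paper. As written, this step is asserted rather than proved.

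The detour is also unnecessary: from what you have already established, the paper's shorter ending applies verbatim. Since $\pi_3(F_x)=pr_1|_{Z_d(X^i\cap X_\eta)}^{-1}(x)$ and the image of a rationally connected projective variety under a morphism is rationally connected, the rational connectedness of $F_x$ immediately gives that the general fiber of $pr_1|_{Z_d(X^i\cap X_\eta)}$ is rationally connected; as source and target are irreducible with rational singularities, Proposition \ref{propcohtriv} applies directly to $pr_1|_{Z_d(X^i\cap X_\eta)}$, and composing with the cohomologically trivial $pr_2|_W$ via Lemma \ref{lemcompcohtri} yields the cohomological triviality of $\pi_G|_{Z_d(X^i\cap X_\eta)}$ without ever needing $\pi_3|_{IV}$ to be cohomologically trivial. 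If you wish to keep your route, you must supply the missing connectedness/irreducibility argument for the $\pi_3$-fibers; otherwise replace the last step by the observation above.
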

\begin{proof}
 Notice that $Z_d(X^i, X_\eta)$ is Richardon variety in $F\ell_{k-d, k, k+d; n}$. Consider the natural projection $\hat\rho: F\ell_{n}\to  F\ell_{k-d, k, k+d; n}$. Then    $Y:=\hat\rho^{-1}(Z_d(X^i, X_\eta))$ is again a Richardson variety. Hence, $\hat\rho|_Y$, $pr_1\circ \hat\rho|Y$ and  $\pi_G\circ \hat\rho|Y$ are all cohomologically trivial by Proposition \ref{projRVcoh} (2). Therefore $pr_2: pr_1(Z_d(X^i, X_\eta))=pr_1\circ \hat\rho(Y)\to \Gamma_d(X^i, X_\eta)$ is cohomologically trivial by Lemma \ref{lemcompcohtri}.
 Therefore by Lemma \ref{lemmiddle} and Proposition \ref{Gammacomparemukzero}, $pr_2: pr_1(Z_d(X^i\cap X_\eta)) \to \Gamma_d(X^i\cap X_\eta)$ is cohomologically trivial.

The variety   $\pi_F\pi_G^{-1}(X^i\cap X_\eta)$ is a projected Richardson variety in $F\ell_{k-d, k+d; n}$. Thus it is irreducible and    has rational singularities. Since $\pi_F$ is a smooth morphism, $Z_d(X^i\cap X_\eta)=\pi_F^{-1}\pi_F\pi_G^{-1}(X^i\cap X_\eta)$ is irreducible and  has rational singularities as well.
By Lemma \ref{lemmiddle}, $pr_1(Z_d(X^i\cap X_\eta))=pr_1(Z_d(X^i, X_\eta))$, and hence it is a projected Richardson variety. Consequently
 $pr_1(Z_d(X^i\cap X_\eta))$ is irreducible and has rational singularities. For generic $x\in   pr_1(Z_d(X^i\cap X_\eta))$,
  both $F_x$ and $\pi_4(F_x)$ are irreducible by Lemma \ref{lemirreducible}. The base $\pi_4(F_x)$ is rationally connected by  Corollary \ref{corratconn}. The fiber of $\pi_4|_{F_x}$ is   rationally connected, for being $Gr(k, V_k\cap \bar V_k)$. Therefore $F_x$ is rationally connected by Proposition \ref{propratconn}. Hence, the generic fiber $pr_1^{-1}(x)$ of $pr_1|_{Z_d(X^i\cap X_\eta)}$  is rationally connected.
Hence, $pr_1|_{Z_d(X^i\cap X_\eta)}$ is cohomologically trivial by Proposition \ref{propcohtriv}. Therefore
 $\pi_G|_ {Z_d(X^i\cap X_\eta)}=pr_2\circ pr_1|_{Z_d(X^i\cap X_\eta)}$ is cohomologically trivial by Lemma \ref{lemcompcohtri}.
\end{proof}

\subsubsection{Proof Lemma \ref{lemTwopointRicardson}} We state the non-equivariant version of     \cite[Theorem 4.2]{BuMi} as follows, where  $a=\max\{k-d, 0\}$ and $b=\min\{k+d, n\}$.

\begin{prop}\label{propQtoC}
   For  any classes $\alpha_1, \alpha_2, \alpha_3\in  K(X)$ and any $d\geq 1$, we have
    $$I_d(\alpha_1, \alpha_2, \alpha_3)=\chi_{F\ell_{a, b; n}}(\pi_F{}_*\pi^*_G(\alpha_1)\cdot \pi_F{}_*\pi^*_G(\alpha_2)\cdot \pi_F{}_*\pi^*_G(\alpha_2)). $$
\end{prop}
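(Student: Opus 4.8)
The plan is to obtain this as the non-equivariant shadow of its equivariant counterpart \cite[Theorem 4.2]{BuMi}. Applying the forgetful ring homomorphism $K_T(\,\cdot\,)\to K(\,\cdot\,)$ (for the relevant torus $T$ acting on $X$ and on $F\ell_{a,b;n}$), which commutes with flat pullback, proper pushforward, tensor products, and which sends the equivariant sheaf Euler characteristic to the ordinary one, turns the equivariant identity of \cite{BuMi} into exactly the asserted one. The only points to record are that $\pi_G$ is flat and $\pi_F$ proper, so that $\pi_{F*}\pi_G^{*}$ is a well-defined operator $K(X)\to K(F\ell_{a,b;n})$, and that the equivariant $\chi$ specializes to $\chi$ after forgetting the $T$-action. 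Since the present paper only needs the statement, this is the course I would take.

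For a self-contained argument I would instead reconstruct the geometry. Assume first $1\le d<\min\{k+1,n-k\}$, so $a=k-d$ and $b=k+d$. Form the triple fibre product $Y_d:=Z_d\times_{F\ell_{k-d,k+d;n}}Z_d\times_{F\ell_{k-d,k+d;n}}Z_d$, with structure map $q\colon Y_d\to F\ell_{k-d,k+d;n}$ and three evaluation maps $p_1,p_2,p_3\colon Y_d\to X$, each equal to $\pi_G$ composed with projection onto a $Z_d$-factor. Since $\pi_F\colon Z_d\to F\ell_{k-d,k+d;n}$ is a $Gr(d,2d)$-bundle, $Y_d$ is smooth. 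A repeated application of flat base change (for the $Gr(d,2d)$-bundle projections) together with the projection formula gives the identity $q_*\big(p_1^{*}\alpha_1\cdot p_2^{*}\alpha_2\cdot p_3^{*}\alpha_3\big)=\pi_{F*}\pi_G^{*}\alpha_1\cdot\pi_{F*}\pi_G^{*}\alpha_2\cdot\pi_{F*}\pi_G^{*}\alpha_3$ in $K(F\ell_{k-d,k+d;n})$, so that $\chi_{Y_d}\big(p_1^{*}\alpha_1\cdot p_2^{*}\alpha_2\cdot p_3^{*}\alpha_3\big)$ equals the right-hand side of the proposition. It then remains to match this with $I_d(\alpha_1,\alpha_2,\alpha_3)=\chi_{\overline{\mathcal{M}}_{0,3}(X,d)}(\mbox{ev}_1^{*}\alpha_1\cdot\mbox{ev}_2^{*}\alpha_2\cdot\mbox{ev}_3^{*}\alpha_3)$. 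For this I would use the kernel--span construction: assigning to a genus-zero degree-$d$ stable map $f$ its kernel $\bigcap_x f(x)$ and span $\sum_x f(x)$ together with the three marked points defines a morphism $\overline{\mathcal{M}}_{0,3}(X,d)\to Y_d$ intertwining the evaluation maps, and one shows this morphism is cohomologically trivial; then $\chi_{\overline{\mathcal{M}}_{0,3}(X,d)}(\prod_j\mbox{ev}_j^{*}\alpha_j)=\chi_{Y_d}(\prod_j p_j^{*}\alpha_j)$ by the projection formula and Lemma \ref{lemcompcohtri}. The remaining ranges, where $a=0$ (the kernel is forced to be $0$) or $b=n$ (the span is forced to be $\mathbb{C}^n$) for every stable map, are treated the same way; when $a=0$ and $b=n$ the identity reduces to $I_d(\alpha_1,\alpha_2,\alpha_3)=\chi_X(\alpha_1)\chi_X(\alpha_2)\chi_X(\alpha_3)$, which follows from $\Gamma_d(\mathrm{point})=X$ exactly as in the proof of Lemma \ref{lemTwopointRicardsonlarged}.

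The main obstacle in the self-contained approach is the cohomological triviality of $\overline{\mathcal{M}}_{0,3}(X,d)\to Y_d$: one needs $\overline{\mathcal{M}}_{0,3}(X,d)$ to have rational singularities (a known but delicate fact for Grassmannians) and a dimension analysis showing that the general fibre of this morphism is rationally connected, so that Proposition \ref{propcohtriv} applies. This is precisely the technical core of \cite{BuMi}, which is why the cleanest route for the present purposes is to quote \cite[Theorem 4.2]{BuMi} and forget the torus action.
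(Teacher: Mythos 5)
Your first route---quoting \cite[Theorem 4.2]{BuMi} and applying the forgetful map from equivariant to ordinary $K$-theory---is exactly what the paper does: Proposition \ref{propQtoC} is simply stated as the non-equivariant version of that theorem, with no independent proof given. Your self-contained sketch is therefore unnecessary for the paper's purposes (and, as a caution if you ever pursue it, the kernel--span assignment is only a rational map on $\overline{\mathcal{M}}_{0,3}(X,d)$, which is why Buch and Mihalcea work with an incidence/blow-up variety rather than a direct morphism to the triple fibre product), but the cited route you propose is the same as, and sufficient for, the paper's argument.
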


\begin{cor}\label{cortwoptRichardson}For any $\lambda, \eta\in \mathcal{P}_{k, n}$, $\gamma\in K(X)$ and $d\geq 1$, we have
     \begin{eqnarray}
    \label{KGW2}   I_d([\mathcal{O}_{X^\lambda\cap X_\eta}], \gamma)&=&  \chi_X(\pi_G{}_*[\mathcal{O}_{Z_d(X^\lambda\cap X_\eta)}]\cdot \gamma).
\end{eqnarray}
\end{cor}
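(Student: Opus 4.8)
The plan is to obtain the identity as a direct consequence of the non-equivariant quantum-to-classical formula of Proposition \ref{propQtoC}, together with the cohomological triviality package collected above. First, inserting the unit class $[\mathcal{O}_X]$ as a third argument does not change a $K$-theoretic Gromov--Witten invariant (the forgetful map $\overline{\mathcal{M}}_{0, 3}(X, d)\to\overline{\mathcal{M}}_{0, 2}(X, d)$ has connected genus-zero fibres, so its derived pushforward of $\mathcal{O}$ is $\mathcal{O}$, while the first two evaluation maps factor through it), hence $I_d([\mathcal{O}_{X^\lambda\cap X_\eta}],\gamma)=I_d([\mathcal{O}_{X^\lambda\cap X_\eta}],\gamma,[\mathcal{O}_X])$. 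Since $\pi_F\colon Z_d\to F\ell_{a,b;n}$ is a smooth fibration with rational fibres it is cohomologically trivial, so $\pi_{F*}\pi_G^{*}[\mathcal{O}_X]=\pi_{F*}[\mathcal{O}_{Z_d}]=[\mathcal{O}_{F\ell_{a,b;n}}]$; feeding this into Proposition \ref{propQtoC} and applying the projection formula twice (along $\pi_F$, then along $\pi_G$) rewrites the right-hand side of that formula as $\chi_X\!\big(\pi_{G*}\big(\pi_F^{*}\pi_{F*}\pi_G^{*}[\mathcal{O}_{X^\lambda\cap X_\eta}]\big)\cdot\gamma\big)$. Comparing with the asserted right-hand side $\chi_X(\pi_{G*}[\mathcal{O}_{Z_d(X^\lambda\cap X_\eta)}]\cdot\gamma)$, the corollary reduces to the single $K$-theoretic identity $\pi_F^{*}\pi_{F*}\pi_G^{*}[\mathcal{O}_{X^\lambda\cap X_\eta}]=[\mathcal{O}_{Z_d(X^\lambda\cap X_\eta)}]$ in $K(Z_d)$.

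If $X^\lambda\cap X_\eta=\emptyset$ both sides of that identity (and of the corollary) vanish, so I may assume it is nonempty, in which case it is an irreducible Richardson variety in $X=Gr(k,n)$ with rational singularities. As $\pi_G$ is flat, $\pi_G^{*}[\mathcal{O}_{X^\lambda\cap X_\eta}]=[\mathcal{O}_{\pi_G^{-1}(X^\lambda\cap X_\eta)}]$, and since $\pi_G$ is smooth with irreducible fibres, $\pi_G^{-1}(X^\lambda\cap X_\eta)$ is reduced, irreducible, with rational singularities. Granting for the moment that the restriction $\pi_F\colon\pi_G^{-1}(X^\lambda\cap X_\eta)\to\pi_F\pi_G^{-1}(X^\lambda\cap X_\eta)$ is cohomologically trivial, one gets $\pi_{F*}[\mathcal{O}_{\pi_G^{-1}(X^\lambda\cap X_\eta)}]=[\mathcal{O}_{\pi_F\pi_G^{-1}(X^\lambda\cap X_\eta)}]$, and then, $\pi_F$ being flat, $\pi_F^{*}[\mathcal{O}_{\pi_F\pi_G^{-1}(X^\lambda\cap X_\eta)}]=[\mathcal{O}_{\pi_F^{-1}\pi_F\pi_G^{-1}(X^\lambda\cap X_\eta)}]=[\mathcal{O}_{Z_d(X^\lambda\cap X_\eta)}]$ by the definitions \eqref{ZdY1Y2}--\eqref{GammadZdY1}. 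So everything comes down to this cohomological triviality.

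To prove it I would lift to the full flag variety. Let $\hat\rho\colon F\ell_n\to Z_d$ be the natural projection and put $p:=\pi_G\circ\hat\rho\colon F\ell_n\to X$ and $q:=\pi_F\circ\hat\rho\colon F\ell_n\to F\ell_{a,b;n}$, all being $SL(n,\mathbb{C})$-equivariant projections of the form $G/B\to G/P$. Then $R:=p^{-1}(X^\lambda\cap X_\eta)=p^{-1}(X^\lambda)\cap p^{-1}(X_\eta)$ is a Richardson variety in $F\ell_n$, with $\hat\rho(R)=\pi_G^{-1}(X^\lambda\cap X_\eta)$ and $q(R)=\pi_F\pi_G^{-1}(X^\lambda\cap X_\eta)$. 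By Proposition \ref{projRVcoh}(2) both $\hat\rho|_R\colon R\to\hat\rho(R)$ and $q|_R\colon R\to q(R)$ are cohomologically trivial; since $q|_R=\big(\pi_F|_{\pi_G^{-1}(X^\lambda\cap X_\eta)}\big)\circ\hat\rho|_R$ with $\hat\rho|_R$ cohomologically trivial, Lemma \ref{lemcompcohtri} forces $\pi_F|_{\pi_G^{-1}(X^\lambda\cap X_\eta)}$ to be cohomologically trivial, which is exactly what was needed. (For the degenerate range $d\geq\min\{k,n-k\}$ the intermediate flag variety in \eqref{diag} collapses, but the same chase goes through with $F\ell_{a,b;n}$ throughout; this range is moreover already covered by the direct computation behind Lemma \ref{lemTwopointRicardsonlarged}.) I expect the only real work here to be bookkeeping---keeping straight which of $\pi_G^{-1}(X^\lambda\cap X_\eta)$, $\pi_F\pi_G^{-1}(X^\lambda\cap X_\eta)$ and $Z_d(X^\lambda\cap X_\eta)$ is a Richardson, respectively projected Richardson, variety so that Proposition \ref{projRVcoh} and Lemma \ref{lemcompcohtri} apply verbatim---rather than any new geometric input; in particular, unlike Proposition \ref{propcohtrivZd}, this statement needs neither the incidence-variety analysis nor the refined decomposition of \cite{SSV}.
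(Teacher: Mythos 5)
Your proposal is correct and follows essentially the same route as the paper: apply Proposition \ref{propQtoC} with the unit class inserted, use the projection formula along $\pi_F$ and $\pi_G$, and reduce to the identity $\pi_F^*\pi_F{}_*\pi_G^*[\mathcal{O}_{X^\lambda\cap X_\eta}]=[\mathcal{O}_{Z_d(X^\lambda\cap X_\eta)}]$, which is obtained from the cohomological triviality of $\pi_F$ on the Richardson preimage $\pi_G^{-1}(X^\lambda\cap X_\eta)$ via Proposition \ref{projRVcoh}(2) and Lemma \ref{lemcompcohtri} (the same lift-to-$F\ell_n$ chase the paper invokes from the start of Proposition \ref{propcohtrivZd}). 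Your added justifications (unit insertion via the forgetful map, flat pullbacks, the empty-intersection and large-$d$ cases) are correct details the paper leaves implicit.
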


\begin{proof} By Proposition \ref{propQtoC} and the projection formula, we have
\begin{align*}
 I_d([\mathcal{O}_{X^\lambda\cap X_\eta}], \gamma)&=\chi_{F\ell_{a, b; n}}(\pi_F{}_*\pi^*_G(\mathcal{O}^{\rm id})\cdot \pi_F{}_*\pi^*_G([\mathcal{O}_{X^\lambda\cap X_\eta}])\cdot \pi_F{}_*\pi^*_G(\gamma))\\
 &=\chi_{F\ell_{a, b; n}}( \pi_F{}_*\pi^*_G([\mathcal{O}_{X^\lambda\cap X_\eta}])\cdot \pi_F{}_*\pi^*_G(\gamma))\\
 &=\chi_{F\ell_{a,k, b; n}}(\pi_F^*\pi_F{}_*\pi^*_G([\mathcal{O}_{X^\lambda\cap X_\eta}])\cdot \pi^*_G(\gamma))\\
 &=\chi_{X}(\pi_G{}_*\pi_F^*\pi_F{}_*\pi^*_G([\mathcal{O}_{X^\lambda\cap X_\eta}])\cdot  \gamma).
\end{align*}
Since $\pi^{-1}_G(X^\lambda\cap X_\eta)$ is a Richardson variety, $\pi_F|_{\pi^{-1}_G(X^\lambda\cap X_\eta)}$ is cohomologically trivial by Proposition \ref{projRVcoh} (2) (more precisely, by the same arguments as at the beginning of Proposition \ref{propcohtrivZd}). It follows that $\pi_F^*\pi_F{}_*\pi^*_G([\mathcal{O}_{X^\lambda\cap X_\eta}])=[\mathcal{O}_{\pi_F^{-1}\pi_F\pi^{-1}_G(X^\lambda\cap X_\eta)}]=\mathcal{O}_{Z_d(X^\lambda\cap X_\mu)}$. Therefore the statement follows.
\end{proof}

\bigskip
\begin{proof}[Proof of Lemma \ref{lemTwopointRicardson}]
  We have  $I_d([\mathcal{O}_{X^i\cap X_\eta}], \gamma) =   \chi_X(\pi_G{}_*[\mathcal{O}_{Z_d(X^i\cap X_\eta)}]\cdot \gamma)$ by Corollary \ref{cortwoptRichardson}.
    By   Proposition \ref{propcohtrivZd}, $\pi_G: Z_d(X^i\cap X_\eta)\to \Gamma_d(X^i\cap X_\eta)$ is cohomologically trivial. Since the projection $\pi_G:{Z_d(X^i\cap X_\eta)}\to \Gamma_d(X^i\cap X_\eta)$ is proper and surjective, we have $\pi_G{}_*[\mathcal{O}_{Z_d(X^i\cap X_\eta)}]=[\mathcal{O}_{\Gamma_d(X^i\cap X_\eta)}]$. Thus the statement follows.
\end{proof}
 \section{Quantum-to-classical for certain structure constants}
As in \cite[Conjecture 5.10]{BuMi}, the structure constants for $QK(X)$ are expected to satisfy the alternating positivity: $(-1)^{|\lambda|+|\mu|+|\nu|+dn}N_{\lambda, \mu}^{\nu, d}\geq 0$.  {This was recently proved in \cite{BCMP22} for minuscule Grassmannians and quadric hypersurfaces with a geometric method.}
It is then very natural to ask for a quantum Littlewood-Richardson rule for all $N_{\lambda, \mu}^{\nu, d}$, which is a central theme in the subject of Schubert calculus.
The classical  Littlewood-Richardson rule for all $N_{\lambda, \mu}^{\nu, 0}$ was first given by Buch \cite{BuchGr}. In this section, we will prove Theorem \ref{mainthm1}, which  ensures that the structure constants $N_{\lambda, \mu}^{\nu, d_{\rm min}}$ for the smallest power $q^{d_{\rm min}}$     appearing in $\mathcal{O}^\lambda*\mathcal{O}^\mu$ are all equal to corresponding  classical Littlewood-Richardson coefficients.
 We will also reduce a bit more structure constants $N_{\lambda, \mu}^{\nu, d}$   to structure constants of smaller degree. 
Similar   properties 
have been studied for $QH^*(X)$ by Postnikov \cite[Proposition 6.10]{Post}.\footnote{Postnikov \cite[Proposition 6.10]{Post} also did the quantum-to-classical reduction for the largest power of $q$ appearing in a quantum product, while this part cannot be generalized to $QK(X)$, because of the lack of strange duality.}
The sufficient condition we provide looks more accessible.

\subsection{Proof of Theorem \ref{mainthm1}}
For convenience, we restate  Theorem \ref{mainthm1} as follows.
 It is   the quantum $K$-version of a formula  by Belkale in the proof of \cite[Theorem 10]{Belk},  {or equivalently the quantum $K$-version of
   \cite[Corollary 6.2 and the $D_{\rm min}$-part of Theorem 7.1]{Post} by Postnikov.}

 \begin{thm}\label{mainthm2}
     Let $\lambda, \mu \in \mathcal{P}_{k, n}$.
 The smallest power $d_{\rm min}$ of $q$ appearing in  $\mathcal{O}^\lambda*\mathcal{O}^\mu$ in $QK(X)$ equals that appearing in  $[X^\lambda]\star [X^\mu]$ in $QH^*(X)$, and   is given by 
$$d_{\rm min} = \max\{{1\over n}\big({|\lambda|-|\lambda\uparrow i|+|\mu|-|\mu\uparrow (n-i)|} \big)| 0\leq i\leq  n\}.$$ Moreover, if the max
is achieved for $r$, then
   $$\mathcal{O}^\lambda* \mathcal{O}^\mu=q^{d_{\rm min}} \mathcal{O}^{\lambda\uparrow r}*\mathcal{O}^{\mu\uparrow (n-r)}. $$
  \end{thm}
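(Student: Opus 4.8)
The plan is to derive Theorem~\ref{mainthm2} directly from the Seidel representation (Theorem~\ref{SeidelQK}/\ref{SeidelQKT}), mirroring Belkale's cohomological argument (Proposition~\ref{Belformula}) but taking care of the one extra input that distinguishes the $K$-theoretic setting, namely the vanishing of negative powers of $q$ in a quantum $K$-product. First I would record the elementary combinatorial fact, immediate from Theorem~\ref{SeidelQKT} and induction, that $\mathcal{T}^i(\mathcal{O}^\lambda)=q^{e_i(\lambda)}\mathcal{O}^{\lambda\uparrow i}$ where $e_i(\lambda)=\tfrac1n\big(ik+|\lambda|-|\lambda\uparrow i|\big)\in\mathbb{Z}_{\ge 0}$; this is the exact $QK$-analogue of the formula $T^r([X^\lambda])=q^{d_r(I_\lambda)}[X^{\lambda\uparrow r}]$ in Section~2, and it follows because each application of $\mathcal{T}$ contributes a factor of $q$ precisely when the first part is $n-k$. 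Since $\mathcal{H}\mathcal{T}=q\,\mathrm{Id}$, we also get $\mathcal{T}^{n-i}$ acting as $q^{\bullet}$ times a shift, so for every $0\le i\le n$,
\begin{equation*}
\mathcal{O}^\lambda*\mathcal{O}^\mu
=\mathcal{T}^{-i}\mathcal{T}^{i}(\mathcal{O}^\lambda)*\mathcal{T}^{i-n}\mathcal{T}^{n-i}(\mathcal{O}^\mu)
=q^{\,e_i(\lambda)+e_{n-i}(\mu)-k}\;\mathcal{O}^{\lambda\uparrow i}*\mathcal{O}^{\mu\uparrow(n-i)},
\end{equation*}
using $\mathcal{T}^{-n}=q^{-k}\,\mathrm{Id}$ to combine the two shift operators. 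Writing $f(i):=e_i(\lambda)+e_{n-i}(\mu)-k=\tfrac1n\big(|\lambda|-|\lambda\uparrow i|+|\mu|-|\mu\uparrow(n-i)|\big)$, this says $\mathcal{O}^\lambda*\mathcal{O}^\mu=q^{f(i)}\,\mathcal{O}^{\lambda\uparrow i}*\mathcal{O}^{\mu\uparrow(n-i)}$ as an identity in $QK(X)\otimes\mathbb{Z}[q^{\pm1}]$ for each $i$.

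Next I would extract the bound on $d_{\min}$. Since $\mathcal{O}^{\lambda\uparrow i}*\mathcal{O}^{\mu\uparrow(n-i)}\in QK(X)=K(X)\otimes\mathbb{Z}[[q]]$ has only nonnegative powers of $q$, the displayed identity forces $d_{\min}\ge f(i)$ for every $i$; hence $d_{\min}\ge\max_i f(i)=:D$. For the reverse inequality, let $r$ be an index achieving the max. The claim is that the $q^0$-term of $\mathcal{O}^{\lambda\uparrow r}*\mathcal{O}^{\mu\uparrow(n-r)}$ is nonzero, i.e.\ that $\mathcal{O}^{\lambda\uparrow r}\cdot\mathcal{O}^{\mu\uparrow(n-r)}\ne 0$ in $K(X)$; equivalently $X^{(\lambda\uparrow r)}\cap X_{(\mu\uparrow(n-r))^\vee}\ne\emptyset$, which by the Bruhat-order criterion amounts to $\lambda\uparrow r\subseteq (\mu\uparrow(n-r))^\vee$, i.e.\ $(\lambda\uparrow r)_j+(\mu\uparrow(n-r))_{k+1-j}\le n-k$ for all $j$. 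This is exactly the condition that $r$ maximizes $f$: if it failed for some $j$, one checks that replacing $r$ by $r\pm 1$ strictly increases $f$, because a Seidel shift either adds $1$ to every part (when no wraparound occurs) or cyclically rotates, and the failure of the inequality records a "collision" that a further shift removes — this is the same bookkeeping Belkale and Postnikov do with jump sequences, and I would phrase it via the jump-sequence translation $I_{\lambda\uparrow r}=I_\lambda-r$ to keep it clean. Granting the claim, $\mathcal{O}^{\lambda\uparrow r}*\mathcal{O}^{\mu\uparrow(n-r)}$ has nonzero constant term, so $q^{f(r)}\mathcal{O}^{\lambda\uparrow r}*\mathcal{O}^{\mu\uparrow(n-r)}$ has lowest power exactly $f(r)=D$, whence $d_{\min}=D$ and the last displayed formula of the theorem holds with $r$.

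Finally, the coincidence with the cohomological $d_{\min}$ is automatic: the formula for $D$ is literally the one in Proposition~\ref{Belformula}, so $d_{\min}^{QK}=d_{\min}^{QH}$. \textbf{The main obstacle} I anticipate is the purely combinatorial lemma that the maximizing index $r$ is characterized by the nonemptiness condition $\lambda\uparrow r\subseteq(\mu\uparrow(n-r))^\vee$, together with the monotonicity argument showing $f$ has no other local maxima — one must verify that $f(i+1)-f(i)$ changes sign exactly once as $i$ runs over $0,\dots,n$, which is a discrete-convexity statement about $i\mapsto|\lambda\uparrow i|$ and $i\mapsto|\mu\uparrow(n-i)|$ best handled by the jump-sequence description $d_i(I)$ of Section~2. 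Everything else is a formal consequence of $\mathcal{T}^n=q^k\,\mathrm{Id}$ and of $QK(X)$ living in $\mathbb{Z}[[q]]$ rather than $\mathbb{Z}[q^{\pm1}]$.
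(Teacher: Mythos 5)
Your first half coincides with the paper's argument: using $\mathcal{T}^n=q^k\,\mathrm{Id}$ together with associativity/commutativity of $*$ to get $\mathcal{O}^\lambda*\mathcal{O}^\mu=q^{f(i)}\,\mathcal{O}^{\lambda\uparrow i}*\mathcal{O}^{\mu\uparrow(n-i)}$ for every $0\le i\le n$, and hence $d_{\rm min}\ge\max_i f(i)$ because $QK(X)$ has no negative powers of $q$. The divergence is in the reverse inequality, and there your proposal has a genuine gap: the statement you flag as "the main obstacle" — that a maximizer $r$ of $f$ forces $\mathcal{O}^{\lambda\uparrow r}\cdot\mathcal{O}^{\mu\uparrow(n-r)}\neq 0$ — is exactly the nontrivial content, and it is left unproved. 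Moreover the route you sketch for it is shaky: the assertion that $f(i+1)-f(i)$ changes sign exactly once is false in general. In jump-sequence terms $f(i)=d_i(I_\lambda)+d_{n-i}(I_\mu)-k$, and for $Gr(2,6)$ with $I_\lambda=\{1,3\}$, $I_\mu=\{1,5\}$ (i.e.\ $\lambda=(4,3)$, $\mu=(4,1)$) one computes $f(0),\dots,f(6)=0,1,0,1,1,1,0$, so $f$ is not unimodal and the set of maximizers is not an interval; a purely local $r\mapsto r\pm1$ argument would need to be replaced by a global counting argument \`a la Belkale/Postnikov, which you have not supplied.

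The paper closes this gap without any new combinatorics, and you could too, since the needed claim is an immediate consequence of Proposition \ref{Belformula}, which you already cite for the "coincidence" step: at a maximizer $r$ that proposition gives $[X^\lambda]\star[X^\mu]=q^{d_{\rm min}}[X^{\lambda\uparrow r}]\star[X^{\mu\uparrow(n-r)}]$ with $d_{\rm min}$ the smallest power in $QH^*(X)$, so the cohomological product $[X^{\lambda\uparrow r}]\cup[X^{\mu\uparrow(n-r)}]$ is nonzero. The paper then transfers this nonvanishing to $K$-theory via the filtration of $K(X)$ whose associated graded ring is $H^*(X,\mathbb{Z})$, obtaining $N_{\lambda\uparrow r,\mu\uparrow(n-r)}^{\hat\nu,0}=c_{\lambda\uparrow r,\mu\uparrow(n-r)}^{\hat\nu,0}\neq0$; your alternative transfer (nonzero cup product $\Rightarrow$ nonempty Richardson variety $\Rightarrow$ nonzero class $[\mathcal{O}_{X^{\lambda\uparrow r}\cap X_{(\mu\uparrow(n-r))^\vee}}]$, e.g.\ because its Euler characteristic is $1$) is equally valid. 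With that substitution your argument becomes essentially the paper's proof; as written, however, the key step is an unproven claim supported by an incorrect monotonicity heuristic.
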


\begin{proof}  Notice that we have obtained  \eqref{eqnTT}, due to  \eqref{eqnHH} and  the isomorphism $Gr(k, n)\cong Gr(n-k, n)$. Then for any $i\geq 0$, 
it follows from \eqref{SeidelQH}  that
$\mathcal{T}^i(\mathcal{O}^\lambda)=q^{a}\mathcal{O}^{\hat \lambda}\in QK(X)$ if and only if  $ {T}^i([X^\lambda])=q^{a}[X^{\hat \lambda}] \in QH^*(X)$. Therefore 
 \begin{align*}
   q^k\mathcal{O}^\lambda*\mathcal{O}^\mu&=\mathcal{T}^n(\mathcal{O}^\lambda*\mathcal{O}^\mu)\\
        &=\mathcal{T}^i(\mathcal{O}^\lambda)*\mathcal{T}^{n-i}(\mathcal{O}^\mu)\\
         &=q^{{1\over n}\big(ik+|\lambda|-|\lambda\uparrow i|\big)}\mathcal{O}^{\lambda\uparrow i}*q^{{1\over n}\big((n-i)k+|\mu|-|\mu\uparrow (n-i)|\big)}\mathcal{O}^{\mu\uparrow (n-i)}.
\end{align*}
\noindent
Here the second equality follows from the associativity and commutativity of the quantum $K$ product among Schubert classes.  It follows that $$\mathcal{O}^\lambda*\mathcal{O}^\mu=q^{{1\over n}\big({|\lambda|-|\lambda\uparrow i|+|\mu|-|\mu\uparrow (n-i)|} \big)}\mathcal{O}^{\lambda\uparrow i}*\mathcal{O}^{\mu\uparrow (n-i)}$$ for all $0\leq i\leq n$.
In particular for $d_{\rm min}=\max\{ {1\over n}\big({|\lambda|-|\lambda\uparrow i|+|\mu|-|\mu\uparrow (n-i)|} \big)\mid 0\leq i\leq n\}$ which is achieved for  $i=r$, we have
   $$\mathcal{O}^\lambda*\mathcal{O}^\mu=q^{d_{\rm min}}\mathcal{O}^{\lambda\uparrow r}*\mathcal{O}^{\mu\uparrow (n-r)}.$$
  By Proposition \ref{Belformula}, ${d_{\rm min}}$ is the smallest power of $q$ appearing in $[X^\lambda]\star[X^\mu]$ in $QH^*(X)$, and
   $[X^\lambda]\star[X^\mu]=q^{d_{\rm min}}[X^{\lambda\uparrow r}]\star[X^{\mu\uparrow (n-r)}]$.  Therefore there exist $\nu, \hat \nu\in \mathcal{P}_{k, n}$ such that
    $c_{\lambda\uparrow r, \mu\uparrow (n-r)}^{\hat \nu, 0}=c_{\lambda, \mu}^{\nu, d_{\rm min}}\neq 0$.
    Since $K(X)$ has a $\mathbb{Z}$-filtration structure $\{\bigoplus_{|\lambda|\geq k}\mathbb{Z}\mathcal{O}^{\lambda}\}_{k\in \mathbb{Z}}$  whose associated grading ring gives $H^*(X, \mathbb{Z})$, we have
      $N_{\lambda\uparrow r, \mu\uparrow (n-r)}^{\hat \nu, 0}=c_{\lambda\uparrow r, \mu\uparrow (n-r)}^{\hat \nu, 0}\neq 0$.
     It says that the smallest power of $q$ appearing in $\mathcal{O}^{\lambda\uparrow r}*\mathcal{O}^{\mu\uparrow (n-r)}$ is zero. Thus $d_{\rm min}$ must also be the smallest power of $q$ appearing in $\mathcal{O}^{\lambda}*\mathcal{O}^{\mu}$.
\end{proof}
\subsection{More reductions of quantum-to-classical type} Here we provide more reductions, especially   Theorem \ref{thmdegonerestated}, by using the operators $\mathcal{H}$ and $\mathcal{T}$ on $QK(X)$.
\begin{lemma}\label{lemred}
   Let $\lambda, \mu, \nu\in \mathcal{P}_{k, n}$ and $d\geq 1$.
   \begin{enumerate}
     \item If  $|\lambda|-|\lambda\uparrow 1|> |\nu|-|\nu\uparrow 1|$, then $N_{\lambda, \mu}^{\nu, d}=N_{\lambda\uparrow1, \mu}^{\nu\uparrow 1, d-1}$.
     \item If  $|\lambda|-|\lambda\downarrow 1|> |\nu|-|\nu\downarrow 1|$, then $N_{\lambda, \mu}^{\nu, d}=N_{\lambda\downarrow1, \mu}^{\nu\downarrow 1, d-1}$.
      \item If  $|\lambda|-|\lambda\uparrow i|= |\nu|-|\nu\uparrow i|$ for some $i$, then $N_{\lambda, \mu}^{\nu, d}=N_{\lambda\uparrow i, \mu}^{\nu\uparrow i, d}$.
     \item If  $|\lambda|-|\lambda\downarrow i|= |\nu|-|\nu\downarrow i|$ for some $i$, then $N_{\lambda, \mu}^{\nu, d}=N_{\lambda\downarrow i, \mu}^{\nu\downarrow i, d}$.
   \end{enumerate}
\end{lemma}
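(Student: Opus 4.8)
The plan is to reduce all four statements to a single ``transfer identity'' for the structure constants, obtained by multiplying the defining expansion
$$\mathcal O^\lambda*\mathcal O^\mu=\sum_{\nu,d}N_{\lambda,\mu}^{\nu,d}\,q^d\,\mathcal O^\nu$$
by a suitable power of $\mathcal O^{1^k}$ (for parts (1) and (3)) or of $\mathcal O^{n-k}$ (for parts (2) and (4)). Iterating Theorem \ref{SeidelQKT} gives, for every $0\le i\le n$ and every $\eta\in\mathcal P_{k,n}$,
$$\mathcal T^i(\mathcal O^\eta)=(\mathcal O^{1^k})^{*i}*\mathcal O^\eta=q^{(ik+|\eta|-|\eta\uparrow i|)/n}\,\mathcal O^{\eta\uparrow i},$$
where the exponent is a non-negative integer because each of the $i$ successive applications of $\mathcal T$ contributes $0$ or $1$ to it, and $\eta\mapsto\eta\uparrow i$ is a bijection of $\mathcal P_{k,n}$ with inverse $\eta\mapsto\eta\downarrow i=\eta\uparrow(n-i)$.

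First I would multiply the expansion above by $(\mathcal O^{1^k})^{*i}$. Using associativity and commutativity of $*$ among Schubert classes, the left-hand side equals $q^{(ik+|\lambda|-|\lambda\uparrow i|)/n}\,\mathcal O^{\lambda\uparrow i}*\mathcal O^\mu$, which re-expands as $\sum_{\kappa,e}N_{\lambda\uparrow i,\mu}^{\kappa,e}\,q^{\,e+(ik+|\lambda|-|\lambda\uparrow i|)/n}\mathcal O^\kappa$, while the right-hand side becomes $\sum_{\nu,d}N_{\lambda,\mu}^{\nu,d}\,q^{\,d+(ik+|\nu|-|\nu\uparrow i|)/n}\mathcal O^{\nu\uparrow i}$. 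Re-indexing the latter by $\kappa=\nu\uparrow i$ and comparing, for each fixed $\kappa$, the coefficients of each power of $q$ (legitimate since both sides now lie in $K(X)\otimes\mathbb Z[[q]]$, where $\{q^p\mathcal O^\kappa\}$ is a topological basis), and writing $N^{\bullet,m}=0$ for $m<0$, I obtain the transfer identity
$$N_{\lambda,\mu}^{\nu,d}=N_{\lambda\uparrow i,\mu}^{\nu\uparrow i,\;d+c_i},\qquad c_i:=\tfrac1n\big((|\nu|-|\nu\uparrow i|)-(|\lambda|-|\lambda\uparrow i|)\big),$$
valid for all $i,d$ and all $\lambda,\mu,\nu\in\mathcal P_{k,n}$. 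The same computation with $\mathcal O^{n-k}$ (that is, with $\mathcal H$, Theorem \ref{SeidelQK}) in place of $\mathcal O^{1^k}$ — equivalently, the identity above with $i$ replaced by $n-i$ and $\uparrow i$ by $\downarrow i$ — gives the same statement with every $\uparrow i$ replaced by $\downarrow i$.

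The four assertions then follow by inspecting $c_i$. For (3), the hypothesis $|\lambda|-|\lambda\uparrow i|=|\nu|-|\nu\uparrow i|$ says precisely that $c_i=0$, hence $N_{\lambda,\mu}^{\nu,d}=N_{\lambda\uparrow i,\mu}^{\nu\uparrow i,d}$; part (4) is (3) applied with $i$ replaced by $n-i$, since $\downarrow i=\uparrow(n-i)$. For (1) I take $i=1$ and use that, by Definition \ref{Seidelshifts} (equivalently Theorem \ref{SeidelQKT}), $|\eta|-|\eta\uparrow1|$ equals $-k$ if $\eta_1<n-k$ and $n-k$ if $\eta_1=n-k$; hence the strict inequality $|\lambda|-|\lambda\uparrow1|>|\nu|-|\nu\uparrow1|$ forces $|\lambda|-|\lambda\uparrow1|=n-k$ and $|\nu|-|\nu\uparrow1|=-k$, so $c_1=\tfrac1n(-k-(n-k))=-1$ and $N_{\lambda,\mu}^{\nu,d}=N_{\lambda\uparrow1,\mu}^{\nu\uparrow1,d-1}$. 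Part (2) is the mirror statement: working with $\mathcal H$ in place of $\mathcal T$ (i.e. $i=1$ in the $\downarrow$-version of the transfer identity), one uses that $|\eta|-|\eta\downarrow1|$ equals $-(n-k)$ if $\eta_k=0$ and $k$ if $\eta_k>0$, so the hypothesis forces the same shift $c=-1$.

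I expect this to be largely formal; the only care required is the bookkeeping — keeping $(ik+|\eta|-|\eta\uparrow i|)/n$ visibly a non-negative integer so that multiplication by $(\mathcal O^{1^k})^{*i}$ stays inside $K(X)\otimes\mathbb Z[[q]]$ and coefficient comparison is valid, and correctly tracking which structure constant occupies which slot $(q^p,\mathcal O^\kappa)$ after the re-indexing $\nu\mapsto\nu\uparrow i$. No geometry beyond the already-established Seidel representation (Theorems \ref{SeidelQKT} and \ref{SeidelQK}) enters the argument.
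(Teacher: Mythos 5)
Your proposal is correct and follows essentially the same route as the paper: the paper also expands $\mathcal{T}^i(\mathcal{O}^\lambda*\mathcal{O}^\mu)$ (respectively $\mathcal{H}^i(\mathcal{O}^\lambda*\mathcal{O}^\mu)$) in two ways using associativity, the iterated Seidel formula $\mathcal{T}^i(\mathcal{O}^\eta)=q^{(ik+|\eta|-|\eta\uparrow i|)/n}\mathcal{O}^{\eta\uparrow i}$, and a comparison of coefficients of $q^p\mathcal{O}^\kappa$, then deduces the four cases from the dichotomy $|\eta|-|\eta\uparrow 1|\in\{-k,\,n-k\}$ exactly as you do. Your explicit ``transfer identity'' with the shift $c_i$ is just a packaged form of the same comparison, so there is nothing further to add.
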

\begin{proof} Expanding  $\mathcal{T}^i(\mathcal{O}^\lambda*\mathcal{O}^\mu)$ in two ways, we have
   $$\mathcal{T}^i(\mathcal{O}^\lambda*\mathcal{O}^\mu)=\mathcal{T}^i(\sum_{\nu, d}N_{\lambda, \mu}^{\nu, d}\mathcal{O}^\nu q^d)
               =\sum_{\nu, d}N_{\lambda, \mu}^{\nu, d}q^d\mathcal{T}^i(\mathcal{O}^\nu)
               =\sum_{\nu, d}N_{\lambda, \mu}^{\nu, d}q^dq^{{i k+|\nu|-|\nu\uparrow i|\over n}}\mathcal{O}^{\nu\uparrow i};
   $$
   $$\mathcal{T}^i(\mathcal{O}^\lambda*\mathcal{O}^\mu)=\mathcal{T}^i(\mathcal{O}^\lambda)*\mathcal{O}^\mu=
   q^{{i k+|\lambda|-|\lambda\uparrow i|\over n}}\mathcal{O}^{\lambda\uparrow i}*\mathcal{O}^\mu
               =q^{{i k+|\lambda|-|\lambda\uparrow i|\over n}}\sum_{\nu, d}N_{\lambda\uparrow i, \mu}^{\nu, d}\mathcal{O}^\nu q^d.
    $$
  Hence, statement (3) follows immediately. By definition, $|\lambda|-|\lambda\uparrow 1|=-k$ if $\lambda_1<n-k$, or $n-k$ if $\lambda_1=n-k$; so does $|\nu|-|\nu\uparrow 1|$. Hence,  $|\lambda|-|\lambda\uparrow 1|> |\nu|-|\nu\uparrow 1|$ if and only if $|\lambda|-|\lambda\uparrow 1|=n-k$ and $|\nu|-|\nu\uparrow 1|=-k$. Comparing the above equalities for such $i=1$, we have
      $\sum_{\nu, d}N_{\lambda, \mu}^{\nu, d}q^d\mathcal{O}^{\nu\uparrow 1}= q\sum_{\nu, d}N_{\lambda\uparrow 1, \mu}^{\nu, d}\mathcal{O}^\nu q^d$.
      Therefore, $N_{\lambda, \mu}^{\nu, d}=N_{\lambda\uparrow1, \mu}^{\nu\uparrow 1, d-1}$, namely statement (1) holds.
     Similarly, we conclude statements (4) and (2), by using the associativity
       $\mathcal{H}^i(\mathcal{O}^\lambda*\mathcal{O}^\mu)=\mathcal{H}^i(\mathcal{O}^\lambda)*\mathcal{O}^\mu$.
\end{proof}
\begin{lemma}[Theorem 5.13 of \cite{BuMi}]\label{lemdual}
   For $\lambda, \mu, \nu\in \mathcal{P}_{k, n}$ and $d\geq 0$, $N_{\lambda, \mu}^{\nu, d}=N_{\lambda, \nu^\vee}^{\mu^\vee, d}$.
\end{lemma}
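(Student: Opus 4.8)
The plan is to deduce the duality from the Frobenius structure of quantum $K$-theory together with classical $K$-theoretic Poincar\'e duality on $X$. First I would repackage the recursion \eqref{sc1}--\eqref{sc2} as a Frobenius property. Let $G$ be the symmetric $\mathbb{Z}[[q]]$-bilinear quantum $K$-metric on $QK(X)$ with $G(\alpha,\beta)=\sum_{d\geq 0}q^{d}I_{d}(\alpha,\beta)$, where $I_{0}(\alpha,\beta)=\chi_{X}(\alpha\cdot\beta)$ and $I_{d}(\alpha,\beta)=\chi_{\overline{\mathcal{M}}_{0,2}(X,d)}(\mathrm{ev}_{1}^{*}\alpha\cdot\mathrm{ev}_{2}^{*}\beta)$ for $d\geq1$. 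Rewriting \eqref{sc2} as $\sum_{\kappa,\,0\leq e\leq d}N_{\lambda,\mu}^{\kappa,d-e}I_{e}(\mathcal{O}^{\kappa},\xi_{\nu})=I_{d}(\mathcal{O}^{\lambda},\mathcal{O}^{\mu},\xi_{\nu})$ and summing against $q^{d}$ gives $G(\mathcal{O}^{\lambda}*\mathcal{O}^{\mu},\xi_{\nu})=\sum_{d}q^{d}I_{d}(\mathcal{O}^{\lambda},\mathcal{O}^{\mu},\xi_{\nu})$; since both sides are $\mathbb{Z}[[q]]$-linear in the last slot and $\{\xi_{\nu}\}$ is a basis, this yields
\[
 G(\alpha*\beta,\gamma)=\Phi(\alpha,\beta,\gamma):=\sum_{d\geq 0}q^{d}I_{d}(\alpha,\beta,\gamma)\qquad\text{for all }\alpha,\beta,\gamma .
\]
As $\Phi$ is symmetric in its three arguments, $G$ is a Frobenius pairing for $*$, i.e. $G(\alpha*\beta,\gamma)=G(\alpha,\beta*\gamma)$.

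Next I would introduce the $q$-independent, symmetric, non-degenerate pairing $B(\alpha,\beta):=\chi_{X}\big(\alpha\cdot\beta\cdot[\mathcal{O}_{X}(-1)]\big)$, where $[\mathcal{O}_{X}(-1)]=1-\mathcal{O}^{(1,0,\dots,0)}$, extended $\mathbb{Z}[[q]]$-bilinearly. Using the classical identity $\mathcal{O}^{\mu}\otimes[\mathcal{O}_{X}(-1)]=\xi_{\mu^{\vee}}$ in $K(X)$ --- provable combinatorially from the rook-strip expansion of $\xi$ and the $K$-theoretic Pieri rule of Proposition~\ref{propQuanPieri}, or via Serre duality on $X$ and the dualizing sheaves of Schubert varieties --- together with $\chi_{X}(\mathcal{O}^{\lambda}\cdot\xi_{\rho})=\delta_{\lambda,\rho}$, one gets $B(\mathcal{O}^{\lambda},\mathcal{O}^{\mu})=\delta_{\lambda,\mu^{\vee}}$, so $\{\mathcal{O}^{\mu^{\vee}}\}_{\mu}$ is the $B$-dual basis of $\{\mathcal{O}^{\mu}\}_{\mu}$. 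Then $\sum_{d}N_{\lambda,\mu}^{\nu,d}q^{d}=B(\mathcal{O}^{\lambda}*\mathcal{O}^{\mu},\mathcal{O}^{\nu^{\vee}})$, and if $B$ is also Frobenius for $*$ then, using commutativity of $*$, $B(\mathcal{O}^{\lambda}*\mathcal{O}^{\mu},\mathcal{O}^{\nu^{\vee}})=B(\mathcal{O}^{\mu},\mathcal{O}^{\lambda}*\mathcal{O}^{\nu^{\vee}})=B(\mathcal{O}^{\lambda}*\mathcal{O}^{\nu^{\vee}},\mathcal{O}^{\mu})=\sum_{d}N_{\lambda,\nu^{\vee}}^{\mu^{\vee},d}q^{d}$ (the second equality using symmetry of $B$ and $(\mu^{\vee})^{\vee}=\mu$), and comparing coefficients of $q^{d}$ gives $N_{\lambda,\mu}^{\nu,d}=N_{\lambda,\nu^{\vee}}^{\mu^{\vee},d}$; the case $d=0$ is the classical $K$-theoretic duality. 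Thus everything reduces to proving that $B$ is a Frobenius pairing for $*$.

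To prove this last statement, note that since $QK(X)$ is a finite free $\mathbb{Z}[[q]]$-algebra carrying the non-degenerate Frobenius pairing $G$, the pairing $B$ equals $(\alpha,\beta)\mapsto G(u*\alpha,\beta)$ for a uniquely determined $u\in QK(X)$, and $B$ is Frobenius for $*$ exactly when $u$ is a unit; moreover $u\equiv[\mathcal{O}_{X}(-1)]\pmod q$ by the classical identity above, so the issue is entirely with the $q$-corrections --- equivalently, one must show that the $G$-dual basis $\{e^{\mu}\}$ of $\{\mathcal{O}^{\lambda}\}$ satisfies $e^{\mu}=u*\mathcal{O}^{\mu^{\vee}}$. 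Here I would invoke the quantum-to-classical principle of Proposition~\ref{propQtoC} to express every $K$-theoretic Gromov--Witten invariant occurring in \eqref{sc1} (three-pointed, and two-pointed via the string equation) as a classical $K$-intersection number on a smooth projective two-step flag variety $F\ell_{a,b;n}$ through $\Psi=\pi_{{\rm F}*}\pi_{\rm G}^{*}$, apply ordinary Serre duality on $F\ell_{a,b;n}$, and check that $\Psi(\mathcal{O}^{\lambda})$ and $\Psi(\xi_{\lambda})$ are Serre-dual there up to one fixed twist, compatibly with $\lambda\mapsto\lambda^{\vee}$; feeding this back into \eqref{sc1} then gives the symmetry.

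The main obstacle is exactly this last computation: identifying the relevant relative canonical bundles of $\pi_{\rm F}$ and $\pi_{\rm G}$ (equivalently, the canonical bundle of $\overline{\mathcal{M}}_{0,m}(X,d)$ for $m\in\{2,3\}$, which is tractable since $X=Gr(k,n)$ is convex) and verifying that all the twists cancel, so that the $\lambda\mapsto\lambda^{\vee}$ dualization intertwines the two sides of the Frobenius identity for $B$. A little care is also needed with the asymmetry between the $\mathcal{O}$-classes and the $\xi$-classes inside \eqref{sc1}, where one uses the threefold symmetry of the $K$-theoretic invariants $I_{d}(\cdot,\cdot,\cdot)$ and the above relation between $\Psi(\mathcal{O}^{\lambda})$ and $\Psi(\xi_{\lambda^{\vee}})$ to reshuffle the chains of $\kappa_{i}$'s correctly. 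An alternative route, closer to the geometric methods used elsewhere in this paper, would realize the relevant Euler characteristics directly on projected Gromov--Witten varieties and apply Serre duality there.
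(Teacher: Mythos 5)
First, a point of comparison: the paper does not prove Lemma \ref{lemdual} at all --- it is quoted as \cite[Theorem 5.13]{BuMi} --- so the question is whether your argument would stand as an independent proof. Your preliminary steps are fine: the recursion \eqref{sc2} does repackage into $G(\alpha*\beta,\gamma)=\sum_{d}q^{d}I_{d}(\alpha,\beta,\gamma)$, so the quantum metric $G$ is a Frobenius pairing for $*$ (this is Givental's construction); and the classical identity $\mathcal{O}^{\mu}\cdot\bigl(1-\mathcal{O}^{(1,0,\dots,0)}\bigr)=\xi_{\mu^{\vee}}$ is correct --- it is exactly the $r=1$ case of Lenart's Pieri rule, since the only horizontal strips contributing a nonzero binomial coefficient there are rook strips --- so indeed $B(\mathcal{O}^{\lambda},\mathcal{O}^{\mu})=\delta_{\lambda,\mu^{\vee}}$.

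The genuine gap is the decisive step: the claim that $B(\alpha*\beta,\gamma)=B(\alpha,\beta*\gamma)$ is never proved, and it is not an auxiliary fact but is literally equivalent to the lemma. Evaluating on basis elements, $B(\mathcal{O}^{\lambda}*\mathcal{O}^{\mu},\mathcal{O}^{\nu^{\vee}})=\sum_{d}N_{\lambda,\mu}^{\nu,d}q^{d}$ and $B(\mathcal{O}^{\lambda},\mathcal{O}^{\mu}*\mathcal{O}^{\nu^{\vee}})=\sum_{d}N_{\mu,\nu^{\vee}}^{\lambda^{\vee},d}q^{d}$, so the Frobenius property of $B$ is, via commutativity of $*$, precisely the statement $N_{\lambda,\mu}^{\nu,d}=N_{\lambda,\nu^{\vee}}^{\mu^{\vee},d}$; your reduction therefore goes in a circle unless this step is actually carried out. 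The justification you sketch for it is also flawed: nondegeneracy of $G$ gives a unique $\mathbb{Z}[[q]]$-linear operator $A$ with $B(\alpha,\beta)=G(A\alpha,\beta)$, but $A$ being quantum multiplication by some $u\in QK(X)$ is equivalent to $A$ commuting with all quantum multiplications, i.e. to the invariance of $B$ --- exactly what is to be shown --- so ``$B=G(u*\cdot,\cdot)$ for some $u$'' cannot be asserted up front; moreover, if $B$ were of that form it would be Frobenius whether or not $u$ is a unit (invertibility of $u$ governs nondegeneracy, not invariance). The remaining plan --- expressing the invariants in \eqref{sc1} through $\Psi=\pi_{{\rm F}*}\pi_{\rm G}^{*}$ via Proposition \ref{propQtoC} and matching Serre duals of $\Psi(\mathcal{O}^{\lambda})$ and $\Psi(\xi_{\lambda^{\vee}})$ on $F\ell_{a,b;n}$ up to a twist --- is only an outline, and the canonical-bundle bookkeeping you defer is where the entire content of the theorem lies. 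As it stands the proposal does not establish Lemma \ref{lemdual}; to make it self-contained you would either have to complete that computation or reproduce the argument of \cite[Theorem 5.13]{BuMi}.
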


\begin{lemma}\label{lemcom}
    Let $\lambda, \nu\in \mathcal{P}_{k, n}$ and $1\leq m\leq k$. If $\nu_i\geq \lambda_i$ for all $1\leq i<m$ and $\nu_m<\lambda_m$, then we have
  \begin{align*}
      \lambda\uparrow(n-k-\lambda_m+m)&=({\scriptstyle \lambda_{m+1}+n-k-\lambda_{m},\cdots, \lambda_k+n-k-\lambda_m,\lambda_1-\lambda_m,\cdots, \lambda_{m-1}-\lambda_m,0}); \\
      \nu\uparrow(n-k-\lambda_m+m)&=({\scriptstyle \nu_{m}+n-k-\lambda_{m}+1,\cdots, \nu_k+n-k-\lambda_m+1,\nu_1-\lambda_m+1,\cdots, \nu_{m-1}-\lambda_m+1}).
   \end{align*}
\end{lemma}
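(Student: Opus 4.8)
The plan is to compute the two Seidel shifts directly from the combinatorial definition of $\uparrow 1$, keeping careful track of how many of the first $n-k-\lambda_m+m$ steps are "cyclic" (i.e.\ occur when the first part of the current partition equals $n-k$) versus "non-cyclic". Recall that applying $\uparrow 1$ to a partition $\alpha$ either adds $1$ to every part (when $\alpha_1<n-k$) or deletes the first part, appends a trailing $0$, and shifts everything left (when $\alpha_1=n-k$). The key bookkeeping device is the jump-sequence picture: $I_\alpha-1=I_{\alpha\uparrow 1}$, so $\alpha\uparrow p$ corresponds to $I_\alpha-p$; thus $\lambda\uparrow p$ and $\nu\uparrow p$ can be read off from $I_\lambda-p$ and $I_\nu-p$. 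I would first translate the claimed answers into jump sequences and verify the identity there, since the cyclic subtraction on $\{1,\dots,n\}/n\mathbb Z$ makes the "wrap-around" automatic and removes the need to split into cases by hand.

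Concretely, $I_\lambda=\{n-k+j-\lambda_j\mid 1\le j\le k\}$. Set $p:=n-k-\lambda_m+m$. The hypothesis $\lambda_m>\nu_m$ and $\lambda_i\le\nu_i$ for $i<m$ pins down where the index $j$ such that $n-k+j-\lambda_j\le p$ sits: one checks that $n-k+j-\lambda_j\le p$ exactly for $j\ge m$ on the $\lambda$ side, and for $j>m$ (but not $j=m$) on the $\nu$ side — this is precisely the content of the inequalities $\nu_i\ge\lambda_i$ ($i<m$), $\nu_m<\lambda_m$ together with the monotonicity $\lambda_1\ge\cdots\ge\lambda_k$. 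Subtracting $p$ and reducing mod $n$, the entries $\{n-k+j-\lambda_j : j\ge m\}$ become $\{j-m-\lambda_j+\lambda_m : j\ge m\}$ sitting at the small end, while the entries $\{n-k+j-\lambda_j : j<m\}$ become $\{n+j-m-\lambda_j+\lambda_m-(n-k) \;=\; k+j-m+\lambda_m-\lambda_j\}$ at the large end; converting this sorted jump sequence back into a partition via $\varphi^{-1}$ yields exactly the stated formula for $\lambda\uparrow p$. The same computation with $\lambda_j$ replaced by $\nu_j$, but now with the threshold at $j=m$ rather than $j>m$ for the $\nu$-entry indexed $m$, produces the stated formula for $\nu\uparrow p$; the "$+1$" shift throughout in the $\nu$-formula compared to the $\lambda$-formula is an artifact of $n-k+m-\nu_m > n-k+m-\lambda_m = p$, i.e.\ the $m$-th jump of $\nu$ does \emph{not} get subtracted below $0$, so it lands at the top of the window rather than the bottom.

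Alternatively — and this may be cleaner to write — one can argue inductively on $p$: apply $\uparrow 1$ step by step, showing that for the first $n-k-\lambda_1$ steps both $\lambda$ and $\nu$ are in the "add $1$ to all parts" regime (so after $n-k-\lambda_m$ steps the $m$-th part of the shifted $\lambda$ has reached $n-k$ while $\nu$'s has not, using $\lambda_1\ge\lambda_m$ and $\nu_m<\lambda_m\le n-k$), and then the next $m$ steps are governed by when each successive leading part hits $n-k$; the hypothesis on the relative order of $\lambda_i$ and $\nu_i$ for $i<m$ guarantees the cyclic steps for $\lambda$ and for $\nu$ interleave in the way that produces the $+1$ discrepancy. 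The main obstacle I anticipate is purely notational: making sure the trailing zeros, the left-shift, and the off-by-one between "parts $\ge\lambda_m$" and "parts $>\lambda_m$" are tracked consistently, and checking that the resulting sequences are genuinely weakly decreasing and lie in $\mathcal P_{k,n}$ (in particular that $\lambda\uparrow p$ ends in a $0$ and that $\nu\uparrow p$'s last listed entry $\nu_{m-1}-\lambda_m+1$ is still $\ge 0$, which follows from $\nu_{m-1}\ge\lambda_{m-1}\ge\lambda_m-1$ — here one uses $\lambda_{m-1}\ge\lambda_m$, strictly more than needed). No genuinely hard idea is required; it is a careful unwinding of Definition \ref{Seidelshifts} under the stated inequalities.
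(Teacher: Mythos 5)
Your overall strategy (pass to jump sequences, use $I_{\alpha\uparrow p}=I_\alpha-p$, and determine which entries wrap around mod $n$) is sound and, once executed correctly, does give a proof that is genuinely different from the paper's; but the key verification step is stated backwards. Since $\lambda$ is weakly decreasing, the map $j\mapsto a_j:=n-k+j-\lambda_j$ is strictly increasing, and $a_m=n-k+m-\lambda_m=p$; hence $n-k+j-\lambda_j\le p$ holds \emph{exactly for $j\le m$}, not for $j\ge m$ as you claim. Likewise on the $\nu$ side: $\nu_j\ge\lambda_j$ for $j<m$ gives $n-k+j-\nu_j\le a_j<p$, while $\nu_m<\lambda_m$ gives $n-k+m-\nu_m>p$, so the wrap set is exactly $\{j<m\}$, not $\{j>m\}$. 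Your subsequent bookkeeping is then internally inconsistent with your own characterization: you apply the unwrapped value $a_j-p$ to $j\ge m$ (which at $j=m$ yields $0$, not an element of $\{1,\dots,n\}$ — it must wrap to $n$, producing the trailing $0$ of $\lambda\uparrow p$, i.e.\ it lands at the \emph{large} end of the jump sequence, not the small end), and your wrapped values for $j<m$ carry a spurious $-(n-k)$: the correct wrapped jump entry is $n+j-m-\lambda_j+\lambda_m$, sitting at position $k-m+j$, which converts to the part $\lambda_j-\lambda_m$. The correct dichotomy — wrap for $j\le m$ on the $\lambda$ side versus $j\le m-1$ on the $\nu$ side, the single entry $j=m$ accounting for the uniform $+1$ in the $\nu$-formula — is exactly the point your inequalities were meant to pin down, so the fix is local, but as written the central step would not produce the stated partitions.

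For comparison, the paper does not use jump sequences at all: it decomposes the total shift $p=n-k-\lambda_m+m$ as $a_m$ with $a_i=n-k-\lambda_i+i$, applies $\uparrow(a_i-a_{i-1})$ stage by stage, and proves by induction an explicit formula for each intermediate partition $\lambda^{(i)}$ (and similarly for $\nu$ with thresholds $b_i=n-k-\nu_i+i$, ending with a final block of $b_m-b_{m-1}$ purely additive steps since $\nu_m+n-k-\lambda_m+1\le n-k$). This is essentially your second, fallback sketch. The jump-sequence route, once the wrap set is corrected, is arguably cleaner because the cyclic subtraction handles all the case analysis at once; the paper's stepwise induction has the advantage of never leaving the language of Definition \ref{Seidelshifts} and of producing the intermediate partitions explicitly.
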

\begin{proof}
   Denote $a_0=0$ and $\lambda^{(0)}=\lambda$. Set $a_i=n-k-\lambda_i+i$ and recursively define
     $\lambda^{(i)}=\lambda^{(i-1)}\uparrow (a_i-a_{i-1})$ for $1\leq i\leq m$. By the definition of Seidel shifts and induction on $i$,
   we conclude
    $$\lambda^{(i)}=(\lambda_{i+1}+a_i-i,\cdots, \lambda_{k-1}+a_i-i,\lambda_k+a_i-i, a_i-a_1-(i-1), a_i-a_2-(i-2),\cdots, a_i-a_{i-1}-1, 0)$$
    for all $1\leq i\leq m$. In particular,  the first half of the statement follows from the case $i=m$.

     Denote $b_0=0$ and $\nu^{(0)}=\nu$. Set $b_m=a_m$ and $b_i=n-k-\nu_i+i$ for $1\leq i\leq m-1$.
     Then, $b_m-b_{m-1}=\nu_{m-1}-\lambda_m+1\geq \lambda_{m-1}-\lambda_m+1>0$.
    For $1\leq i\leq m-1$, we recursively define
     $\nu^{(i)}=\nu^{(i-1)}\uparrow (b_i-b_{i-1})$, and by induction we conclude
     $$\nu^{(i)}=(\nu_{i+1}+b_i-i,\cdots, \lambda_{k-1}+b_i-i,\lambda_k+b_i-i, b_i-b_1-(i-1), b_i-b_2-(i-2),\cdots, b_i-b_{i-1}-1, 0).$$
  Noting $\nu_{m}+b_{m-1}-(m-1)+(b_m-b_{m-1})
  =\nu_m+n-k-\lambda_m+1\leq n-k$, we have
   $$ \nu\uparrow(n-k-\lambda_m+m)= \nu^{(m-1)}\uparrow (b_m-b_{m-1})=\nu^{(m-1)}+(b_m-b_{m-1}, \cdots, b_m-b_{m-1}).
      $$
   Therefore the second half of the statement follows.
\end{proof}
\begin{thm}\label{thmdegonerestated}
 Let $\lambda, \mu, \nu\in \mathcal{P}_{k, n}$ and $d\geq 1$.
 If $\nu_i<\lambda_i$ for some $i$, then we set $m=\min\{i\mid \nu_i<\lambda_i, 1\leq i\leq k\}$. We have
  \begin{align*}
     N_{\lambda, \mu}^{\nu, d}&=N_{\lambda \uparrow (n-k-\lambda_m+m), \mu}^{\nu\uparrow (n-k-\lambda_m+m), d-1}
 \\
     &=     N_{(\lambda_{m+1}+n-k-\lambda_{m},\cdots, \lambda_k+n-k-\lambda_m,\lambda_1-\lambda_m,\cdots, \lambda_{m-1}-\lambda_m,0),\mu}^{(\nu_{m}+n-k-\lambda_{m}+1,\cdots, \nu_k+n-k-\lambda_m+1,\nu_1-\lambda_m+1,\cdots, \nu_{m-1}-\lambda_m+1),d-1}.
   \end{align*}
\end{thm}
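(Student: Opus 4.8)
The plan is to derive the first claimed identity from Lemma~\ref{lemred} by shifting $\lambda$ and $\nu$ simultaneously $p:=n-k-\lambda_m+m$ times with the operator $\mathcal{T}$, arranged so that the power of $q$ drops by exactly one and only at the last shift; the explicit second identity is then just a substitution of the closed forms in Lemma~\ref{lemcom}.

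It is cleanest to argue with jump sequences. Write $a_j^\lambda=n-k+j-\lambda_j$, so that $I_\lambda=\{a_1^\lambda<\cdots<a_k^\lambda\}$, and recall $d_i(I_\lambda)=\#\{j:a_j^\lambda\le i\}=\tfrac{1}{n}\big(ik+|\lambda|-|\lambda\uparrow i|\big)$, with the analogous notation for $\nu$. The key observation is that $p=a_m^\lambda$. First I would record the elementary inequalities: for $\ell<m$ one has $\nu_\ell\ge\lambda_\ell$, hence $a_\ell^\nu\le a_\ell^\lambda<a_m^\lambda=p$; and for $\ell\ge m$ one has $\nu_\ell\le\nu_m<\lambda_m$, hence $a_\ell^\nu=n-k+\ell-\nu_\ell>n-k+m-\lambda_m=p$. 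These at once give
\[ d_{p-1}(I_\lambda)=d_{p-1}(I_\nu)=m-1,\qquad p\in I_\lambda,\qquad p\notin I_\nu. \]

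Granting this, the argument is short. From $d_{p-1}(I_\lambda)=d_{p-1}(I_\nu)$ we get $|\lambda|-|\lambda\uparrow(p-1)|=|\nu|-|\nu\uparrow(p-1)|$, so Lemma~\ref{lemred}(3) with $i=p-1$ gives $N_{\lambda,\mu}^{\nu,d}=N_{\lambda\uparrow(p-1),\mu}^{\nu\uparrow(p-1),d}$. On the other hand, $p\in I_\lambda$ forces $d_1\big(I_{\lambda\uparrow(p-1)}\big)=1$, i.e. $(\lambda\uparrow(p-1))_1=n-k$, hence $|\lambda\uparrow(p-1)|-|\lambda\uparrow p|=n-k$; similarly $p\notin I_\nu$ gives $(\nu\uparrow(p-1))_1<n-k$ and $|\nu\uparrow(p-1)|-|\nu\uparrow p|=-k$. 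Therefore $|\lambda\uparrow(p-1)|-|(\lambda\uparrow(p-1))\uparrow1|>|\nu\uparrow(p-1)|-|(\nu\uparrow(p-1))\uparrow1|$, and Lemma~\ref{lemred}(1) applied to the pair $\lambda\uparrow(p-1)$, $\nu\uparrow(p-1)$ yields $N_{\lambda\uparrow(p-1),\mu}^{\nu\uparrow(p-1),d}=N_{\lambda\uparrow p,\mu}^{\nu\uparrow p,d-1}$. Combining the two equalities proves $N_{\lambda,\mu}^{\nu,d}=N_{\lambda\uparrow p,\mu}^{\nu\uparrow p,d-1}$, and Lemma~\ref{lemcom} rewrites $\lambda\uparrow p$ and $\nu\uparrow p$ in the stated form. (If $p=1$ the first step is vacuous and one applies Lemma~\ref{lemred}(1) directly.)

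I do not anticipate a real obstacle: once one spots $p=a_m^\lambda$, everything reduces to the displayed equalities, which are immediate from the monotonicity of $\lambda,\nu$ and the defining property of $m$. The only place needing slight care is the strict inequality $a_\ell^\nu<p$ for $\ell<m$ (rather than just $\le p$), which is what pins $d_{p-1}(I_\nu)$ down to exactly $m-1$, and which uses $a_\ell^\lambda<a_m^\lambda$ on top of $\nu_\ell\ge\lambda_\ell$.
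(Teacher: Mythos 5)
Your proposal is correct and follows essentially the same route as the paper: both hinge on applying the Seidel shift by $p=n-k-\lambda_m+m$ (i.e.\ comparing the two expansions of $\mathcal{T}^p(\mathcal{O}^\lambda*\mathcal{O}^\mu)$ encoded in Lemma~\ref{lemred}) and then invoking Lemma~\ref{lemcom} for the explicit second identity. The only difference is bookkeeping: you certify the unit drop in the power of $q$ by factoring the shift into $p-1$ degree-preserving steps plus one degree-dropping step, checking $d_{p-1}(I_\lambda)=d_{p-1}(I_\nu)=m-1$, $p\in I_\lambda$, $p\notin I_\nu$ with jump sequences, whereas the paper computes the exponent difference $t=1$ directly from the closed forms of $\lambda\uparrow p$ and $\nu\uparrow p$ given by Lemma~\ref{lemcom}.
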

\begin{proof}
  Set $r=n-k-\lambda_m+m$ and  $t:={rk+|\lambda|-|\lambda\uparrow r|\over n}-{rk+|\nu|-|\nu\uparrow r|\over n}$.
   By comparing the equalities for the two ways of expansions of   $\mathcal{T}^r(\mathcal{O}^\lambda*\mathcal{O}^\mu)$(as in the proof of Lemma \ref{lemred}),
  we have $$N_{\lambda, \mu}^{\nu, d}= N_{\lambda \uparrow r, \mu}^{\nu\uparrow r, d-t}.$$
  By Lemma \ref{lemcom}, we have $|\lambda|-|\lambda\uparrow r|=-(n-k)(k-m)+k\lambda_m$ and
  $|\nu|-|\nu\uparrow r|=-(n-k+1)(k-m+1)+k\lambda_m-m+1$. Hence, $t=1$, namely the first equality in the statement holds. By Lemma \ref{lemcom}, the second equality   holds as well.
\end{proof}

We provide two   consequences of Theorem \ref{thmdegonerestated} below. 

\begin{prop}\label{prophigherdegp}
 Let $\lambda, \mu, \nu\in \mathcal{P}_{k, n}$ and $d\geq s\geq 2$.
  Suppose $\nu_1+s-2<\lambda_{s-1}$ and $\nu_{j-s+1}+s-1<\lambda_j$ for some integer $j\in [s, k]$. Let $t=\min
  \{j\mid  \nu_{j-s+1}+s-1<\lambda_j, s\leq j\leq k\}$. Then we have
   \begin{align*}
     N_{\lambda, \mu}^{\nu, d}&=N_{\lambda \uparrow (n-k- \lambda_t+t), \mu}^{\nu\uparrow (n-k- \lambda_t +t), d-s}.
    \end{align*}

\end{prop}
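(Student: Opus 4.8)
The plan is to prove the statement by induction on $s$, the single reduction step being Theorem~\ref{thmdegonerestated} together with the explicit Seidel shift formulas of Lemma~\ref{lemcom}; in fact Theorem~\ref{thmdegonerestated} is itself the ``$s=1$'' instance of the proposition, its first hypothesis $\nu_1-1<\lambda_0$ being read as vacuous. The starting observation is that $\nu_1+s-2<\lambda_{s-1}$ forces $\nu_1<\lambda_1$, since $\lambda_{s-1}\le\lambda_1$ and $s\ge 2$. Hence $\min\{i\mid\nu_i<\lambda_i\}=1$, and Theorem~\ref{thmdegonerestated} gives $N_{\lambda,\mu}^{\nu,d}=N_{\bar\lambda,\mu}^{\bar\nu,d-1}$, where by Lemma~\ref{lemcom} (case $m=1$) we have $\bar\lambda:=\lambda\uparrow(n-k-\lambda_1+1)$ with $\bar\lambda_i=\lambda_{i+1}+(n-k-\lambda_1)$ for $1\le i\le k-1$ and $\bar\lambda_k=0$, and $\bar\nu:=\nu\uparrow(n-k-\lambda_1+1)$ with $\bar\nu_i=\nu_i+(n-k-\lambda_1+1)$ for $1\le i\le k$.

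Next I would check that the triple $(\bar\lambda,\mu,\bar\nu)$ together with the parameters $s-1$ and $d-1$ satisfies the hypotheses of the proposition. The key bookkeeping is: for $1\le i\le k-1$ one has $\bar\nu_i<\bar\lambda_i$ if and only if $\nu_i+1<\lambda_{i+1}$, while $\bar\nu_k>0=\bar\lambda_k$. Feeding the hypotheses $\nu_1+s-2<\lambda_{s-1}$ and $\nu_{j-s+1}+s-1<\lambda_j$ through these equivalences shows that $\bar\nu_1+(s-1)-2<\bar\lambda_{(s-1)-1}$ and $\bar\nu_{j'-(s-1)+1}+(s-1)-1<\bar\lambda_{j'}$ hold with $j'=j-1$, and that the corresponding minimizing index for the shifted data is $t'=t-1$.

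Applying the inductive hypothesis (the case $s-1$, or Theorem~\ref{thmdegonerestated} directly when one reaches $s=1$) then gives $N_{\bar\lambda,\mu}^{\bar\nu,d-1}=N_{\bar\lambda\uparrow(n-k-\bar\lambda_{t-1}+(t-1)),\mu}^{\bar\nu\uparrow(n-k-\bar\lambda_{t-1}+(t-1)),\,d-s}$. Since $\bar\lambda_{t-1}=\lambda_t+(n-k-\lambda_1)$ (here $2\le t\le k$, so $1\le t-1\le k-1$), the shift amount simplifies to $n-k-\bar\lambda_{t-1}+(t-1)=\lambda_1-\lambda_t+t-1$, whence composing the two Seidel shifts yields $\bar\lambda\uparrow(\lambda_1-\lambda_t+t-1)=\lambda\uparrow(n-k-\lambda_t+t)$ and likewise $\bar\nu\uparrow(\lambda_1-\lambda_t+t-1)=\nu\uparrow(n-k-\lambda_t+t)$; this is exactly the claimed identity. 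Along the way one should record that all shift amounts lie in $[0,n]$ (in particular $1\le n-k-\lambda_t+t\le n$) and that Theorem~\ref{thmdegonerestated} is invoked only at degrees $\ge 1$, both of which follow from $d\ge s\ge 2$, $1\le t\le k$ and $0\le\lambda_t\le n-k$.

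I expect the proof to bring in no new geometric or combinatorial input beyond Theorem~\ref{thmdegonerestated}; the only real work, and the main place to be careful, will be the index arithmetic — verifying that the two numerical hypotheses and the minimizing index $t$ transform compatibly under a single Seidel shift, and that the accumulated shift telescopes to $n-k-\lambda_t+t$. This becomes routine once the formulas of Lemma~\ref{lemcom} are at hand.
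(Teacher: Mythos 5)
Your proposal is correct and follows essentially the same route as the paper: the paper simply unrolls what you phrase as an induction on $s$, applying Theorem \ref{thmdegonerestated} with minimal index $1$ a total of $s-1$ times (justified by $\nu_1+i\leq\nu_1+s-2<\lambda_{s-1}\leq\lambda_{i+1}$) and then once more with $m=t-s+1$, using the explicit shift formulas of Lemma \ref{lemcom} so that the shifts telescope to $n-k-\lambda_t+t$. Your index bookkeeping for how the hypotheses and the minimizing index transform under one shift matches the paper's computation, so no gap remains.
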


    \begin{proof}
     Since $\nu_1+i\leq \nu_1+s-2 <\lambda_{s-1}\leq \lambda_{i+1}$ for all $0\leq i\leq s-2$, by applying Theorem \ref{thmdegonerestated} repeatedly, we have
      \begin{eqnarray*}
         N_{\lambda, \mu}^{\nu, d}\hspace{-0.6cm}&=N_{\lambda\uparrow (n-k-\lambda_1+1), \mu}^{\nu\uparrow (n-k-\lambda_1+1), d-1}&=N_{(n-k+\lambda_2-\lambda_1, \cdots, n-k+\lambda_k-\lambda_1, 0),\mu}^{(n-k+\nu_1-\lambda_1+1, \cdots, n-k+\nu_k-\lambda_1+1), d-1}\\
          &=N_{\lambda\uparrow (n-k-\lambda_2+2), \mu}^{\nu\uparrow (n-k-\lambda_2+2), d-2}&=N_{(n-k+\lambda_3-\lambda_2, \cdots, n-k+\lambda_k-\lambda_2, \lambda_1-\lambda_2,0),\mu}^{(n-k+\nu_1-\lambda_2+2, \cdots, n-k+\nu_k-\lambda_2+2), d-2}\\
           &=N_{\lambda\uparrow (n-k-\lambda_{s-1}+s-1), \mu}^{\nu\uparrow (n-k-\lambda_{s-1}+s-1), d-s+1}&=:N_{\bar \lambda, \bar\mu}^{\bar \nu, d-s+1}\\
      \end{eqnarray*}
      with $N_{\bar \lambda, \bar\mu}^{\bar \nu, d-s+1}=N_{(n-k+\lambda_s-\lambda_{s-1}, \cdots, n-k+\lambda_k-\lambda_{s-1}, \lambda_1-\lambda_{s-1},\cdots, \lambda_{s-2}-\lambda_{s-1}, 0),\mu}^{(n-k+\nu_1-\lambda_{s-1}+s-1, \cdots, n-k+\nu_k-\lambda_{s-1}+{s-1}), d-s+1}.$ Then we have
        $N_{\bar \lambda, \bar\mu}^{\bar \nu, d-s+1}=N_{\bar \lambda\uparrow (n-k-\bar{\lambda}_{m}+m), \bar\mu}^{\bar \nu\uparrow (n-k-\bar \lambda_m+m), d-s}= N_{\lambda \uparrow (n-k- \lambda_t+t), \mu}^{\nu\uparrow (n-k- \lambda_t +t), d-s}$,
          again by Theorem \ref{thmdegonerestated} with $m=t-s+1$.
     \end{proof}

\begin{example}
{\upshape
  In $QK(Gr(6,17))$,
    take $\lambda=\mu=(10, 8, 6,4,2,0)$ and $d=3$.

 For $\nu =(3,3,2,1,0,0)$,   $\nu_1+3-2<\lambda_{3-1}$ and $\nu_{3-3+1}+3-1<\lambda_{3}$ hold. By Proposition \ref{prophigherdegp} with respect to $s=t=3$, we have $n-k-\lambda_t+t=8$ and
   $$ N_{\lambda, \mu}^{\nu, 3} =N_{\lambda \uparrow 8, \mu}^{\nu\uparrow 8, 0}=N_{(9, 7, 5, 3, 2), (10, 8, 6, 4, 2, 0)}^{(11,11,10,9,8,8), 0}.$$

   For  $\eta=(6,2,2,1,0,0)$,    Proposition \ref{prophigherdegp}  is not applicable. Nevertheless, we can apply   Theorem \ref{thmdegonerestated} repeatedly and obtain
   \begin{align*}
      N_{(10, 8, 6,4,2,0), (10, 8, 6,4,2,0)}^{(6,2,2,1,0,0), 3}
      &=
             N_{(9, 7, 5,3,1,0), (10, 8, 6,4,2,0)}^{(8, 4, 4,3,2,2), 2}\\
             &=N_{ (9, 7, 5,3,1,0), (9, 7, 5,3,1,0)}^{(10,6,6,5, 4, 4), 1}\\
             &=N_{ (9, 7, 5,4,2,0), (9, 7, 5,3,1,0)}^{(11,11,10,9, 9, 4), 0}.\\
   \end{align*}
}
  \end{example}

\begin{prop}\label{propred33}
 Let $\lambda, \mu, \nu\in \mathcal{P}_{k, n}$ and $d \geq 1$. If $\nu_1\geq \lambda_1$ and $\nu_1<\lambda_{k+1-j}+\mu_{j}$ for some $j$, then we set $m=\min
  \{j\mid  \nu_1<\lambda_{k+1-j}+\mu_{j}, 1\leq j\leq k\}$. We have
   $$N_{\lambda, \mu}^{\nu, d}=
   N_{\nu^\vee\downarrow (n-k-\nu_1), \mu \downarrow (k+\mu_m-m)}^{\lambda^\vee\downarrow(n-\nu_1+\mu_m-m), d-1} .$$
\end{prop}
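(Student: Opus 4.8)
The plan is to deduce the statement from \textbf{Theorem \ref{thmdegonerestated}} after two preliminary degree-preserving manipulations: a Seidel shift that normalizes the first part of $\nu$ to $n-k$, followed by the duality $N_{\lambda,\mu}^{\nu,d}=N_{\lambda,\nu^\vee}^{\mu^\vee,d}$ of \textbf{Lemma \ref{lemdual}}. The point is that after these two moves the hypothesis $\nu_1<\lambda_{k+1-j}+\mu_j$ becomes exactly the inequality ``target $<$ first argument'' needed to run \textbf{Theorem \ref{thmdegonerestated}}, with the minimizing index again equal to $m$.

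First I would set $i:=n-k-\nu_1$ and invoke \textbf{Lemma \ref{lemred}}(3). To check its hypothesis, note that iterating the first Seidel shift and using $\nu_1\geq\lambda_1$ (so that no part of $\lambda$ reaches $n-k$ before the $i$-th step) gives $\lambda\uparrow i=(\lambda_1+i,\dots,\lambda_k+i)$ and $\nu\uparrow i=(n-k,\nu_2+i,\dots,\nu_k+i)$; in particular $|\lambda|-|\lambda\uparrow i|=-ki=|\nu|-|\nu\uparrow i|$. Hence $N_{\lambda,\mu}^{\nu,d}=N_{\hat\lambda,\mu}^{\hat\nu,d}$ with $\hat\lambda:=\lambda\uparrow i$ and $\hat\nu:=\nu\uparrow i$, and now $\hat\nu_1=n-k$. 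This is the only place the assumption $\nu_1\geq\lambda_1$ is used.

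Next I would apply \textbf{Lemma \ref{lemdual}} together with the commutativity of $*$ to rewrite this as $N_{\mu,\hat\nu^\vee}^{\hat\lambda^\vee,d}$. By the identity $(\eta\uparrow p)^\vee=\eta^\vee\downarrow p$ from \eqref{shiftdual} one has $\hat\nu^\vee=\nu^\vee\downarrow(n-k-\nu_1)$ and $\hat\lambda^\vee=\lambda^\vee\downarrow(n-k-\nu_1)$, and a direct computation of the dual of $\hat\lambda$ shows $(\hat\lambda^\vee)_\ell=\nu_1-\lambda_{k+1-\ell}$ for $1\leq\ell\leq k$. Consequently the condition ``$(\hat\lambda^\vee)_\ell<\mu_\ell$ for some $\ell$'' needed to apply \textbf{Theorem \ref{thmdegonerestated}} to $N_{\mu,\hat\nu^\vee}^{\hat\lambda^\vee,d}$, with $\mu$ playing the role of the first argument, $\hat\nu^\vee$ that of the middle argument, and $\hat\lambda^\vee$ that of the target, is exactly $\nu_1<\lambda_{k+1-\ell}+\mu_\ell$, and the minimizing index is precisely the $m$ of the statement. \textbf{Theorem \ref{thmdegonerestated}} then gives $N_{\mu,\hat\nu^\vee}^{\hat\lambda^\vee,d}=N_{\mu\uparrow r,\hat\nu^\vee}^{\hat\lambda^\vee\uparrow r,\,d-1}$ with $r=n-k-\mu_m+m$. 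Using $\uparrow r=\downarrow(k+\mu_m-m)$, composing the down-shifts $\hat\lambda^\vee=\lambda^\vee\downarrow(n-k-\nu_1)$ and $\downarrow(k+\mu_m-m)$ into $\lambda^\vee\downarrow(n-\nu_1+\mu_m-m)$ (here one uses $n-k-\nu_1=(n-\nu_1+\mu_m-m)-(k+\mu_m-m)$ and the periodicity $\eta\uparrow n=\eta$, i.e.\ $\mathcal{T}^n=q^k\mathrm{Id}$), together with $\hat\nu^\vee=\nu^\vee\downarrow(n-k-\nu_1)$ and one more use of commutativity to reorder the first two arguments, one arrives at the asserted formula.

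I expect the main obstacle to be purely bookkeeping: checking that all intermediate partitions $\hat\lambda,\hat\nu,\hat\lambda^\vee,\hat\nu^\vee$ actually lie in $\mathcal{P}_{k,n}$ so that the quoted results apply, confirming that the index $m$ produced by \textbf{Theorem \ref{thmdegonerestated}} in the last step coincides with the index $m$ defined in the statement, and verifying that the composition of the various $\uparrow$ and $\downarrow$ operators collapses to exactly the shift amounts $n-k-\nu_1$, $k+\mu_m-m$, and $n-\nu_1+\mu_m-m$. Everything else follows formally from \textbf{Lemma \ref{lemred}}, \textbf{Lemma \ref{lemdual}} and \textbf{Theorem \ref{thmdegonerestated}}.
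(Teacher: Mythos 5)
Your proposal is correct and follows essentially the same route as the paper: duality (Lemma \ref{lemdual} plus commutativity), a degree-preserving Seidel shift by $n-k-\nu_1$ via Lemma \ref{lemred}, and then Theorem \ref{thmdegonerestated} applied with $\mu$ as the acting argument, followed by the conversion $\uparrow r=\downarrow(k+\mu_m-m)$ and composition of shifts. The only difference is cosmetic: you shift $\lambda,\nu$ upward first and then dualize (Lemma \ref{lemred}(3)), whereas the paper dualizes first and then shifts $\nu^\vee,\lambda^\vee$ downward (Lemma \ref{lemred}(4)); by \eqref{shiftdual} both arrive at the same intermediate constant $N_{\nu^\vee\downarrow(n-k-\nu_1),\,\mu}^{\lambda^\vee\downarrow(n-k-\nu_1),\,d}$.
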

\begin{proof}
  Since $\nu_1\geq \lambda_1$, we have $n-k-\lambda_1\geq n-k-\nu_1$.   By Lemma \ref{lemdual} and Lemma \ref{lemred} (4), we have

   $$N_{\lambda, \mu}^{\nu, d}=N_{\nu^\vee, \mu}^{\lambda^\vee, d}=N_{\nu^\vee\downarrow (n-k-\nu_1), \mu}^{\lambda^\vee\downarrow(n-k-\nu_1), d}
     =N_{(\nu_1-\nu_k, \cdots, \nu_1-\nu_2, 0), (\mu_1, \cdots, \mu_k)}^{(\nu_1-\lambda_k, \cdots, \nu_1-\lambda_1), d}=:N_{\bar \lambda, \mu}^{\bar \nu,d}.$$
  Therefore by Theorem \ref{thmdegonerestated} together with the definition of Seidel shifts, we have
  $$N_{\lambda, \mu}^{\nu, d}=N_{\bar \lambda, \mu\uparrow (n-k-\mu_m+m)}^{\bar \nu \uparrow (n-k-\mu_m+m),d-1}=N_{\bar \lambda, \mu\downarrow (k+\mu_m-m)}^{\bar \nu \downarrow (k+\mu_m-m),d-1}=
   N_{\nu^\vee\downarrow (n-k-\nu_1), \mu \downarrow (k+\mu_m-m)}^{\lambda^\vee\downarrow(n-\nu_1+\mu_m-m), d-1} .$$
\end{proof}

\section{A quantum Littlewood-Richardson rule for   $QK(Gr(3, n))$}
In this section, we restrict to $Gr(3, n)$, and provide a quantum Littlewood-Richardson rule for $QK(Gr(3, n))$ in \textbf{Theorem \ref{thmQLRforGr3n}}. We obtain the rule by reducing most of $N_{\lambda, \mu}^{\nu, d}$  to  corresponding $N_{\tilde \lambda, \tilde \mu}^{\tilde \nu, 0}$ and computing the rest directly. As a direct consequence, we show the alternating positivity in Corollary \ref{positivityGr3n}.

\begin{prop}\label{propred2zero}
    Let $\lambda, \mu, \nu\in \mathcal{P}_{3, n}$ and $d\geq 1$. In $QK(Gr(3, n))$,  we have
$$N_{\lambda, \mu}^{\nu, d}=N_{(\lambda_1-\lambda_3, \lambda_2-\lambda_3, 0), (\mu_1-\mu_3, \mu_2-\mu_3,0)}^{\nu\downarrow (\lambda_3+\mu_3), d+{|\nu|-|\nu\downarrow (\lambda_3+\mu_3)|-3(\lambda_3+\mu_3)\over n}}=:N_{\hat \lambda, \hat \mu}^{\hat \nu, \hat d}.$$
Moreover, we have  $(-1)^{|\lambda|+|\mu|+|\nu|+dn}=(-1)^{|\hat\lambda|+|\hat\mu|+|\hat\nu|+\hat dn}$.
\end{prop}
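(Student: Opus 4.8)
The plan is to peel the content of the third row off both $\lambda$ and $\mu$ using the Seidel operator $\mathcal{T}$, read off the resulting identity of structure constants, and then settle the sign by a one-line parity count. The first step I would carry out is to record that
\[
\mathcal{O}^\lambda=\mathcal{T}^{\lambda_3}(\mathcal{O}^{\hat\lambda})\qquad\text{and}\qquad \mathcal{O}^\mu=\mathcal{T}^{\mu_3}(\mathcal{O}^{\hat\mu})\qquad\text{in }QK(Gr(3,n)).
\]
If $\lambda_3=0$ this is vacuous. If $\lambda_3>0$, then for each $0\le j\le\lambda_3-1$ the partition $(\lambda_1-\lambda_3+j,\lambda_2-\lambda_3+j,j)$ has first part $\le\lambda_1-1\le n-k-1<n-k$, so Theorem \ref{SeidelQKT} applies in its second case and gives $\mathcal{T}(\mathcal{O}^{(\lambda_1-\lambda_3+j,\,\lambda_2-\lambda_3+j,\,j)})=\mathcal{O}^{(\lambda_1-\lambda_3+j+1,\,\lambda_2-\lambda_3+j+1,\,j+1)}$ with no factor of $q$. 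Iterating $\lambda_3$ times from $\hat\lambda=(\lambda_1-\lambda_3,\lambda_2-\lambda_3,0)$ lands on $\mathcal{O}^{(\lambda_1,\lambda_2,\lambda_3)}=\mathcal{O}^\lambda$; the statement for $\mu$ is identical. (Equivalently, this is $\mathcal{H}^{\lambda_3}(\mathcal{O}^\lambda)=q^{\lambda_3}\mathcal{O}^{\hat\lambda}$ read through $\mathcal{H}\mathcal{T}=q\,\mbox{Id}$, valid because $\lambda$ keeps a positive third row through the first $\lambda_3$ applications of $\mathcal{H}$.)

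Next, since $\mathcal{T}$ is quantum multiplication by $\mathcal{O}^{1^k}$, the associativity and commutativity of $*$ (exactly as in the proofs of Lemma \ref{lemred} and Theorem \ref{mainthm2}) give
\[
\mathcal{O}^\lambda*\mathcal{O}^\mu=\mathcal{T}^{\lambda_3}(\mathcal{O}^{\hat\lambda})*\mathcal{T}^{\mu_3}(\mathcal{O}^{\hat\mu})=\mathcal{T}^{\lambda_3+\mu_3}\!\big(\mathcal{O}^{\hat\lambda}*\mathcal{O}^{\hat\mu}\big).
\]
Then I would expand $\mathcal{O}^{\hat\lambda}*\mathcal{O}^{\hat\mu}=\sum_{\eta,e}N_{\hat\lambda,\hat\mu}^{\eta,e}\,\mathcal{O}^\eta q^e$, apply the $QK$-version of $\mathcal{T}^r(\mathcal{O}^\eta)=q^{d_r(I_\eta)}\mathcal{O}^{\eta\uparrow r}$ with $r=\lambda_3+\mu_3$ and $d_r(I_\eta)=\tfrac1n\bigl(3r+|\eta|-|\eta\uparrow r|\bigr)$ (obtained by iterating Theorem \ref{SeidelQKT}), and compare coefficients of $\mathcal{O}^\nu q^d$. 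Because the Seidel shift $\eta\mapsto\eta\uparrow r$ is a bijection of $\mathcal{P}_{3,n}$ (it is the cyclic shift $I_\eta\mapsto I_\eta-r$ of jump sequences, and $\mathcal{T}^n=q^k\,\mbox{Id}$ lets one reduce $r$ modulo $n$ when $\lambda_3+\mu_3\ge n$), precisely one $\eta$ contributes to a fixed $\nu$, namely $\eta=\nu\downarrow r=\hat\nu$; matching the exponents of $q$ then forces $e=\hat d=d+\tfrac1n\bigl(|\nu|-|\hat\nu|-3(\lambda_3+\mu_3)\bigr)$, an integer. This yields $N_{\lambda,\mu}^{\nu,d}=N_{\hat\lambda,\hat\mu}^{\hat\nu,\hat d}$, both sides being $0$ by convention when $\hat d<0$.

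Finally, for the sign, the identities $|\lambda|=|\hat\lambda|+3\lambda_3$, $|\mu|=|\hat\mu|+3\mu_3$ and $n(d-\hat d)=-|\nu|+|\hat\nu|+3(\lambda_3+\mu_3)$ combine to
\[
\bigl(|\lambda|+|\mu|+|\nu|+dn\bigr)-\bigl(|\hat\lambda|+|\hat\mu|+|\hat\nu|+\hat d\,n\bigr)=6(\lambda_3+\mu_3),
\]
which is even, so $(-1)^{|\lambda|+|\mu|+|\nu|+dn}=(-1)^{|\hat\lambda|+|\hat\mu|+|\hat\nu|+\hat d\,n}$. I do not expect a genuine obstacle here: the only points requiring care are (i) checking that the $\lambda_3$ (resp. $\mu_3$) Seidel shifts rebuilding $\mathcal{O}^\lambda$ from $\mathcal{O}^{\hat\lambda}$ never cross the wall $\lambda_1=n-k$, so that no spurious power of $q$ enters, and (ii) bookkeeping the exponent $d_{\lambda_3+\mu_3}(I_\eta)$, including the harmless reduction of $\lambda_3+\mu_3$ modulo $n$; both are routine.
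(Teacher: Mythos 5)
Your proposal is correct and is essentially the paper's argument: the paper proves the first equality by the same coefficient comparison as in Lemma \ref{lemred}, i.e.\ writing the product as a Seidel shift $\mathcal{T}^{\lambda_3+\mu_3}$ (equivalently $\mathcal{H}$) of $\mathcal{O}^{\hat\lambda}*\mathcal{O}^{\hat\mu}$ via associativity and the bijectivity of the shift, and the sign statement is the same parity bookkeeping. Your write-up just makes explicit the details the paper leaves to the reader.
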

\begin{proof}
  The arguments for the first equality are the same as that for Lemma \ref{lemred}. The second equality follows immediately from the definition of the notation.
\end{proof}

Thanks to the above proposition, we will always consider the  partitions $\lambda=(\lambda_1, \lambda_2, 0)$ and $\mu=(\mu_1, \mu_2, 0)$ in $\mathcal{P}_{3, n}$ in the rest of this section.

The next lemma is a special case of the Giambelli formula in \cite[Theomrem 5.6]{BuMi}. Here we provide  the detail by  quantum Pieri formula for completeness.
\begin{lemma}\label{lemGiamb}
 Let $\mu=(\mu_1, \mu_2, 0)\in \mathcal{P}_{3, n}$. In $QK(Gr(3, n))$, we have
   \begin{align*}
            \mathcal{O}^{\mu}&= \mathcal{O}^{\mu_1}*\mathcal{O}^{\mu_2-1}+\sum_{j=\mu_1}^{n-3}\mathcal{O}^{j}*(\mathcal{O}^{\mu_2}-\mathcal{O}^{\mu_2-1}).
          \end{align*}
  \end{lemma}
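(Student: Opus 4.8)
The plan is to prove the identity by applying the quantum Pieri rule (Proposition \ref{propQuanPieri}) to both sides and verifying that the coefficients of each $\mathcal{O}^\nu$ match. First I would compute the right-hand side. By the classical Pieri rule ($q=0$ part of Proposition \ref{propQuanPieri}), $\mathcal{O}^{\mu_2-1}\cdot\mathcal{O}^{\mu_1}$ and $\mathcal{O}^{j}\cdot\mathcal{O}^{\mu_2}$, $\mathcal{O}^{j}\cdot\mathcal{O}^{\mu_2-1}$ are sums over horizontal strips; the quantum corrections are controlled by the second sum in Proposition \ref{propQuanPieri}, which is nonzero only when the relevant partition has full length $k=3$. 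Since $\mathcal{O}^{\mu_1}$, $\mathcal{O}^{\mu_2}$, $\mathcal{O}^{\mu_2-1}$, $\mathcal{O}^{j}$ all have length one, the products $\mathcal{O}^{\mu_1}*\mathcal{O}^{\mu_2-1}$ etc.\ are themselves purely classical (degree zero), but the partitions appearing in them may have length $3$, so the subsequent multiplication by $\mathcal{O}^{j}$ can produce $q$-terms. This is where the quantum content enters.

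The cleanest route is to first establish the purely \emph{classical} $K$-theoretic Giambelli-type identity in $K(Gr(3,n))$:
\begin{equation*}
  \mathcal{O}^{\mu}= \mathcal{O}^{\mu_1}\cdot\mathcal{O}^{\mu_2-1}+\sum_{j=\mu_1}^{n-3}\mathcal{O}^{j}\cdot(\mathcal{O}^{\mu_2}-\mathcal{O}^{\mu_2-1}),
\end{equation*}
and then bootstrap to the quantum statement. For the classical identity I would expand everything via Lenart's Pieri rule and compare coefficients of $\mathcal{O}^\nu$ for a fixed $\nu=(\nu_1,\nu_2,\nu_3)\in\mathcal{P}_{3,n}$. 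The point is that $\mathcal{O}^{\mu_1}\cdot\mathcal{O}^{\mu_2-1}$ contributes those $\nu$ obtained by first adding a horizontal strip of appropriate size to the one-row shape $(\mu_1)$ and then another to the result; after the binomial signs collapse (telescoping in the overlap parameter), one is left exactly with the contribution that would come from adding the two-row shape $(\mu_1,\mu_2)$ directly, corrected by the terms with a box in the third row. The sum $\sum_{j\ge\mu_1}\mathcal{O}^{j}\cdot(\mathcal{O}^{\mu_2}-\mathcal{O}^{\mu_2-1})$ is designed precisely to cancel the spurious contributions (where $\nu_1$ exceeds what a genuine $\mu$-strip allows) and to supply the missing third-row boxes; the difference $\mathcal{O}^{\mu_2}-\mathcal{O}^{\mu_2-1}$ is a telescoping device that isolates the boundary case $\nu_2=\mu_2$. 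Carrying out the binomial bookkeeping — summing $\sum_j (-1)^e\binom{r-1}{e}$ against the coefficients from $\mathcal{O}^{\mu_2}-\mathcal{O}^{\mu_2-1}$ — and checking it equals $\delta$ or the correct Lenart coefficient for $\mathcal{O}^\mu$ is the combinatorial heart of the argument.

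Once the classical identity holds, the quantum identity follows by a degree-counting/deformation argument: both sides of the claimed quantum identity are elements of $QK(Gr(3,n))=K(X)\otimes\mathbb{Z}[[q]]$, and their $q=0$ specializations agree by the classical identity and the fact that $\mathcal{O}^a*\mathcal{O}^b|_{q=0}=\mathcal{O}^a\cdot\mathcal{O}^b$. To control the higher-$q$ terms I would invoke Corollary \ref{Pierideg0}: since $\mu_2-1$, $\mu_2$, $j$ all index one-row partitions and $\mu_1$ does too, each individual quantum Pieri product on the right is classical \emph{except} possibly the one multiplying a length-$3$ intermediate partition by $\mathcal{O}^j$; but Proposition \ref{propQuanPieri22} gives the $q$-coefficient there explicitly, and the same expansion applied to $\mathcal{O}^\mu*\mathcal{O}^j$ type identities shows the $q$-parts agree termwise. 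Alternatively, and more robustly, one can associate both sides to the known Giambelli formula of \cite[Theorem 5.6]{BuMi} specialized to $k=3$: that theorem already expresses $\mathcal{O}^\mu$ as a polynomial in special classes under $*$, and the displayed two-row formula is just the $k=3$ instance after using $\mathcal{O}^{\mu_1}*\mathcal{O}^{\mu_2}$-type relations.

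I expect the main obstacle to be the binomial coefficient cancellation in the classical step: keeping careful track of which $\nu$ receive contributions from each term, and verifying that the alternating sums $\sum_{e}(-1)^e\binom{r(\nu/\lambda)-1}{e}$ telescope correctly across the range $j=\mu_1,\dots,n-3$ and against the difference $\mathcal{O}^{\mu_2}-\mathcal{O}^{\mu_2-1}$, so that everything collapses to the single Lenart coefficient of $\mathcal{O}^\nu$ in $\mathcal{O}^\mu$ (with its correct sign). Handling the edge cases $\mu_2=0$, $\mu_1=\mu_2$, and $\mu_1=n-3$ separately will require some care but no new ideas.
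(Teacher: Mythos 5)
Your proposal is correct and is essentially the paper's own argument: the paper likewise expands each product of two special classes by the (quantum) Pieri rule, $\mathcal{O}^a*\mathcal{O}^b=\mathcal{O}^{(a,b,0)}+\sum_{j=a+1}^{\min\{a+b,n-3\}}\bigl(\mathcal{O}^{(j,a+b-j,0)}-\mathcal{O}^{(j,a+b-j+1,0)}\bigr)$ for $n-3\geq a\geq b\geq 0$, and then collapses the right-hand side by the telescoping rearrangement $\mathcal{O}^{n-3}*\mathcal{O}^{\mu_2}+\sum_{j=\mu_1}^{n-4}\bigl(\mathcal{O}^j*\mathcal{O}^{\mu_2}-\mathcal{O}^{j+1}*\mathcal{O}^{\mu_2-1}\bigr)=\mathcal{O}^{(n-3,\mu_2,0)}+\sum_{j=\mu_1}^{n-4}\bigl(\mathcal{O}^{(j,\mu_2,0)}-\mathcal{O}^{(j+1,\mu_2,0)}\bigr)=\mathcal{O}^{(\mu_1,\mu_2,0)}$, which is exactly your ``binomial bookkeeping plus telescoping'' carried out efficiently. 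The one point to correct is your concern about where $q$-terms could enter: the right-hand side of the lemma contains no nested products and, in $Gr(3,n)$, a Pieri product of two one-row classes involves only partitions with at most two rows, so by the quantum Pieri rule (or Corollary \ref{Pierideg0}) no $q$-terms occur anywhere on the right-hand side and your classical identity already is the quantum one, with no further control of higher-degree terms needed.
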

 \begin{proof}
    For any $n-3\geq a\geq b\geq 0$, by the quantum Pieri rule we have
      $$\mathcal{O}^a*\mathcal{O}^b=\mathcal{O}^{(a, b,0)}+\sum_{j=a+1}^{\min\{a+b, n-3\}}(\mathcal{O}^{(j, a+b-j,0)}-\mathcal{O}^{(j, a+b-j+1,0)}).$$
Hence, the right hand side RHS of the expected equality satisfies
\begin{align*}
    \mbox{RHS}&
 =\mathcal{O}^{n-3}* \mathcal{O}^{\mu_2}+\sum_{j={\mu_1}}^{n-4}(\mathcal{O}^j*\mathcal{O}^{\mu_2}-\mathcal{O}^{j+1}*\mathcal{O}^{\mu_2-1})\\
 &=\mathcal{O}^{(n-3,\mu_2,0)}+\sum_{j={\mu_1}}^{n-4}(\mathcal{O}^{(j,\mu_2,0)}-\mathcal{O}^{(j+1, \mu_2,0)})\\
 &=\mathcal{O}^{(\mu_1, \mu_2, 0)}.
\end{align*}

 \end{proof}

\begin{thm}\label{thmQLRforGr3n}
Let $\lambda,\mu, \nu\in \mathcal{P}_{3,n}$ with $\lambda_3=\mu_3=0$. In $QK(Gr(3,n))$,
  we have $N_{\lambda, \mu}^{\nu, d}=0$  unless $d\leq 1$. Moreover, descriptions of $N_{\lambda, \mu}^{\nu, 1}$ are given as follows.

 \begin{enumerate}
   \item If $\nu_1<\max\{\lambda_1,\mu_1\}$,  assuming $\nu_1<\lambda_1$,  we have  $$N_{\lambda, \mu}^{\nu, 1}=N_{(\lambda_2+n-3-\lambda_1, n-3-\lambda_1, 0), \mu}^{(\nu_1+n-2-\lambda_1, \nu_2+n-2-\lambda_1,\nu_3+n-2-\lambda_1), 0}.$$

   \item If $\nu_1\geq \max\{\lambda_1,\mu_1\}$ and $\nu_2< \max\{\lambda_2,\mu_2\}$,   assuming $\nu_2<\lambda_2$,  we have
           $$N_{\lambda, \mu}^{\nu, 1}=N_{(n-3-\lambda_2, \lambda_1-\lambda_2, 0), \mu}^{(\nu_2+n-2-\lambda_2, \nu_3+n-2-\lambda_2,\nu_1-\lambda_2+1), 0}.$$
    \item If $\nu_1\geq \max\{\lambda_1, \mu_1\}$ and $\nu_2\geq\max\{\lambda_2, \mu_2\}$, setting
      $m:=|\nu|+n-|\lambda|-|\mu|$ and $A:=\lambda_1+\mu_1-\nu_1-\nu_2$, we have
       $$N_{\lambda, \mu}^{\nu, 1}=\begin{cases} \min\{A-1, n-3-\nu_1\},&\mbox{if } m=0,\\
   -\min\{A, n-3-\nu_1\}-2\min\{A-1, n-3-\nu_1\},&\mbox{if } m=1,\\
   2\min\{A, n-3-\nu_1\}+\min\{A-1, n-3-\nu_1\},&\mbox{if } m=2,\\
   -\min\{A, n-3-\nu_1\} ,&\mbox{if } m=3,\\
\end{cases}$$
   provided all the   constraints in \eqref{constraint} hold, or  $N_{\lambda, \mu}^{\lambda, 1}=0$ otherwise.
  \begin{eqnarray}\label{constraint}
     \begin{cases}
      A>0,\\ 0\leq m\leq 3,\\
      \min\{\lambda_1+\lambda_2, \mu_1+\mu_2\}\geq n-3+\nu_3,\\
      \min\{\lambda_1, \mu_1\}>\nu_{2}, \quad \min\{\lambda_2, \mu_2\}>\nu_{3}.
    \end{cases}
    \end{eqnarray}

 \end{enumerate}
\end{thm}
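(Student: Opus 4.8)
The plan is to reduce everything to the quantum Pieri rule (Proposition \ref{propQuanPieri22}) via the Giambelli-type expansion of Lemma \ref{lemGiamb}, together with the Seidel operators $\mathcal{T},\mathcal{H}$ and the reductions of Section 6. First I would establish the degree bound: since $\lambda=(\lambda_1,\lambda_2,0)$ has $\lambda_3=0$, Corollary \ref{Pierideg0} applied through the Giambelli formula $\mathcal{O}^\mu=\mathcal{O}^{\mu_1}*\mathcal{O}^{\mu_2-1}+\sum_{j=\mu_1}^{n-3}\mathcal{O}^j*(\mathcal{O}^{\mu_2}-\mathcal{O}^{\mu_2-1})$ shows $\mathcal{O}^\lambda*\mathcal{O}^\mu=\mathcal{O}^\lambda*\mathcal{O}^{\mu_1}*\mathcal{O}^{\mu_2-1}+\cdots$, and by the quantum Pieri rule (Proposition \ref{propQuanPieri22}) every factor $\mathcal{O}^\bullet*\mathcal{O}^j$ contributes at most $q^1$; associativity plus the fact that $\mathcal{O}^\lambda*\mathcal{O}^j$ already has $\lambda_3=0$ forces the total $q$-degree to be at most $1$. (A short separate check that $N_{\lambda,\mu}^{\nu,d}=0$ for $d\ge 2$ when $\lambda_3=\mu_3=0$: apply Pieri twice and note that the first Pieri multiplication raises the third part to at most $1$, after which the second multiplication can raise it by at most $1$ more but the $q$ in the first already consumed the room — more carefully, one uses $\mathcal{H}\mathcal{T}=q\,\mathrm{Id}$ and the explicit $\mathcal{T},\mathcal{H}$ formulas of Theorem \ref{SeidelQK} to see that a degree-$2$ term would force the Seidel shifts $\lambda\uparrow r$ to wrap around twice, impossible for $k=3$ with $\lambda_3=0$.)

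For cases (1) and (2) I would invoke Theorem \ref{thmdegonerestated} directly. In case (1), if $\nu_1<\lambda_1$ then $m=\min\{i\mid\nu_i<\lambda_i\}=1$, so $r=n-k-\lambda_1+1=n-3-\lambda_1+1$ and Theorem \ref{thmdegonerestated} (with $k=3$) gives $N_{\lambda,\mu}^{\nu,1}=N_{\lambda\uparrow r,\mu}^{\nu\uparrow r,0}$ with the displayed partitions $\lambda\uparrow r=(\lambda_2+n-3-\lambda_1,\,n-3-\lambda_1,\,0)$ and $\nu\uparrow r=(\nu_1+n-2-\lambda_1,\,\nu_2+n-2-\lambda_1,\,\nu_3+n-2-\lambda_1)$, exactly by plugging $m=1$, $k=3$ into the second formula of that theorem. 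Case (2) is identical with $m=2$: when $\nu_1\ge\max\{\lambda_1,\mu_1\}$ but $\nu_2<\lambda_2$, the minimal bad index is $m=2$, $r=n-3-\lambda_2+2$, and Theorem \ref{thmdegonerestated} yields $N_{\lambda,\mu}^{\nu,1}=N_{(n-3-\lambda_2,\lambda_1-\lambda_2,0),\mu}^{(\nu_2+n-2-\lambda_2,\nu_3+n-2-\lambda_2,\nu_1-\lambda_2+1),0}$ after reading off the permuted entries. The symmetric statements (with the roles of $\lambda$ and $\mu$ exchanged when $\nu_i<\mu_i$ instead) follow from the commutativity $N_{\lambda,\mu}^{\nu,d}=N_{\mu,\lambda}^{\nu,d}$.

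Case (3), where $\nu_i\ge\max\{\lambda_i,\mu_i\}$ for $i=1,2$, is the genuinely computational heart and the main obstacle: here none of the quantum-to-classical reductions of Section 6 apply, and $N_{\lambda,\mu}^{\nu,1}$ must be computed by hand. The plan is to use the Giambelli expansion of Lemma \ref{lemGiamb} for $\mathcal{O}^\mu$, multiply by $\mathcal{O}^\lambda$, and then expand each $\mathcal{O}^\lambda*\mathcal{O}^j$ by the quantum Pieri rule of Proposition \ref{propQuanPieri22}, keeping only the $q^1$-part. Because $\ell(\lambda)\le 2<3=k$, the quantum (degree-one) part of $\mathcal{O}^\lambda*\mathcal{O}^j$ vanishes, so the $q^1$-contributions to $\mathcal{O}^\lambda*\mathcal{O}^\mu$ come only from the interaction of the classical part of $\mathcal{O}^\lambda*\mathcal{O}^j$ (a sum of $\mathcal{O}^{(a,b,0)}$ with binomial signs) with the quantum Pieri rule applied in the next multiplication — i.e. from terms $\mathcal{O}^{(a,b,0)}*\mathcal{O}^{\mu_2}$ and $\mathcal{O}^{(a,b,0)}*\mathcal{O}^{\mu_2-1}$ whose degree-one parts are nonzero, which requires the intermediate partition to have full length $3$. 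One then collects these contributions: each is an alternating binomial sum $\sum_e(-1)^e\binom{r(\tilde\nu/\tilde\lambda)-1}{|\tilde\nu/\tilde\lambda|-i}$, and for $k=3$ the relevant binomials are $\binom{0}{0}=1$, $\binom{1}{0}=1$, $\binom{1}{1}=1$, $\binom{2}{0}=1$, $\binom{2}{1}=2$, $\binom{2}{2}=1$, so the final answer is a small integer linear combination of $\min\{A,n-3-\nu_1\}$ and $\min\{A-1,n-3-\nu_1\}$ whose coefficients depend only on $m=|\nu|+n-|\lambda|-|\mu|\in\{0,1,2,3\}$ — this produces the four displayed formulas. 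The constraints in \eqref{constraint} are precisely the nonemptiness conditions ($A>0$ so a horizontal strip of the right size exists, $0\le m\le 3$ from the dimension count, $\min\{\lambda_i,\mu_i\}>\nu_{i+1}$ and $\min\{\lambda_1+\lambda_2,\mu_1+\mu_2\}\ge n-3+\nu_3$ so that each intermediate Pieri step is legal); when any fails, every contributing term vanishes and $N_{\lambda,\mu}^{\nu,1}=0$. The tedious part is a careful bookkeeping of which $(a,b)$ and which of $\mu_2,\mu_2-1$ produce a length-$3$ intermediate partition and with what sign, and verifying that the ranges of summation telescope into the stated $\min$'s; I would organize this by fixing $m$ and tracking the generating-function identity $\sum_j(\text{Pieri in }\mathcal{O}^\lambda*\mathcal{O}^j)\cdot(\mathcal{O}^{\mu_2}-\mathcal{O}^{\mu_2-1})$ so that the cross-terms cancel cleanly as in the proof of Lemma \ref{lemGiamb}.
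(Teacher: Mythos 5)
Your degree bound and your treatment of cases (1) and (2) follow the paper's route exactly: the Giambelli expansion of Lemma \ref{lemGiamb} plus the observation (Corollary \ref{Pierideg0}) that $\mathcal{O}^{(\lambda_1,\lambda_2,0)}*\mathcal{O}^{j}$ is purely classical shows that only the final Pieri multiplication can contribute $q$, hence $N_{\lambda,\mu}^{\nu,d}=0$ for $d\geq 2$; and (1), (2) are indeed direct specializations of Theorem \ref{thmdegonerestated} with $m=1$ and $m=2$, with the displayed partitions read off correctly. (Your parenthetical alternative argument about ``wrapping around twice'' is too vague to stand on its own, but it is not needed.)

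The genuine gap is case (3), which you yourself identify as the heart and then only sketch. Saying that the answer ``is a small integer linear combination of $\min\{A,n-3-\nu_1\}$ and $\min\{A-1,n-3-\nu_1\}$ whose coefficients depend only on $m$'' is the conclusion, not an argument: you never derive the coefficients, and the mechanism by which the alternating Pieri sums collapse is missing. In the paper this is done by first writing the degree-one coefficient as an explicit inclusion--exclusion over horizontal strips $\eta\supset\lambda$ organized by the value of $m$ (the analogue of \eqref{degonered}), and then --- this is the key device your outline lacks --- constructing the injection $\eta=(\eta_1,\eta_2,\eta_3)\mapsto(\eta_1,\eta_2,\eta_3-1)$ between the contributing sets $\Gamma_i^m\to\Gamma_{i-1}^m$. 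Its well-definedness requires proving $\eta_3-1>\nu_3$, which is exactly where the constraint $\min\{\lambda_1+\lambda_2,\mu_1+\mu_2\}\geq n-3+\nu_3$ enters; you list that constraint only as a ``nonemptiness condition'' and miss this second, essential role. Only after this cancellation do the surviving terms form an interval whose length is $\min\{A+i-2,\,n-3-\nu_1\}$, producing the four formulas. Likewise, the vanishing statements when the constraints fail are not automatic: the paper gives separate arguments (no admissible $\eta$ exists when $\min\{\lambda_j,\mu_j\}\leq\nu_{j+1}$; $m>3$ forced when $\min\{\lambda_1+\lambda_2,\mu_1+\mu_2\}<n-3+\nu_3$; empty difference sets when $A\leq 0$), whereas you assert ``every contributing term vanishes'' without proof. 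As written, the proposal establishes the easy parts and defers the decisive computation, so it does not yet prove part (3).
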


 \begin{proof}
  By Lemma \ref{lemGiamb},
   \begin{align*}
            \mathcal{O}^{\lambda}*\mathcal{O}^{\mu}&=\mathcal{O}^{(\lambda_1,\lambda_2,0)}*\big(\mathcal{O}^{\mu_1}*\mathcal{O}^{\mu_2-1}+\sum\nolimits_{j=\mu_1}^{n-3}\mathcal{O}^{j}*(\mathcal{O}^{\mu_2}-\mathcal{O}^{\mu_2-1})\big)\\
            &=\mathcal{O}^{(\lambda_1,\lambda_2,0)}*\mathcal{O}^{\mu_1}*\mathcal{O}^{\mu_2-1}+\big(\sum\nolimits_{j=\mu_1}^{n-3}\mathcal{O}^{(\lambda_1,\lambda_2,0)}*\mathcal{O}^{j}\big)*(\mathcal{O}^{\mu_2}-\mathcal{O}^{\mu_2-1}).
  \end{align*}
Since $\lambda_3=0$, we have  $\mathcal{O}^{(\lambda_1,\lambda_2,0)}* \mathcal{O}^{j}=\mathcal{O}^{(\lambda_1,\lambda_2,0)}\cdot \mathcal{O}^{j}=\sum_{\eta} c_\eta \mathcal{O}^\eta$. Thus $N_{\lambda, \mu}^{\nu, d}=0$ whenever $d>1$, by Proposition \ref{propQuanPieri}. Namely the first claim in the statement holds.

 Statement (1) and (2) follow directly from Theorem \ref{thmdegonerestated}.

 Now we start with the hypotheses $\nu_1\geq \lambda_1$ and $\nu_2\geq \lambda_2$ only.

For any $a$, the term $q \mathcal{O}^{\nu}$ occurs in $\mathcal{O}^\eta*\mathcal{O}^a$ only if
 $\nu$ can be obtained from $\eta$ by removing at least one box at each row.
 In particular if $\lambda_i\leq \nu_{i+1}$ for some $1\leq i\leq 2$, then none of $\eta$ could satisfy the requirement. Consequently we have
   \begin{equation}\label{vanishing111}
      N_{\lambda, \mu}^{\nu, 1}=0, \mbox{if } \nu_i\geq \lambda_i \mbox{ for all } i\in\{1, 2\} \mbox{ and } \lambda_j\leq \nu_{j+1} \mbox{ for some }j\in \{1,2\}.
   \end{equation}
 Since $\nu_1\geq \lambda_1$ and $\nu_2\geq \lambda_2$, it follows that
$\eta_i > \lambda_i$ for $1\leq i\leq 3$, so that   $r(\eta/\lambda)=3$. Thus by Proposition \ref{propQuanPieri},
  $N_{\lambda, \mu}^{\nu, 1}$ coincides with the coefficient of $q\mathcal{O}^\nu$ in PRHS with

    \begin{align*}
     \mbox{PRHS}  &=\,\,\sum_{i=0}^2\sum_{ |\eta/\lambda|=\mu_1+i }(-1)^i{2\choose i}\mathcal{O}^{\eta}*\mathcal{O}^{\mu_2-1}\\
     &\qquad+ \sum\nolimits_{j=\mu_1}^{n-3}\sum_{i=0}^2\sum_{|\eta/\lambda|=j+i }(-1)^i{2\choose i}\mathcal{O}^{\eta} *(\mathcal{O}^{\mu_2}-\mathcal{O}^{\mu_2-1})\\
      &=\,\,\sum_{i=0}^2\sum_{ |\eta/\lambda|=\mu_1+i }(-1)^i{2\choose i}\mathcal{O}^{\eta}*\mathcal{O}^{\mu_2-1}\\
     &\qquad+\big(\sum_{|\eta/\lambda|=\mu_1}\mathcal{O}^{\eta}-\sum_{|\eta/\lambda|=\mu_1+1}\mathcal{O}^{\eta}\big)*(\mathcal{O}^{\mu_2}-\mathcal{O}^{\mu_2-1})\\
      &= \sum_{ |\eta/\lambda|=\mu_1+2 } \mathcal{O}^{\eta}*\mathcal{O}^{\mu_2-1}-\sum_{ |\eta/\lambda|=\mu_1+1 }  \mathcal{O}^{\eta}*\mathcal{O}^{\mu_2-1}\\
     &\qquad -\sum_{|\eta/\lambda|=\mu_1+1 }\mathcal{O}^{\eta} * \mathcal{O}^{\mu_2}+ \sum_{|\eta/\lambda|=\mu_1 }\mathcal{O}^{\eta}*\mathcal{O}^{\mu_2}\\ 
    \end{align*}
    Since $\eta/\lambda$ is a horizontal strip, $|\eta/\lambda|\leq n-3$  is an  implicit constraint  in the above sums. In particular if
    $|\eta/\lambda|>n-3$, then the corresponding sum is read off as 0.
 Since $r(\eta/\lambda)=3$, the outer rim of $\eta$ has 3 rows.  Since $\nu_i\geq \lambda_i$ for all $i$,    the first two rows of the outer rim of $\eta$ both contain at least a box in $\nu$. Therefore for any $a$, the coefficient of $q\mathcal{O}^\nu$ in $\mathcal{O}^\eta*\mathcal{O}^a$  is equal to $(-1)^e{2\choose e}$, provided
       $|\nu|-|\eta|+n-a=e\in \{0, 1, 2\}$ and the implicit constraint that $\nu$ can be obtained from $\eta$ by removing a subset of the boxes in the outer rim of $\eta$ with at least one box removed from each row. Hence,
\begin{eqnarray*}\label{degone}
     N_{\lambda, \mu}^{\nu, 1}  &=\qquad \sum_{ |\eta/\lambda|=\mu_1+2 \atop |\nu|-|\eta|+n-\mu_2+1=0}1+
   \sum_{ |\eta/\lambda|=\mu_1+2 \atop |\nu|-|\eta|+n-\mu_2+1=1}(-2) +
   \sum_{ |\eta/\lambda|=\mu_1+2 \atop |\nu|-|\eta|+n-\mu_2+1=2}1\\
   &  \qquad -  \sum_{ |\eta/\lambda|=\mu_1+1 \atop |\nu|-|\eta|+n-\mu_2+1=0}1-
   \sum_{ |\eta/\lambda|=\mu_1+1 \atop |\nu|-|\eta|+n-\mu_2+1=1}(-2) -
   \sum_{ |\eta/\lambda|=\mu_1+1 \atop |\nu|-|\eta|+n-\mu_2+1=2}1\\
    &\quad - \sum_{ |\eta/\lambda|=\mu_1+1 \atop |\nu|-|\eta|+n-\mu_2=0}1-
   \sum_{ |\eta/\lambda|=\mu_1+1 \atop |\nu|-|\eta|+n-\mu_2=1}(-2) -
   \sum_{ |\eta/\lambda|=\mu_1+1 \atop |\nu|-|\eta|+n-\mu_2=2}1\\
   &\quad  +\sum_{ |\eta/\lambda|=\mu_1 \atop |\nu|-|\eta|+n-\mu_2=0}1+
   \sum_{ |\eta/\lambda|=\mu_1 \atop |\nu|-|\eta|+n-\mu_2=1}(-2) +
   \sum_{ |\eta/\lambda|=\mu_1 \atop |\nu|-|\eta|+n-\mu_2=2}1.
 \end{eqnarray*}
  Let $m=n+|\nu|-|\lambda|-|\mu|$. The first sum occurs only if $|\eta/\lambda|+(|\nu|-|\eta|+n-\mu_2+1)=(\mu_1+2)+0$, namely only if $m=1$. For the same reason, only part of the sum could occur at the same time. Precisely, we have
 \begin{eqnarray}\label{degonered}
 \begin{split}{}\hspace{1.5cm}&N_{\lambda, \mu}^{\nu, 1}\\
    =&\begin{cases}\displaystyle
             -\sum_{ |\eta/\lambda|=\mu_1+1 \atop |\nu|-|\eta|+n-\mu_2+1=0}1+\sum_{ |\eta/\lambda|=\mu_1 \atop |\nu|-|\eta|+n-\mu_2=0}1, &\mbox{if } m=0,\\
         \displaystyle
 \sum_{ |\eta/\lambda|=\mu_1+2 \atop |\nu|-|\eta|+n-\mu_2+1=0}1  + \sum_{ |\eta/\lambda|=\mu_1+1 \atop |\nu|-|\eta|+n-\mu_2+1=1}1+ \sum_{ |\eta/\lambda|=\mu_1 \atop |\nu|-|\eta|+n-\mu_2=1}(-2), &\mbox{if } m=1,\\
 \displaystyle\sum_{ |\eta/\lambda|=\mu_1+2 \atop |\nu|-|\eta|+n-\mu_2+1=1}(-2)+ \sum_{ |\eta/\lambda|=\mu_1+1 \atop |\nu|-|\eta|+n-\mu_2=1}1+ \sum_{ |\eta/\lambda|=\mu_1 \atop |\nu|-|\eta|+n-\mu_2=2}1, &\mbox{if } m=2,\\
\displaystyle  \sum_{ |\eta/\lambda|=\mu_1+2 \atop |\nu|-|\eta|+n-\mu_2+1=2}1-\sum_{ |\eta/\lambda|=\mu_1+1 \atop |\nu|-|\eta|+n-\mu_2=2}1, &\mbox{if } m=3,\\
0,&\mbox{otherwise}.          \end{cases}
 \end{split}
  \end{eqnarray}

 Notice  $|\nu|-|\lambda|\geq \nu_3$. If $\mu_1+\mu_2<n-3+\nu_3$, then
   $m\geq \nu_3+n-|\mu|=\nu_3+n-(\mu_1+\mu_2)>3$.
  Therefore  by formula \eqref{degonered}, we have
  \begin{equation}\label{vanishing222}
      N_{\lambda, \mu}^{\nu, 1}=0, \mbox{if } \nu_i\geq \lambda_i \mbox{ for all } i\in\{1, 2\} \mbox{ and } \mu_1+\mu_2<n-3+\nu_3.
   \end{equation}
   The above arguments only use the hypotheses $\nu_1\geq \lambda_1$ and $\nu_2\geq \lambda_2$. Since $\nu_i\geq \max\{\lambda_i, \mu_i\}$, by interchanging $\lambda$ and $\mu$ in \eqref{vanishing111} and \eqref{vanishing222}, we obtain
    \begin{eqnarray}\label{vanishing333}
     {} \qquad N_{\lambda, \mu}^{\nu, 1}&=0,& \mbox{if } \nu_i\geq \mu_i \mbox{ for all } i\in\{1, 2\}  \mbox{ and } \lambda_j\leq \nu_{j+1} \mbox{ for some }j\in \{1,2\};\\
      {}\qquad N_{\lambda, \mu}^{\nu, 1}&=0,& \mbox{if } \nu_i\geq \mu_i \mbox{ for all } i\in\{1, 2\} \mbox{ and  } \lambda_1+\lambda_2<n-3+\nu_3.
   \end{eqnarray}

 Now we consider the case $\mu_1+\mu_2\geq n-3+\nu_3$ and $\lambda_j>\nu_{j+1} \mbox{ for all }j\in \{1,2\}$. Assume    $\lambda_2\geq \mu_2$ (otherwise we interchange $\lambda$ and $\mu$ in the   arguments below).  Set
   $$\Gamma_{i}^j:=\{\eta\mid \eta \mbox{ occurs in the unique sum  when } m=j \mbox{ and } |\eta/\lambda|=\mu_1+i\}.$$
   We claim $(\eta_1, \eta_2, \eta_3)\mapsto (\eta_1, \eta_2, \eta_3-1)$ well defines a map $\psi_i^j: \Gamma_{i}^j\to\Gamma_{i-1}^j$ for all $(i, j)$.
   Indeed, we just need to show  $\nu$ can   be obtained from $(\eta_1, \eta_2, \eta_3-1)$   by removing a subset of the boxes in the outer rim of $(\eta_1, \eta_2, \eta_3-1)$ with at least one box removed from each row, for all $\eta\in \Gamma_i^j$.
     That is, we need to show   $\eta_3-1>\nu_3$.
   Notice   $\mu_1+i=|\eta|-|\lambda|=\eta_3+(\eta_1-\lambda_1)+(\eta_2-\lambda_2)\leq \eta_3+(n-3-\lambda_1)+(\lambda_1-\lambda_2)$. Hence
    $\eta_3 \geq   \mu_1+i+\lambda_2-(n-3)
         \geq \mu_1+\mu_2+i-(n-3) $.
   \begin{enumerate}
     \item[a)] If $m=3$, then we have $i=2$ and hence  $\eta_3\geq n-3+\nu_3+2-(n-3)=\nu_3+2$.
     \item[b)] If $m<3$, then $|\mu|=|\nu|+n-|\lambda|-m>\nu_3+n-3$, and consequently $\eta_3>\nu_3+ n-3+i-(n-3)=\nu_3+i\geq \nu_3+1$.
   \end{enumerate}
 Thus we   always have $\eta_3-1>\nu_3$. Since $\psi_i^j$ is  obviously injective,  we have
  \begin{equation}\label{NGr3n}
     N_{\lambda, \mu}^{\nu, 1}=\begin{cases} \big|\Gamma_{0}^m\setminus \psi_{1}^m(\Gamma_1^m)|, &\mbox{if }m=0,\\
     -\big|\Gamma_{1}^m\setminus \psi_{2}^m(\Gamma_2^m)|-2\big|\Gamma_{0}^m\setminus \psi_{1}^m(\Gamma_1^m)|, &\mbox{if }m=1,\\
      2\big|\Gamma_{1}^m\setminus \psi_{2}^m(\Gamma_2^m)|+ \big|\Gamma_{0}^m\setminus \psi_{1}^m(\Gamma_1^m)|, &\mbox{if }m=2,\\
      -\big|\Gamma_{1}^m\setminus \psi_{2}^m(\Gamma_2^m)|, &\mbox{if }m=3.
  \end{cases}
  \end{equation}

  \begin{align*}
       &\Gamma_{i-1}^m\setminus \psi_{i}^m(\Gamma_i^m)\\
       =&\{\eta\in \mathcal{P}_{3, n}\mid \eta_3=\lambda_2, \nu_1+1\leq \eta_1, \nu_2+1\leq \eta_2\leq \lambda_1, |\eta|-|\lambda|=\mu_1+i-1\}\\
          \overset{\cong}{=} &\{(a, b)\in \mathbb{Z}^2\left| {  \nu_1-\lambda_1+1\leq a\leq n-3-\lambda_1, \atop \nu_2+1-\lambda_2\leq b\leq \lambda_1-\lambda_2, \quad a+b=\mu_1+i-1-\lambda_2}\right.\}
          .\\
  \end{align*}
Since $\mu_1\leq \nu_1$ and $i\in \{1, 2\}$, we have
$$  \mu_1+i-1-\lambda_2 -(\nu_1-\lambda_1+1)=\lambda_1-\lambda_2+(\mu_1-\nu_1)+(i-2)\leq \lambda_1-\lambda_2;$$
  $$\mu_1+i-1-\lambda_2 -(\nu_1-\lambda_1+1)-(\nu_2-\lambda_2+1)=\lambda_1+\mu_1-\nu_1-\nu_2+i-3=A+i-3.$$
Thus $\Gamma_{i-1}^m\setminus \psi_{i}^m(\Gamma_i^m)\neq \emptyset$ if and only if $A+i-3\geq 0$. Hence, by \eqref{NGr3n} we have
$$N_{\lambda, \mu}^{\nu, 1}=0,\quad\mbox{ if }A\leq0.$$

Now we assume  $A>0$. (We allow the  case $A=1$ and $i=1$,
 since $\Gamma_{0}^m\setminus \psi_{1}^m(\Gamma_1^m)$ would be an empty set, consistent with the counting number $A-1=0$).
For $M=\mu_1-\nu_2+i-2$, we have $\mu_1+i-1-\lambda_2=M+(\nu_2+1-\lambda_2)$, and $M$ is the  upper bound of $a$ for which  the required lower bound of $b$ is satisfied. Hence,
$$\Gamma_{i-1}^m\setminus \psi_{i}^m(\Gamma_i^m)\cong\{a\in \mathbb{Z}\mid \nu_1-\lambda_1+1\leq a\leq \min\{M, n-3-\lambda_1\}\},$$
independent of the value $m$.
Hence, $$\big|\Gamma_{i-1}^m\setminus \psi_{i}^m(\Gamma_i^m)|
=\begin{cases}
    n-3-\nu_1\leq A+i-2,&\mbox{if } M\geq n-3-\lambda_1\\
      A+i-2\leq n-3-\nu_1,&\mbox{if } M< n-3-\lambda_1.
\end{cases}$$
   Consequently statement (3) follows from \eqref{NGr3n}.
  \end{proof}
As a direct consequence, we verify   \cite[Conjecture 5.10]{BuMi} by Buch and Mihalcea  in the special case $Gr(3, n)$.
  \begin{cor}\label{positivityGr3n}
     Let $\lambda,\mu, \nu\in \mathcal{P}_{3,n}$ and $d\in \mathbb{Z}_{\geq 0}$. In $QK(Gr(3, n))$, we have
        $$(-1)^{|\lambda|+|\mu|+|\nu|+dn}N_{\lambda, \mu}^{\nu, d}\geq 0.$$
  \end{cor}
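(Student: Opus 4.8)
The plan is to obtain the inequality as an immediate consequence of Theorem \ref{thmQLRforGr3n} combined with the classical alternating positivity $(-1)^{|\alpha|+|\beta|+|\gamma|}N_{\alpha,\beta}^{\gamma,0}\geq 0$ for $K(Gr(3,n))$ \cite{Buch,Brio}. First I would invoke Proposition \ref{propred2zero} to reduce to the case $\lambda_3 = \mu_3 = 0$: that proposition rewrites $N_{\lambda,\mu}^{\nu,d}$ as $N_{\hat\lambda,\hat\mu}^{\hat\nu,\hat d}$ with $\hat\lambda_3 = \hat\mu_3 = 0$ and, crucially, with $(-1)^{|\lambda|+|\mu|+|\nu|+dn} = (-1)^{|\hat\lambda|+|\hat\mu|+|\hat\nu|+\hat dn}$, so nothing is lost. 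Because $N_{\lambda,\mu}^{\nu,d} = N_{\mu,\lambda}^{\nu,d}$ and the target sign is symmetric in $\lambda$ and $\mu$, I may also freely interchange $\lambda$ and $\mu$.

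Next I would split on $d$. For $d \geq 2$, Theorem \ref{thmQLRforGr3n} gives $N_{\lambda,\mu}^{\nu,d} = 0$, so there is nothing to check; for $d = 0$ the statement is precisely the cited classical positivity. The case $d = 1$ is handled through the three cases of Theorem \ref{thmQLRforGr3n}. In cases (1) and (2), $N_{\lambda,\mu}^{\nu,1}$ equals an explicit classical coefficient $N_{\tilde\lambda,\tilde\mu}^{\tilde\nu,0}$, and it only remains to verify the parity identity $(-1)^{|\tilde\lambda|+|\tilde\mu|+|\tilde\nu|} = (-1)^{|\lambda|+|\mu|+|\nu|+n}$; for case (1) the difference $|\tilde\lambda|+|\tilde\mu|+|\tilde\nu| - (|\lambda|+|\mu|+|\nu|+n)$ works out to $4n - 12 - 6\lambda_1$, and for case (2) to $2n - 6 - 6\lambda_2$, both even, after which classical positivity finishes the argument. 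In case (3) I would note that $m := |\nu|+n-|\lambda|-|\mu|$ satisfies $|\lambda|+|\mu|+|\nu|+n = 2(|\lambda|+|\mu|)+m$, so the sign to match is $(-1)^m$; using that the constraints \eqref{constraint} force $A > 0$ and that $n-3-\nu_1 \geq 0$ always holds in $Gr(3,n)$, the two quantities $\min\{A, n-3-\nu_1\}$ and $\min\{A-1, n-3-\nu_1\}$ are nonnegative, so the piecewise formula for $N_{\lambda,\mu}^{\nu,1}$ is $\geq 0$ when $m \in \{0,2\}$ and $\leq 0$ when $m \in \{1,3\}$, exactly matching $(-1)^m$ (and the coefficient is $0$ when \eqref{constraint} fails).

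I do not expect a serious obstacle: the whole content sits in Theorem \ref{thmQLRforGr3n}, and the corollary is essentially a bookkeeping exercise. The only points requiring a little care are the parity computations in cases (1) and (2) of the $d = 1$ analysis, and confirming that the reduction of Proposition \ref{propred2zero} together with the $\lambda \leftrightarrow \mu$ symmetry is compatible with the sign $(-1)^{|\lambda|+|\mu|+|\nu|+dn}$ — both of which are straightforward.
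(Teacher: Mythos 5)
Your proposal is correct and follows essentially the same route as the paper: reduce to $\lambda_3=\mu_3=0$ and $d=1$ via Proposition \ref{propred2zero} and the vanishing for $d\geq 2$, handle cases (1)--(2) of Theorem \ref{thmQLRforGr3n} by a parity check against classical positivity, and read case (3) off the explicit piecewise formula. Your explicit parity computations ($4n-12-6\lambda_1$ and $2n-6-6\lambda_2$, both even) correctly verify the congruence the paper merely asserts.
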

  \begin{proof}
    If $d=0$, the alternating positivity is   known to hold  \cite{Buch,Brio}. By Proposition \ref{propred2zero}, we can assume $\lambda_3=\mu_3=0$ and $d=1$.

     If $\nu_i<\max\{\lambda_i, \mu_i\}$ for some $1\leq i\leq 2$, then it follows
        from Theorem \ref{thmQLRforGr3n} (1) and (2) that
        $N_{\lambda, \mu}^{\nu, 1}=N_{\bar \lambda, \bar \mu}^{\bar \nu, 0}$ with
         $|\lambda|+|\mu|+|\nu|+n\equiv |\bar\lambda|+|\bar\mu|+|\bar\nu|\mod 2$. Hence the statement follows.

     If $\nu_i\geq\max\{\lambda_i, \mu_i\}$  for all $1\leq i\leq 2$, then the statement follows directly from Theorem \ref{thmQLRforGr3n} (3).
  \end{proof}

\begin{example}\label{exampleccc}
Let $c, u$ be   integers with   $0\leq c< u\leq 2c< n-3$. Let $\lambda=\mu=(2c, c, 0)$ and $\mu=(u, c, 0)$. Then $A=
\lambda_1+\mu_1-\nu_1-\nu_2=u-c>0$. In  $QK(Gr(3,n))$,  by Theorem \ref{thmQLRforGr3n} (3) we have
\begin{align*}
   N_{(2c,c,0),(u,c,0)}^{(2c,c,0),1}=
\begin{cases}
n-3-2c,&\mbox{if } u=n-c,\\
-3(n-3-2c),&\mbox{if } u=n-1-c,\\
3(n-3-2c), & \mbox{if }u=n-2-c,\\
-(n-3-2c),&\mbox{if }u=n-3-c,\\
0,&\mbox{otherwise}.
\end{cases}
\end{align*}
In particular if $u=2c$, then $n-3=3c$ and $ N_{(2c,c,0),(2c,c,0)}^{(2c,c,0),1}=-c$.
\end{example}
We remark that part of the structure constants $N_{\lambda, \mu}^{\nu, 1}$  in case (3) in Theorem \ref{thmQLRforGr3n} can also be reduced to $N_{\tilde \lambda, \tilde \mu}^{\tilde \nu, 0}$. For instance if $\nu_1<\lambda_2+\mu_2$, then  the  reduction can be done by Proposition \ref{propred33}. Here we provide a reduction for the part when $\nu_3=0$.

\begin{prop}
Let $\lambda,\mu, \nu\in \mathcal{P}_{3,n}$ with $\lambda_3=\mu_3=0$.
Assume  $\nu_1\geq \max\{\lambda_1, \mu_1\}$, $\nu_2\geq\max\{\lambda_2, \mu_2\}$ and $\nu_3=0$.
 If either of   {\upshape i), ii)} holds,
 {\upshape $$  \mbox{i)  } n-3-\mu_{j}< \lambda_{3-j} \mbox{ for some }j\in \{1, 2\}; \mbox{ ii) } \lambda=\mu=\nu,  \lambda_1<2\lambda_2, \lambda_1+\lambda_{2}\leq n-3; $$
}
 then $N_{\lambda,\mu}^{\nu,1}= N_{\tilde{\lambda} ,\tilde{\mu} }^{\tilde{\nu} ,0}$ for some $\tilde\lambda,\tilde \mu, \tilde\nu\in \mathcal{P}_{3,n}$ with explicit descriptions.
 \begin{enumerate}
   \item[iii)] If $\lambda=\mu=\nu,  \lambda_1=2\lambda_2, \lambda_1+\lambda_{2}\leq n-3$,,  then $N_{\lambda,\mu}^{\nu,1}=-\lambda_2$.
  \end{enumerate}
If none of {\upshape i), ii), iii)} holds, then  $N_{\lambda,\mu}^{\nu,1}=0$.
 \end{prop}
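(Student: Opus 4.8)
The plan is to reduce everything to the explicit formula for $N_{\lambda,\mu}^{\nu,1}$ in case (3) of Theorem \ref{thmQLRforGr3n}, which under the standing hypotheses $\nu_1\geq\max\{\lambda_1,\mu_1\}$, $\nu_2\geq\max\{\lambda_2,\mu_2\}$ reads
$N_{\lambda,\mu}^{\nu,1}$ in terms of $A=\lambda_1+\mu_1-\nu_1-\nu_2$, $m=|\nu|+n-|\lambda|-|\mu|$ and $\min\{A,n-3-\nu_1\}$, $\min\{A-1,n-3-\nu_1\}$, subject to the constraints \eqref{constraint}. Since we are additionally assuming $\nu_3=0$, the constraint $\min\{\lambda_1+\lambda_2,\mu_1+\mu_2\}\geq n-3+\nu_3$ becomes $\min\{\lambda_1+\lambda_2,\mu_1+\mu_2\}\geq n-3$, i.e.\ $n-3-\mu_j\le\lambda_{3-j}$ would fail only when this is violated; so condition i) is precisely the statement that one of the \emph{horizontal-strip / Pieri} constraints in \eqref{constraint} fails. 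First I would dispose of the easy implication: if none of i), ii), iii) holds, then I claim one of the constraints in \eqref{constraint} fails and hence $N_{\lambda,\mu}^{\nu,1}=0$ by Theorem \ref{thmQLRforGr3n}(3); conversely when i) holds the nonvanishing of $N_{\lambda,\mu}^{\nu,1}$ forces (via the interchange symmetry $\lambda\leftrightarrow\mu$ built into the proof of that theorem, cf.\ \eqref{vanishing111}--\eqref{vanishing333}) a constrained configuration, and I would pin down the surviving region.

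The heart of the argument is the reduction under i). Here I would use Theorem \ref{thmdegonerestated} (or rather its dual-flavoured companions Lemma \ref{lemred} and Lemma \ref{lemdual}): by Lemma \ref{lemdual}, $N_{\lambda,\mu}^{\nu,1}=N_{\lambda,\nu^\vee}^{\mu^\vee,1}$, and the hypothesis $n-3-\mu_j<\lambda_{3-j}$ translates, after passing to $\mu^\vee=(n-3-\mu_2,n-3-\mu_1,0)$ (recall $\mu_3=0$ so $\mu^\vee_3=0$ too, using $n-k=n-3$ here), into a strict inequality of the form ``$(\mu^\vee)_{i}<\lambda_i$ for some $i$'' or ``$(\nu^\vee)_i<(\text{something})_i$''. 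Applying Theorem \ref{thmdegonerestated} to whichever of the two input partitions has the index $i$ with $(\text{third partition})_i<(\text{first partition})_i$, we get $N_{\lambda,\mu}^{\nu,1}=N_{\tilde\lambda,\tilde\mu}^{\tilde\nu,0}$ with $\tilde\lambda,\tilde\mu,\tilde\nu$ given by the explicit Seidel-shift formulas in that theorem (the shift being by $n-3-\lambda_m+m$ or the analogous amount for $\mu$). The two subcases $j=1$ and $j=2$ produce two different explicit triples; I would record both. This is exactly parallel to how Theorem \ref{thmQLRforGr3n}(1),(2) handle the case $\nu_i<\max\{\lambda_i,\mu_i\}$, so the bookkeeping should be routine given those precedents.

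For ii) and iii), the relevant $A$ and $m$ are $A=\lambda_1+\mu_1-\nu_1-\nu_2=2\lambda_1-\lambda_1-\lambda_2=\lambda_1-\lambda_2$ and $m=|\lambda|+n-2|\lambda|=n-|\lambda|=n-\lambda_1-\lambda_2$, which under $\lambda_1+\lambda_2\leq n-3$ gives $m\geq 3$; combined with $0\le m\le 3$ from \eqref{constraint} this forces $m=3$, so Theorem \ref{thmQLRforGr3n}(3) gives $N_{\lambda,\lambda}^{\lambda,1}=-\min\{A,n-3-\nu_1\}=-\min\{\lambda_1-\lambda_2,\,n-3-\lambda_1\}$. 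When $\lambda_1<2\lambda_2$, i.e.\ $\lambda_1-\lambda_2<\lambda_2$, I expect this $\min$ to evaluate in a way that can be re-expressed as a single $K(Gr(3,n))$ Littlewood--Richardson number $N_{\tilde\lambda,\tilde\mu}^{\tilde\nu,0}$ (likely $-\min\{\lambda_1-\lambda_2,n-3-\lambda_1\}$ equals $(-1)^{?}N^{?,0}_{?,?}$ via a small classical Pieri identity, e.g.\ recognizing $-\min\{a,b\}$ as a coefficient in $\mathcal{O}^{(a,?,?)}\cdot\mathcal{O}^{?}$); in case iii) the equality $\lambda_1=2\lambda_2$ makes $\lambda_1-\lambda_2=\lambda_2$ and presumably $\lambda_2\le n-3-\lambda_1$, so the formula collapses to exactly $-\lambda_2$, which is the asserted value and which is \emph{not} in general a classical structure constant (hence it is listed separately). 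The main obstacle I anticipate is iii) and the borderline analysis of the two $\min$'s: I will need to check carefully that $\lambda_2\le n-3-\lambda_1$ and $\lambda_1-\lambda_2\le n-3-\lambda_1$ hold under the stated hypotheses $\lambda_1+\lambda_2\le n-3$ (the first is exactly this hypothesis; the second needs $\lambda_1\le n-3-\lambda_1+\lambda_2$, which also follows), and, for ii), to produce the explicit classical triple rather than just a number — this is where Proposition \ref{propred33} may be the right tool, reducing $N_{\lambda,\mu}^{\nu,1}$ with $\nu_1<\lambda_2+\mu_2$ (note $\lambda_1<2\lambda_2=\lambda_2+\mu_2$ in case ii) since $\lambda=\mu$) directly to degree $0$.
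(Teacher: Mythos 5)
Your reduction under condition i) is in substance the paper's own argument (dualize with Lemma \ref{lemdual}, then apply Theorem \ref{thmdegonerestated}), but as written it contains a slip that matters: for $\mu=(\mu_1,\mu_2,0)$ one has $\mu^\vee=(n-3,\,n-3-\mu_2,\,n-3-\mu_1)$, not $(n-3-\mu_2,\,n-3-\mu_1,\,0)$, so $\mu^\vee_3=n-3-\mu_1\neq 0$ in general. Applying Theorem \ref{thmdegonerestated} directly to $N_{\lambda,\nu^\vee}^{\mu^\vee,1}$ therefore only reaches the condition $n-3-\mu_2<\lambda_2$, which does not cover i). You first need to strip the common first row of length $n-3$ from $\nu^\vee$ and $\mu^\vee$ (legitimate by Lemma \ref{lemred}(3) since $\nu_3=\mu_3=0$); after this normalization the output partition is $(n-3-\mu_2,n-3-\mu_1,0)$ and i) becomes exactly ``its $j$-th entry is smaller than $\lambda_j$'', which is precisely how the paper proceeds. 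For ii) your route (evaluate the number via Theorem \ref{thmQLRforGr3n}(3) and repackage through Proposition \ref{propred33}, using $\nu_1=\lambda_1<2\lambda_2=\lambda_2+\mu_2$) is a legitimate alternative to the paper, which instead reduces structurally to $N_{(n-3-\lambda_2,\lambda_1-\lambda_2,0),(\lambda_1,\lambda_1-\lambda_2,0)}^{(n-3-\lambda_1+1,n-3-\lambda_2+1,\lambda_1-\lambda_2+1),0}$; but note that the constraint $0\le m\le 3$ together with $m=n-\lambda_1-\lambda_2$ forces $\lambda_1+\lambda_2=n-3$ in the nonvanishing case (when $\lambda_1+\lambda_2<n-3$ the constant is $0$), so in iii) the value $-\lambda_2$ is obtained only when $n-3=3\lambda_2$, as in Example \ref{exampleccc}.

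The genuine gap is in the vanishing clause. Your claim that ``if none of i), ii), iii) holds then some constraint in \eqref{constraint} fails'' is false: take $\lambda=\mu=\nu$ with $\lambda_1+\lambda_2=n-3$, $\lambda_1>2\lambda_2$ and $\lambda_2>0$, e.g.\ $\lambda=\mu=\nu=(7,2,0)$ in $QK(Gr(3,12))$. Here i) fails because $n-3-\mu_2=\lambda_1$ and $n-3-\mu_1=\lambda_2$ (no strict inequality), and ii), iii) fail since $\lambda_1>2\lambda_2$; yet $A=\lambda_1-\lambda_2>0$, $m=3$, $\lambda_1+\lambda_2=n-3$, $\lambda_1>\nu_2=\lambda_2>0=\nu_3$, so every constraint in \eqref{constraint} holds and Theorem \ref{thmQLRforGr3n}(3) gives $N_{\lambda,\mu}^{\nu,1}=-\min\{\lambda_1-\lambda_2,\lambda_2\}=-\lambda_2\neq 0$ (for $(7,2,0)$, $n=12$, this is $-2$, which one can confirm independently from Lemma \ref{lemGiamb} and the quantum Pieri rule). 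So your route cannot close this clause — and in fact it exposes a discrepancy with the proposition as printed: the paper's own disposal of the subcase $\lambda_1-\lambda_2>\lambda_2$ by ``the conclusion of part (a)'' does not apply, because for the transformed triple $\big((\lambda_1,\lambda_2,0),(\lambda_1,\lambda_1-\lambda_2,0)\big)$ one has $n-3-(\lambda_1-\lambda_2)<\lambda_1$, so the standing hypothesis of that part fails. Before asserting the final vanishing statement you would have to either restrict it (the natural repair is to extend iii) to all $\lambda=\mu=\nu$ with $\lambda_1\ge 2\lambda_2$ and $\lambda_1+\lambda_2=n-3$, where the value is $-\lambda_2$) or find an error in Theorem \ref{thmQLRforGr3n}(3); as it stands this part of your proposal does not go through.
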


\begin{proof}
   By Lemma \ref{lemdual} and Lemma \ref{lemred} (3), we have
    \begin{align*}
      N_{\lambda,\mu}^{\nu,1}=N_{(\lambda_1,\lambda_2,0),(n-3,n-3-\nu_2,n-3-\nu_1)}^{(n-3, n-3-\mu_2,n-3-\mu_1),1}=N_{(\lambda_1,\lambda_2,0),(n-3-\nu_2,n-3-\nu_1,0)}^{(n-3-\mu_2,n-3-\mu_1,0),1}.
    \end{align*}

    \begin{enumerate}
      \item Assume  $n-3-\mu_2<\lambda_1$, then by Theorem \ref{thmdegonerestated} we have
       $$N_{\lambda,\mu}^{\nu,1}=
      N_{(n-3-\lambda_1+\lambda_2,n-3-\lambda_1,0),(n-3-\nu_2,n-3-\nu_1,0)}^{(2n-5-\lambda_1-\mu_2,2n-5-\lambda_1-\mu_2,n-2-\lambda_1),0}
       .$$
      \item Assume   $n-3-\mu_2\geq \lambda_1$ and $n-3-\mu_1<\lambda_2$, then   by Theorem \ref{thmdegonerestated} we have
         $$N_{\lambda,\mu}^{\nu,1}=
      N_{(n-3-\lambda_2+\lambda_3,\lambda_1-\lambda_2,0),(n-3-\nu_2,n-3-\nu_1,0)}^{(2n-5-\lambda_2-\mu_1, n-2-\lambda_2,n-2-\mu_2-\lambda_2),0}
       .$$
        \item  Assume     $n-3-\mu_2 \geq  \lambda_1$   and  $n-3-\mu_1 \geq  \lambda_2$.
      \begin{enumerate}
        \item  If $\lambda\neq \mu$,  then we further assume $\lambda_1+\lambda_2>\mu_1+\mu_2$ without loss of generality.
         Thus $2(n-3)-(\mu_2+\mu_1)\geq \lambda_1+\lambda_2>\mu_1+\mu_2$.  It follows that $\mu_1+\mu_2<n-3$, and hence $N_{\lambda,\mu}^{\nu,1}=0$ by Theorem \ref{thmQLRforGr3n} (3)(a).
       \item  If $\lambda= \mu$,  then   $n-3-\mu_2=n-3-\lambda_2 \geq  \lambda_1$. If $\lambda_1+\lambda_2<n-3$, then  $N_{\lambda,\mu}^{\nu,1}=0$ by Theorem \ref{thmQLRforGr3n} (3)(a).
         If $\lambda_1+\lambda_2=n-3$, then by the conclusion of part (a), we have $N_{\lambda,\mu}^{\nu,1}=0$ unless $\lambda=\nu$.

         \end{enumerate}
            \end{enumerate}

         It remains to discuss the case $\lambda=\mu=\nu$ with $\lambda_1+\lambda_2=n-3$. We have

   $N_{\lambda,\lambda}^{\lambda,1}=N_{(\lambda_1,\lambda_2,0),(n-3,n-3-\lambda_2,n-3-\lambda_1)}^{(n-3, n-3-\lambda_2,n-3-\lambda_1),1}=  N_{(\lambda_1,\lambda_2,0),(\lambda_1,\lambda_1-\lambda_2, 0)}^{(\lambda_1,\lambda_1-\lambda_2, 0), 1}$.

   If $\lambda_1-\lambda_2<\lambda_2$, then by Theorem \ref{thmdegonerestated} we have
   $$N_{\lambda,\lambda}^{\lambda,1}
   =N_{(n-3-\lambda_2, \lambda_1-\lambda_2,0), (\lambda_1, \lambda_1-\lambda_2, 0)}^{(n-3-\lambda_1+1, n-3-\lambda_2+1, \lambda_1-\lambda_2+1), 0}.$$

   If $\lambda_1-\lambda_2>\lambda_2$, then $N_{\lambda,\lambda}^{\lambda,1}=0$ by the conclusion of part (a).

   If $\lambda_1-\lambda_2=\lambda_2$, then $3\lambda_2=\lambda_1+\lambda_2= n-3$ and $N_{\lambda,\lambda}^{\lambda,1}= -\lambda_2$ by
    Example \ref{exampleccc}.
     \end{proof}

\bibliographystyle{amsplain}

\end{document}